\documentclass{ucthesisnew}



\usepackage{amsmath, amsfonts, amsthm, amssymb}
\usepackage[all]{xy}
\usepackage{graphicx}

\makeatletter
\newtheorem*{rep@theorem}{\rep@title}
\newcommand{\newreptheorem}[2]{%
\newenvironment{rep#1}[1]{%
 \def\rep@title{#2 \ref{##1}}%
 \begin{rep@theorem}}%
 {\end{rep@theorem}}}
\makeatother

\theoremstyle{plain}
\newtheorem{theorem}{Theorem}[section]
\newreptheorem{theorem}{Theorem}
\newtheorem{lemma}[theorem]{Lemma}
\newtheorem{proposition}[theorem]{Proposition}

\newtheorem{corollary}[theorem]{Corollary}

\newtheorem{question}[theorem]{Question}

\newtheorem{reduction}[theorem]{Reduction}

\makeatletter
\@addtoreset{theorem}{chapter}
\makeatother

\theoremstyle{definition}
\newtheorem{definition}[theorem]{Definition}

\theoremstyle{remark}
\newtheorem{remark}[theorem]{Remark}
\newtheorem{example}[theorem]{Example}

\newcommand{\SC}{\mbox{ ; }}

\newcommand{\for}{,\mbox{\hspace{4pt} for  }}

\newcommand{\isom}{\simeq}
\newcommand{\inj}{\hookrightarrow}

\newcommand{\dd}{{\rm d}}
\newcommand{\brac}{\{\cdot,\cdot\}}

\newcommand{\TT}{\mathcal{T}}
\newcommand{\JJ}{\mathcal{J}}
\newcommand{\II}{\mathcal{I}}
\newcommand{\OO}{\mathcal{O}}

\newcommand{\KK}{\mathbb{K}}
\newcommand{\NN}{\mathbb{N}}
\newcommand{\ZZ}{\mathbb{Z}}

\newcommand{\RR}{\mathbb{R}}

\newcommand{\CC}{\mathbb{C}}
\newcommand{\bbA}{\mathbb{A}}

\newcommand{\g}{\mathfrak{g}}
\newcommand{\gl}{\mathfrak{gl}_2}
\newcommand{\slt}{\mathfrak{sl}_2}
\newcommand{\m}{\mathfrak{m}}
\newcommand{\h}{\mathfrak{h}}

\newcommand{\zb}{\overline{z}}
\newcommand{\wb}{\overline{w}}

\newcommand{\pt}{\{0\}}
\newcommand{\half}{\frac{1}{2}}

\newcommand{\ad}{\rm{ad}}
\newcommand{\Span}{{\rm Span}}
\newcommand{\Sym}{{\rm Sym}}

\newcommand{\Der}{{\rm Der}}
\newcommand{\End}{{\rm End}}
\newcommand{\Spec}{{\rm Spec}}

\newcommand{\pderiv}[2]{\frac{\partial #1}{\partial #2}}
\newcommand{\vect}[1]{\frac{\partial}{\partial #1}}
\newcommand{\bivect}[2]{\frac{\partial}{\partial #1}\wedge\frac{\partial}{\partial #2}}

\newcommand{\nbd}[3]{\mathcal{N}^{#1}_{#2}(#3)}

\begin{document}


\title{On Embedding Singular Poisson Spaces}
\author{Aaron Fraenkel McMillan}
\degreesemester{Spring}
\degreeyear{2011}
\degree{Doctor of Philosophy}
\chair{Professor Alan Weinstein}
\othermembers{Professor Denis Auroux  \\
   Professor Venkat Anantharam}
\numberofmembers{3}
\prevdegrees{B.A. (University of California, Berkeley) 2004} 
\field{Mathematics}
\campus{Berkeley}

\maketitle


\begin{abstract}

This dissertation investigates the problem of locally embedding singular Poisson spaces.  Specifically, it seeks to understand when a singular symplectic quotient $V/G$ of a symplectic vector space $V$ by a group $G\subseteq{\rm Sp}_{2n}(\RR)$ is realizable as a Poisson subspace of some Poisson manifold $(\RR^n,\brac)$.

The local embedding problem is recast in the language of schemes and reinterpreted as a problem of extending the Poisson bracket to infinitesimal neighborhoods of an embedded singular space.  Such extensions of a Poisson bracket near a singular point $p$ of $V/G$ are then related to the cohomology and representation theory of the cotangent Lie algebra at $p$.  

Using this framework, it is shown that the real 4-dimensional quotient $V/\ZZ_n$ ($n$ odd) is not realizable as a Poisson subspace of any $(\RR^{2n+6},\brac)$, even though the underlying variety algebraically embeds into $\RR^{2n+6}$.  The proof of this nonembedding result hinges on a refinement of the Levi decomposition for Poisson manifolds to partially linearize any extension with respect to the Levi decomposition of the cotangent Lie algebra of $V/G$ at the origin.  Moreover, in the case $n=3$, this nonembedding result is complemented by a concrete realization of $V/\ZZ_3$ as a Poisson subspace of $\RR^{78}$.

\end{abstract}

\begin{frontmatter}



\tableofcontents




\end{frontmatter}

\pagestyle{headings}


\chapter{Introduction}

This dissertation studies the local Poisson geometry of singular symplectic quotients.  It is particularly concerned with the problem of realizing such a quotient as an embedded Poisson subspace of some Poisson structure on $\RR^d$.  Such an embedding then realizes the quotient as a union of singular leaves of the symplectic foliation of the Poisson structure on $\RR^d$.  

The main results of this dissertation concern the singular symplectic quotient $V/\ZZ_n$ of a $4$-dimensional symplectic real vector space $V$ by the linear action of a cyclic group $\ZZ_n\subseteq{\rm Sp}_{4}(\RR)$ of odd order.   First, the Poisson structures on these quotients are shown to exhibit an obstruction to extension:

\begin{theorem} 
\label{thm:NonEmb}
Let $n$ be an odd integer greater than 2.  The symplectic quotient $V/\ZZ_n$ embeds into $\RR^{2n+6}$, yet is {\it not} realizable as a Poisson subspace of $\RR^{2n+6}$ for {\it any} Poisson structure on $\RR^{2n+6}$.
\end{theorem}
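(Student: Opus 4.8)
The plan is to split the claim into its two halves: the existence of an algebraic embedding $V/\ZZ_n\hookrightarrow\RR^{2n+6}$, and the nonexistence of a compatible Poisson structure. The first half is essentially bookkeeping: the invariant ring $\RR[V]^{\ZZ_n}$ for the given $\ZZ_n$-action on $V\cong\RR^4$ is generated by a computable set of polynomials, and by counting a minimal generating set (using the structure of the $\ZZ_n$-representation on $V$, which splits into two rotation planes with coprime weights) one finds that $2n+6$ coordinates suffice to embed the real affine variety $\Spec\RR[V]^{\ZZ_n}$. I would record the explicit generators, since their relations and weights feed directly into the second, harder half.

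For the nonembedding half, the strategy is local and schematic, following the framework set up earlier in the dissertation. Suppose for contradiction that $V/\ZZ_n$ sits inside $(\RR^{2n+6},\brac)$ as a Poisson subspace. Restricting attention to the singular point $p$ (the image of the origin), such a Poisson structure would induce a Poisson bracket on the full local ring of $\RR^{2n+6}$ at $p$ that restricts to the quotient bracket on $V/\ZZ_n$ and hence descends to an \emph{extension} of the Poisson bracket to infinitesimal neighborhoods of the embedded singular point, in the sense developed in the scheme-theoretic reformulation. The obstruction to such an extension is controlled by the cohomology and representation theory of the cotangent Lie algebra $\g = \m_p/\m_p^2$ at the origin; here one computes that $\g$ has a nontrivial Levi decomposition $\g = \slt \ltimes \mathfrak{r}$ with the semisimple part $\slt$ acting on the relevant pieces.

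The key technical step — and the one I expect to be the main obstacle — is the \emph{partial linearization}: one must show that any putative Poisson extension can be normalized, via a Levi-type decomposition theorem for Poisson structures adapted to the cotangent Lie algebra, so that the $\slt$-part of the bracket is linear in suitable coordinates. This is the refinement of the Poisson Levi decomposition alluded to in the abstract, and making it work in the singular/formal setting, with control over which coordinates get linearized and which representations of $\slt$ appear, is where the real work lies. Once the extension is put in this normal form, the $\slt$-equivariance forces the higher-order terms of the bracket to lie in specific isotypic components; comparing the dimension $2n+6$ against the dimensions of the $\slt$-representations that would be needed to accommodate a genuine extension yields a numerical contradiction precisely when $n$ is odd. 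I would isolate that representation-theoretic count as a separate lemma, since it is the crux: the parity of $n$ enters through which weights occur in the decomposition of the normal bundle data, and the even case behaves differently. Assembling these pieces — algebraic embedding, reduction to a formal extension problem at $p$, partial linearization, and the isotypic dimension count — completes the proof.
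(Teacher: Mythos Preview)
Your architectural sketch is right up to a point: the embedding via a Hilbert basis of $2n+6$ generators, the reduction to a formal extension problem at the origin, the identification of the cotangent Lie algebra and its Levi decomposition, and the partial linearization step are all exactly the scaffolding the paper uses. But the endgame you describe is not what actually happens, and the gap is substantive.

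The contradiction is \emph{not} a numerical or isotypic dimension count. What the paper does is fix an explicit (non-Poisson) extension $\beta=\beta^1+\beta^{n-1}$ of the quotient bracket, write any putative Poisson structure as $\pi=\beta+\alpha+\tilde\pi$ with $\alpha$ vanishing on $V/\ZZ_n$, and then analyze the Schouten bracket $[\pi,\pi]$ degree by degree. Two technical devices you do not mention are essential: a refinement of the Levi decomposition by the Poincar\'e--Dulac normal form (applied to the Hamiltonian vector field of the central element $t=a_0+a_1$), and an auxiliary $W$-grading on $\Sym(\g)$ coming from the semidirect product $\g\cong\gl_2\ltimes(V_n\oplus V_n)$. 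These force the low $W$-degree components $\alpha^k_{-2}$ and $\alpha^k_{-1}$ (for $k<n$) to preserve the $t$-degree, which in turn pins their coefficients inside the principal ideal $\langle C\rangle$ where $C=a_0a_1-a_2a_3$. The punchline is then an explicit computation: $[\beta^1,\beta^{n-1}](dx_i,dx_j,dy_0)$ contains terms in $\Span\{\overline{A}_l,\overline{B}_l\}$ that lie \emph{outside} $\langle C\rangle$, so no choice of $\alpha$ can cancel them, and $[\pi,\pi]^{n-1}_{-2}\neq 0$.

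Your account of where oddness enters is also off. It is not in the final representation-theoretic count; it is used earlier, in showing that after the semi-linearizing change of coordinates the embedding still agrees with the Hilbert map on the $\gl_2$-coordinates. The argument there needs $\Span\{\phi(x_l)\}\cong V_n$ to avoid intersecting a direct sum of even-dimensional irreducibles, which requires $n$ odd. Without pinning down the embedding on these coordinates, you cannot identify $\alpha$ as vanishing on the explicit ideal generators of Table~\ref{table:GenKer}, and the whole $\langle C\rangle$-containment argument collapses.
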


\begin{remark} The requirement that $n$ be odd is likely unnecessary; the proof of the theorem requires this assumption at only one step.
\end{remark}

This result provides further evidence for such rigidity of Poisson structures previously established by Egilsson \cite{Egilsson95} and Davis \cite{Davis01} (see Example \ref{ex:ResSpaces}).  

Second, a procedure is given that constructs an explicit realization of $V/\ZZ_3$ as a Poisson subspace of some $\RR^{N}$.  More specifically,

\begin{theorem} 
\label{thm:PoissExt}
There is a Poisson structure on $\RR^{78}$ and an embedding $V/\ZZ_3\inj\RR^{78}$ that realizes $V/\ZZ_3$ as a Poisson subspace of $\RR^{78}$.
\end{theorem}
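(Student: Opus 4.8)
The plan is to make $V/\ZZ_3$ completely explicit and then to build the ambient Poisson structure by hand, passing to a deliberately non-minimal embedding with enough room to absorb the obstruction that, by Theorem~\ref{thm:NonEmb} (which we may take as given), blocks the minimal embedding $V/\ZZ_3\inj\RR^{2n+6}=\RR^{12}$. First I would record a minimal homogeneous generating set $f_1,\dots,f_{12}$ of the invariant ring $A:=\RR[V]^{\ZZ_3}$ — the quadratic invariants together with the cubic invariants and their conjugates — the ideal $I\subseteq\RR[x_1,\dots,x_{12}]$ of relations among them, and the structure functions $\{f_i,f_j\}=P_{ij}(f)$, which by a degree count are at most quadratic in the $f$'s. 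In the scheme-theoretic language already in place, realizing $V/\ZZ_3$ as a Poisson subspace of $\RR^N$ amounts to producing a polynomial bivector $\Pi$ on $\RR^N$ for which the defining ideal is a Poisson ideal and the induced bracket on the quotient is the symplectic-quotient one; for $N=12$ no such $\Pi$ exists, the obstruction being the cohomology class isolated in the proof of Theorem~\ref{thm:NonEmb}.

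The constructive heart is to embed $V/\ZZ_3$ not by the invariants alone but by the invariants together with their pairwise brackets: the map $v\mapsto\big(f_1(v),\dots,f_{12}(v),\,(\{f_i,f_j\}(v))_{i<j}\big)$ realizes $V/\ZZ_3$ as a closed subvariety of $\RR^{12}\oplus\Lambda^2\RR^{12}\cong\RR^{78}$ (with $78=12+\binom{12}{2}$), cut out by $I$ together with the graph equations $w_{ij}=P_{ij}(x)$ relating each new coordinate $w_{ij}$ to the old ones. On $\RR^{78}$ one then sets $\{x_i,x_j\}:=w_{ij}$ — now a \emph{linear} structure function — and it is natural to set $\{x_i,w_{jk}\}:=\sum_l\partial_l P_{jk}(x)\,w_{il}$ so that the graph equations automatically generate a Poisson ideal, with an analogous ansatz for $\{w_{ij},w_{kl}\}$; what then remains is to correct these choices by elements of the ideal so as to enforce the Jacobi identity. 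The point of the enlargement is that the $66=\binom{12}{2}$ extra coordinates, and this freedom to add ideal corrections, are exactly the room that was absent in $\RR^{12}$: the class of Theorem~\ref{thm:NonEmb}, which was not a coboundary in the minimal conormal module, becomes one once the module is enlarged. I would then assemble $\Pi$ by extending the bracket over the successive infinitesimal neighborhoods of $V/\ZZ_3$ in $\RR^{78}$ — the same filtered induction that appears in the proof of Theorem~\ref{thm:NonEmb} — using the Levi decomposition of the cotangent Lie algebra of $V/\ZZ_3$ at the origin to split off a semisimple part and clear the successive obstructions by the accompanying cohomology vanishing; homogeneity of all the data forces the induction to terminate in an honest polynomial bivector.

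It then remains to verify that the enlarged ideal is a Poisson ideal for $\Pi$, that $\Pi$ satisfies the Jacobi identity — a finite check, since every Jacobiator is homogeneous of bounded degree — and that the bracket induced on $A$ is the intended one, so that $V/\ZZ_3\inj\RR^{78}$ genuinely realizes it as a Poisson subspace. The main obstacle I anticipate is the middle step: proving that the obstruction of Theorem~\ref{thm:NonEmb} really does die in $\RR^{78}$ and that the induction then closes up without reintroducing rigidity at a lower level. This is where the explicit representation theory of the cotangent Lie algebra — the decomposition of the enlarged conormal module over its Levi subalgebra — does the real work, and where the particular value $78$ gets pinned down; a secondary, purely computational nuisance is the bookkeeping of an honest Poisson bracket in $78$ variables. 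It is worth emphasizing that $78$ is in no way canonical — any larger ambient space would serve equally well — and the number is simply what this particular procedure yields for $n=3$.
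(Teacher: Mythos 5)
Your setup does match the paper's in its essentials: the paper also takes the ambient space to be $(\g\times Z)^*$ with $Z\cong\g\wedge\g$, so that $78=12+\binom{12}{2}$, takes $Z$ abelian so that the linear part $\pi^1$ is an abelian extension of the cotangent Lie algebra determined by a $\g$-representation on $Z$ and a cocycle $\g\wedge\g\to Z$ (scalar multiples of the identity on the irreducible $\slt(\CC)$-summands, by Schur), and then seeks the nonlinear correction degree by degree using the $t$- and $H$-gradings and the Levi decomposition. Your embedding by the graph of the brackets differs from the paper's $Q\inj\g^*\times\{0\}$ only by the shift $w_{ij}\mapsto w_{ij}-P_{ij}(x)$, so that is immaterial; note, however, that your linear ansatz $\{x_i,w_{jk}\}=\sum_l\partial_lP_{jk}(x)\,w_{il}$ is not the natural $\g$-action on $\g\wedge\g$ and does not by itself make the linear part a Lie algebra, so even the first-order data already requires the ``correction by ideal elements'' you defer.

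The genuine gap is the existence step, which is the entire content of the theorem. You assert that the obstruction of Theorem \ref{thm:NonEmb} ``becomes a coboundary once the module is enlarged,'' that the successive obstructions are cleared by an accompanying cohomology vanishing, and that homogeneity forces the induction to terminate in a polynomial bivector. None of this is argued, and none of it is available off the shelf: the paper proves no vanishing theorem for the relevant cohomology in the enlarged space (it explicitly remarks that it is unclear why the obstruction disappears between dimensions $13$ and $78$), and degree-by-degree solvability would at best yield a formal Poisson structure unless termination is actually established --- in the paper, termination is a feature of the particular solution found (the bivector is quadratic, $\pi=\pi^1+\pi^2$), not something forced in advance. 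What the paper actually does at this point is entirely constructive: after the representation-theoretic reductions (coefficients in the ideal, preservation of the $H$-grading, support on the radical, matching against the explicit nonzero terms of $[\beta^1,\beta^2]$) cut the space of admissible $\alpha^2$ down to a few hundred unknown scalars, the equations $[\pi^1,\pi^2]=0$ and $[\pi^2,\pi^2]=0$ are solved as explicit linear systems by computer (Macaulay2/Maple), and the resulting bivector is checked directly to satisfy $[\pi,\pi]=0$. Without either such an explicit solution or a genuine cohomological argument that the obstructions vanish in $(\g\times Z)^*$, your outline shows only that $\RR^{78}$ is a plausible place to look; it does not prove the extension exists.
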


In conjunction, these two theorems paint a picture of a singular symplectic space that (1) smoothly embeds in $\RR^{12}$, (2) whose Poisson structure cannot be extended to a Poisson structure on $\RR^{12}$, yet (3) can be extended to a Poisson structure on $\RR^{78}$.  It is unclear why this dimensional obstruction disappears between 13 and 78.  However, it seems to be a local property: in all known examples, the obstruction to embedding such singular quotients appears when attempting to infinitesimally extend the Poisson structure at a zero-dimensional symplectic leaf (a singular point with respect to both the smooth and Poisson structures). \\

While the motivating examples and corresponding results of this dissertation lie in the category of smooth manifolds, the techniques developed to handle them are algebraic in nature.  Why would one answer questions about quotients of {\it smooth symplectic manifolds} by studying {\it algebraic/formal Poisson structures}?  The more general category of Poisson structures is better equipped for studying such quotients for a number of reasons:

\begin{enumerate}
\item The symplectic quotients considered have isolated singularities. While a symplectic structure is defined in terms of a smooth structure on the space, Poisson structures have no such requirement for smoothness. 
\item Every smooth Poisson structure has a (possibly singular) symplectic foliation.  Thus, one may attempt to ``desingularize'' a symplectic quotient by realizing it as the union of (a finite number of) symplectic leaves of a smooth Poisson manifold.  The ``desingularization'' is then a nearby, smooth symplectic leaf. 
\end{enumerate}

\begin{remark} There is another notion of the desingularization of singular symplectic quotients, dual to the one above, called the ``symplectic resolution of singularities.''  This approach to understanding symplectic singularities allows one to work exclusively with symplectic structures (though there is still value in considering their corresponding Poisson geometry --- see \cite{Kaledin06}).
\end{remark}

Our method for either finding, or proving the nonexistence of, a Poisson embedding is roughly as follows (see Chapter 4):  Given a singular symplectic quotient $X=V/G$ and an embedding $X \inj \KK^d$ for $\KK=\RR$ or $\CC$, we attempt to extend the Poisson structure on $X$ locally near the zero-dimensional symplectic leaf $\pt\subseteq \KK^d$.  Such an extension would be a Poisson bracket on the algebra of formal functions $\KK[[x_1,\ldots, x_d]]$ vanishing at $\pt$.  The necessary and sufficient conditions (Theorem \ref{prop:EmbeddingCriteria}) for a local extension at $\pt$ are obtained in terms of
\begin{enumerate}
\item a sequence of conditions given in terms of the decomposition of the bracket by degree in $\KK[[x_1,\ldots,x_d]]$, and 
\item a local invariant (the cotangent Lie algebra) of the Poisson structure on $X/G$ at the singular point. 
\end{enumerate}

While the characterization above describes a (local) embedding of algebraic varieties $V/G\inj\KK^d$, it is perhaps better to interpret it in the language of schemes.  From this perspective, solving the degree $k$ condition amounts to extending the Poisson bracket on $X$ to a Poisson bracket on the $k^{\rm th}$ infinitesimal neighborhood of $X$ (which is not a variety, but is a scheme).  The condition for each such extension is naturally expressed as the solution of a Maurer-Cartan type equation on the cotangent Lie algebra.

When working with examples (a large part of this dissertation) we will take advantage of the computational power of the first point of view, then reflect on the procedure and the result in the language of schemes.

\section{Outline}

\subsubsection{Chapter 2}

Chapter 2 contains an introduction to the basic structures of Poisson geometry from an algebraic point of view --- focusing on Poisson structures in the category of schemes.  Following \cite{Kaledin06}, we introduce singular symplectic structures and discuss the extent to which local properties of their Poisson structure carry over from the smooth case to the algebraic case.  Lastly, using these concepts, we examine Poisson brackets on infinitesimal neighborhoods of a Poisson subscheme that will be used later as the framework in which one ``infinitesimally extends'' the Poisson bracket on a singular space.

\subsubsection{Chapter 3}

Chapter 3 describes the connection between the usual differential geometric description of a Poisson structure and the scheme theoretic development in Chapter 2.  It begins by reviewing (following \cite{Dufour05}) the correspondence, on smooth spaces, between Poisson brackets and bivector fields.  Moreover, the definition of such bivector fields is extended to infinitesimal neighborhoods and related to the homogeneous decomposition of a bivector field.

\subsubsection{Chapter 4}

Chapter 4 serves an an introduction to the problem of embedding a singular symplectic space.  The first section explains the problem and the extent to which the smooth embedding problem reduces to a algebraic/formal Poisson embedding problem.  The second section consists of many examples --- both of concrete embeddings and nonembedding results of singular Poisson spaces.  The last section sketches a first approach to constructing an embedding: the Zariski cotangent space at the singular point has a natural Lie algebra structure, called the cotangent Lie algebra, that defines a Poisson-embedding {\it up to first order}.  Any Poisson structure extending the given one is a deformation of the dual of this Lie algebra.

\subsubsection{Chapter 5}

Chapter 5 summarizes (following \cite{Dufour05}) the linearization problem in Poisson geometry.  In particular, the Levi decomposition for Poisson manifolds determines to what extent a potential extension of the Poisson structure on the quotient will be isomorphic to the dual of its cotangent Lie algebra.  This semi-linearization is key to proving Theorem \ref{thm:NonEmb}.

\subsubsection{Chapter 6}

Chapter 6 contains the proof of Theorem \ref{thm:NonEmb}.  Using the Levi decomposition for Poisson structures, the proof simplifies the ``infinitesimal extension'' conditions formulated in Chapter 4 to reveal an obstruction to extension in the $(n-1)^{\rm th}$-infinitesimal neighborhood of the singular point.

\subsubsection{Chapter 7}

Chapter 7 reasons through a series of reductions in the ``infinitesimal extension'' conditions formulated in Chapter 4 to construct a concrete extension of the Poisson structure on $V/\ZZ_3$, for a 4-dimensional real vector space $V$ (i.e. Theorem \ref{thm:PoissExt}).

\subsubsection{Appendix}

The Appendix contains two parts: (1) a section reviewing the representation theory of $\slt(\CC)$ and (2) a section explaining the Macaulay2 code used to construct the extension given in Chapter 7.

\chapter{Poisson Geometry Basics}

This chapter contains preliminary materials on Poisson structures that form the framework for the remainder of the thesis.   As the central objects of study are Poisson structures on singular spaces, we will take an algebraic approach to the subject.  A Poisson structure on a space is an additional algebraic structure on the sheaf of functions of the space.  

Some of the following material may be developed over a ground field $\KK$ of positive characteristic (see \cite{Kaledin06}).  This is a difficult endeavor that will not be pursued here.  Throughout, $\KK$ will be a field of characteristic $0$.  Much of the later material will assume that $\KK=\RR$ or $\CC$.  All schemes will be Noetherian schemes of finite type over $\KK$.

The material in this section draws from the following work of Kaledin and Polishchuk (see \cite{Kaledin06, Polishchuk97}). 

\section{Poisson Algebras}
\label{sec:PoissonAlg}

\begin{definition}
\label{def:PoissonAlg}
A Poisson algebra is a pair $(A,\brac)$, where $A$ is a commutative, associative algebra over $\KK$ and $\brac$ is a Lie bracket on $A$, satisfying the Leibniz identity:
$$\{fg,h\}=f\{g,h\}+g\{f,h\}  \mbox{ for all $f,g,h\in A$}$$   
The Leibniz identity is a compatibility condition between the Lie algebra $(A,\brac)$ and associative algebra $(A,\cdot)$.
\end{definition}

\begin{remark}  As a Poisson algebra $A$ is both an associative algebra and a Lie algebra, an algebraic property/structure of $A$ will by default mean a property/structure of the associative algebra of $A$.  For example, the statement ``$A$ is commutative'' means that the associative algebra $A$ is a commutative algebra.  The property that $\{f,g\}=\{g,f\}$ would be expressed as ``$(A,\brac)$ is a commutative Lie algebra.''
\end{remark}

\begin{definition} A homomorphism of Poisson algebras $\psi:(A,\brac_A)\to(B,\brac_B)$ is a homomorphism of associative and Lie algebras.
\end{definition}

The Leibniz identity implies that a Poisson bracket is determined by where it sends the generators of $A$.

\begin{example}
\label{ex:SympVS}  
Let $A=\KK[x_1,y_1,\dots,x_n,y_n]$ be a polynomial algebra in $(2n)$-variables.  Define a Poisson bracket by $\{x_i,y_j\}=\delta_{ij}$. 
\end{example}

\begin{example}
\label{ex:LP}
Let $(\g,[\cdot,\cdot])$ be a Lie algebra over $\KK$.  Then the symmetric algebra $\Sym(\g)$ is a Poisson algebra, where $\brac$ is defined on generators $v_i,v_j\in\g$ by $\{v_i,v_j\}=[v_i,v_j]$ and extended to $\Sym(\g)$ via the Leibniz identity.  
\end{example}

\begin{definition}  An element $f$ of a Poisson algebra $(A,\brac)$ is called a {\it Casimir} element if $\{f,g\}=0$ for all $g\in A$.  The Casimir elements form a subalgebra of $A$.
\end{definition}

\begin{remark} The Casimir elements of the polynomial algebra $\KK[x_1,y_1,\dots,x_n,y_n]$ in Example \ref{ex:SympVS} are the constants, while the subalgebra formed by the Casimir elements of $\Sym(\g)$ in Example \ref{ex:LP} is isomorphic to the center of the universal enveloping algebra $Z(\mathcal{U(\g)})$.
\end{remark}

Poisson algebras behave well with respect to many common ring operations including localization, quotients, and tensor products:

\begin{proposition}
\label{prop:MultSub}
 Let $S\subset A$ be a multiplicative subset of a Poisson algebra $(A,\brac)$.  The localization $S^{-1}A$ is naturally a Poisson algebra with the following bracket:  let $s^{-1}f,t^{-1}g\in S^{-1}A$,
$$ \{s^{-1}f,t^{-1}g\}:=(s^2t^2)^{-1}\left[ \{s,t\}fg\right]-(t^{2}s)^{-1}\left[\{f,t\}g\right]-(s^{2}t)^{-1}\left[\{s,g\}f\right]+(st)^{-1}\left[\{f,g\}\right]$$
Moreover, the natural homomorphism $A\to S^{-1}A$ is a homomorphism of Poisson algebras.
\end{proposition}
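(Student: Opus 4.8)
The plan is to avoid checking the displayed formula directly against the Poisson axioms --- which, especially for the Jacobi identity, would be a long computation --- and instead to construct the bracket on $S^{-1}A$ abstractly, using the basic fact that a $\KK$-derivation of $A$ (valued in an $A$-module) extends uniquely to a $\KK$-derivation of the localization via the quotient rule $D(t^{-1}g)=t^{-1}Dg-t^{-2}g\,Dt$. I will also repeatedly use the elementary lemma that a $\KK$-derivation $S^{-1}A\to S^{-1}A$ vanishing on the image of $A$ vanishes identically: from $0=D(1)=D(t\,t^{-1})=t\,D(t^{-1})$ we get $D(t^{-1})=0$, hence $D(t^{-1}g)=0$. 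Together these give \emph{uniqueness} almost for free: if $\brac'$ is any Poisson bracket on $S^{-1}A$ extending $\brac$, then for each $h\in A$ the operator $\{h,-\}'$ is a derivation of $S^{-1}A$ extending $\{h,-\}$ on $A$, so it is forced; then for each $q\in S^{-1}A$ the operator $\{-,q\}'$ is a derivation of $S^{-1}A$ whose restriction to $A$ is forced by the previous step together with antisymmetry, so $\{-,q\}'$ is forced as well.

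For \emph{existence} I will run this constructively. For $h\in A$ let $D_h\colon S^{-1}A\to S^{-1}A$ be the unique derivation extending $\{h,-\}$. For fixed $p\in S^{-1}A$ the map $h\mapsto D_h(p)$ is $\KK$-linear, and it is a derivation $A\to S^{-1}A$ (viewing $S^{-1}A$ as an $A$-module via $A\to S^{-1}A$): the Leibniz identity $D_{h_1h_2}(p)=h_1D_{h_2}(p)+h_2D_{h_1}(p)$ holds because, as functions of $p$, both sides are derivations of $S^{-1}A$ --- a combination $a_1D_1+a_2D_2$ of derivations $D_i\colon S^{-1}A\to S^{-1}A$ with $a_i\in S^{-1}A$ is again a derivation --- and they agree on the image of $A$ by the Leibniz identity in $A$, hence everywhere. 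So $h\mapsto D_h(p)$ extends uniquely to a derivation of $S^{-1}A$; defining $\{q,p\}$ to be its value at $q$ produces a bilinear bracket on $S^{-1}A$. By construction $\{-,p\}$ is a derivation for each $p$; antisymmetry follows by comparing the two derivations $\{-,q\}$ and $-\{q,-\}$, which agree on the image of $A$; and Leibniz in the second slot then follows from antisymmetry. Unwinding the two extensions on $s^{-1}f$ and $t^{-1}g$ reproduces exactly the formula in the statement, which in particular shows that formula is well defined, independently of the chosen numerators and denominators.

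The remaining point, and the only real obstacle, is the Jacobi identity, which I will establish without expanding the formula. Fix $q,r\in S^{-1}A$ and regard the Jacobiator $J(p)=\{p,\{q,r\}\}+\{q,\{r,p\}\}+\{r,\{p,q\}\}$ as a function of $p$: using antisymmetry, $p\mapsto\{q,\{r,p\}\}+\{r,\{p,q\}\}$ equals the commutator $[\{q,-\},\{r,-\}]$ applied to $p$ and is therefore a derivation, and $p\mapsto\{p,\{q,r\}\}$ is a derivation, so $J$ is a derivation of $S^{-1}A$. By the cyclic symmetry of the Jacobiator the same reasoning shows $J(a,q,r)$ is a derivation in $q$ for fixed $a$, and $J(a,b,r)$ is a derivation in $r$ for fixed $a,b$. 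Invoking the vanishing lemma three times then reduces the Jacobi identity on $S^{-1}A$ to the Jacobi identity for $\brac$ on $A$, which holds by hypothesis. Finally, $A\to S^{-1}A$ is a map of associative algebras by the universal property of localization, and it preserves brackets because setting $s=t=1$ in the displayed formula, together with $\{1,-\}=0$ (from $\{1,g\}=\{1\cdot 1,g\}=2\{1,g\}$), gives $\{1^{-1}f,1^{-1}g\}=\{f,g\}$.
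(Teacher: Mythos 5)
The paper states this proposition without proof (it belongs to the background material drawn from Kaledin and Polishchuk), so there is no in-text argument to compare against; what you supply is the standard ``extend derivations to the localization'' proof, and it is the right way to avoid a brute-force verification of the Jacobi identity for the displayed formula. Your two tools --- unique extension of a derivation along $A\to S^{-1}A$ via the quotient rule, and the observation that a derivation of $S^{-1}A$ vanishing on the image of $A$ vanishes identically --- are exactly what is needed; the Jacobi step (the Jacobiator is a derivation in each slot once antisymmetry and both Leibniz identities are in hand, so three applications of the vanishing lemma reduce it to the Jacobi identity in $A$) is correct, as are the unwinding that recovers the displayed formula (hence its well-definedness) and the check that $A\to S^{-1}A$ is a Poisson morphism.

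The one defect is a circularity in the order of your middle steps: to prove antisymmetry you compare ``the two derivations $\{-,q\}$ and $-\{q,-\}$,'' but $-\{q,-\}$ being a derivation is precisely Leibniz in the second slot, which you then claim to deduce from antisymmetry. As written, each of these two facts is proved from the other. The repair stays entirely inside your own toolbox: first prove second-slot Leibniz directly, by fixing $p_1,p_2$ and comparing, as functions of $q$, the derivations $q\mapsto\{q,p_1p_2\}$ and $q\mapsto p_1\{q,p_2\}+p_2\{q,p_1\}$, which agree on the image of $A$ because each $D_h$ is a derivation of $S^{-1}A$, hence agree everywhere; then get antisymmetry in two stages, first for one argument in the image of $A$ by comparing the derivations $p\mapsto\{p,h\}$ and $p\mapsto-D_h(p)$, and then in general by comparing $p\mapsto\{p,q\}$ with $p\mapsto-\{q,p\}$, the latter now known to be a derivation. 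With the steps reordered this way the argument is complete and correct.
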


Analogous to the case of Lie algebras, in order to take quotients in the category of Poisson algebras we need the notion of a Poisson ideal:

\begin{definition} An ideal $I$ of a Poisson algebra $A$ is called a {\it Poisson ideal} if it is also a Lie-ideal: $\{f,I\}\in I$ for all $f\in A$.
\end{definition}

\begin{proposition} Given a Poisson ideal $I$ of a Poisson algebra $(A,\brac)$, the quotient $A/I$ is naturally a Poisson algebra with Poisson bracket $\{f+I,g+I\}=\{f,g\}+I$.
\end{proposition}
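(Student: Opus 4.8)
The plan is to verify that the formula $\{f+I,g+I\}:=\{f,g\}+I$ defines a well-defined operation on the quotient, and then to observe that every defining axiom of a Poisson algebra passes to $A/I$ automatically because the projection $\pi\colon A\to A/I$ is a surjective homomorphism of commutative associative algebras. So the content of the proof is concentrated entirely in the well-definedness step, which is the only place the Poisson-ideal hypothesis enters.

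First I would check well-definedness. Suppose $f+I=f'+I$ and $g+I=g'+I$, i.e.\ $f-f'\in I$ and $g-g'\in I$. Using bilinearity of $\brac$,
$$\{f,g\}-\{f',g'\}=\{f-f',g\}+\{f',g-g'\}.$$
Because $I$ is a Lie-ideal we have $\{A,I\}\subseteq I$, and by antisymmetry of $\brac$ also $\{I,A\}\subseteq I$; hence the first term lies in $I$ and the second lies in $I$, so $\{f,g\}-\{f',g'\}\in I$. Thus the bracket on cosets does not depend on the chosen representatives.

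Next I would transport the axioms along $\pi$. By construction $\pi(\{f,g\})=\{\pi(f),\pi(g)\}$, and $\pi$ is already a surjective homomorphism of commutative associative algebras. Bilinearity and antisymmetry of $\brac$ on $A/I$ follow by applying $\pi$ to the corresponding identities in $A$; the Jacobi identity and the Leibniz identity are universally quantified equational identities in the two operations, so given arbitrary elements of $A/I$ I lift them along $\pi$ to $A$, invoke the identity there, and push forward, with surjectivity of $\pi$ guaranteeing that this exhausts $A/I$. The statement that $\pi$ is then a homomorphism of Poisson algebras holds essentially by fiat. The only genuine obstacle, such as it is, is the displayed identity in the well-definedness step; the rest is the standard principle that equational structure descends along surjections.
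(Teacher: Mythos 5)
Your proof is correct: the well-definedness computation using bilinearity together with the Lie-ideal property (and antisymmetry for the other side) is exactly the point, and the descent of the equational axioms along the surjection $A\to A/I$ is routine. The paper states this proposition without proof, treating it as standard, and your argument is precisely the standard one it implicitly relies on.
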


An important special subclass of the Poisson ideals are the Poisson prime ideals:

\begin{definition}
\label{def:PoissonPrime}
A Poisson ideal $I$ is a called a {\it Poisson prime ideal} if it is both a Poisson ideal and a prime ideal.
\end{definition}

\begin{proposition}
\label{prop:PoissonTProd}
 Given two Poisson algebras $(A,\brac_A)$ and $(B,\brac_B)$, one can define a Poisson algebra on the tensor product $A\otimes B$ as follows:
$$\{f\otimes g,f'\otimes g'\}_{A\otimes B}:=ff'\otimes\{g,g'\}_B+\{f,f'\}_A\otimes gg'$$
\end{proposition}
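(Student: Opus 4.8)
The plan is to verify directly that the proposed operation makes $C:=A\otimes_\KK B$ into a Poisson algebra, i.e.\ that it is a well-defined $\KK$-bilinear, antisymmetric biderivation satisfying the Jacobi identity; the commutative associative structure on $C$ is the usual tensor-product one, so no work is needed there. For well-definedness, consider the map $A\times B\times A\times B\to C$ sending $(f,g,f',g')$ to $ff'\otimes\{g,g'\}_B+\{f,f'\}_A\otimes gg'$. Since multiplication in $A$ and $B$ and the brackets $\brac_A$, $\brac_B$ are each $\KK$-bilinear, this map is $\KK$-linear in every one of its four slots, so by the universal property of the tensor product it factors through $A\otimes B\otimes A\otimes B\cong C\otimes C$, producing a well-defined $\KK$-bilinear $\brac_C\colon C\times C\to C$ that agrees with the stated formula on elementary tensors. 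From here on every identity need only be checked on elementary tensors and then extended by bilinearity.

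Antisymmetry is immediate: interchanging $(f,g)$ with $(f',g')$ and using that $A$ and $B$ are commutative while $\brac_A$, $\brac_B$ are antisymmetric replaces the right-hand side by its negative. For the Leibniz identity, write $u=f_1\otimes g_1$, $v=f_2\otimes g_2$, $w=f_3\otimes g_3$, so $uv=f_1f_2\otimes g_1g_2$; expanding $\{uv,w\}_C$ and applying the Leibniz rule of $\brac_A$ to $\{f_1f_2,f_3\}_A$ and that of $\brac_B$ to $\{g_1g_2,g_3\}_B$ yields four terms which, after reordering factors by commutativity, regroup exactly as $u\{v,w\}_C+v\{u,w\}_C$. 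This is a short direct computation.

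The Jacobi identity is the only step with real content, and I would argue it structurally rather than by brute force. Note that $\iota_A\colon A\to C$, $f\mapsto f\otimes 1$, and $\iota_B\colon B\to C$, $g\mapsto 1\otimes g$, are injective algebra homomorphisms whose images together generate $C$ as a $\KK$-algebra, and a direct check on the formula gives $\{\iota_A(f),\iota_A(f')\}_C=\iota_A(\{f,f'\}_A)$, $\{\iota_B(g),\iota_B(g')\}_C=\iota_B(\{g,g'\}_B)$, and $\{\iota_A(f),\iota_B(g)\}_C=0$. Now invoke the standard fact that for an antisymmetric biderivation on a commutative algebra the Jacobiator $(u,v,w)\mapsto\{u,\{v,w\}_C\}_C+\{v,\{w,u\}_C\}_C+\{w,\{u,v\}_C\}_C$ is $\KK$-linear and a derivation in each argument, hence vanishes on all of $C$ once it vanishes on a generating set; on triples drawn from $\iota_A(A)\cup\iota_B(B)$ it reduces, via the three bracket relations above, to the Jacobi identity of $\brac_A$, the Jacobi identity of $\brac_B$, or an expression whose terms all vanish because $\iota_A$ and $\iota_B$ bracket-commute. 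Alternatively one can expand the Jacobiator on three elementary tensors into twelve terms and watch them collapse into the Jacobiators of $A$ and $B$ plus cancelling cross terms; this works but is less illuminating, so I would present the structural version. I expect the bookkeeping of the Jacobi step — in the structural approach, making sure the triderivation reduction genuinely covers the mixed triples — to be the main obstacle, everything else being formal.

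As an addendum one records that $1_A\otimes 1_B$ is the unit of $C$ and a Casimir, and that $\iota_A$ and $\iota_B$ are homomorphisms of Poisson algebras, exhibiting $A$ and $B$ as commuting Poisson subalgebras of $A\otimes B$ — the Poisson analogue of the corresponding fact for ordinary commutative algebras.
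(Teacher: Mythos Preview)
Your proof is correct. The paper actually states this proposition without proof, so there is nothing to compare against; your verification supplies exactly the routine check the paper omits. The structural argument for the Jacobi identity---reducing to the generating set $\iota_A(A)\cup\iota_B(B)$ via the fact that the Jacobiator of an antisymmetric biderivation is a derivation in each slot---is the cleanest way to handle it, and your case analysis (all-$A$, all-$B$, mixed) is complete since in the mixed cases every term vanishes by the bracket-commuting of the two subalgebras.
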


A Poisson module over a Poisson algebra $A$ is both an $A$-module and a Lie algebra module, 

\begin{definition}
\label{def:PoissonModule}
A {\it Poisson Module} is an $A$-module $M$ over a Poisson algebra $A$, and a $\KK$-linear bracket $\brac_M:A\times M\to M$ that
\begin{enumerate}
\item is a derivation of the commutative product: $\{fg,m\}_M=f\{g,m\}_M+g\{f,m\}_M $
\item satisfies the Leibniz-type identity: $ \{f,gm\}_M= \{f,g\}m+g\{f,m\}_M$
\end{enumerate}
for all $m\in M$ and $f,g\in A$.
\end{definition}

\begin{example} It is straightforward to check that a Poisson ideal of a Poisson algebra is a Poisson module.  
\end{example}

The algebraic development above also carries over to the graded case:

\begin{definition}
\label{def:GrPoissonAlg} A graded associative, commutative $\KK$-algebra $A=\bigoplus A_n$  is a {\it graded Poisson algebra of degree $l$} if the Poisson bracket satisfies 
$$ \brac: A_i\otimes A_j\to A_{i+j+l}$$
\end{definition}

\begin{example} With the usual grading on the associative algebra $\KK[x_1,y_1,\dots,x_n,y_n]$ considered in Example \ref{ex:SympVS} the Poisson
bracket $\{x_i,y_j\}=\delta_{ij}$ has degree (-2).   On the other hand, with the usual grading on $\Sym(\g)=\bigoplus_{n}\Sym^n(\g)$ the Poisson bracket on $\Sym(\g)$ considered in Example \ref{ex:LP} has degree (-1).
\end{example}

\section{Poisson Structures on Schemes}  

The algebraic preliminaries above cover the basic requirements needed to develop the subject of Poisson algebraic geometry.  

\begin{definition}  A scheme $(X,\OO_X)$ is a {\it Poisson scheme} if its sheaf of functions $\OO_X$ is a sheaf of Poisson algebras.  That is, for each open set $U\subset X$, the algebra of functions $\OO_X(U)$ is a Poisson algebra.  Moreover, these brackets are required to be compatible with the restriction maps.  A morphism of Poisson schemes is a morphism of schemes with the additional requirement that the pullback is a homomorphism of Poisson algebras. 
\end{definition}

\begin{remark}
\label{rmk:Anchor} Note that any such Poisson structure is given by an $\OO_X$-linear morphism of sheaves, called the {\it anchor map} $\sharp:\Omega_X\to\Der(\OO_X,\OO_X) = \TT_X\subset\End(\OO_X)$ satisfying $\sharp(df)(g)=\{f,g\}$.
\end{remark}

\begin{remark} In fact, one may instead develop Poisson structures on other spaces in a similar manner.  The most common such Poisson structures are listed below:
\begin{itemize}
\item A smooth manifold $(M,C^{\infty}(M))$ is a {\it Poisson manifold} if  $C^{\infty}(M)$ is a Poisson algebra.
\item A complex manifold $M$ is a {\it complex-analytic Poisson manifold} if the sheaf of complex-analytic functions is a sheaf of Poisson algebras.
\item A formal scheme $(\hat{X},\hat{\OO}_X)$ is a {\it formal Poisson scheme} if $\hat{\OO}_X$ is a sheaf of Poisson algebras.
\item An algebraic variety $X$ is a {\it Poisson variety} if the sheaf of regular functions is a sheaf of Poisson algebras.
\end{itemize}
\end{remark}

The only Poisson schemes considered in this thesis are affine Poisson schemes.  Given a Poisson algebra $A$, Proposition \ref{prop:MultSub} implies the affine scheme $\Spec(A)$ is in fact a Poisson scheme.  

\begin{example} $2n$-dimensional affine space $\bbA^{2n}_k:=\Spec(\KK[x_1,y_1,\dots,x_n,y_n])$ is a Poisson scheme with the sheaf of Poisson algebras defined in Example \ref{ex:LP}.  This is {\it the standard symplectic} structure on $\bbA^{2n}_k$.  
\end{example}

\begin{example} Given a Lie algebra $\g$ over $\KK$, the Poisson bracket in Example \ref{ex:LP} makes the dual space $\g^*$ a Poisson scheme.
\end{example}

The remainder of the material from the previous algebraic section easily translates to their corresponding geometric properties:

\begin{definition}
Let $X$ be a Poisson scheme.  A Poisson scheme $Y$ is a {\it Poisson subscheme} of X if 
\begin{enumerate}
\item $Y$ is a subscheme of $X$, and
\item the inclusion map $Y\inj X$ is a Poisson map.
\end{enumerate}
\end{definition}

\begin{proposition} Any open subscheme $U\subset X$ of a Poisson scheme is a Poisson subscheme of $X$.
\end{proposition}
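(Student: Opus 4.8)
The plan is to equip $U$ with the Poisson structure obtained by simply restricting the sheaf of Poisson algebras $\OO_X$, and then to observe that the open immersion $U \inj X$ is automatically a Poisson morphism because its pullback on sections is nothing but a restriction map.

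First I would record that for an open subscheme the structure sheaf satisfies $\OO_U = \OO_X|_U$; concretely, for every open $V \subseteq U$ one has $\OO_U(V) = \OO_X(V)$, with the same restriction maps. Since $\OO_X$ is by hypothesis a sheaf of Poisson algebras, each $\OO_X(V)$ with $V \subseteq U$ is already a Poisson algebra and the restriction maps between such $V$'s are already Poisson homomorphisms. Hence $\OO_U$ is a sheaf of Poisson algebras and $U$ is a Poisson scheme. In the affine picture, if $X = \Spec(A)$ and $V = D(f) \subseteq U$, this is precisely the Poisson algebra $A_f$ furnished by Proposition \ref{prop:MultSub}, so no new choices are being made.

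Second I would verify that the inclusion $\iota\colon U \inj X$ is a morphism of Poisson schemes. For an open immersion the pullback on sections over an open $W \subseteq X$ is the restriction $\OO_X(W) \to \OO_X(W \cap U) = \OO_U(\iota^{-1}W)$. This is a homomorphism of associative algebras by the definition of a morphism of schemes, and it is a homomorphism of Lie algebras --- hence of Poisson algebras --- precisely because the brackets on $\OO_X$ are required to be compatible with the restriction maps. Therefore $\iota$ is a Poisson map, and $U$ is a Poisson subscheme of $X$.

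The only point that requires any care is the compatibility, in the affine case, between the sheaf-theoretic restriction and the explicit localization formula of Proposition \ref{prop:MultSub}: when $X = \Spec(A)$ one wants the Poisson structure on $\OO_X(D(f)) = A_f$ inherited from the sheaf to coincide with the one given by that proposition. But this is exactly how the sheaf of Poisson algebras on an affine Poisson scheme was defined in the first place, so there is in fact nothing to prove here. The statement is essentially a bookkeeping consequence of the definitions, and I do not expect any genuine obstacle.
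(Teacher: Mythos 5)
Your proof is correct, and it follows the only natural route — the one the paper implicitly takes by stating the proposition without proof: $\OO_U=\OO_X|_U$ is automatically a sheaf of Poisson algebras, and the pullback of the open immersion is a restriction map, which is a Poisson homomorphism by the compatibility built into the definition of a Poisson scheme. Your remark reconciling the affine localization of Proposition \ref{prop:MultSub} with the sheaf-theoretic restriction is accurate and appropriately brief.
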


\begin{proposition} Suppose that $Y\subset X$ is a closed subscheme of a Poisson scheme defined locally by a sheaf of ideals $\II$.  Then $Y$ is a Poisson sub-scheme if and only if $\II$ is a sheaf of Poisson ideals.
\end{proposition}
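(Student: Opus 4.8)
The plan is to reduce the assertion to the purely algebraic fact recorded above---that the quotient of a Poisson algebra by a Poisson ideal is again a Poisson algebra---and then to check that the resulting local data glue to a sheaf. All three conditions in play (being a closed subscheme, being a Poisson subscheme, and $\II$ being a sheaf of Poisson ideals) can be verified on an affine open cover, so it suffices to treat the affine case: $X=\Spec(A)$ with $A$ a Poisson algebra, $\II$ corresponding to an ideal $I\subset A$, and $Y=\Spec(A/I)$ the closed subscheme it cuts out, so that $\OO_Y=\OO_X/\II$ and the closed immersion $Y\inj X$ corresponds to the surjection $\pi\colon A\to A/I$.

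For the direction ($\Leftarrow$), suppose $I$ is a Poisson ideal. The earlier proposition on quotients furnishes a Poisson bracket on $A/I$ via $\{f+I,g+I\}=\{f,g\}+I$, and for this bracket $\pi$ is tautologically a homomorphism of Poisson algebras; hence $\Spec(A/I)$ is a Poisson scheme and $Y\inj X$ is a Poisson morphism. To globalize, one notes that this construction is compatible with restriction to distinguished affine opens $\Spec(A_s)\subset\Spec(A)$: the localized bracket of Proposition \ref{prop:MultSub} on $A_s$ descends to $(A/I)_s=A_s/I_s$ by the same formula, and the descended brackets agree on overlaps because the brackets on the $\OO_X$ side already do. Thus the locally defined quotient brackets patch to a sheaf of Poisson algebras on $\OO_Y$, exhibiting $Y$ as a Poisson subscheme of $X$.

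For the direction ($\Rightarrow$), suppose $Y$ is a Poisson subscheme, so that, after passing to an affine open, $\pi\colon A\to A/I$ is a homomorphism of Poisson algebras. Take any $f\in A$ and $g\in I$. Since $\pi$ intertwines the brackets and $\pi(g)=0$, we get $\{f,g\}_A+I=\{\pi(f),\pi(g)\}_{A/I}=\{\pi(f),0\}_{A/I}=0$, so $\{f,g\}_A\in I$. Hence $I$ is a Lie ideal as well as an ideal, i.e.\ a Poisson ideal; as this holds on each member of an affine cover, $\II$ is a sheaf of Poisson ideals. (The same computation also shows that the Poisson structure on $Y$ witnessing that it is a Poisson subscheme is unique, since $\pi$ is surjective.)

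I do not expect a genuine obstacle here; the only point requiring care is the sheaf-theoretic bookkeeping in the ($\Leftarrow$) direction---verifying that the affine-locally defined quotient brackets are independent of the chosen cover and compatible with the restriction maps---which is routine given that the brackets on $\OO_X$ enjoy these properties by hypothesis and that localization commutes with passing to the quotient.
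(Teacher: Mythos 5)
Your proof is correct and follows exactly the argument the paper intends: the paper states this proposition without proof, treating it as an immediate consequence of the fact that a quotient by a Poisson ideal is a Poisson algebra (for $\Leftarrow$) together with the surjectivity of the pullback of the closed immersion (for $\Rightarrow$), which is precisely your affine-local reduction. Nothing further is needed; the gluing remarks are routine, as you say.
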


Combining this proposition with the fact that irreducible subschemes correspond to quasi-coherent sheaves of prime ideals (see \cite{Hartshorne77}) gives:

\begin{proposition}
\label{prop:IrredPoisson}
Let $X$ be a Poisson scheme.  The irreducible Poisson subschemes of $X$ correspond precisely to $V(\II)$, where $\II\subseteq\OO_X$ is a quasi-coherent sheaf of Poisson prime ideals.
\end{proposition}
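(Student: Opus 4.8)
The plan is to deduce the statement by simply combining the two immediately preceding results: the characterization of a closed subscheme $Y=V(\II)$ as a Poisson subscheme precisely when $\II$ is a sheaf of Poisson ideals, together with the classical dictionary (recalled from \cite{Hartshorne77}) between irreducible closed subschemes and quasi-coherent sheaves of prime ideals. The only content beyond citing these is to observe that the two conditions on the defining ideal sheaf — ``Poisson'' and ``prime'' — are independent and may be imposed at the same time, which reduces, for the affine Poisson schemes that are the only ones used in this thesis, to the bare conjunction of the inputs.

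For the ``if'' direction, suppose $\II\subseteq\OO_X$ is a quasi-coherent sheaf of Poisson prime ideals. By Definition \ref{def:PoissonPrime} each such ideal is in particular a Poisson ideal, so by the preceding proposition $V(\II)$ is a Poisson subscheme of $X$; and each such ideal is in particular prime, so by the cited correspondence $V(\II)$ is an irreducible closed subscheme. Hence $V(\II)$ is an irreducible Poisson subscheme. Conversely, let $Y\subseteq X$ be an irreducible Poisson subscheme; after passing to its closure (automatic in the affine case, and the setting in which the Hartshorne correspondence is stated) we may write $Y=V(\II)$ for a unique quasi-coherent sheaf of ideals $\II$. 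Since $Y\inj X$ is a Poisson map, the proposition on closed Poisson subschemes forces $\II$ to be a sheaf of Poisson ideals; since $Y$ is irreducible, the Hartshorne correspondence forces $\II$ to be a sheaf of prime ideals; combining, $\II$ is a quasi-coherent sheaf of Poisson prime ideals, and the uniqueness of $\II$ given $Y$ is inherited from the non-Poisson statement.

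The only genuine subtlety — hence the ``main obstacle,'' such as it is — is bookkeeping about where ``prime'' and ``Poisson ideal'' actually live, since primeness is not a stalk condition on $\OO_X$ in the naive sense (stalks of a prime ideal sheaf need not be prime in the local rings). I would therefore phrase the argument on an affine cover $\{\Spec A_\alpha\}_\alpha$, on which an irreducible closed subscheme restricts to $\Spec(A_\alpha/\mathfrak{p}_\alpha)$ for a prime $\mathfrak{p}_\alpha\subseteq A_\alpha$, and note that the Poisson-ideal condition is compatible with the restriction maps by the very construction of the bracket on localizations in Proposition \ref{prop:MultSub}; this reduces the claim to the single affine statement that the irreducible Poisson subschemes of $\Spec A$ are exactly the $V(\mathfrak{p})$ with $\mathfrak{p}$ a Poisson prime ideal of $A$, which is immediate from the two inputs.
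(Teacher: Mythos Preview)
Your proposal is correct and matches the paper's approach exactly: the paper does not give a separate proof but simply states that the proposition follows by ``combining this proposition with the fact that irreducible subschemes correspond to quasi-coherent sheaves of prime ideals (see \cite{Hartshorne77}),'' which is precisely the two-input argument you spell out. Your additional remarks about working on an affine cover and the bookkeeping of ``prime'' versus ``Poisson'' are sound elaborations of what the paper leaves implicit.
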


Lastly, in the language of affine schemes, Proposition \ref{prop:PoissonTProd} implies that the product of affine Poisson schemes is again an affine Poisson scheme.  This result globalizes to give the following proposition:

\begin{proposition} If $X,Y$ are Poisson schemes, then $X\times_\KK Y$ is a Poisson scheme.
\end{proposition}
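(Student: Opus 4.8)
The plan is to reduce to the affine case, already settled by Proposition \ref{prop:PoissonTProd}, and then glue. Fix affine open covers $\{U_i=\Spec A_i\}$ of $X$ and $\{V_j=\Spec B_j\}$ of $Y$; since $X$ and $Y$ are Poisson schemes, each $A_i$ and each $B_j$ carries the Poisson bracket induced on that open subscheme, and any two such brackets agree on overlaps. The opens $U_i\times_\KK V_j=\Spec(A_i\otimes_\KK B_j)$ form an affine open cover of $X\times_\KK Y$, and Proposition \ref{prop:PoissonTProd} puts a Poisson bracket on each ring $A_i\otimes_\KK B_j$; as observed just after Proposition \ref{prop:MultSub}, this turns each $U_i\times_\KK V_j$ into an affine Poisson scheme. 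If these structures agree on the overlaps $(U_i\times_\KK V_j)\cap(U_{i'}\times_\KK V_{j'})$, then the local sheaves of Poisson algebras patch to a single sheaf of Poisson algebras on $X\times_\KK Y$ whose underlying sheaf of rings is $\OO_{X\times_\KK Y}$, which is precisely what must be shown.

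Hence the whole content is this agreement on overlaps, and it is enough to verify it on an open cover of $(U_i\times_\KK V_j)\cap(U_{i'}\times_\KK V_{j'})=(U_i\cap U_{i'})\times_\KK(V_j\cap V_{j'})$. Cover $U_i\cap U_{i'}$ by opens that are distinguished in both $U_i$ and $U_{i'}$, and $V_j\cap V_{j'}$ by opens distinguished in both $V_j$ and $V_{j'}$; the products of these, of the form $\Spec\!\big((A_i)_f\otimes_\KK(B_j)_g\big)=\Spec\!\big((A_{i'})_{f'}\otimes_\KK(B_{j'})_{g'}\big)$, then cover the overlap. On such a piece the bracket inherited from the chart $U_i\times_\KK V_j$ is the tensor-product bracket of $A_i\otimes_\KK B_j$ localized at $f\otimes g$, and the one inherited from $U_{i'}\times_\KK V_{j'}$ is the tensor-product bracket of $A_{i'}\otimes_\KK B_{j'}$ localized at $f'\otimes g'$; since $(A_i)_f\isom(A_{i'})_{f'}$ and $(B_j)_g\isom(B_{j'})_{g'}$ as Poisson algebras, both reduce to the tensor-product bracket of the Poisson algebras $(A_i)_f$ and $(B_j)_g$, provided one knows the key naturality fact: for Poisson algebras $A$, $B$ with multiplicative sets $S\subset A$, $T\subset B$, localizing the tensor-product bracket on $A\otimes_\KK B$ (Proposition \ref{prop:PoissonTProd}) at the multiplicative set generated by $S$ and $T$ (Proposition \ref{prop:MultSub}) yields the tensor-product bracket of the Poisson algebras $S^{-1}A$ and $T^{-1}B$. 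This is a direct comparison of the formulas in Propositions \ref{prop:MultSub} and \ref{prop:PoissonTProd}: both brackets are derivations in each argument and agree on elements $s^{-1}f\otimes 1$ and $1\otimes t^{-1}g$, hence everywhere by the Leibniz identity. The same naturality makes the glued bracket compatible with the restriction maps, so the structure sheaf is genuinely a sheaf of Poisson algebras.

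I expect the proposition to have no genuinely hard step; the only non-bookkeeping ingredient is this last naturality statement — that the tensor-product construction of Proposition \ref{prop:PoissonTProd} commutes with localization — and it is itself routine. A coordinate-free alternative that sidesteps the chart bookkeeping is to use the anchor-map viewpoint of Remark \ref{rmk:Anchor}: as $X$ and $Y$ are of finite type over $\KK$, one has canonical decompositions $\Omega_{X\times_\KK Y}\isom p_X^*\Omega_X\oplus p_Y^*\Omega_Y$ and dually $\TT_{X\times_\KK Y}\isom p_X^*\TT_X\oplus p_Y^*\TT_Y$, and one sets $\sharp_{X\times_\KK Y}:=p_X^*\sharp_X\oplus p_Y^*\sharp_Y$. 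One then checks that this $\OO$-linear map from $\Omega_{X\times_\KK Y}$ to $\TT_{X\times_\KK Y}$ comes from a Poisson bracket via $\sharp(df)(g)=\{f,g\}$, i.e.\ that the induced bracket is skew and satisfies the Jacobi identity; unwound on products of functions this is the same computation as in the affine case and reproduces the formula of Proposition \ref{prop:PoissonTProd} on each chart.
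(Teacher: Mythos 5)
Your argument is correct, and it follows the same route the paper takes: the paper simply states that Proposition \ref{prop:PoissonTProd} ``globalizes,'' and your proof is exactly that globalization, with the one substantive detail --- that the tensor-product bracket commutes with localization, so the chart-by-chart structures agree on overlaps and are compatible with restriction --- spelled out correctly via the Leibniz/generators argument. The anchor-map alternative you sketch is a reasonable coordinate-free variant but is not needed; the gluing argument already settles the proposition.
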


\subsection{Symplectic structures}

Singular symplectic varieties will provide the primary examples in later chapters.  Unlike Poisson structures, which require no smoothness to define, symplectic structures are defined via differential 2-forms.  Thus the notion of a {\it singular} symplectic space requires a little care.

 Defining symplectic structures and their relationship to Poisson structures requires the notion of the rank of a Poisson bracket:

\begin{definition}
\label{def:Rank}
Let $(X,\brac)$ be a Poisson scheme.  The {\it rank} of $\brac$ at a point $x\in X$ is the rank of the anchor map $\sharp_{x}:\Omega_{X,x}\to\TT_{X,x}$.  If the rank of $\brac$ is maximal for all $x$, the Poisson bracket is called {\it non-degenerate}.
\end{definition}

\begin{remark} More concretely, the rank is computed as follows:  Pick an affine open set $x\in U$, and a local system of generators $f_1,\ldots,f_n\in\OO_X(U)$.  The rank of $\brac$ at $x$ is the rank of the matrix with entries $\{f_i,f_j\}$, for $1\leq i,j\leq n$.
\end{remark}

\begin{example} The Poisson bracket on $\KK[x_1,y_1,\ldots,x_n,y_n]$ given in Example \ref{ex:SympVS} is non-degenerate, as it has rank $n$ everywhere.
\end{example} 

For smooth spaces, the above definition provides the link between Poisson structures and the symplectic structures defined below:

\begin{definition}
\label{def:Symp}[Smooth symplectic structures]
On a smooth scheme $X$, a closed, non-degenerate 2-form $\omega\in\Omega^2_{X}$ is a {\it smooth symplectic structure} on $X$.
\end{definition}

\begin{remark} We append the adjective ``smooth'' to differentiate between the usual notion of a symplectic structure in manifold theory and the singular symplectic varieties considered in \cite{Kaledin06} and below.
\end{remark}

There is a one-to-one correspondence between non-degenerate smooth Poisson structures and smooth symplectic structures given by the bundle isomorphism $\sharp:\Omega_X\to\TT_X$.   Starting with a non-degenerate Poisson structure, define a 2-form $\omega_x(u,v)=<\sharp^{-1}(u),v>$ where $<,>:\Omega_X\times\TT_X\to k$ is the usual non-degenerate pairing.

\begin{example} The Poisson structure of Example \ref{ex:SympVS} on $\bbA^{2n}_k$ corresponds to the constant 2-form  $\omega=dx_1\wedge dy_1+\ldots+dx_n\wedge dy_n$
\end{example}

Following \cite{Kaledin06}, this correspondence between non-degenerate Poisson spaces and symplectic spaces may be used to extend the definition of a symplectic structure to a possibly singular algebraic variety.   Let $(X,\brac)$ be an integral, normal, Poisson scheme of finite type over $\KK$ (that is, $X$ is a normal algebraic variety).  Moreover, denote the maximal, open smooth subset of $X$ by $X_{sm}$.

\begin{definition} A normal Poisson variety $(X,\brac)$ is called {\it symplectic} if the induced Poisson bracket on $X_{sm}$ is non-degenerate.
\end{definition}

\begin{remark} The requirement that the variety be normal is motivated by the fact (see \cite{Hartshorne77}) that normal varieties are the most general spaces for which an algebraic version of Hartog's theorem holds.  This condition ensures that $\brac$ is determined by its restriction to the smooth locus $X_{sm}$ .
\end{remark}

By \cite{Kaledin06}, Definition 2.1, such a bracket is equivalent to a non-degenerate, closed 2-form on $X_{sm}$ that extends to  a possibly degenerate 2-form on some resolution of singularities of $X$.  If $X$ is smooth, this clearly coincides with the definition of a smooth symplectic space.

\begin{example} Let $X=\bbA^{2n}_k$ with the standard Poisson structure, and let $G\subseteq{\rm Sp}_{2n}(\KK)$ be a finite algebraic group acting linearly on $X$ by Poisson isomorphisms. In the Chapter \ref{sec:PoissEmbProb}, it will be shown that the quotient $X/G=\Spec(\OO_X(X)^G)$ is a singular symplectic variety.
\end{example}

\subsection{Decompositions of Poisson structures}

In the case of real manifolds, Poisson structures are built from non-degenerate, symplectic pieces.  Weinstein proved \cite{Weinstein83} that any Poisson manifold is foliated by symplectic leaves: 

\begin{theorem}[Weinstein's splitting theorem \cite{Weinstein83}]
\label{thm:WeinsteinDecomp} Let $(M,\brac)$ be a smooth Poisson manifold, and $p\in M$.  Then there is a system of coordinates $(x_1,y_1,\ldots,x_r,y_r,z_1,\ldots,z_d)$ centered at $p$ such that $\brac=\brac_S+\brac_N$, where
$$\{f,g\}_S=\sum_{i=0}^{r}(\pderiv{f}{x_i}\pderiv{g}{y_i}-\pderiv{g}{x_i}\pderiv{f}{x_j})$$
$$\{f,g\}_N=\sum_{0\leq i,j\leq d} \phi(x,y,z)\pderiv{f}{z_i}\pderiv{g}{z_j}$$
where $f,g,\phi\in C^{\infty}(M)$ and $\phi(p)=0$.  In other words, $M\isom S\times N$ is locally a product of a symplectic manifold $S$ and a transverse Poisson manifold $N$ with a bracket $\brac_N$ that vanishes at the origin.
\end{theorem}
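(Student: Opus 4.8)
The plan is to induct on the rank $2r$ of the anchor map $\sharp_p\colon\Omega_{M,p}\to\TT_{M,p}$ at $p$ --- equivalently, on the rank of the skew-symmetric matrix $(\{f_i,f_j\}(p))$ in any coordinate system centered at $p$. If $r=0$, the Poisson bivector vanishes at $p$, so in any centered chart the bracket already has the form $\brac_N$ with all coefficients vanishing at $p$ and no symplectic coordinates are needed; this is the base case.

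For the inductive step assume $r\geq 1$. Since the rank at $p$ is positive, there are local functions with nonvanishing bracket at $p$; relabel one of them $x_1$, so that the Hamiltonian vector field $X_{x_1}:=\{x_1,\cdot\}$ is nonzero at $p$. By the flow-box (straightening) theorem choose a function $y_1$ with $y_1(p)=0$ and $X_{x_1}(y_1)=\{x_1,y_1\}=1$ on a neighborhood of $p$. Then $dx_1(p)$ and $dy_1(p)$ are linearly independent, and the Hamiltonian vector fields $X_{x_1}$ and $X_{y_1}$ are linearly independent at $p$ and commute, since $[X_{x_1},X_{y_1}]=X_{\{x_1,y_1\}}=X_1=0$.

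Next I would simultaneously rectify this commuting pair. The locus $N':=\{x_1=y_1=0\}$ is a codimension-$2$ submanifold through $p$ transverse to both $X_{x_1}$ and $X_{y_1}$; choose coordinates $z_1,\dots,z_{n-2}$ on $N'$ and flow $N'$ out along the (commuting) local flows of $X_{x_1}$ and $X_{y_1}$ to obtain a chart centered at $p$. Using $X_{x_1}(x_1)=0$, $X_{x_1}(y_1)=1$, $X_{y_1}(x_1)=-1$, $X_{y_1}(y_1)=0$ one checks that the two flow parameters are exactly $y_1$ and $-x_1$, so that $(x_1,y_1,z_1,\dots,z_{n-2})$ is a valid coordinate system in which $X_{x_1}=\vect{y_1}$ and $X_{y_1}=-\vect{x_1}$. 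Reading off the brackets of coordinate functions gives $\{x_1,y_1\}=1$ and $\{x_1,z_j\}=\{y_1,z_j\}=0$, and applying the Jacobi identity to the triples $(x_1,z_i,z_j)$ and $(y_1,z_i,z_j)$ shows that each $\{z_i,z_j\}$ is annihilated by $\vect{x_1}$ and $\vect{y_1}$, hence depends on $z$ alone. Thus $\brac=\brac_S^{(1)}+\brac'$, where $\brac_S^{(1)}$ is the standard symplectic bracket in $(x_1,y_1)$ and $\brac'$ is a bivector in the $z$-variables with coefficients depending only on $z$; since the two summands involve disjoint sets of variables, the Jacobi identity for $\brac$ forces $\brac'$ to be an honest Poisson bracket on the $z$-space.

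Finally, the bivector of $\brac'$ at $p$ is the bivector of $\brac$ at $p$ with the summand $\vect{x_1}\wedge\vect{y_1}$ removed, so it has rank $2r-2$; the inductive hypothesis applied to $(z\text{-space},\brac')$ yields coordinates $(x_2,y_2,\dots,x_r,y_r,z_1,\dots,z_d)$ splitting $\brac'$ into a symplectic piece and a transverse piece vanishing at $p$, and adjoining $x_1,y_1$ completes the induction; the resulting product description $M\isom S\times N$ is then immediate. The step that I expect to require the most care is the simultaneous rectification: one must verify that the commuting flows of $X_{x_1}$ and $X_{y_1}$ genuinely produce a chart in which $x_1$ and $y_1$ themselves are, up to sign, two of the coordinate functions, so that the partition of variables underlying the splitting is clean. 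Everything after that is bookkeeping with the Leibniz and Jacobi identities.
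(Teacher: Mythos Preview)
The paper does not actually prove this theorem; it is stated with a citation to Weinstein's original paper \cite{Weinstein83} and used as background. So there is no ``paper's own proof'' to compare against.

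That said, your argument is the standard one and is correct. The induction on the rank, the use of the flow-box theorem to produce $y_1$ with $\{x_1,y_1\}=1$, the observation $[X_{x_1},X_{y_1}]=X_{\{x_1,y_1\}}=0$, and the simultaneous rectification of the two commuting Hamiltonian vector fields via flowing out from the transversal $N'=\{x_1=y_1=0\}$ are exactly the steps in Weinstein's proof (and in the treatment of \cite{Dufour05}, which the paper cites for this circle of ideas). Your identification of the flow parameters with $(-x_1,y_1)$ is the right computation, and the Jacobi-identity argument showing $\{z_i,z_j\}$ depends only on $z$ is clean. The only places where a reader might want one more sentence are the linear independence of $dx_1(p),dy_1(p)$ (which follows immediately from $\{x_1,y_1\}(p)=1$) and the fact that $X_{x_1}(p),X_{y_1}(p)$ together with $T_pN'$ span $T_pM$, so that the flow map really is a local diffeomorphism; both are routine.
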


If $M$ is a symplectic manifold to begin with, one recovers Darboux's theorem:

\begin{corollary}[Darboux's Theorem]
\label{thm:Darboux}
  Let $(M,\omega)$ be a symplectic manifold, and $p\in M$.  Then there exists a system of coordinates $(x_1,y_1,\ldots,x_r,y_r)$ centered at $p$ such that 
$$\omega=\sum_{i=0}^{r}dx_i\wedge dy_i$$
\end{corollary}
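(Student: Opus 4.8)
The plan is to deduce Darboux's theorem directly from Weinstein's splitting theorem (Theorem~\ref{thm:WeinsteinDecomp}) together with the dictionary between non-degenerate Poisson structures and symplectic structures recorded after Definition~\ref{def:Symp}; no Moser-type homotopy argument is needed once the splitting theorem is in hand. First I would pass from the symplectic form $\omega$ to its associated Poisson bracket $\brac$ via the bundle isomorphism $\sharp\colon\Omega_M\to\TT_M$. By construction this bracket is \emph{non-degenerate}: its anchor is an isomorphism at every point, so the rank of $\brac$ equals $\dim M$ everywhere.

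Next I would apply Theorem~\ref{thm:WeinsteinDecomp} to $(M,\brac)$ at $p$, obtaining coordinates $(x_1,y_1,\ldots,x_r,y_r,z_1,\ldots,z_d)$ centered at $p$ in which $\brac=\brac_S+\brac_N$, with $\brac_N$ of the stated transverse form and with each of its coefficient functions $\phi$ vanishing at $p$. In these coordinates the structure matrix $(\{f_i,f_j\})$ is block diagonal: the $(x,y)$-block is the constant standard symplectic block, of rank $2r$, while every entry of the $z$-block is one of the functions $\phi$ and hence vanishes at $p$. Therefore the rank of $\brac$ at $p$ is exactly $2r$. Non-degeneracy forces $2r=\dim M=2r+d$, so $d=0$: the transverse factor $N$ is a point and $\brac=\brac_S$ on the entire chart.

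Finally I would translate back across the dictionary. Under the inverse of the correspondence $\omega_x(u,v)=\langle\sharp^{-1}(u),v\rangle$, the bivector $\sum_i \bivect{x_i}{y_i}$ corresponds to the constant $2$-form $\sum_i dx_i\wedge dy_i$ (up to the standard sign convention, which one absorbs by a relabelling $x_i\leftrightarrow y_i$ if needed). Reading this off on the coordinate coframe is pure bookkeeping and yields the claimed normal form for $\omega$ in the coordinates $(x_1,y_1,\ldots,x_r,y_r)$.

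I expect the only genuinely delicate point to be the rank count in the second step: one must use that the coefficients of $\brac_N$ vanish at $p$ \emph{itself}, so that although $\brac_N$ need not vanish on a whole neighborhood, the rank of $\brac$ at the single point $p$ is already $2r$; since non-degeneracy is a pointwise condition and holds at $p$ in particular, this forces $d=0$ outright rather than merely constraining the structure on a punctured neighborhood. Everything else is a direct unwinding of the Poisson--symplectic correspondence, requiring no further analysis.
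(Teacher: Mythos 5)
Your proof is correct and follows the same route the paper intends: Darboux's theorem is stated there precisely as a corollary of Weinstein's splitting theorem, obtained by passing to the associated non-degenerate Poisson bracket, noting that non-degeneracy at $p$ kills the transverse factor ($d=0$), and translating $\brac_S$ back into the normal form for $\omega$. Your rank count at $p$ is exactly the detail that makes the corollary go through, so nothing further is needed.
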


Thus, all symplectic manifolds are locally isomorphic to Example \ref{ex:SympVS}, and the study of local properties of Poisson manifolds reduces to the consideration of Poisson structures that vanish at the origin.
\\

The analogous properties for algebraic Poisson structures do {\it not} hold as (1) the change in coordinates is rarely algebraic and (2) the symplectic leaves are rarely algebraic.  However, in certain cases, Poisson schemes still have a stratification by symplectic leaves.  For example, a smooth symplectic scheme trivially decomposes into a product of a single symplectic leaf and a trivial Poisson scheme.  The next case --- that of a singular symplectic variety --- is the subject of Theorem 2.3 of \cite{Kaledin06}.

\begin{theorem}[Kaledin \cite{Kaledin06}]
\label{thm:Kaledin06}
Let $X$ be a singular symplectic variety.  There exists a finite stratification $X_i\subseteq X$ where
\begin{enumerate}
\item each stratum $X_i$ is an irreducible Poisson subscheme,
\item the open smooth loci $(X_i)_{sm}$ are symplectic,
\item for any closed point $x\in(X_i)_{sm}$, the formal completion of $X$ decomposes as a product
$$ \widehat{X}_x=\mathcal{Y}_x\times \widehat{(X_{i})_{sm}}_x $$
\end{enumerate}
where $\mathcal{Y}_x$ is a local formal Poisson scheme.
\end{theorem}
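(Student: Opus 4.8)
Since the statement is Theorem 2.3 of \cite{Kaledin06}, the natural plan is to follow Kaledin's strategy: stratify $X$ by the rank of the anchor map, verify that the rank strata are Poisson subschemes whose smooth loci are symplectic, and then run a formal version of Weinstein's splitting theorem (Theorem \ref{thm:WeinsteinDecomp}) at a smooth point of each stratum. Everything except the construction of the stratification is local on formal completions.

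First I would set up the stratification. The rank $r(x) = \mathrm{rk}(\sharp_x)$ of the anchor map is lower semicontinuous in the Zariski topology, since on an affine chart with generators $f_1,\dots,f_N$ it is the rank of the matrix $(\{f_i,f_j\})$ of regular functions; hence $X^{\geq m} := \{x : r(x) \geq m\}$ is open and $Z_m := X \smallsetminus X^{\geq m}$ is closed. The crucial point is that each $Z_m$ is a Poisson subscheme, equivalently a union of (formal) symplectic leaves: for $f$ vanishing on $Z_m$ and any $g$, the Hamiltonian derivation $\{g,-\}$ integrates formally to a one-parameter family of Poisson automorphisms preserving the rank of $\sharp$, hence preserving $Z_m$, so $\{g,f\}$ vanishes on $Z_m$ as well. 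As $X$ is Noetherian only finitely many $m$ are relevant, and I would take the strata $X_i$ to be the irreducible components of the locally closed loci $\{x : r(x) = m\}$, each with its reduced structure; each is then an irreducible Poisson subscheme on which $r$ is constant, and lower semicontinuity of $r$ guarantees that the closure of a stratum is a union of strata. Because $X$ is normal, the induced bracket on $X_i$ is determined by its restriction to $(X_i)_{sm}$, and there the rank of the induced bracket equals $\dim X_i = r|_{X_i}$, so $(X_i)_{sm}$ is symplectic; this gives (1) and (2).

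For (3), fix a closed point $x \in (X_i)_{sm}$ with $r(x) = 2k$ and apply the formal analogue of Theorem \ref{thm:WeinsteinDecomp} to $\widehat{X}_x$. Concretely, choose $p_1$ with $dp_1(x)\neq 0$ and $X_{p_1}(x)\neq 0$, solve $\{p_1,q_1\}=1$ in $\widehat{\OO}_{X,x}$, note that $[X_{p_1},X_{q_1}] = X_{\{p_1,q_1\}} = 0$ so these two Hamiltonian vector fields commute and split off a formal symplectic plane $\widehat{\bbA}^2$, and recurse on the Poisson complement; after $k$ steps one obtains $\widehat{X}_x \isom \widehat{\bbA}^{2k}\times \mathcal{Y}_x$ with the bracket on $\mathcal{Y}_x$ vanishing at the base point. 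It remains to identify the $\widehat{\bbA}^{2k}$ factor with $\widehat{(X_i)_{sm}}_x$: the functions $p_1,q_1,\dots,p_k,q_k$ restrict to a coordinate system on the symplectic leaf through $x$, and since $X_i$ is precisely the locus where $r=2k$, that leaf coincides formally with $\widehat{(X_i)_{sm}}_x$, whence $\widehat{X}_x \isom \mathcal{Y}_x \times \widehat{(X_i)_{sm}}_x$.

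The hardest part is the second paragraph. Making ``the rank strata are Poisson subschemes'' rigorous requires justifying the integration of Hamiltonian derivations in the formal category (where \cite{Polishchuk97} is useful) and being careful about scheme structure --- the reduced subscheme versus the one cut out by the vanishing minors --- and one must establish that the induced bracket on $(X_i)_{sm}$ is non-degenerate, which is exactly the assertion that the stratum agrees with the symplectic leaf and is where the normality hypothesis on $X$ is essential. Once that is in place, the formal splitting in the last paragraph is a routine transcription of the smooth argument, and the finiteness and closure properties of the stratification are immediate from Noetherianity and the semicontinuity of $r$.
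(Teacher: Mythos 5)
The dissertation does not prove this statement---it quotes it from \cite{Kaledin06}---so the only available comparison is with Kaledin's own argument. Your overall architecture (a finite stratification by Poisson subschemes, then a formal Weinstein--Darboux splitting at a smooth point of a stratum) is the right shape, and two of your three ingredients are essentially sound: the rank loci of the anchor are indeed preserved by Hamiltonian derivations (this can be made algebraic by applying $\{f,\cdot\}$ to the ideal of minors of $(\{f_i,f_j\})$ and using the Jacobi identity, rather than by ``integrating'' flows), and the inductive splitting off of formal symplectic planes by solving $\{p_1,q_1\}=1$ in $\widehat{\OO}_{X,x}$ is the standard formal analogue of Theorem \ref{thm:WeinsteinDecomp} and is also how the product decomposition arises in Kaledin's paper.

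The genuine gap is item (2), which you dispose of in one clause: ``there the rank of the induced bracket equals $\dim X_i = r|_{X_i}$, so $(X_i)_{sm}$ is symplectic.'' That equality is precisely the nontrivial content of the theorem, and it does not follow from semicontinuity of the rank, from normality, or from the Poisson-subscheme formalism: for the Poisson structure $\{x,y\}=x$ on $\bbA^2$ the rank-$0$ stratum is the line $x=0$, a one-dimensional irreducible Poisson subscheme whose induced bracket is identically zero, so ``rank stratum $\Rightarrow$ symplectic'' fails for general Poisson schemes and any proof must use the hypothesis that $X$ is a symplectic variety---i.e.\ that the non-degenerate form on $X_{sm}$ extends to a (possibly degenerate) $2$-form on a resolution. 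Your proposal never invokes that hypothesis; your closing paragraph flags the non-degeneracy as ``the hardest part'' but attributes it to normality, which only serves to propagate the bracket from the smooth locus, and acknowledging the gap does not close it. This is exactly where Kaledin's route differs: he stratifies by iterated (reduced) singular loci rather than by rank, and the key lemma is that the normalization of each irreducible component of the singular locus of a symplectic variety is again a symplectic variety, proved by pulling the extended $2$-form back along a resolution; non-degeneracy on the strata, the identification of your formal symplectic factor with $\widehat{(X_{i})_{sm}}_x$, and finiteness of the stratification then all come out of induction on dimension. Without an argument of that kind (or some substitute that actually uses the resolution), your proof establishes only that the rank strata are Poisson subschemes and that a formal splitting exists at points where the stratum is known to be symplectic, which falls short of the theorem.
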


The completed strata $\widehat{(X_{i})_{sm}}_x$  are the symplectic leaves of $\widehat{X}$.
The use of formal schemes in the theorem is unavoidable, as the symplectic leaves leaves are seldom algebraic.  However, the theorem still gives a one-to-one correspondence between the algebraic strata $X_i$ and the symplectic leaves of $X$.  Therefore,

\begin{corollary}
There is a bijection between the Poisson prime ideals of a singular symplectic variety $X$ and the symplectic leaves of $X$.  Moreover, a singular symplectic variety has finitely many symplectic leaves.
\end{corollary}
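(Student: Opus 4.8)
The plan is to deduce the statement from Proposition~\ref{prop:IrredPoisson} and Kaledin's stratification theorem (Theorem~\ref{thm:Kaledin06}). Proposition~\ref{prop:IrredPoisson} already identifies the Poisson prime ideals $\II\subseteq\OO_X$ with the irreducible Poisson subschemes $V(\II)\subseteq X$, so it suffices to set up a bijection between irreducible Poisson subschemes and symplectic leaves and to verify finiteness. Finiteness is immediate from Theorem~\ref{thm:Kaledin06}: it supplies a \emph{finite} stratification $\{X_i\}$ of $X$ into irreducible Poisson subschemes, and, as noted in the discussion following it, $X_i\mapsto\widehat{(X_i)_{sm}}_x$ is a bijection between the strata and the symplectic leaves of $\widehat X$; hence the leaves are finite in number.

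For the bijection, I would assign to the leaf carried by the stratum $X_i$ the ideal $\II_i:=I(\overline{X_i})$ of its Zariski closure, taken with the reduced (hence integral) structure. One checks $\II_i$ is a Poisson prime ideal: primeness is irreducibility of $\overline{X_i}$, and the Poisson condition $\{f,\II_i\}\subseteq\II_i$ holds on the dense open $X_i$, where $X_i$ is closed and Poisson by Theorem~\ref{thm:Kaledin06}(1), hence on all of the integral scheme $X$ by density. This assignment is injective, since distinct strata of a stratification have distinct closures (two disjoint locally closed irreducible subsets cannot both be dense in the same irreducible set).

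It remains to prove surjectivity, which is the crux: every Poisson prime ideal $\II$ equals some $\II_i$. The key claim is that a Poisson subscheme of $X$ is set-theoretically a union of symplectic leaves --- equivalently, if a Poisson subscheme meets the stratum $X_i$ then it contains $X_i$. I would prove this from the formal splitting of Theorem~\ref{thm:Kaledin06}(3): at a smooth point $x$ of $X_i$ one has $\widehat X_x=\mathcal{Y}_x\times\widehat{(X_i)_{sm}}_x$ with the second factor symplectic, and a connected symplectic local model (e.g.\ $\KK[[x_1,y_1,\dots,x_r,y_r]]$ with the standard bracket, cf.\ Example~\ref{ex:SympVS}) has no nonzero proper Poisson ideal --- a nonzero Poisson ideal, having an element of minimal order $d$, would also contain one of its bracket-derivatives $\{x_i,\cdot\}$ or $\{y_i,\cdot\}$, of order $d-1$, forcing $d=0$ and the ideal to be the whole ring. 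Hence every Poisson ideal of $\widehat X_x$ is pulled back from the transverse factor $\mathcal{Y}_x$, so near $x$ the subscheme $V(\II)$ is a product $V(\JJ)\times\widehat{(X_i)_{sm}}_x$ and contains the whole leaf through $x$. Granting the claim, the generic point $\eta$ of the irreducible set $V(\II)$ lies in a unique stratum $X_{i_0}$, and then $V(\II)=\overline{\{\eta\}}\subseteq\overline{X_{i_0}}\subseteq V(\II)$ (the last inclusion because $X_{i_0}\subseteq V(\II)$ by the claim), so $V(\II)=\overline{X_{i_0}}$ and $\II=I(V(\II))=\II_{i_0}$ (using that $\II$, being prime, is radical).

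The main obstacle is establishing the union-of-leaves claim in full generality: the formal-splitting argument handles smooth points of each stratum directly, but one must still propagate the conclusion through the singular strata --- most cleanly by descending induction on the stratification, using the frontier property that $\overline{X_i}\setminus X_i$ is a union of lower strata --- to conclude that $V(\II)$ contains all of $X_{i_0}$ and not merely its smooth locus. Everything else is bookkeeping with closures and generic points.
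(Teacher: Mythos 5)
Your argument is correct and follows essentially the same route as the paper: Proposition \ref{prop:IrredPoisson} identifies Poisson prime ideals with irreducible Poisson subschemes, and Theorem \ref{thm:Kaledin06} supplies the finite stratification whose strata correspond to the symplectic leaves, giving both the bijection and finiteness. The paper simply cites these two facts ("Therefore\ldots") and stops, so the additional material you supply --- the union-of-leaves claim via the formal splitting and the minimal-order trick showing the symplectic formal factor has no nonzero proper Poisson ideals, plus the propagation through singular strata --- is detail the paper leaves implicit in Kaledin's theorem rather than a genuinely different approach.
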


\subsection{Poisson Structures on $k^{\rm th}$-Infinitesimal Neighborhoods}
\label{sec:InfNbd}

The following proposition states that the $k$-infinitesimal neighborhood of a closed Poisson subscheme is again a Poisson subscheme.

\begin{proposition} Suppose $Y=\Spec(A/I)$ is a closed Poisson subscheme of  an affine Poisson scheme $X=\Spec(A)$. The $k$-infinitesimal neighborhood of $Y$, defined locally as $\nbd{k}{Y}{X}:=\Spec(A/I^{k+1})$, is also a Poisson subscheme of $X$. 
\end{proposition}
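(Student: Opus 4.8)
The plan is to reduce the statement to the purely algebraic fact that powers of a Poisson ideal are again Poisson ideals. By the proposition characterizing closed Poisson subschemes, the hypothesis that $Y=\Spec(A/I)$ is a Poisson subscheme of $X=\Spec(A)$ means precisely that $I$ is a Poisson ideal of $(A,\brac)$, i.e. $\{A,I\}\subseteq I$. If I show that $I^{k+1}$ is likewise a Poisson ideal, the same proposition yields that $\nbd{k}{Y}{X}=\Spec(A/I^{k+1})$ is a closed Poisson subscheme of $X$, with induced bracket $\{f+I^{k+1},g+I^{k+1}\}=\{f,g\}+I^{k+1}$, and the inclusion $\nbd{k}{Y}{X}\inj X$ a Poisson morphism.

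So the key step is the claim $\{A,I^{k+1}\}\subseteq I^{k+1}$. An arbitrary element of $I^{k+1}$ is a finite sum $\sum_j a_j\,h_{j,1}\cdots h_{j,k+1}$ with $a_j\in A$ and all $h_{j,l}\in I$. Fix $f\in A$. Using $\KK$-bilinearity of $\brac$ together with the Leibniz identity, it is enough to check two things. First, $\{f,a\,p\}=a\{f,p\}+\{f,a\}p$, and $\{f,a\}p\in A\cdot I^{k+1}=I^{k+1}$ whenever $p\in I^{k+1}$. Second, for a single product $p=h_1\cdots h_{k+1}$ with $h_l\in I$, iterating the Leibniz identity gives
$$\{f,h_1\cdots h_{k+1}\}=\sum_{i=1}^{k+1}\Big(\prod_{l\neq i}h_l\Big)\{f,h_i\}.$$
Since $I$ is a Poisson ideal, $\{f,h_i\}\in I$ for each $i$, while $\prod_{l\neq i}h_l$ is a product of $k$ elements of $I$; hence each summand lies in $I^{k}\cdot I=I^{k+1}$, so $\{f,p\}\in I^{k+1}$. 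Combining the two observations gives $\{f,I^{k+1}\}\subseteq I^{k+1}$, as desired.

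I do not expect any serious obstacle: the argument is entirely formal, resting only on the Leibniz identity and the fact that $I$ is Poisson. The one point deserving a word is the phrase ``defined locally'' in the statement. On an affine scheme, $\Spec(A/I^{k+1})$ is global and the computation above applies directly; on a general scheme one observes that the ideal sheaf of the $k$-th infinitesimal neighborhood of $Y$ is $\II^{k+1}$ for $\II$ the ideal sheaf of $Y$, and, the computation being a statement about sections over an arbitrary affine open, it shows $\II^{k+1}$ is a sheaf of Poisson ideals — so the construction is independent of the affine chart and glues. Finally, one gets for free that since $I^{k+1}\subseteq I$ the projection $A/I^{k+1}\sur A/I$ is Poisson, so the $\nbd{k}{Y}{X}$ form a tower of Poisson subschemes interpolating between $Y$ and the formal completion of $X$ along $Y$.
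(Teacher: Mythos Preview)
Your proof is correct and follows essentially the same approach as the paper: reduce to showing that powers of a Poisson ideal are Poisson ideals, then expand $\{f,h_1\cdots h_{k+1}\}$ via the Leibniz identity and use $\{f,h_i\}\in I$. Your version is in fact slightly more careful, explicitly treating the $A$-linear combinations and commenting on the sheaf-theoretic gluing, but the core argument is identical.
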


\begin{proof} It needs to be shown that $I^k$ is a Poisson ideal.  In fact, given $f\in A$ and $g=g_1\ldots g_k\in I^k$, the Leibniz identity implies that
$$\{f,g\}_{X}=\{f,g_1\ldots g_k\}_{X}=\sum_{i=1}^k g_1\ldots\hat{g_i}\ldots g_k\{f,g_i\}_{X}$$
As each $g_i\in I$ and $I$ is a Poisson ideal, it follows that each $\{f,g_i\}_{X}\in I$. Hence $\{f,g\}_{X}\in I^k$.
\end{proof}

Thus, given a closed Poisson subscheme of an affine scheme, one automatically gets compatible Poisson structures on the $k$-infinitesimal neighborhoods of $Y$. That is, $\forall k>0$ the map $p_k\!:A\to A/I^{k+1}$ is a homomorphism of Poisson algebras.  Furthermore, the Poisson structures $(\nbd{k}{Y}{X},\brac_{k})$ are compatible with each other in the following sense:

\begin{proposition} The natural projection $\phi_k\!:A/I^{k+1}\to A/I^k$ is a homomorphism of Poisson algebras.
\end{proposition}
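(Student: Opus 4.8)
The plan is to reduce the statement to bookkeeping with the quotient bracket already constructed above. Recall from the preceding two propositions that, since $I^m$ is a Poisson ideal of $(A,\brac)$ for every $m\geq 1$, the quotient $A/I^m$ carries a well-defined Poisson bracket $\brac_m$ determined by $\{f+I^m,g+I^m\}_m=\{f,g\}_X+I^m$, and the projection $p_m\colon A\to A/I^m$ is a homomorphism of Poisson algebras. The map $\phi_k\colon A/I^{k+1}\to A/I^k$ is the unique ring map satisfying $\phi_k\circ p_{k+1}=p_k$; it is well-defined precisely because $I^{k+1}\subseteq I^k$, and it is automatically a homomorphism of associative algebras. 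So the only thing left to verify is that $\phi_k$ intertwines $\brac_{k+1}$ with $\brac_k$.

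First I would unwind the definitions on representatives. For $f,g\in A$, write $\bar f=f+I^{k+1}$ and $\bar g=g+I^{k+1}$. Then on one hand
$$\phi_k\big(\{\bar f,\bar g\}_{k+1}\big)=\phi_k\big(\{f,g\}_X+I^{k+1}\big)=\{f,g\}_X+I^k,$$
while on the other hand
$$\{\phi_k(\bar f),\phi_k(\bar g)\}_k=\{f+I^k,\,g+I^k\}_k=\{f,g\}_X+I^k.$$
These coincide, so $\phi_k$ is a Poisson homomorphism. I would also note that this computation is independent of the chosen representatives: replacing $f$ by $f+a$ with $a\in I^{k+1}$ changes $\{f,g\}_X$ by $\{a,g\}_X\in I^{k+1}\subseteq I^k$ (using that $I^{k+1}$, hence also $I^k$, is a Poisson ideal), so both sides are unaffected modulo $I^k$.

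There is essentially no obstacle here; all of the substance was already absorbed into showing that the powers $I^m$ are Poisson ideals. Conceptually, the tower $A\sur\cdots\sur A/I^{k+1}\sur A/I^{k}\sur\cdots$ is a tower of Poisson-algebra quotients of $(A,\brac)$ by the decreasing chain of Poisson ideals $\{I^m\}$, and any surjection between two terms of such a tower that is induced by an inclusion of ideals is automatically Poisson — the present proposition is the case of consecutive terms. Finally, the passage from this affine statement to the sheaf-theoretic one (compatibility of the brackets $\brac_k$ with restriction, so that $\phi_k$ is a morphism of sheaves of Poisson algebras) is routine, since the whole construction is compatible with localization by Proposition~\ref{prop:MultSub}.
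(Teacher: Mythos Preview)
Your proof is correct and follows essentially the same approach as the paper: both arguments lift to representatives in $A$ and use that the quotient brackets on $A/I^{k+1}$ and $A/I^k$ are defined via the same bracket on $A$, so $\phi_k$ automatically intertwines them. The paper phrases this via the commutative triangle $p_{k-1}=\phi_k\circ p_k$ and the fact that the $p_i$ are Poisson homomorphisms, while you unwind the definitions directly; the content is identical.
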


\begin{proof}  By the proposition above, the top arrows below are homomorphisms of Poisson algebras:

\[
\xymatrix{
& (A,\brac) \ar[ld]_{p_{k-1}} \ar[rd]^{p_k} &  \\
(A/I^k,\brac_{k-1}) & & (A/I^{k+1},\brac_k) \ar[ll]^{\phi_k}
}
\]

Given $f,g\in A/I^k$, let $\tilde{f},\tilde{g}$ be lifts of $f,g$ to $A$.  By the diagram above, 
$$\left\{\begin{array}{l} \phi_k(f)=\phi_k\circ p_k(\tilde{f})=p_{k-1}(\tilde{f}) \\ \phi_k(g)=\phi_k\circ p_k(\tilde{g})=p_{k-1}(\tilde{g})\end{array}\right.$$

Using these lifts to $A$, and the fact that the $p_i$ are Poisson algebra homomorphisms, 
$$\begin{array}{rclr}
\{\phi_k f,\phi_k g\}_{k-1} &=& \{\phi_k\circ p_k(\tilde{f}),\phi_k\circ p_k(\tilde{g})\}_{k-1} &\\
					& =& \{p_{k-1}(\tilde{f}),p_{k-1}(\tilde{g})\}_{k-1} &\\
					&=& p_{k-1}(\{\tilde{f},\tilde{g}\}) &  \mbox{     (as $p_{k-1}$ is a Poisson homomorphism) } \\
					&=& \phi_k\circ p_k(\{\tilde{f},\tilde{g}\}) &\\
					&=& \phi_k(\{f,g\}_k)  &\mbox{     (as $p_{k}$ is a Poisson homomorphism) }
\end{array}
$$
Thus, $\phi_k$ is a homomorphism of Poisson algebras.

\end{proof}

The propositions above are nicely summarized by the following commutative diagram of Poisson algebras:

\[
\xymatrix{
A \ar[d]_{\exists p_\infty} \ar[rrd]|-{p_3} \ar[rrrd]|-{p_2} \ar[rrrrd]|-{p_1}\\
\varprojlim A/I^k \ar[r] & \ldots  \ar[r]_{\phi_3} & A/I^3 \ar[r]_{\phi_2} & A/I^2 \ar[r]_{\phi_1} & A/I
}
\]

On the level of spaces, this diagram corresponds to a chain of Poisson embeddings:
$$(Y,\brac_Y)\hookrightarrow(\nbd{1}{Y}{X},\brac_{1})\hookrightarrow\ldots\hookrightarrow(\nbd{\infty}{Y}{X},\brac_{\infty})\hookrightarrow(X,\brac)$$

In subsequent chapters, the primary technique to (1) construct Poisson embeddings, or (2) show the impossibility of constructing a Poisson embedding will be as follows: 

Given a Poisson scheme $(Y,\brac_Y)$ and a closed embedding $\iota\!:Y\hookrightarrow\mathbb{A}^n_k$ into $n$-dimensional affine space, either (1) construct a Poisson structure on $X$ (making $\iota$ a Poisson map) by extending $\brac_Y$ to Poisson structures on each $\nbd{k}{Y}{\mathbb{A}^n_k}$, or (2) find an obstruction to such an extension, for some $\KK$.

\subsubsection{The cotangent Lie algebra}

Let $X$ be a smooth Poisson scheme.  Define a Lie bracket on the cotangent sheaf $\Omega_X$ by defining it locally on exact 1-forms as follows: $[df,dg]=d\{f,g\}$.  This Lie algebra structure turns the anchor map $\sharp:\Omega_X\to\TT_X$ into a Lie algebra homomorphism.  Moreover, this construction is functorial with respect to Poisson subschemes:  Given $Y\subseteq X$ with defining Poisson ideal $\II$, the diagram below is a commutative diagram of Lie algebras:

\[
\xymatrix{
\Omega_{X/Y} \ar[d]\ar[rd]^{(\sharp_X)_{|_Y}}   \\
\Omega_Y \ar[r]_{\sharp_Y} & \TT_Y
}
\]

The standard exact sequence
$$0\to\II/\II^2\to\Omega_{X/Y}\to\Omega_Y\to 0$$
becomes an exact sequence of Lie algebras.

\begin{definition}
\label{def:Linearization} The $\OO_Y$-linear lie bracket on $\II/\II^2$ obtained from the above exact sequence is the {\it linearization of the Poisson bracket at $Y$}
\end{definition}

\begin{example} Let $(X,\brac)$ be a Poisson scheme and $x\in X$ is a Poisson subscheme with corresponding maximal ideal $\m_x$ (so $\brac$ vanishes at $x$).  The linearization of $\brac$ at $x$ is a Lie bracket on Zariski cotangent space $\m_x/\m_x^2$, sometimes called the {\it isotropy cotangent space at $x$} (see \cite{Fernandes04}).
\end{example}

\begin{remark} The above example, in terms of the splitting theorem for real Poisson manifolds (Theorem \ref{thm:WeinsteinDecomp}), consists of only a transverse Poisson structure $\brac=\brac_N$.  The linearization of $\brac_N$ is given by what would be expected:  
$$[df,dg]=d\left(\sum_{0\leq i,j\leq d}\phi^1\pderiv{f}{z_i}\pderiv{g}{z_j}\right)$$
where $\phi^1$ is the linear term of $\phi$.
\end{remark}

\chapter{Poisson Bivectors and the Schouten Calculus}
\label{sec:Ch3}

This section summarizes basic properties of {\it smooth} Poisson structures using multivector fields.  As the majority of this material is applied to the case $X=\bbA^{n}_k$, for the purpose of explicit computation, we'll adopt the coordinate intensive treatment of Dufour/Zung in \cite{Dufour05}.  

\section{The Poisson bivector and the Schouten bracket}

Fix a Poisson scheme $(X,\brac)$.   The Poisson bracket $\brac$ is equivalent to a global section of the sheaf $\mathcal{H}{\it\! om}_{\OO_X}(\bigwedge^2\Omega_X,\OO_X)$.  That is, we may define $\pi\in\Gamma(X,\mathcal{H}{\it\! om}_{\OO_X}(\bigwedge^2\Omega_X,\OO_X))$ by 
$$\pi(df\wedge dg)=\{f,g\}$$
Furthermore, when $X$ is smooth, $\mathcal{H}{\it\! om}_{\OO_X}(\bigwedge^2\Omega_X,\OO_X)$ may be identified with the sheaf of bivector fields $\bigwedge^2\TT_X$, and the defining relationship between the bivector and bracket becomes:
$$ \iota_{df}\pi = \sharp(df) $$
where $\iota$ is the contraction operator, $\sharp$ is the anchor (see Remark \ref{rmk:Anchor}), and equality takes place in $\TT_X$.  It's clear that the properties of the bilinearity, skew-symmetry of $\brac$, as well as the satisfaction of the Leibniz identity, are equivalent to $\pi\in\Gamma(X,\mathcal{H}{\it\! om}_{\OO_X}(\bigwedge^2\Omega_X,\OO_X))$.  To see what property of $\pi$ corresponds to $\brac$ satisfying the Jacobi identity, wee need to introduce the Schouten bracket.

\subsection{Multivector fields and the Schouten bracket}

This section introduces the theory of multivector fields and translates properties of Poisson brackets into the language of bivectors (see \cite{Dufour05}).  Throughout, $(X,\brac)$ will be a {\it smooth} Poisson scheme.  \\

Denote the graded complex of multivector fields on $X$ by $\bigwedge^{*}\TT_X=\bigoplus\bigwedge^{k}\TT_X$.   The Lie bracket of vector fields on $X$ uniquely extends to a graded Lie bracket on $\bigwedge^{*}\TT_X$ satisfying the graded Leibniz rule.  That is:

\begin{definition}[Schouten-Nijenhuis] There is a unique bilinear bracket $[\cdot,\cdot]$ of degree-$(-1)$, called the {\it Schouten bracket} satisfying the following properties:  for multivector fields $\alpha$ and $\beta\in\bigwedge^{\cdot}\TT_X$, with respective degrees $a,b\in\NN$, $\gamma\in\bigwedge^{c}\TT_X$, and $X\in\TT_X$, $f\in\OO_X$
\\
$$
\begin{array}{lclr}
(1) &\qquad & [\alpha,\beta]+(-1)^{(a-1)(b-1)}[\beta,\alpha]=0 &  \quad\mbox{(skew-symmetry)} \\
\\
(2) &\qquad & [\alpha,\beta\wedge\gamma]=[\alpha,\beta]\wedge\gamma+(-1)^{(a-1)b}\beta\wedge[\alpha,\gamma] & \mbox{(Leibniz identity)} \\
\\
(3) & \qquad & (-1)^{(a-1)(c-1)}[\alpha,[\beta,\gamma]]+(-1)^{(b-1)(a-1)}[\beta,[\gamma,\alpha]]+ & \mbox{(Jacobi identity)} \\
 & \qquad & \qquad\qquad\qquad\qquad\quad  +(-1)^{(c-1)(b-1)}[\gamma,[\alpha,\beta]]=0 & \\
\\
(4) & \qquad & [X,Y]=\mathcal{L}_XY \mbox{ and } [X,f]=\langle df,X \rangle & \mbox{(extends the Lie bracket)} \\
\end{array}
$$
\end{definition}

The Schouten bracket succinctly characterizes when a bivector corresponds to a Poisson bracket \cite{Dufour05}:
$$ \pi \mbox{ corresponds to a Poisson bracket $\brac$ if and only if } [\pi,\pi]=0 $$

\begin{definition}
\label{def:Jacobiator}
Given a bivector field $\pi\in\bigwedge^2\TT_X$, the trivector field $[\pi,\pi]\in\bigwedge^3\TT_X$ is called the {\it Jacobiator} of $\pi$.
\end{definition}

The jacobiator of a bivector has the explicit characterization:

$$[\pi,\pi](df,dg,dh)=2\left(\pi(df,d\pi(dg,dh))+{\rm c.p}\right)$$

An explicit formula for the Schouten bracket in local coordinates will prove useful for explicit calculations.  We adopt the conventions in \cite{Dufour05}.  Given a local coordinate system $(x_1,\ldots,x_n)$ in X, the $\OO_X$-module $\bigwedge^*\TT_X$ is generated by elements of the form $\vect{x_{i_1}}\wedge\ldots\wedge\vect{x_{i_k}}$.   To ease notation, we replace $\{\vect{x_1},\ldots,\vect{x_n}\}$ with anti-commuting variables $\{\xi_1,\ldots,\xi_n\}$.   The Schouten bracket of $\alpha,\beta\in\bigwedge^{*}\TT_X$ is then locally written as a ``super Poisson bracket.''

$$[\alpha,\beta]=\sum_{i=1}^{n}\left(\pderiv{\alpha}{\xi_i}\pderiv{\beta}{x_i}-(-1)^{(a-1)(b-1)}\pderiv{\beta}{\xi_i}\pderiv{\alpha}{x_i}\right)$$

\begin{remark} Defining $\vect{\xi_i}$ for anti-commuting variables takes a little care.  We use the sign convention that
$$\pderiv{\xi_{i_1}\cdots\xi_{i_d}}{\xi_{i_k}}=(-1)^{d-k}\xi_{i_1}\cdots\widehat{\xi_{i_k}}\cdots\xi_{i_d}$$
where $\widehat{\mbox{  }}$ denotes a removed term.
\end{remark}

\section{Poisson Cohomology}

Using the structures defined in the previous section, Lichnerowicz \cite{Lichnerowicz77} defined a cohomology for Poisson manifolds.  This construction transfers mostly unchanged to the category of {\it smooth} Poisson schemes.

\begin{definition}
Let $X$ be a smooth Poisson scheme with Poisson bivector $\pi$.  We define the {\it Poisson cohomology} $H_\pi^*(X,\mathcal{F}) $ of $(X,\pi)$ with coefficients in a sheaf of Poisson-modules $\mathcal{F}$ to be the cohomology of the complex:
$$\ldots\to\wedge^3\TT_X\otimes_{\OO_X}\mathcal{F}^*\to\wedge ^2\TT_X\otimes_{\OO_X}\mathcal{F}^*\to\TT_X\otimes_{\OO_X}\mathcal{F}^*\to\mathcal{F}^*\to 0$$
where $\dd_\pi:\wedge^k\TT_X\to\wedge^{k-1}\TT_X$ is defined as $\dd_\pi\beta:=[\pi,\beta]$.  If $\mathcal{F}=\OO_X$, then we write $H^*_\pi(X)$
\end{definition}

\begin{remark} That the $d_\pi$ squares to zero is equivalent to the condition that $[\pi,\pi]=0$
\end{remark}

\begin{remark} As this construction is built from multivector fields, it unsurprisingly depends heavily on the smoothness of $X$. There is an alternative cohomology theory for non-smooth Poisson schemes, called the Harrison cohomology, built from the Hochschild complex of $\OO_X$.  See \cite{Loday98}.
\end{remark}

Unsurprisingly, the $2^{{\rm nd}}$ Poisson cohomology of a Poisson scheme $(X,\brac)$ classifies (formal) deformations of $\brac$ on $X$.  The low degree cohomology groups have the following interpretation:

\begin{enumerate}
\item the zeroth cohomology $H^0_\pi(X)$ is the algebra of Casimir functions of $\brac$.
\item the first cohomology $H^1_\pi(X)$ is the algebra of vector fields leaving $\brac$ invariant, modulo Hamiltonian vector fields.
\item the second cohomology $H^2_\pi(X)$ is the space of formal deformations modulo ``trivial deformations''.
\item the third cohomology  $H^3_\pi(X)$ consists of obstructions to deformations of $\pi$.
\end{enumerate}

These Poisson cohomology groups tend to be difficult to compute, infinite dimensional groups --- even for well-behaved spaces.  In the category of smooth manifolds, the following theorem illustrates the range of possibilities for $H_\pi^*(M)$.  

\begin{theorem}[Lichnerowicz] Given a Poisson manifold $(M,\pi)$, the pullback of the anchor map descends to a homomorphism
$$\sharp^*:H^*_{dR}(M)\to H^*_\pi(M)$$
from the de Rham cohomology of $M$ to the Poisson cohomology of $M$.  Moreover:
\begin{itemize}
\item $\sharp^*$ is, in general, neither injective nor surjective.
\item If $M$ is symplectic, $\sharp^*$ is an isomorphism.
\end{itemize}
\end{theorem}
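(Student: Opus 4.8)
The plan is to realize $\sharp^*$ as the map induced on cohomology by the exterior powers of the anchor. Write $\bigwedge^\bullet\sharp\colon\bigwedge^\bullet\Omega_M\to\bigwedge^\bullet\TT_M$ for the $\OO_M$-algebra homomorphism whose degree-one component is $\sharp$, and note that the statement compares the de Rham complex $(\Omega^\bullet_M,d)$ with the Lichnerowicz complex $(\bigwedge^\bullet\TT_M,d_\pi)$, where $d_\pi=[\pi,-]$ raises the multivector degree by one. The whole theorem reduces to showing that $\bigwedge^\bullet\sharp$ is a morphism of differential graded algebras: once that is known, $\sharp^*$ exists and is a ring homomorphism, the symplectic case is immediate, and the failure of injectivity and surjectivity is witnessed by a trivial example.

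\textbf{Verifying the chain map property.} First I would establish $\sharp(d\alpha)=\pm\,d_\pi(\sharp\alpha)$ for every form $\alpha$. Both $\alpha\mapsto\sharp(d\alpha)$ and $\alpha\mapsto d_\pi(\sharp\alpha)$ are maps $\bigwedge^\bullet\Omega_M\to\bigwedge^\bullet\TT_M$ obeying the same graded Leibniz rule over the algebra homomorphism $\bigwedge^\bullet\sharp$ (because $d$ and $d_\pi=[\pi,-]$ are each degree-one superderivations of the relevant wedge algebra), so it suffices to check the identity on algebra generators, i.e.\ on functions $f\in\Omega^0_M=\OO_M$ and on exact one-forms $df$. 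On a function the identity is $\sharp(df)=\pm[\pi,f]=\pm X_f$, which is just the defining relation $\iota_{df}\pi=\sharp(df)$ between the anchor and the Hamiltonian vector field. On $df$ it becomes $0=\sharp(d(df))=\pm\,d_\pi(X_f)=\pm[\pi,[\pi,f]]$, and $[\pi,[\pi,f]]=\tfrac12[[\pi,\pi],f]=0$ by the graded Jacobi identity for the Schouten bracket together with $[\pi,\pi]=0$. This is the only place the Jacobi identity of the Poisson bracket enters.

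\textbf{Descent, multiplicativity, and the symplectic case.} A morphism of complexes carries cocycles to cocycles and coboundaries to coboundaries, so $\bigwedge^\bullet\sharp$ descends to $\sharp^*\colon H^k_{dR}(M)\to H^k_\pi(M)$; since $\bigwedge^\bullet\sharp$ is an algebra map, $\sharp^*$ intertwines the cup product with the wedge product and is thus a homomorphism of graded algebras. If $(M,\omega)$ is symplectic then $\sharp$ is a vector bundle isomorphism from $\Omega_M$ to $\TT_M$ (with inverse given by contraction with $\omega$), hence $\bigwedge^k\sharp$ is an isomorphism for every $k$; therefore $\bigwedge^\bullet\sharp$ is an isomorphism of complexes and $\sharp^*$ is an isomorphism. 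For the negative assertions I would take the zero Poisson structure $\pi=0$ on any $M$ with $H^k_{dR}(M)\neq 0$ for some $k>0$, e.g.\ $M=S^1$: then $d_\pi=0$, so $H^k_\pi(M)=\bigwedge^k\TT_M(M)$ is the full (infinite-dimensional) module of $k$-vector fields, while $\sharp=0$ forces $\sharp^*=0$, which kills $H^1_{dR}(S^1)\cong\RR$ and misses nearly all of $H^k_\pi(M)$.

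I expect the only genuine subtlety to be the sign bookkeeping in the first step: pinning down the universal sign in $\sharp\circ d=\pm\,d_\pi\circ\sharp$ and checking it is consistent with the derivation property across all degrees. Everything after that is formal.
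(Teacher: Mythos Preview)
The paper does not prove this theorem; it is stated with attribution to Lichnerowicz \cite{Lichnerowicz77} and used only as background, so there is no proof in the paper to compare against.

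Your argument is the standard one and is correct. The reduction to generators works because $\Omega^\bullet_M$ is locally generated as a graded-commutative algebra by $\OO_M$ and exact one-forms, and both $\bigwedge^\bullet\sharp\circ d$ and $d_\pi\circ\bigwedge^\bullet\sharp$ are graded derivations over the algebra map $\bigwedge^\bullet\sharp$. Your two base checks are right: $\sharp(df)=X_f$ agrees (up to a global sign) with $[\pi,f]$, and $[\pi,[\pi,f]]=\tfrac12[[\pi,\pi],f]=0$ follows from the graded Jacobi identity exactly as you wrote. The symplectic case is then immediate since $\bigwedge^\bullet\sharp$ is an isomorphism of complexes, and your $\pi=0$ on $S^1$ example cleanly exhibits both failures. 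The only loose end, which you already identified, is fixing the sign convention once and for all; with the paper's conventions one gets $\bigwedge^{k+1}\sharp\circ d = (-1)^k\,d_\pi\circ\bigwedge^k\sharp$ (or its negative), and this uniform sign does not affect the induced map on cohomology.
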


This theorem reinforces the result of Darboux (Theorem \ref{thm:Darboux}): symplectic manifolds are locally rigid, with only their topology affecting their deformations.

Another example for which the Poisson cohomology groups are known is the case of the Lie-Poisson structure $(\g^*,\brac)$  for  a Lie algebra $\g$ (see Example \ref{ex:LP}).  In this case they are related to the familiar Chevalley-Eilenberg cohomology.

\begin{theorem}[\cite{Lichnerowicz77}]
\label{thm:CECoh}
 Let $(\g^*,\pi)$ be the Lie-Poisson structure of a Lie algebra $\g$.  Then there is an isomorphism of cohomology groups:
$$H^*_{\pi}(\g^*)\isom H_{CE}^*(\g,\Sym(\g))$$
where $\Sym(\g)$ is the (infinite dimensional) representation given by the adjoint action of $\brac$. 
\end{theorem}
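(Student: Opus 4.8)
The plan is to prove the isomorphism at the level of cochain complexes: I would exhibit an isomorphism between the Lichnerowicz complex of $(\g^*,\pi)$ and the Chevalley--Eilenberg complex $C^*_{CE}(\g,\Sym(\g))$ that intertwines their differentials, and then simply pass to cohomology. The first step is to set up the dictionary. Fix a basis $e_1,\dots,e_m$ of $\g$ with structure constants $[e_i,e_j]=\sum_k c_{ij}^k e_k$, and use the $e_i$ as linear coordinates $x_i$ on $\g^*$, so that $\OO_{\g^*}=\Sym(\g)$. The module of polynomial vector fields is free over $\Sym(\g)$ on the coordinate fields $\partial/\partial x_i$, and the pairing $\langle dx_i,\partial/\partial x_j\rangle=\delta_{ij}$ identifies $\partial/\partial x_i$ with $e_i^*\in\g^*$; hence one obtains a canonical isomorphism of graded $\KK$-vector spaces
$$\textstyle\bigwedge^k\TT_{\g^*}\;\cong\;\Sym(\g)\otimes_\KK\bigwedge^k\g^*\;=\;C^k_{CE}\!\big(\g,\Sym(\g)\big),$$
where $\Sym(\g)$ carries the $\g$-module structure extending $\ad$ by derivations --- precisely the representation appearing in the statement.

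The heart of the argument is to check that under this identification the Lichnerowicz differential $d_\pi=[\pi,-]$ becomes the Chevalley--Eilenberg coboundary $d_{CE}$. In the ``super Poisson bracket'' notation of Chapter \ref{sec:Ch3} the Poisson bivector is $\pi=\tfrac12\sum_{i,j,k}c_{ij}^k x_k\,\xi_i\wedge\xi_j$. I would use that both differentials are determined by their restrictions to generators: $d_\pi$ because the Schouten bracket satisfies the graded Leibniz rule, and $d_{CE}$ because it is a degree-one derivation of $\Sym(\g)\otimes\bigwedge^*\g^*$. It then suffices to compare them on (i) functions $f\in\Sym(\g)$ and (ii) the constant fields $\xi_i$. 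On a function, $[\pi,f]$ is the vector field $g\mapsto\{f,g\}$; for $f=e_i$ this is the adjoint action $g\mapsto e_i\cdot g$, which is exactly $d_{CE}$ applied to the $0$-cochain $e_i$, and the general case follows because both operations are derivations in $f$. On $\xi_i$, the local Schouten formula gives $[\pi,\xi_i]=-\partial_{x_i}\pi$, which (up to the overall sign convention chosen for $d_{CE}$) equals $\pm\sum_{j<k}c_{jk}^i\,\xi_j\wedge\xi_k$, i.e.\ $d_{CE}$ of the $1$-cochain dual to $e_i$. Granting these two checks, $d_\pi$ and $d_{CE}$ coincide on all of $\bigwedge^*\TT_{\g^*}$, the complexes are isomorphic, and the stated cohomology isomorphism follows.

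The obstacle I expect is bookkeeping rather than substance: one has to choose compatible sign conventions so that the Koszul signs in the Schouten bracket (together with the convention for $\partial/\partial\xi_i$ recorded in Chapter \ref{sec:Ch3}) line up with the alternating signs of $d_{CE}$, and one must fix the grading so that $d_\pi$, which \emph{raises} multivector degree, is matched with $d_{CE}$ and not with its transpose. A second point to spell out carefully is why the reduction to generators is legitimate: $d_\pi$ is $\KK$-linear but not $\Sym(\g)$-linear, so one needs the precise first-order Leibniz identity $d_\pi(h\,\xi_I)=[\pi,h]\wedge\xi_I\pm h\,[\pi,\xi_I]$ and the observation that the $[\pi,h]\wedge\xi_I$ term is exactly what reproduces the module-action part of $d_{CE}$. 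Once the signs are pinned down consistently, the proof is a direct unwinding of the definitions.
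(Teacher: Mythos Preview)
Your proposal is correct and is the standard argument. Note, however, that the paper does not actually give a proof of this theorem: it is stated with a citation to \cite{Lichnerowicz77}, followed only by the remark that ``this theorem actually holds on the level of cochains.'' Your write-up is precisely a fleshing-out of that remark --- identifying $\bigwedge^k\TT_{\g^*}\cong \Sym(\g)\otimes\bigwedge^k\g^*$ and checking that $d_\pi=[\pi,-]$ agrees with $d_{CE}$ on the generators $f\in\Sym(\g)$ and $\xi_i$ --- so there is no divergence in approach to speak of. The caveats you flag (sign conventions, and the Leibniz identity justifying the reduction to generators) are exactly the points one must pin down, and the paper's Schouten-calculus conventions in Chapter~\ref{sec:Ch3} give you what you need to do so.
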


\begin{remark} This theorem actually holds on the level of cochains.
\end{remark}

\subsection{Poisson cohomology and infinitesimal neighborhoods}

Let $Y=V(\II)$ be a smooth Poisson subscheme of a Poisson scheme $X$.  Recall from Section \ref{sec:InfNbd} that the ideal sheaves $\II^k\subseteq\OO_X$ are in fact Poisson ideal sheaves for all $k\in\NN$.  Thus, each $k$ gives rise to an exact sequence of sheaves of Poisson $\OO_X$-modules:
$$0\to\II^k\to\OO_X\to\OO_{Y^k}\to0$$
where $Y^k=\mathcal{N}_Y^k(X)$.  There is a one-to-one correspondence between Poisson structures $\brac_{(k)}$ on $Y^k$ as defined in Section $\ref{sec:InfNbd}$ and Poisson bivectors $\pi^k$ in $\TT_X\otimes\OO_{Y^k}$.  Thus, although $Y^k$ is {\it not} smooth, it makes sense to talk about bivectors on $Y^k$ and to identify $H^*_{\pi^{(k)}}(Y^k)$ with $H^*_{\pi}(X,\OO_{Y^k})$.

\section{Graded Poisson structures on $\bbA^n_k$}
\label{sec:GrPoiss}

Let $\pi$ be a Poisson bivector on $\bbA^n_k$ vanishing at $0$.  The usual grading on $A=\KK[x_1,\ldots,x_n]$ decomposes $\pi$ into homogeneous components:
$$\pi=\pi^1+\pi^2+\pi^3+\ldots$$
where the coefficients $\pi^d(dx_i,dx_j)\in k_d[x_1,\ldots,x_n]$ of the $d^{\rm th}$ summand are homogeneous polynomials of degree $d$.  Note that the constant component $\pi^0$ vanishes because $\brac$ vanishes at the origin.

\begin{remark}
This grading does {\it not} agree with notion of the degree of a bracket introduced in Definition \ref{def:GrPoissonAlg}.  With the bracket corresponding to $\pi^d$ denoted by $\brac_{\pi^d}$, the $d^{\rm th}$ bracket $\brac_{\pi^d}$ has degree-$(d-2)$.
\end{remark}

One can now express the condition $[\pi,\pi]=0$ for $\pi$ to be a Poisson bivector in terms of the graded components:

\begin{eqnarray}
[\pi,\pi] &=& [\sum_{i\in\NN}\pi^i,\sum_{j\in\NN}\pi^j] \\
             &=& \sum_{i+j=k}[\pi^i,\pi^j]
\end{eqnarray}

Listing these conditions by $k\in\NN$ gives:\\

$\begin{array}{ll}
k=1: & [\pi^1,\pi^1]=0  \\
k=2: & [\pi^1,\pi^2]=0 \\
k=3: & [\pi^1,\pi^3]+\frac{1}{2}[\pi^2,\pi^2]=0 \\
k=4: & [\pi^1,\pi^4]+[\pi^2,\pi^3]=0 \\
k=5: & [\pi^1,\pi^5]+[\pi^2,\pi^4]+\frac{1}{2}[\pi^3,\pi^3]=0 \\
\vdots & 
\end{array}$\\

Analyzing the condition in each degree gives:
\begin{itemize}
\item[(1)] The degree-$1$ condition, $[\pi^1,\pi^1]=0$, is equivalent to the linear component $\pi^1$ being a Poisson bivector.  This linear Poisson structure is in fact the Lie-Poisson algebra associated to the {\it cotangent lie algebra} of the Poisson subscheme $\{0\}$.
\item[(2)] In light of the degree-$1$ condition, $\dd_{\pi^{1}}$ is a differential, and the degree-$2$ condition states that $\dd_{\pi^1}\pi^2=0$.  In other words, $\pi^2$ is a $\pi^1$-cocycle.
\item[\vdots] 
\item[(k)] The $k^{\rm th}$-degree condition for $k>2$: 
$$\dd_{\pi^1}\pi^k =\sum_{\stackrel{i+j=k+1}{i,j>1}}[\pi^i,\pi^j] $$

\end{itemize}

Thus a Poisson structure $\pi$ as above may be viewed as an infinitesimal deformation of its linear part $\pi^1$.

\subsection{Extending Poisson structures to infinitesimal neighborhoods of 0}
\label{sec:ExtPoissonInfNbd}

The graded decomposition above may be reinterpreted in terms of infinitesimal neighborhoods of the Poisson subscheme $\pt\subseteq\bbA^k_k$.  Using the notation from Section \ref{sec:InfNbd}, denote the Poisson bivector on $\nbd{k}{\pt}{\mathbb{A}^n_k}$ corresponding to $\brac_k$ by $\pi^{(k)}$.  A natural choice for a bivector on $\mathbb{A}^n_k$ that is $p_k$-related to $\pi^{(k)}$ is given by $\pi^1+\pi^2+\ldots+\pi^k\in\bigwedge^2\TT_{\mathbb{A}^n_k}$.  

\begin{eqnarray}
[\pi^{(k)},\pi^{(k)}] &=& p_{k*}\left([\pi^1+\ldots+\pi^k, \pi^1+\ldots+\pi^k]\right) \\
			   &=& p_{k*}\left([\pi^1,\pi^1]+2[\pi^1,\pi^2]+\ldots+\sum_{l=1}^{k}[\pi^l,\pi^{k-l}]\right)+\\
			   &  & \hspace{2cm} +p_{k*}\left(\sum_{2\leq l\leq k}[\pi^l,\pi^k]+\sum_{3\leq l\leq k}[\pi^l,\pi^{k-1}]+\ldots\right)
\end{eqnarray}

The second term (3.5) on the right-hand side vanishes, as each summand has coefficients in $\m_0^{k+1}$.  The first term (3.4) on the right-hand side vanishes {\it exactly when} the first $k$ degree conditions for $[\pi,\pi]=0$ are satisfied.  

Moreover, using the fact that $\pi^1$ defines a Poisson structure, one can view each $\pi^{(k)}$ as a solution of an equation: $${\rm d}_{\pi^1}\pi^{(k)}=[\phi_{k*}\pi^{(k-1)},\phi_{k*}\pi^{(k-1)}]$$

\begin{remark} The discussion above easily generalizes to the case where $Y=V(x_{i_1},\ldots,x_{i_k})$ is any coordinate subspace --- in this case, the relevant grading on $A$ is the one where $\{x_{i_1},\ldots,x_{i_k}\}$ have degree-1, and the other generators have degree-0.
\end{remark}

\begin{example}
Consider a Lie Poisson algebra $(\g,\pi^1)$ and the closed Poisson subscheme $\{0\}$.  The fact that Theorem \ref{thm:CECoh} holds on the level of cochains implies that the Poisson cohomology groups of infinitesimal neighborhoods of $\pt$ are isomorphic to {\it finite dimensional} cohomology groups:
$$H^*_{\pi^1}(\nbd{k}{\pt}{\g^*})\isom H^*_{CE}(\g,\Sym^{\leq k}\g)$$
\end{example}

\begin{remark}  This isomorphism allows one to reduce the problem of solving the condition $[\pi,\pi]=0$ to computing finite dimensional Chevalley-Eilenberg cohomology groups.
\end{remark}

\chapter{The Poisson Embedding Problem}
\label{sec:PoissEmbProb}

This chapter formulates and provides an initial approach to answering the motivating problem for this dissertation:

\begin{question}
\label{Q:SmEmb}
 Given a singular quotient of a smooth Poisson manifold by the linear action of a compact group, when does it smoothly embed as a Poisson submanifold of $\RR^n$ for some $n>0$ and some Poisson structure on $\RR^n$? 
\end{question}

This problem on {\it smooth} Poisson manifolds is then (locally) reduced to the following {\it algebraic} problem:

\begin{question}
\label{Q:AlgEmb}
Given a Poisson scheme $(X,\brac_X)$ and an embedding (in the category of schemes) of $X\inj\bbA^n_\RR$, when does there exist a Poisson structure on $\bbA^n_\RR$ extending $\brac_X$?
\end{question}

As our primary examples arise as quotients by subgroups of $U(1)$, we will discuss a technique for taking advantage of the natural complex coordinate --- further reducing the problem to finding {\it complex} algebraic Poisson embeddings of the complexification $X_\CC$ into $\CC^n$.  After discussing these reductions, we illustrate the problem with many examples framing the problem in the context of the previous chapters.

\section{Embedding Poisson Structures: Preliminaries}
\label{sec:PoissEmb}
In this chapter, we fix the following notation:
\begin{enumerate}
\item Denote by $V$ the real symplectic space $\bbA^{2n}_\RR$ with the  standard non-degenerate Poisson bracket $\brac_V$.  Denote the Poisson algebra of global sections by $\OO_V(V)$, or $\Sym(V^*)$
\item Let $G\subseteq{\rm Sp}(V)$ be a compact group acting linearly on $V$ by symplectic isomorphisms, and assume that $0$ is an isolated fixed point. 
\end{enumerate}

\begin{remark} Because $\pt$ is an isolated fixed point, the ring of polynomial invariants $\Sym(V^*)^G$ has no linear terms.
\end{remark}

The algebra of $G$-invariant polynomials on $V$ is finitely generated, by the Hilbert basis theorem.  Fixing a set of generators $\sigma:=\{\sigma_1,\ldots,\sigma_d\}\subseteq\Sym(V^*)^G$, we define the {\it Hilbert map} to be the map induced by the ring map: $$(x_1,\ldots,x_d)\mapsto(\sigma_1,\dots,\sigma_d)$$ 

The corresponding map of algebraic varieties is given by:
$$\begin{array}{l}
\tilde{F}_\sigma\!:V\to\RR^d \vspace{.5pc}\\

\tilde{F}_\sigma(v)=(\sigma_1(v),\ldots,\sigma_d(v))  \end{array}$$

As this map separates the orbits of $G$, the Hilbert map induces an embedding $F_\sigma\!:V/G\hookrightarrow\RR^d$ of the quotient.  This construction justifies the following definition:

\begin{definition}  The (algebraic) embedding dimension of $V/G$ is defined as
$$d={\rm min}\{l\in\NN\SC \mbox{ some } \sigma_1,\ldots,\sigma_l \mbox{ generates } \Sym(V^*)^G\}$$
\end{definition}

\begin{remark}  As $\Sym(V^*)^G$ is a finitely generated algebra, the minimal algebraic embedding dimension is finite.
\end{remark}

A useful characterization of the minimal embedding is given by the Zariski tangent space at the fixed point (see \cite{Hartshorne77}):

\begin{proposition}  A minimal algebraic embedding of $V/G$ is given by a choice of basis of the Zariski cotangent space $\m_0/\m_0^2$.  Therefore:
\begin{enumerate}
\item The Zariski tangent space at zero $\TT_{V/G,0}$ is naturally identified with the codomain of the minimal embedding of $V/G$
\item The embedding dimension of $V/G$ is given by $\dim_{\RR}(\TT_{V/G,0})$.
\end{enumerate}
\end{proposition}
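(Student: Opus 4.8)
The plan is to prove the assertion of the first sentence---that a minimal closed embedding $V/G\inj\RR^d$ is the same datum as a homogeneous basis of the Zariski cotangent space $\m_0/\m_0^2$---since (1) and (2) then follow at once. The essential tool is the graded Nakayama lemma, applied to the graded coordinate ring $A:=\Sym(V^*)^G=\bigoplus_{n\geq 0}A_n$, in which $A_0=\RR$ and $\m_0=\bigoplus_{n\geq 1}A_n$ is the homogeneous maximal ideal cutting out the fixed point $\pt\in V/G$; since $\m_0$ and $\m_0^2$ are homogeneous ideals, $\m_0/\m_0^2$ is a graded $\RR$-vector space, and we write $e:=\dim_\RR(\m_0/\m_0^2)$. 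First I would record the dictionary: after translating so that the fixed point maps to the origin (this changes neither the left nor the right side), a closed embedding $V/G\inj\RR^d$ is precisely a surjection of $\RR$-algebras $\RR[x_1,\ldots,x_d]\sur A$ with $x_i\mapsto\sigma_i\in\m_0$---equivalently, an ordered generating set $\sigma_1,\ldots,\sigma_d$ of $A$ by constant-term-free elements. So the proposition reduces to the purely algebraic statement: the minimal size of such a generating set is $e$, realized exactly by the homogeneous lifts of a basis of $\m_0/\m_0^2$.

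For the lower bound $d\geq e$: if $\sigma_1,\ldots,\sigma_d$ generate $A$, then any $f\in\m_0$ is a constant-term-free polynomial in the $\sigma_i$, so reduction modulo $\m_0^2$ kills the higher-order terms and yields $f\equiv\sum_i c_i\sigma_i\pmod{\m_0^2}$; hence the classes $\bar\sigma_i$ span $\m_0/\m_0^2$ and $d\geq e$. This step must be argued through $\m_0^2$ rather than by counting linear parts, because in an arbitrary embedding the $\sigma_i$ need not be homogeneous.

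For the upper bound $d_{\min}\leq e$---the crux---choose \emph{homogeneous} $\sigma_1,\ldots,\sigma_e\in\m_0$, say of degrees $d_1,\ldots,d_e$, whose classes form a basis of the graded space $\m_0/\m_0^2$, and let $B:=\RR[\sigma_1,\ldots,\sigma_e]\subseteq A$ be the graded subalgebra they generate. I claim $B=A$, by induction on degree: $B_0=A_0=\RR$, and assuming $B_m=A_m$ for all $m<N$, any homogeneous $f\in A_N$ with $N\geq 1$ may be written $f=\sum_{d_i=N}c_i\sigma_i+g$ where $g$ is homogeneous of degree $N$ and lies in $\m_0^2$; expanding $g$ as a sum of products $ab$ with $a\in A_p$, $b\in A_q$, $p,q\geq 1$, $p+q=N$, the inductive hypothesis gives $a,b\in B$, so $g\in B$ and $f\in B$. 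Thus $e$ homogeneous invariants generate $A$, producing an embedding $V/G\inj\RR^e$, so $d_{\min}\leq e$.

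Combining the bounds gives $d_{\min}=e=\dim_\RR(\m_0/\m_0^2)=\dim_\RR\TT_{V/G,0}$, the last equality by the definition of the Zariski tangent space as $(\m_0/\m_0^2)^*$; this is (2). Moreover any minimal generating set consists of exactly $e$ elements whose classes span the $e$-dimensional space $\m_0/\m_0^2$, hence form a basis, and conversely a homogeneous basis lifts to a minimal generating set---this is the first sentence. For (1): given a minimal $\iota\!:V/G\inj\RR^e$, the coordinate differentials $dx_i$ pull back to $d\sigma_i$, a basis of $\m_0/\m_0^2=T^*_0(V/G)$, so the codifferential is an isomorphism and $\iota$ identifies $\TT_{V/G,0}$ with the codomain $\RR^e$. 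The only real obstacle is bookkeeping: one must keep the two inequalities genuinely separate---no homogeneity is available in the lower bound, whereas homogeneous generators are indispensable for the degree induction (a generic non-homogeneous lift of a basis need not generate, e.g.\ $\RR[x+x^2]\subsetneq\RR[x]$)---and dispose of the harmless normalization placing the fixed point at $0$. Modulo that, the statement is exactly graded Nakayama.
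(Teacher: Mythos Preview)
Your proof is correct and complete. The paper does not actually supply its own proof of this proposition: it simply states the result and refers the reader to Hartshorne for the standard fact that the embedding dimension at a point equals the dimension of the Zariski tangent space there. What you have written is precisely the standard argument---graded Nakayama applied to the positively graded $\RR$-algebra $A=\Sym(V^*)^G$---that underlies that citation.

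A couple of minor remarks on presentation. Your observation that the lower bound needs no homogeneity while the upper bound does, together with the example $\RR[x+x^2]\subsetneq\RR[x]$, is a nice touch and exactly identifies where the graded structure is doing work. You might also note explicitly that the paper's standing hypothesis that $0$ is an isolated fixed point (so $A_1=0$) is not actually needed for the argument you give; it only affects which degrees the minimal generators live in, not the equality $d_{\min}=\dim_\RR(\m_0/\m_0^2)$.
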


\subsubsection{Smooth structures}
\label{sec:SmStr}

We give a definition, suitable for our purposes, of a smooth embedding of a (possibly singular) quotient:

\begin{definition} Let $M$ be a smooth manifold.  An injective map of sets $\phi\!:V/G\hookrightarrow M$ is called a {\it smooth embedding} if the pullback $\phi^*:C^\infty(P)\twoheadrightarrow C^\infty(V)^G$ is surjective.
\end{definition}

In the present context, the study of smooth embeddings reduces to the algebraic case described above: A theorem of Schwarz \cite{Schwarz75} states that $C^\infty(V)^G$ is generated by a finite number of {\it polynomial} invariants. That is, there exist $G$-invariant polynomials $\{\sigma_1,\ldots,\sigma_d\}$ that generate smooth functions on $V/G$ in the following sense: for all $F\in C^\infty(V)^G$, there exists an $\tilde{F}\in C^\infty(\RR^d)$ such that $F=\tilde{F}(\sigma_1,\ldots,\sigma_d)$.  Thus, any smooth embedding factors through a minimal algebraic embedding.

In more general contexts, the model for the smooth functions on a singular space is more subtle and understood using stratifications (see \cite{Pflaum01}).

\subsubsection{Poisson embedding}

Poisson structures behave very well with respect to quotients by group actions.  As $G$ acts on $V$ by symplectomorphisms, the Poisson bracket on $V$ descends to a Poisson bracket on $V/G$:  If $\phi,\psi$ are $G$-invariant functions on $V$, then $\{\phi,\psi\}_V$ is also a $G$-invariant function on $V$, as 
$$g^*\{\phi,\psi\}_V=\{g^*\phi,g^*\psi\}_V=\{\phi,\psi\}_V$$
Thus, $\brac_V$ restricts to a bracket on the subalgebra of $G$-invariant functions, defining a Poisson structure on the quotient $(V/G,\brac_{V/G})$.  This result holds in every category considered in previous sections (schemes, varieties, smooth spaces, analytic spaces, and formal schemes).  Thus, we have a uniform definition of a Poisson embedding of a quotient space $V/G$:

\begin{definition} Let $V$ and $G$ be as above, and $(P,\brac_P)$ be a Poisson space in a category $\mathcal{C}$ (e.g.  schemes, varieties, smooth manifolds, analytic spaces, or formal schemes).  A map $\phi: V/G\hookrightarrow P$ is a {\em $\mathcal{C}$-Poisson embedding} if it is an embedding and a Poisson map in $\mathcal{C}$.
\end{definition}

\begin{definition}
Define the {\it $\mathcal{C}$-Poisson embedding dimension} of $V/G$ to be the minimal dimension of any $\mathcal{C}$-Poisson embedding --- that is $\min\{d \SC \exists \phi:V/G\hookrightarrow P \mbox{ and } \dim P=d\}$.  If no embedding exists, then we say the $\mathcal{C}$-Poisson embedding dimension is infinity.
\end{definition}

In \cite{Sjamaar91}, R. Cushman conjectured that given an embedding $\phi:V/G\hookrightarrow\RR^n$, there is a Poisson structure on $\RR^n$ making $\phi$ a {\em Poisson embedding}.  A. Egilsson provided a negative answer to this conjecture in \cite{Egilsson95}, using an example that will be described in the following Section \ref{sec:Complexification}.

The answer to the following related question is still not known:

\begin{question}  Given a Poisson quotient $V/G$ in the category of smooth spaces is the smooth Poisson embedding dimension finite?  That is, can one realize $V/G$ as a Poisson subspace of some smooth Poisson structure on $\RR^n$?
\end{question}

In fact, the answer is unknown in every category considered in this dissertation.

\subsection{Reduction to the Algebraic Case}
\label{sec:ReductionAlg}

This reduction follows the same technique developed in \cite{Davis01}.  In the context of this dissertation, the singular quotients $V/G$ behave well enough that their smooth functions $C^\infty(V/G)$ as defined in Section \ref{sec:SmStr} are locally well-approximated by power series.

\begin{theorem}  Let  $A=\OO_V(V)^G$ be the algebra of $G$-invariant global sections of $\OO_V$. Then the inclusion
$$ \hat{\iota}: \widehat{A}_{\iota^{-1}(\m_0)}\to \widehat{C^{\infty}(V/G)}_{\m_0} $$
of the completions of the local rings of functions at $0$ is an isomorphism. 
\end{theorem}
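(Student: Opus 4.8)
The plan is to show that the two completed local rings are isomorphic by identifying a common "intermediate" object and using the fact that $G$ is compact and acts linearly with $0$ an isolated fixed point. First I would recall the structure theorem of Schwarz \cite{Schwarz75} quoted in Section~\ref{sec:SmStr}: there are $G$-invariant polynomials $\sigma_1,\dots,\sigma_d$ generating $A=\OO_V(V)^G=\Sym(V^*)^G$ as an algebra such that every $F\in C^\infty(V)^G$ is of the form $\tilde F(\sigma_1,\dots,\sigma_d)$ for some $\tilde F\in C^\infty(\RR^d)$. Since $0$ is an isolated fixed point, the $\sigma_i$ may be taken to have no constant or linear term, so that the Hilbert map $\tilde F_\sigma:V\to\RR^d$ sends $0$ to $0$ and $\iota^{-1}(\m_0)$ is the maximal ideal of $A$ at $0$. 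Thus both sides of the claimed isomorphism are $\m$-adic completions, where $\m$ is generated by the classes of $\sigma_1,\dots,\sigma_d$, and there is a natural map $\widehat A\to\widehat{C^\infty(V/G)}$ induced by the inclusion $A\hookrightarrow C^\infty(V/G)$ (polynomials are smooth). It remains to show this map is both injective and surjective.

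For surjectivity I would argue as follows. Given $F=\tilde F(\sigma_1,\dots,\sigma_d)\in C^\infty(V/G)$, expand $\tilde F$ in its Taylor series at $0\in\RR^d$; the partial sums are polynomials in the $\sigma_i$, hence elements of $A$, and (by Borel's lemma / the standard estimate on Taylor remainders, composed with the fact that $\sigma_i\in\m$) they converge to $F$ in the $\m$-adic topology of $C^\infty(V/G)$. This shows the image of $A$ is dense, and since $\widehat A$ is complete the induced map on completions is surjective. For injectivity, suppose $f\in\widehat A=\varprojlim A/\m^k$ maps to $0$ in $\widehat{C^\infty(V/G)}$; I must show $f\in\m^k$ for all $k$, i.e. that the natural map $A/\m^k\to C^\infty(V/G)/\m^k_{C^\infty}$ is injective for each $k$. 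Equivalently, I need: if a $G$-invariant polynomial vanishes to order $k$ at $0$ as a smooth function on $V/G$ (i.e. lies in the $k$-th power of the maximal ideal of $C^\infty(V/G)$), then it already lies in $\m^k\subseteq A$. Here I would use that $C^\infty(V/G)$ is, via Schwarz, a quotient of $C^\infty(\RR^d)$ and that the filtration by powers of $\m_0$ on $A$ is comparable to the filtration induced from the polynomial degree, using the Artin--Rees lemma for the Noetherian ring $A$ together with the fact that $A$ is the ring of a variety with an isolated singular point so that $\bigcap_k\m^k=0$ — i.e. $A$ is $\m$-adically separated, hence embeds into $\widehat A$.

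The main obstacle, and the step deserving the most care, is the injectivity/comparison-of-filtrations claim: one must rule out the possibility that some invariant polynomial is "flatter" as a function on the quotient $V/G$ than its algebraic order of vanishing at $0$ would suggest. The cleanest route is probably to pull everything back to $V$ itself: a $G$-invariant polynomial $p$ on $V$ lies in the $k$-th power of the maximal ideal of $C^\infty(V/G)$ at $0$ if and only if $p=\sum_j q_{j,1}\cdots q_{j,k}$ for smooth $G$-invariant $q_{j,\ell}$ vanishing at $0$; averaging over $G$ and using Schwarz again to write each $q_{j,\ell}$ in terms of the $\sigma_i$, one reduces to a statement purely about the polynomial algebra $\Sym(V^*)$, where the degree filtration is visibly separated and the Hilbert map is finite, so that $\m$-adic order on $A$ is detected after pullback along the finite map $V\to V/G$. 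Finally I would note that this is exactly the content of the reduction in \cite{Davis01}, whose argument applies verbatim; the role of compactness of $G$ is to guarantee the averaging operator (Reynolds operator) exists and is continuous in the relevant topologies.
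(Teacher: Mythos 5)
Your sketch is essentially the right argument, but note first how the paper itself handles this statement: it gives no proof, instead invoking Theorem 4.2.15 of \cite{Davis01}, whose two hypotheses --- that $A$ is a subalgebra of a polynomial algebra and that it is finitely generated by \emph{homogeneous} polynomials --- are supplied by Schwarz's theorem. What you have written fills in the content the paper outsources, and it follows the same underlying route: Schwarz to write every smooth invariant as $\tilde F(\sigma_1,\ldots,\sigma_d)$, Taylor expansion (Hadamard's lemma, not Borel, is the tool you want) to get levelwise surjectivity of $A/\m_A^k\to C^\infty(V/G)/\m_0^k$ (with surjectivity passing to the inverse limit because the $A/\m_A^k$ are finite dimensional), and a comparison of the filtration $\{A\cap\m_0^m\}$ with $\{\m_A^k\}$ for injectivity. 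The one place to tighten is the injectivity step: Artin--Rees and Krull separatedness of $A$ are not the right ingredients, since separatedness concerns a single filtration while what is needed is cofinality of the two filtrations, namely that for each $k$ there is an $m$ with $A\cap\m_0^m\subseteq\m_A^k$ (your stated goal $A\cap\m_0^k=\m_A^k$ at the same $k$ is stronger than necessary and not what the argument produces). The ingredient that actually delivers cofinality is exactly the hypothesis the paper singles out from \cite{Davis01}: the generators are homogeneous of positive degree. A product of $m$ smooth invariants vanishing at $0$ vanishes to order at least $m$ on $V$, so an element of $A\cap\m_0^m$ has lowest homogeneous degree at least $m$; and once $m\geq k\cdot\max_i\deg\sigma_i$, any graded expression for such a polynomial in the homogeneous $\sigma_i$ must involve at least $k$ factors, whence it lies in $\m_A^k$. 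With that substitution your proposal is complete and is, in substance, the argument of \cite{Davis01} that the paper cites.
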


By Theorem 4.2.15 in \cite{Davis01}, such an isomorphism is obtained any time the algebra $A$ is 
\begin{enumerate}
\item a subalgebra of a polynomial algebra 
\item finitely generated as an algebra by homogeneous polynomials.  
\end{enumerate}
These conditions are guaranteed by Schwarz \cite{Schwarz75} as described above.  

Within the context of embeddings, this isomorphism implies that any embedding $\phi:V/G\inj\RR^n$, after applying the completion functor, gives rise to a local embedding:

$$\xymatrix{
\widehat{C^{\infty}(\RR^n)}_{(x_1,\ldots,x_n)} \ar@{->>}[d]^{\phi^*} \ar[r]^{\isom} & \RR[[x_1,\ldots,x_n]] \ar@{-->>}[d] \\
\widehat{C^{\infty}(V/G)}_{\m_0}  \ar[r]^{\isom} & \widehat{A}_{\iota^{-1}(\m_0)} \\
}$$
Moreover, as the horizontal isomorphisms respect the bracket structures (see Section \ref{sec:PoissonAlg}, or \cite{Davis01}), if $\phi$ is a Poisson embedding, the above diagram induces a local Poisson embedding.  Thus we obtain a relationship between smooth and formal embedding dimensions.

\begin{theorem}[Davis \cite{Davis01}] 
If $d$ is the smooth Poisson embedding dimension of $V/G$, then the formal Poisson embedding dimension is less than or equal to $d$ at every point $p\in V/G$.
\end{theorem}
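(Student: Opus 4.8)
The plan is as follows. If the smooth Poisson embedding dimension equals $\infty$ there is nothing to prove, so assume it is a finite integer $d$ and fix a smooth Poisson embedding $\phi\colon V/G\inj(P,\brac_P)$ with $\dim P=d$. The statement is local on $V/G$, so I would fix a point $p\in V/G$, set $q=\phi(p)$, choose local coordinates on $P$ centered at $q$ (identifying $\widehat{C^\infty(P)}_q$ with $\RR[[x_1,\dots,x_d]]$), and then push the surjective Poisson map $\phi^*$ through the completion functor at the pair $(q,p)$, recognizing the result as a formal Poisson embedding of $\widehat{(V/G)}_p$ into a $d$-dimensional formal Poisson scheme. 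This bounds the formal Poisson embedding dimension at $p$ by $d$, which is the assertion.

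First I would check that $\brac_P$ descends to the completion. Writing it locally as a bidifferential operator $\{f,g\}_P=\sum_{i,j}\pi^{ij}\,(\partial f/\partial x_i)(\partial g/\partial x_j)$ with $\pi^{ij}\in C^\infty(P)$, each operator $\{f,-\}_P$ lowers $\m_q$-adic order by at most one, hence is continuous for the $\m_q$-adic topology, so $\brac_P$ extends uniquely to a continuous biderivation of $\widehat{C^\infty(P)}_q\isom\RR[[x_1,\dots,x_d]]$; skew-symmetry, the Leibniz rule, and the Jacobi identity persist because each is an identity between continuous maps already valid on the dense subalgebra $C^\infty(P)$. Thus the $d$-dimensional formal scheme with coordinate ring $\RR[[x_1,\dots,x_d]]$ becomes a formal Poisson scheme. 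I expect this step --- making sure completion and the Poisson bracket genuinely commute, although $\brac_P$ need not preserve the $\m_q$-adic filtration in a degree-preserving way --- to be the main technical obstacle.

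Next I would complete $\phi^*$. Since $\phi$ is a smooth embedding, $\phi^*\colon C^\infty(P)\sur C^\infty(V/G)$ is surjective, and since $\phi(p)=q$ it carries $\m_q$ onto $\m_p$ (given $g\in\m_p$ written as $\phi^*f$, evaluating at $p$ forces $f\in\m_q$); hence $\phi^*(\m_q^{k})=\m_p^{k}$ for all $k$, so $\phi^*$ is adically continuous and its completion $\widehat{\phi^*}\colon\RR[[x_1,\dots,x_d]]\sur\widehat{C^\infty(V/G)}_p$ is still surjective, and by the previous step a homomorphism of topological Poisson algebras. Finally I would invoke the completion theorem quoted above (Davis): $\widehat{C^\infty(V/G)}_p$ is, as a Poisson algebra, the formal local ring of the Poisson scheme $V/G$ at $p$, so $\widehat{\phi^*}$ realizes $\widehat{(V/G)}_p$ as a closed Poisson subscheme of a $d$-dimensional formal Poisson scheme, i.e.\ a formal Poisson embedding, and therefore the formal Poisson embedding dimension of $V/G$ at $p$ is at most $d$. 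A minor point to settle en route is that the completion theorem, stated in the excerpt at the fixed point $0$, extends to an arbitrary $p$; at a nontrivial orbit this should follow from the slice theorem, which locally presents $V/G$ near $p$ as a product of a ball with a lower-dimensional symplectic quotient by the stabilizer, reducing to the case already recorded.
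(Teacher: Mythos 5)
Your proposal is correct and follows essentially the same route the paper takes (following Davis): apply the completion functor at $p$ to the smooth Poisson embedding, check that the bracket and the surjection $\phi^*$ pass to the $\m$-adic completions, and identify $\widehat{C^\infty(V/G)}_p$ with the completed algebraic local ring via the Schwarz--Davis isomorphism to obtain a formal Poisson embedding of the same dimension $d$. Your added care about continuity of the bracket, surjectivity after completion, and reduction to the fixed point via the slice theorem only fills in details the paper leaves to the citation.
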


This theorem allows us to restrict our attention to algebraic and formal Poisson structures, as either:

\begin{enumerate}
\item we find an algebraic or formal Poisson embedding of $V/G\inj\RR^n$, or
\item we find an obstruction to formally extending the Poisson structure on $V/G$, which by the above theorem is also an obstruction to smoothly extending the Poisson structure on $V/G$.
\end{enumerate}

\begin{remark} It is possible that such a procedure could result in a third possibility: a formal Poisson structure on a neighborhood $V/G$ that is not extendable to a Poisson structure on the whole space.   Such an example has yet to be found, but would be quite interesting.
\end{remark}

\subsection{The Complexification Trick}
\label{sec:Complexification}

Many of the motivating examples of Poisson quotients arise from actions of subgroups $G\subseteq U(n)$.  When working with such actions, the natural coordinates to consider are the complex coordinates.  However, making use of this extra structure requires a development of how a Poisson embedding behaves under complexification (see also \cite{Egilsson95}).

Recall from Proposition \ref{prop:PoissonTProd}, that the tensor product of Poisson algebras over a field $\KK$ is a Poisson algebra.  Applying this proposition to a Poisson algebra $(A,\brac)$ over $\RR$ and the trivial Poisson algebra $\CC$ over $\RR$, we obtain a Poisson algebra $A\otimes_\RR\CC$ over $\RR$.  One may also consider $A\otimes_\RR\CC$ as a Poisson algebra over $\CC$, and the resulting bracket $\brac_\CC$ is precisely the $\CC$-linear extension of $\brac$.  Thus, if $(X,\brac)$ is a real affine Poisson variety, its complexification $(X_\CC,\brac_\CC)$ is naturally a complex affine Poisson variety.  Moreover, this construction is functorial: given a Poisson map of real Poisson varieties $F:X\to Y$, the induced map between their complexifications $F_\CC:X_\CC\to Y_\CC$ is a Poisson map.

\begin{example}[K\"{a}hler coordinates] 
\label{ex:Complexification}
Let $V_1$ and $V_2$ be real $2n$-dimensional vector spaces (considered as affine varieties), with coordinate rings 
$$\OO_{V_1}(V_1)=\RR[x_1,\ldots,x_n,y_1,\ldots, y_n] \mbox{ and } \OO_{V_2}(V_2)=\RR[u_1,\ldots,u_n,v_1,\ldots, v_n]$$ 
We give $V_1$ the standard symplectic structure and define the map between the complexifications: $$\kappa^*:\CC[x_1,\ldots,x_n,y_1,\ldots, y_n] \to \CC[u_1,\ldots,u_n,v_1,\ldots, v_n]$$

$$\kappa^*(w)=\left\{\begin{array}{l} \frac{1}{2}(u_k+v_k) \mbox{,   if $w=x_k$ } \\ \frac{-i}{2}(u_k-v_k) \mbox{,   if $w=y_k$ } \end{array}\right.$$

The discussion above implies that $\brac$ determines a unique Poisson bracket $\brac_\CC$ on $\OO_{V_2}(V_2)\otimes \CC$.  Denoting $\kappa^*(x_i)=z_i$ and $\kappa^*(y_i)=\zb_i$, the resulting Poisson structure on $(V_2)_\CC$ is given by the standard K\"{a}hler bracket:
$$ \{z_j,\zb_k\}=-2i\delta_{jk} \mbox{ and \hspace{4pt} } \{z_j,z_k\}=\{\zb_j,\zb_k\}=0  \mbox{ for all $j,k$ }$$
\end{example}

\begin{example}  A Poisson embedding of real Poisson varieties $F:X\inj\RR^d$ induces a Poisson embedding of complex Poisson varieties:

$$\xymatrix{
X_\CC \ar@{^{(}->}[r]^{F_\CC} \ar[d] & \CC^d \ar[d] \\
X \ar@{^{(}->}[r]^{F} & \RR^d }
$$ 

As complex coordinates are often more natural, we will usually work with the induced embedding of complex varieties $(X_\CC,\brac_\CC)$ and use this one-to-one correspondence between Poisson embeddings of the real variety $X\inj\RR^d$ and certain Poisson embeddings of the complexification $X_\CC\inj\CC^d$.
\end{example}

\section{Examples of Poisson Embeddings}

The following example, considered in \cite{Sjamaar91}, describes a quotient that Poisson embeds into a linear Poisson structure:
\begin{example}[Quadratic invariants] 
\label{ex:Quadratic}
Assume that the $G$-invariant functions on $V$ are generated by {\em quadratic} polynomials $\sigma_1,\ldots,\sigma_n$.  As the Poisson bracket $\brac_V$ has degree-$(-2)$, the Hamiltonian vector fields $X_{\sigma_i}:=\{\cdot,\sigma_i\}$ are linear; they span a Lie subalgebra $\h\subseteq\mathfrak{sp}(V)$.  The moment map $\mu:\! V\to\h^*$ is precisely the Hilbert map $F_\sigma$.  Moreover, as moment maps are homomorphisms of Poisson algebras, the induced map on the quotient $\mu:\! V/G\hookrightarrow\h^*\isom\RR^n$ is a Poisson embedding, identifying $V/G$ as a nilpotent cone in $\h^*$.
\end{example}

\subsection{Embedding 2-Dimensional Symplectic Orbifolds}

The Hilbert map associated to a low dimensional dimensional Poisson quotient often defines a Poisson embedding.  The following treats 2-dimensional (real and complex) orbifolds.

\subsubsection{ Real Symplectic Orbifolds}

Every local model $\RR^2/\Gamma$ for a real 2-dimensional symplectic orbifold Poisson embeds into $\RR^3$.  The 2-dimensional local models for general orbifolds are quotients $\RR^2/\ZZ_n$ and $\RR^2/D_{2n}$ with the standard real representations. However, only the action of $\ZZ_n$ is compatible with the symplectic structure on $\RR^2$.

\begin{example}[The 2-dimensional real orbifold model $\RR^2/\ZZ_n$]  
\label{ex:Z_n}
Consider the linear action of $\ZZ_n$ on $\RR^2$ by rotation: the generator $\zeta\in\ZZ_n$ acts via the matrix
$$\Phi_\zeta=\left(\begin{array}{cc} \cos (2\pi/n) & \sin(2\pi/n) \\ -\sin (2\pi/n) & \cos (2\pi/n) \end{array}\right)$$
The coordinate transformation $\kappa:\RR^2\to\RR^2\otimes\CC$ in example 2.3 becomes the matrix
$\kappa~=~\frac{1}{2}\left(\begin{array}{cc} 1 & -i \\ 1 & i \end{array}\right)$.  The induced Poisson action on the complexification $\RR^2\otimes\CC$ becomes the standard action of $\ZZ_n\subseteq{\rm SU}(2)$:
$$\kappa\Phi_\zeta\kappa^{-1}=\left(\begin{array}{cc} \zeta & 0 \\ 0 & \zeta^{-1} \end{array}\right) \mbox{ where $\zeta=e^\frac{2\pi i}{n}$}$$
 A minimal generating set (i.e.\! a Hilbert basis) for the algebra of invariant functions consists of polynomials: 
$$\sigma_0=z\zb \mbox{,  \hspace{4pt}    } \sigma_1=z^n+\zb^n \mbox{,   \hspace{4pt}    }  \sigma_2=i(z^n-\zb^n)   $$
These define an embedding into $\RR^3$ via the Hilbert map:
$$F_\sigma\!:\RR^2\isom\CC\to\RR^3\subseteq\CC^3$$
$$F_\sigma(z,\zb)=(\sigma_0(z,\zb),\sigma_1(z,\zb),\sigma_2(z,\zb))$$
and the quotient is realized as the vanishing locus $V(y^2+z^2-4x^n)\subseteq\RR^3$.  The Poisson bracket on the algebra of invariants is given by:
$$\{\sigma_0,\sigma_1\}=-2n\sigma_2 \mbox{, \hspace{4pt}  } \{\sigma_0,\sigma_2\}=2n\sigma_1  \mbox{, \hspace{4pt}  } \{\sigma_1,\sigma_2\}=(2n)^2\sigma_0^{n-1}$$
which extends to the Poisson structure on $\RR^3$ given by:
$$\{x,y\}_{\RR^3}=-2nz \mbox{,\hspace{4pt} } \{x,z\}_{\RR^3}=2ny \mbox{, \hspace{4pt} } \{y,z\}_{\RR^3}=(2n)^2 x^{n-1} $$
That this bracket satisfies the Jacobi identity is easily verified on the generators $\{x,y,z\}$:
$$\begin{array}{rcccccc}
\{x,\{y,z\}_{\RR^3}\}_{\RR^3} + {\rm c.p. } &=& \{x,(2n)^2x^{n-1}\}_{\RR^3} &+& \{z,-2nz\}_{\RR^3} &+& \{y,-2ny\}_{\RR^3}\\ 
&=& 0 &+& 0 &+& 0  \\
\end{array}$$
\end{example}

\begin{remark} If $n=2$, then the resulting quotient $V/\ZZ_2$ falls within the assumptions of Example \ref{ex:Quadratic}.  The resulting Poisson structure on $\RR^3$ is isomorphic to the Lie-Poisson structure on $\slt(\RR)^*$ and the image of the embedding is the nilpotent cone.
\end{remark}

\subsubsection{ Holomorphic Symplectic Orbifolds }

Other low dimensional examples arise as holomorphic Poisson structures on 2-dimensional complex orbifolds.  Let $V$ be a 2 dimensional {\it complex} symplectic vector space and $\Gamma\subseteq{\rm Sp}(2,\CC)$ a finite group. The resulting quotient $V/\Gamma$ is a local model for a holomorphic symplectic orbifold.

\begin{proposition} Every such ``holomorphic symplectic orbifold'' of dimension 2 has a Poisson embedding into $\CC^3$.  
\end{proposition}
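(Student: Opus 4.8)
The plan is to reduce to the classification of finite subgroups of $\mathrm{SL}(2,\CC)$ (equivalently $\mathrm{Sp}(2,\CC)$, since these coincide in dimension $2$) and to exhibit for each the three generators of the invariant ring together with the induced bracket. First I would observe that a $2$-dimensional complex symplectic vector space $V$ is the same as a $2$-dimensional vector space with the volume form as symplectic form, so $\mathrm{Sp}(V)=\mathrm{SL}(2,\CC)$; by the ADE classification, a finite subgroup $\Gamma\subseteq\mathrm{SL}(2,\CC)$ is conjugate to a cyclic, binary dihedral, binary tetrahedral, binary octahedral, or binary icosahedral group. In each case the ring of invariants $\CC[u,v]^{\Gamma}\isom\Sym(V^*)^{\Gamma}$ is classically known to be generated by exactly three homogeneous polynomials $\sigma_1,\sigma_2,\sigma_3$ subject to a single relation (the Klein/du Val equation of the corresponding Kleinian singularity). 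Thus the Hilbert map $F_\sigma\colon V/\Gamma\hookrightarrow\CC^3$ is already a closed embedding of the underlying variety, exactly as in Example \ref{ex:Z_n} for the cyclic case.

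Next I would transport the Poisson structure. Since $\Gamma$ acts symplectically, the standard bracket $\brac_V$ on $\CC[u,v]$ (with $\{u,v\}=1$) restricts to a Poisson bracket on the subalgebra $\CC[u,v]^{\Gamma}$. Because the $\sigma_i$ have degree $d_i$ and $\brac_V$ has degree $-2$, each $\{\sigma_i,\sigma_j\}$ is a $\Gamma$-invariant polynomial of degree $d_i+d_j-2$, hence again a polynomial in $\sigma_1,\sigma_2,\sigma_3$. Writing $\{\sigma_i,\sigma_j\}=P_{ij}(\sigma_1,\sigma_2,\sigma_3)$ defines a quadratic-type Poisson bracket on $\CC^3$; one checks the Jacobi identity either by a direct computation on the generators $x,y,z$ of $\CC[\CC^3]$ (as in Example \ref{ex:Z_n}), or more conceptually by noting that the bracket on $\CC^3$ is determined by a single bivector $\pi$ and that $[\pi,\pi]$ is a trivector whose coefficients, being $\Gamma$-invariant polynomials that pull back to zero on $V$, must vanish because the pullback map $\CC[x,y,z]\to\CC[u,v]^{\Gamma}$ is injective on the quotient ring. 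In fact, there is a uniform description: the invariant $f=\sigma_1$ defining the relation is a Casimir, and the bracket on $\CC^3$ is the one associated to the quadratic differential $\pi = \mathrm{Curl}$-type structure $\{x_i,x_j\}=\varepsilon_{ijk}\,\partial f/\partial x_k$ up to scalar, which automatically satisfies Jacobi for any $f$.

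Then I would spell out that this Poisson structure on $\CC^3$ makes the Hilbert embedding $F_\sigma\colon V/\Gamma\hookrightarrow\CC^3$ a Poisson map: its pullback $\CC[x,y,z]\to\CC[u,v]^{\Gamma}$ sends $x_i\mapsto\sigma_i$ and, by construction of the $P_{ij}$, intertwines the two brackets. Since $F_\sigma$ is a closed embedding of schemes, this exhibits $V/\Gamma$ as a Poisson subscheme of $(\CC^3,\brac)$, proving the proposition.

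The main obstacle is the verification of the Jacobi identity in a case-free way. Rather than checking all five families by brute force, I expect the cleanest route is the observation that for $\CC^3$ with coordinates $x_1,x_2,x_3$ and any $f\in\CC[x_1,x_2,x_3]$, the bracket $\{x_i,x_j\}=\sum_k \varepsilon_{ijk}\,\partial f/\partial x_k$ is always Poisson (its Jacobiator vanishes identically, being a classical identity for such "Jacobian" brackets), and $f$ is automatically a Casimir; one then only needs to identify, for each $\Gamma$, the defining relation $f$ of $\CC[u,v]^{\Gamma}$ and check that the restriction of $\brac_V$ to the invariants agrees with this bracket up to a nonzero constant, which follows by comparing degrees and using that both sides are $\Gamma$-invariant derivations killing $f$. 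The remaining bookkeeping — listing the $\sigma_i$, $d_i$, and $f$ for the dihedral and exceptional cases — is routine invariant theory and can be relegated to a table.
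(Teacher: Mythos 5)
Your overall route is the same as the paper's: identify ${\rm Sp}(V)={\rm SL}(V)$ in dimension $2$, invoke the ADE classification of finite subgroups, take the three classical generators $\sigma_1,\sigma_2,\sigma_3$ of $\Sym(V^*)^\Gamma$ with their single Klein relation $f$, embed by the Hilbert map, and extend the bracket to $\CC^3$ by expressing each $\{\sigma_i,\sigma_j\}$ as a polynomial in the $\sigma$'s. Where you differ is the Jacobi check: the paper simply lists, type by type, a Hilbert basis, the relation, and an explicit extended bracket, and verifies Jacobi directly (this is what its remarks about the principal vanishing ideal and the low embedding dimension amount to), whereas you propose the uniform observation that $\{x_i,x_j\}=\sum_k\varepsilon_{ijk}\,\partial f/\partial x_k$ is Poisson for every $f$, with $f$ a Casimir, and that the induced bracket agrees with it up to a nonzero constant. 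That is a genuine streamlining, and it is consistent with the paper, which uses exactly this structure $\pi=\nabla(*\phi)$ later in Example \ref{ex:ClassifyA_n} for the $A_n$ case. Note, though, that the proportionality claim needs more than ``comparing degrees'': differentiate the relation $f(\sigma_1,\sigma_2,\sigma_3)=0$ with respect to $u$ and $v$ to see that $\nabla f$ evaluated along $\sigma$ is orthogonal to both rows of the Jacobian of $(\sigma_1,\sigma_2,\sigma_3)$, hence pointwise proportional to the vector of $2\times 2$ minors $(\{\sigma_2,\sigma_3\},\{\sigma_3,\sigma_1\},\{\sigma_1,\sigma_2\})$; since $f$ has an isolated singularity the pulled-back partials have no common factor, so the proportionality factor is a polynomial, and the weight identity $d_1+d_2+d_3=\deg f+2$ forces it to be a nonzero constant.

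One aside in your argument is wrong and should be deleted: you suggest that Jacobi holds for the extension ``conceptually'' because the coefficients of $[\pi,\pi]$ pull back to zero on $V$ and the pullback is injective on the quotient ring. That only places the coefficients in the kernel of $\CC[x,y,z]\to\CC[u,v]^\Gamma$, i.e.\ in the principal ideal $(f)$; it does not make them vanish in $\CC[x,y,z]$. The polynomials $P_{ij}$ are determined only modulo $(f)$, and an arbitrary extension of $\brac_{V/\Gamma}$ need not satisfy Jacobi --- the paper's classification of all extensions in the $A_n$ case (Example \ref{ex:ClassifyA_n}) shows the Poisson extensions form a constrained family, cut out by $(d\phi-\phi\alpha)\wedge d\alpha=0$, precisely because Jacobi is not automatic. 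Your other two verification routes (direct computation on the generators, or the Jacobian-bracket argument above) are both sound, so the proof stands once this aside is removed.
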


As $\dim V=2$, the groups ${\rm Sp}(V)$ and ${\rm SL}(V)$ are equal.  Quotients of finite subgroups $\Gamma\subseteq{\rm SL}(V)$ in dimension two are the ``Kleinian singularities'' and are classified by their singularity type (i.e.\! ADE-singularities).  For all such quotients, the algebraic embedding dimension is equal to the Poisson embedding dimension. The proof of this equality follows from two specific properties of these quotients: 
\begin{enumerate}
\item The vanishing ideals of the quotients, by Krull's Hauptidealsatz, are principal (i.e. generated by a single polynomial), forcing both the resulting singularity and the moduli space of extensions to be well-behaved.
\item The low dimension of the embedding space allows one to easily verify the Jacobi identity of any extended bracket.
\end{enumerate}

The computations of the invariants are classical (e.g. see Chapter 2 of \cite{Klein93}).

\begin{example}[$A_n$-singularities]
\label{ex:A_n}
  Let $\Gamma=\ZZ_n$ act on $V$ by multiplication: 
  $$\zeta\cdot(x,y)=(\zeta x,\zeta^{-1} y)  \mbox{\quad where  } \zeta\in\Gamma $$
      A Hilbert basis for the action consists of polynomials: 
$$\sigma_0=xy \mbox{,  \hspace{4pt}    } \sigma_1=x^n \mbox{,   \hspace{4pt}    }  \sigma_2=y^n   $$
The induced Hilbert map $F_\sigma$, whose dual is given by
$$F_\sigma^*=\left\{\begin{array}{l} u\mapsto\sigma_0 \\ v\mapsto\sigma_1 \\ w\mapsto\sigma_2 \end{array}\right.$$
 realizes $\CC^2/\ZZ_n$ as the vanishing locus $V(vw-u^{n})$. The obvious extension of the Poisson bracket $\brac_{V/\Gamma}$ to $\CC^{3}$ is given by:
$$\{u,v\}=-nv \mbox{, \hspace{4pt}  } \{u,w\}=nw  \mbox{, \hspace{4pt}  } \{v,w\}=n^2 u^{n-1}$$

Checking the Jacobi identity by hand verifies that this defines a Poisson bracket on $\CC^{3}$.   In Example \ref{ex:ClassifyA_n} {\it all Poisson structures} on $\CC^3$ extending the one above are classified.
\end{example}

\begin{example}[$D_{(n+2)}$-singularities] Let the binary dihedral group $BD_{4n}\subseteq{\rm Sp}(V)$ be the finite group generated by the transformations:
$$  R_n=\left(\begin{array}{cc} \zeta_n & 0\\ 0 & \zeta^{-1}_n\end{array}\right)  \mbox{and }  T=\left(\begin{array}{cc} 0 & i\\ i & 0\end{array} \right),\mbox{ where } \zeta_n=e^\frac{2\pi i}{n}$$
The $BD_{4n}$-invariant polynomials must be invariant under both generators $R_n$ and $T$.  A minimal generating set for the polynomials invariant under the action of $R_n$ is given by $\{x^n,xy,y^n\}$, while the analogous set for $T$ is given by $\{x^2-y^2,xy(x^2+y^2),(xy)^2\}$.  Thus a Hilbert basis of the $BD_{4n}$-invariant polynomials is given by:
$$\sigma_0=(xy)^2 \mbox{,  \hspace{4pt}    } \sigma_1=x^{2n}+(-1)^ny^{2n} \mbox{,   \hspace{4pt}    }  \sigma_2=xy\left(x^{2n}-(-1)^ny^{2n}\right)  $$
The Hilbert map $F_\sigma$ realizes $V/BD_{4n}$ as the vanishing locus $V(4u^{n+1}+w^2-uv^2)\subseteq\CC^3$.  The Poisson bracket on $V/BD_{4n}$ extends to the Poisson bracket on $\CC^3$:
$$\{u,v\}=-4nw \mbox{, \hspace{4pt} } \{u,w\}=-4nuv \mbox{, \hspace{4pt} } \{v,w\}=-4n\left(2(n-1)(-1)^nu^n-\frac{1}{2}v^2\right) $$
One can easily check directly that this extended bracket satisfies the Jacobi identity.
\end{example}

\begin{example}[$E_6$-singularities] The 2-dimensional $E_6$-singularities are realized by the action of the Binary Tetrahedral group $BT_{24}$, generated by the following matrices:
$$S=\left(\begin{array}{cc} i & 0 \\ 0 & -i \end{array}\right) \mbox{, \hspace{1pt}} T=\left(\begin{array}{cc} 0 & -1 \\ 1 & 0 \end{array}\right) \mbox{, and \hspace{1pt} } U=\frac{1}{2}\left(\begin{array}{cc} 1+i & -1+i \\ 1+i & 1-i \end{array}\right)$$
A Hilbert basis of the $BT_{24}$-invariant polynomials is given by:
$$\sigma_0=xy(x^4-y^4) \mbox{,  \hspace{4pt}    } \sigma_1=x^{8}+14x^4y^4+y^8 \mbox{,   \hspace{4pt}    }  \sigma_2=x^{12}-33x^8y^4-33x^4y^8+y^{12}$$
The Hilbert map $F_\sigma$ realizes $V/BT_{24}$ as the vanishing locus $V(108u^4-v^3+w^2)\subseteq\CC^3$.  The Poisson bracket on $V/BT_{24}$ extends to the Poisson bracket on $\CC^3$:
$$\{u,v\}=-8w \mbox{, \hspace{4pt} } \{u,w\}=-12v^2 \mbox{, \hspace{4pt} } \{v,w\}=-1728u^3 $$
\end{example}

\begin{example}[$E_7$-singularities] The 2-dimensional $E_7$-singularities are realized by the action of the Binary Octahedral group $BO_{48}$, generated by $BT_{24}$ (i.e. the matrices $S,T,$ and $U$) and $R_8=\left(\begin{array}{cc} \zeta_8 & 0 \\ 0 & \zeta^{-1}_8\end{array}\right)$.  Noting that $R_8^2=S$, we find that the Hilbert basis is given by a subset of the $BT_{24}$-invariant polynomials:
$$\sigma_0=\left(xy(x^4-y^4)\right)^2 \mbox{,  \hspace{2pt}    } \sigma_1=x^{8}+14x^4y^4+y^8 \mbox{,   \hspace{2pt}    } $$ $$ \sigma_2=\left(x^{12}-33x^8y^4-33x^4y^8+y^{12}\right)\left(xy(x^4-y^4)\right)$$
The Hilbert map $F_\sigma$ realizes $V/BO_{48}$ as the vanishing locus $V(uv^3-108u^3-w^2)\subseteq\CC^3$.  The Poisson bracket on $V/BO_{48}$ extends to the Poisson bracket on $\CC^3$:
$$\{u,v\}=-16w \mbox{, \hspace{4pt} } \{u,w\}=-24uv^2 \mbox{, \hspace{4pt} } \{v,w\}=-2592u^2+8v^3 $$
\end{example}

\begin{example}[$E_8$-singularities] The 2-dimensional $E_8$-singularities are realized by the action of the Binary Icosahedral group $BI_{120}$, generated by:
$$V=\left(\begin{array}{cc} \zeta^3_5 & 0 \\ 0 & \zeta_5^2 \end{array}\right) \mbox{, and \hspace{1pt} } W=\frac{1}{\sqrt{5}}\left(\begin{array}{cc} -\zeta_5+\zeta_5^4 & \zeta_5^2-\zeta_5^3 \\ \zeta_5^2-\zeta_5^3 & \zeta_5-\zeta_5^4 \end{array}\right) \mbox{, where } \zeta_5=e^\frac{2\pi i}{5} $$
A Hilbert basis of the $BI_{120}$-invariant polynomials is given by:
$$\left\{\begin{array}{l}
\sigma_0=x^{30}+522x^{25}y^5-10005x^{20}y^{10}-10005x^{10}y^{20}-522x^5y^{25}+y^{30} \\
\sigma_1=x^{20}-228x^{15}y^5+494x^{10}y^{10}+228x^5y^{15}+y^{20} \\
\sigma_2=xy\left(x^{10}+11x^5y^5-y^{10}\right) 
\end{array}\right.$$
The Hilbert map $F_\sigma$ realizes $V/BI_{120}$ as the vanishing locus $V(u^2+\sigma^3_1-1728w^5)\subseteq\CC^3$.  The Poisson bracket on $V/BI_{120}$ extends to the Poisson bracket on $\CC^3$:
$$\{u,v\}=-86400w^4 \mbox{, \hspace{4pt} } \{u,w\}=30v^2 \mbox{, \hspace{4pt} } \{v,w\}=20u $$

\end{example}

\begin{remark} Higher dimensional orbifolds do not behave so nicely.  For example, in dimension 4, Theorem \ref{thm:NonEmb} proves that any minimal embedding $\RR^4/\ZZ_n\hookrightarrow\RR^{2n+6}$ cannot be a Poisson embedding.
\end{remark}

\subsubsection{$S^1$-resonance Spaces}
\label{ex:ResSpaces}

Denote the coordinate functions on $\RR^{2n}$ by $\{x_1,y_1,\ldots,x_n,y_n\}$ and fix $(d_1,\ldots,d_n)\in\ZZ^n$.  Define the $(d_1,\ldots,d_n)$-resonance action on $\RR^{2n}$ to be the linear, symplectic $S^1$-action given by a block matrix built from 2-dimensional rotation matrices $\theta\mapsto A_{d,\theta}=(A_{d_1,\theta},\ldots,A_{d_n,\theta})$, where $$A_{d_k,\theta}=\left(\begin{array}{cc} \cos (2\pi d_k \theta) & \sin(2\pi d_k \theta) \\ -\sin (2\pi d_k \theta) & \cos (2\pi d_k \theta) \end{array}\right)$$
Using the change of coordinates in example 2.3 and passing to complex coefficients rewrites the action as
$$\theta\cdot(z_1,\ldots,z_n,\zb_1,\ldots,\zb_n)=(e^{2\pi i\theta d_1} z_1\ldots,e^{2\pi i\theta d_n}z_n,e^{-2\pi i\theta d_1}\zb_1,\ldots,e^{-2\pi i\theta d_1}\zb_n)$$  The  quotient from a general $S^1$-resonance action is a cone over weighted projective $n$-space $\mathbb{CP}(d_1,\dots,d_n)$; Hilbert bases are determined in \cite{Egilsson00}.

\begin{example}[$(a,b)$-resonance space]  
Let $n=1$ and $(d_1,d_2)=(a,b)\in\NN$.  A Hilbert basis of invariants is
$$\sigma_1=z\zb \mbox{, \hspace{4pt}} \sigma_2=w\wb \mbox{, \hspace{4pt}} \sigma_3=(z^b\wb^a+\zb^b w^a) \mbox{, \hspace{4pt}} \sigma_4=i(z^b\wb^a-\zb^b w^a) $$  This basis embeds the quotient $\RR^4/S^1\isom\CC^2/S^1$ as $V(x_3^2+x_4^2-4x_1^bx_2^a)\subseteq\RR^4\subseteq\CC^4$.  Additionally, there is a Poisson structure on $\RR^4$ making this a Poisson embedding:
$$\{x_1,x_2\}=0    \mbox{, \hspace{4pt}}  \{x_1,x_3\}= 2bx_4    \mbox{, \hspace{4pt}}  \{x_1,x_4\}= -2bx_3   $$
$$\{x_2,x_3\}= -2ax_3   \mbox{, \hspace{4pt}}  \{x_2,x_4\}=2ax_4    \mbox{, \hspace{4pt}}  \{x_3,x_4\}= 2x_1^{b-1}x_2^{a-1}(b^2x_2-a^2x_1)  $$
The Jacobi identity can be checked directly.
\end{example}

\begin{remark}  If $(a,b)=(1,1)$, then the invariants are quadratic.  Applying the result of Example \ref{ex:Quadratic}, the quotient $\CC^2/S^1$ embeds as the nilpotent cone in $\mathfrak{u}(2)^*$.  The Hilbert map associated to the invariants $\{z\zb,w\wb,z\wb,\zb w\}$ is precisely the moment map 
$$\mu:\!\CC^2\to\mathfrak{u}(2)^*\isom\mathfrak{u}(2)$$
$$(z,w)\mapsto i\left(\begin{array}{cc} z\zb & \zb w \\ z\wb & w\wb \end{array}\right)$$
Thus, the image of the quotient $\mathcal{N}:=\CC^2/S^1$ is given by the semi-algebraic set 
$$\mathcal{N}=\{A\in\mathfrak{u}(2) \SC \det(A)=0 \mbox{ and } i{\rm tr}(A)\geq 0\}$$
Furthermore, the moment map for the $S^1$ action $\mu_{S^1}\!=-i({\rm tr}\circ\mu):\!\CC^2\to\RR$ describes the symplectic leaves of this nilpotent cone.  The symplectic leaves $S_\lambda=i{\rm tr}^{-1}(\lambda)\cap\mathcal{N}\subseteq\mathfrak{u}(2)$ are isomorphic to $\mu_{S^1}^{-1}(\lambda)/S^1\isom S^2$ for $\lambda\neq 0$ and $\pt$ for $\lambda=0$.
\end{remark}

\begin{example}[(1,1,2)-resonance space]   Unlike in the examples above, the Poisson embedding dimension of the $(1,1,2)$-resonance space {\it does not} equal the smooth embedding dimension.   Any Hilbert basis for the $(1,1,2)$-action has 11 elements, giving rise to an embedding $F:\!\CC^3/S^1\hookrightarrow\RR^{11}$ (the minimal smooth embedding dimension).   In \cite{Egilsson95}, A. Egilsson proves that there cannot exist a Poisson structure on $\RR^{11}$ extending the Poisson structure on the embedded variety.  Later, in \cite{Davis02}, B. Davis strengthens this result: Given an embedding of the $(1,1,2)$-resonance space into $\RR^{12}$, there does not exist a Poisson structure on $\RR^{12}$ extending the Poisson structure on the quotient.
\end{example}

\section{The Poisson Embedding Problem: First Approach}

\subsection{Infinitesimal Extensions of Embedded Poisson Varieties}

As discussed in the previous section, it is generally quite difficult to determine when an embedding can be turned into a Poisson embedding. The remainder of this thesis is concerned with this question:  

\begin{question} Given an embedding of a singular symplectic quotient $F\!:\!V/G\hookrightarrow\RR^n$, when is there a Poisson structure on $\RR^n$ such that $\phi$ becomes a Poisson embedding?
\end{question}

As a first approach to finding such a Poisson structure $(\RR^n,\brac)$, one can attempt to extend the Poisson structure on $V/G\isom{\rm Im}(F)\subseteq\RR^n$ to the infinitesimal neighborhoods $\mathcal{N}_{V/G}^k(\RR^n)$ of $V/G$ in $\RR^n$.  As mentioned in Chapter 3, a Poisson embedding $F\!:V/G\hookrightarrow\RR^n$ gives rise to a chain of Poisson maps:
$$V/G\hookrightarrow\mathcal{N}_{V/G}^1(\RR^n)\hookrightarrow\mathcal{N}_{V/G}^2(\RR^n)\hookrightarrow\ldots\hookrightarrow\mathcal{N}_{V/G}^\infty(\RR^n)\hookrightarrow\RR^n$$
on each infinitesimal neighborhood $\mathcal{N}_{V/G}^k(\RR^n)\!:=\Spec(A/I^k)$, where $I:=\ker(F^*)$.  A procedure for building successive extensions goes as follows:

We begin by extending the Poisson bracket $\brac_{V/G}$ on $V/G$ arbitrarily to a bilinear bracket $\brac$ on $\RR^n$ (that almost surely will {\it not} satisfy the Jacobi identity) and denote the bivector corresponding to $\brac$ by $\pi$.  All extensions of the Poisson structure on $V/G$ are of the form $\pi+\pi_I$, where $\pi_I$ has coefficients in $I$.  Therefore we may try to solve the equation
$$[\pi+\pi_I,\pi+\pi_I]=0$$ 
for $\pi_I$.   Moreover, we may also attempt to solve for $\pi_I$ step-by-step:

\begin{enumerate}
\item[0.] The bivector $\pi$ may be considered the zeroth extension $\pi^{(0)}\!:=\pi$, as the Schouten bracket vanishes $[\pi,\pi]=0 \mod I$.  Thus, $\pi^{(0)}$ defines a Poisson bracket on $\mathcal{N}_{V/G}^0(\RR^n)\isom V/G$.
\item[1.] The first extension would be a bivector $\pi^{(1)}:=\pi^{(0)}+\pi_I^1$ such that 
\begin{enumerate}
\item $\pi^{(1)}$ defines a Poisson structure on $\mathcal{N}_{V/G}^1(\RR^n)$.  In other words, $[\pi^{(1)},\pi^{(1)}]=0\mod I^2$).
\item $\pi^1_I$ has coefficients in $I$, so that $\pi^{(1)}$ is an extension of $\pi^{(0)}$.
\end{enumerate}
\item[k.]  Similarly, the $k^{\rm th}$ extension is a bivector $\pi^{(k)}:=\pi^{(k)}+\pi_I^k$ such that $\pi^{(k)}$ defines a Poisson structure on $\mathcal{N}_{V/G}^k(\RR^n)$ --- that is, $[\pi^{(k)},\pi^{(k)}]=0\mod I^k$) --- and $\pi_I^k$ has coefficients in $I$.
\end{enumerate}

The limit $\pi^{(\infty)}:=\varprojlim\pi^{(k)}$ then defines a Poisson structure on a formal neighborhood of $V/G\subseteq\RR^n$.

\begin{remark}  This procedure is in general very difficult, as the ideals $I^k$ are usually not easy to describe.  However, various refinements of this procedure are used later.
\end{remark}

To illustrate the work involved in solving $[\pi+\pi_I,\pi+\pi_I]=0$, consider the 2-dimensional holomorphic symplectic orbifold $\CC^2/\ZZ_n\subseteq\CC^3$ of Example \ref{ex:A_n}.

\subsection{Classification of Minimal Poisson Embeddings of $\CC^2/\ZZ_n$}

\begin{example}[The moduli of Poisson structures extending $\CC^2/\ZZ_n$]  
\label{ex:ClassifyA_n}
In Example \ref{ex:A_n}, a Poisson structure on $\CC^3$ is defined so that the embedding $F_\sigma\!:\!\CC^2/\ZZ_n\hookrightarrow\CC^3$ is a Poisson embedding. One can further ask if it is possible to determine all Poisson structures extending the quotient Poisson structure on $\CC^2/\ZZ_n$.  

\subsubsection{Multivector fields on $\CC^3$}
To investigate this problem, we take advantage of a few peculiarities of the deRham sequence of 3-dimensional affine space (see \cite{Pichereau06} for details):  Let $A=\CC[u,v,w]$ be the algebra of polynomials on $\CC^3$ and denote the $k$-multivectors on $\CC^3$ by $\mathfrak{X}^k(\CC^3):=\bigwedge^k\TT_{\CC^3}$.   Recall that $\mathfrak{X}^k(\CC^3)\isom A$ for $k=0,3$ and $\mathfrak{X}^j(\CC^3)\isom A^3$ for $j=1,2$.  Using these identifications, define the following operations (using the analogous, familiar, vector operations on $\CC^3$):  
\begin{enumerate}
\item Given a vector field $X=f\frac{\partial}{\partial u}+g\frac{\partial}{\partial v}+h\frac{\partial}{\partial w}\in\mathfrak{X}^1(\CC^3)$, define the function
$${\rm div}(X)=\left(\frac{\partial f}{\partial u}+\frac{\partial g}{\partial v }+\frac{\partial h}{\partial w}\right)\in\mathfrak{X}^0(\CC^3)\isom A$$
\item Given a bivector field $\beta=f\frac{\partial}{\partial  u}\wedge\frac{\partial}{\partial v }+g\frac{\partial}{\partial u }\wedge\frac{\partial}{\partial w }+h\frac{\partial}{\partial v }\wedge\frac{\partial}{\partial w }\in\mathfrak{X}^2(\CC^3)$, define the vector field
$${\rm Curl}(\beta)= \left(\frac{\partial f}{\partial v }-\frac{\partial g}{\partial w }\right)\frac{\partial}{\partial u }+\left(\frac{\partial h}{\partial w }-\frac{\partial f}{\partial u }\right)\frac{\partial}{\partial v }+\left(\frac{\partial g}{\partial u }-\frac{\partial h}{\partial v }\right)\frac{\partial}{\partial w }\in\mathfrak{X}^1(\CC^3)$$
\item Given a trivector field $\tau=f\frac{\partial}{\partial u }\wedge\frac{\partial}{\partial v }\wedge\frac{\partial}{\partial w }\in\mathfrak{X}^3(\CC^3)$, define the bivector field
$$\nabla(\tau)=\frac{\partial f}{\partial w }\frac{\partial}{\partial u }\wedge\frac{\partial}{\partial v }+\frac{\partial f}{\partial v }\frac{\partial}{\partial w }\wedge\frac{\partial}{\partial u }+\frac{\partial f}{\partial u }\frac{\partial}{\partial v }\wedge\frac{\partial}{\partial w }$$
\end{enumerate}

Using the identification of multi-derivations and differential forms $\mathfrak{X}^{3-k}(\CC^3)\isom\Omega^k(\CC^3)$ given by the star operator $*:\mathfrak{X}^{3-k}(\CC^3)\to\Omega^k(\CC^3)$, there is an isomorphism of exact sequences:

\[
\xymatrix{
0 & \Omega^3(\CC^3) \ar[l] & \Omega^2(\CC^3) \ar[l]_d & \Omega^1(\CC^3) \ar[l]_d & \Omega^0(\CC^3) \ar[l]_d & 0 \ar[l] \\
0 & \mathfrak{X}^0(\CC^3) \ar[l] \ar@{<->}[u]_{*} &\mathfrak{X}^1(\CC^3) \ar[l]^{{\rm div}} \ar@{<->}[u]_{*} & \mathfrak{X}^2(\CC^3) \ar[l]^{{\rm Curl}} \ar@{<->}[u]_{*} & \mathfrak{X}^3(\CC^3) \ar[l]^{\nabla} \ar@{<->}[u]_{*} & 0 \ar[l]
}
\]

\subsubsection{Characterization of extensions}
We recall the set up for the Poisson embedding $F_\sigma: \CC^2/\ZZ_n\hookrightarrow\CC^3$.  The Hilbert basis is given by $\{\sigma_0=xy,\sigma_1=x^n,\sigma_2=y^n\}$, and the map $F_\sigma$ is defined by 
$$\begin{array}{rcccc}
		      &&  u&v&w\\
F_\sigma(x,y) &=& (\sigma_0(x,y),&\sigma_1(x,y),&\sigma_2(x,y))\\
			\end{array}$$

This embedding realizes the quotient as the vanishing locus $V(\phi)\subseteq\CC^3$, where $\phi=u^n-vw$.  Recall that the extension of $\brac_{\CC^2/\ZZ_n}$ given by the bivector below defines a Poisson structure:

$$\pi=-2v \frac{\partial}{\partial u }\wedge\frac{\partial}{\partial v }-2w \frac{\partial}{\partial w }\wedge\frac{\partial}{\partial u } + u^{n-1}\frac{\partial}{\partial v }\wedge\frac{\partial}{\partial w }$$

This is a very special type of Poisson bivector: 
$$\pi=\nabla(*\phi)  \mbox{  , where } \phi=(vw-u^n) \mbox{ is a weighted homogeneous polynomial.}$$
The Poisson cohomology and deformation theory of such Poisson structures are computed in \cite{Pichereau06}.  Now, consider another bivector $\pi'$ corresponding to an extension of $\brac_{\CC^2/\ZZ_n}$.  As $\pi$ and $\pi'$ agree on $V/G$, it follows that $\pi'=\pi+\pi_\phi$, where $\pi_\phi$ vanishes on $V/G$.  Such a bivector $\pi_\phi$ has the form $\phi A$ for an arbitrary bivector $A$. That is
$$\pi_\phi=\phi A=\phi\left(A_3\frac{\partial}{\partial u }\wedge\frac{\partial}{\partial v }+A_2 \frac{\partial}{\partial w }\wedge\frac{\partial}{\partial u } + A_1\frac{\partial}{\partial v }\wedge\frac{\partial}{\partial w }\right)$$
The following proposition gives criteria for when $\pi'$ is also a Poisson bivector:

\begin{proposition}  The set of Poisson structures on $\CC^3$ extending the Poisson structure $(\CC^2/\ZZ_n,\brac_{\CC^2/\ZZ_n})$ is given by 
$$\left\{ \pi+\phi(*\alpha)\SC \alpha\in\Omega^1(\CC^3) \mbox{ {\rm and} } (d\phi-\phi\alpha)\wedge d\alpha=0\right\}$$
\end{proposition}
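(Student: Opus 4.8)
The plan is to reduce the Poisson (i.e.\ Jacobi) condition $[\pi',\pi']=0$ on the candidate extension $\pi'=\pi+\phi A$ to the stated equation on the one-form $\alpha = *A$, exploiting the special structure $\pi=\nabla(*\phi)$ and the translation of $\Div$, $\Curl$, $\nabla$ into the de Rham differential via the star operator. First I would record the key computational fact from \cite{Pichereau06} (or verify it directly): for a polynomial $\phi$ and a bivector field $B=*\beta$, the Schouten bracket $[\nabla(*\phi), \phi B]$ can be expressed — after applying $*$ — in terms of $d\phi$, $\phi$, $\beta$, and $d\beta$; and more generally $[\phi A,\phi A']$ and $[\nabla(*\phi),\phi A]$ both have clean expressions under $*$. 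The identity $[\nabla(*\phi),\nabla(*\phi)]=0$ (already known, since $\pi$ is Poisson) removes the leading term.

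Concretely, writing $\alpha=*A$ so that $\pi_\phi=\phi(*\alpha)$, the steps are: (1) expand $[\pi',\pi'] = [\pi,\pi] + 2[\pi,\phi{\cdot}(*\alpha)] + [\phi{\cdot}(*\alpha),\phi{\cdot}(*\alpha)]$ and drop the first term; (2) use the Leibniz rule for the Schouten bracket together with $\pi=\nabla(*\phi)$ and $[\nabla(*\phi),\cdot]$-formulas to rewrite $[\pi,\phi{\cdot}(*\alpha)]$, pulling the scalar $\phi$ out and collecting the terms where a derivative lands on $\phi$ (producing a factor of $d\phi$) versus on $\alpha$ (producing $d\alpha$); (3) similarly expand the quadratic term $[\phi(*\alpha),\phi(*\alpha)]$, which will carry an overall factor of $\phi$ and contribute a $\phi\,\alpha\wedge d\alpha$-type term; (4) apply $*$ to the whole trivector identity to land in $\Omega^3(\CC^3)$, where everything becomes wedge products of one- and two-forms, and observe that the total expression factors as $\phi\bigl[(d\phi-\phi\alpha)\wedge d\alpha\bigr]$ (up to a nonzero constant). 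Since $\phi$ is a nonzerodivisor in $A=\CC[u,v,w]$, vanishing of this trivector is equivalent to $(d\phi-\phi\alpha)\wedge d\alpha=0$, giving the claimed description; finally I would note that every extension $\pi'$ of $\brac_{\CC^2/\ZZ_n}$ is of the form $\pi+\phi A$ (as established just before the proposition) and that $A\leftrightarrow\alpha=*A$ is a bijection, so the set is exactly as stated.

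The main obstacle is step (2)–(3): getting the signs and the precise combinatorial form of the Schouten bracket of $\nabla(*\phi)$ with $\phi(*\alpha)$, and of $\phi(*\alpha)$ with itself, correct under the star operator — in particular confirming that the ``derivative hits $\phi$'' terms assemble into exactly $d\phi\wedge d\alpha$ and that the remaining terms assemble into $-\phi\,\alpha\wedge d\alpha$ with matching coefficients, so that the common factor is precisely $(d\phi-\phi\alpha)$. This is the kind of identity that is essentially forced once one adopts the dictionary between multivector operations and $d$ on $\CC^3$, but it requires care; I would lean on the computations in \cite{Pichereau06} for the curl/divergence formalism rather than redo the super-Poisson-bracket bookkeeping by hand. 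Everything else (the decomposition of $\pi'$, the nonzerodivisor argument, the bijection $A\leftrightarrow *A$) is routine.
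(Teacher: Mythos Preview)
Your proposal is correct and follows essentially the same route as the paper: expand $[\pi+\pi_\phi,\pi+\pi_\phi]$, use that $\phi$ is a Casimir for $\pi$ to factor out $\phi$ from the cross term (this is exactly your ``derivative lands on $\phi$'' step), apply the $*$-dictionary to convert the Schouten brackets into wedge products via the formula $*[A,B]=\alpha\wedge d\beta+d\alpha\wedge\beta$ from \cite{Pichereau06}, and then cancel the overall $\phi$ factor. The paper is marginally more direct in that it invokes the Schouten-to-wedge formula as a single black box rather than unpacking the Leibniz rule, but the structure of the argument is identical.
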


\begin{proof}  The proof relies heavily on the following formula for the Schouten bracket of bivectors:  Given bivectors $A,B\in\mathfrak{X}^2(\CC^3)$, their Schouten bracket is given by: 
\begin{eqnarray}  
[A,B] &=& A\cdot{\rm Curl}(B)+{\rm Curl}(A)\cdot B  \\
         &=& \alpha\wedge d\beta + d\alpha\wedge\beta   \mbox{,  \hspace{10pt} where $\alpha=*A$ and $\beta=*B$}
      \end{eqnarray}

The bivector $\pi+\pi_\phi$, by construction, extends the bracket given by Poisson structure on the quotient.  It defines a Poisson structure if and only if
\begin{eqnarray}
0 &=& \frac{1}{2}[\pi+\pi_\phi,\pi+\pi_\phi]  \\
   &=& \frac{1}{2}[\pi,\pi]+[\pi,\pi_\phi]+\frac{1}{2}[\pi_\phi,\pi_\phi] \\
   &=& [\pi,\pi_\phi]+\frac{1}{2}[\pi_\phi,\pi_\phi],
   \end{eqnarray}
where $[\pi,\pi]=0$ because $\pi$ itself is a Poisson structure.  Recall, that 
$$\pi_\phi=\phi A=\phi\left(A_3\frac{\partial}{\partial u }\wedge\frac{\partial}{\partial v }+A_2 \frac{\partial}{\partial w }\wedge\frac{\partial}{\partial u } + A_1\frac{\partial}{\partial v }\wedge\frac{\partial}{\partial w }\right) \for A\in\mathfrak{X}^2(\CC^3)$$
Straightforward computations give
\begin{enumerate}
\item $[\pi_\phi,\pi_\phi]=\phi^2[A,A]$, 
\item $[\pi,\pi_\phi]=\phi[\pi,A]$ holds because $\phi$ is Casimir for $\pi$ 
\end{enumerate}
Thus, formula $(2)$ implies
\begin{eqnarray}
0 &=& [\pi,\pi_\phi]+\frac{1}{2}[\pi_\phi,\pi_\phi] \\
   &=& \phi\left([\pi,A]+\phi[A,A]\right)\\
   &=& \phi\left([\nabla{(*\phi)},A]+\phi[A,A]\right)\\
   &=& *\phi\left(d\phi\wedge d\alpha+\phi\alpha\wedge d\alpha\right)
\end{eqnarray}

As $*$ is an isomorphism, this expression is zero if and only if $d\phi\wedge d\alpha+\phi\alpha\wedge d\alpha=0$
\end{proof}

Note that if $d\alpha=0$, then $\pi+\phi(*\alpha)$ is a Poisson structure.  As the De Rham complex for $\CC^3$ is exact, any closed form $\alpha=df$ is exact. Thus, functions give a wealth of Poisson structures lifting $\brac_{V/G}$:

$$\pi+\phi(*df)= \left(\frac{\partial\phi}{\partial w}+\phi\frac{\partial f}{\partial w}\right)\frac{\partial}{\partial u }\wedge\frac{\partial}{\partial v }+\left(\frac{\partial\phi}{\partial v}+\phi\frac{\partial f}{\partial v} \right)\frac{\partial}{\partial w }\wedge\frac{\partial}{\partial u } + \left(\frac{\partial\phi}{\partial u}+\phi\frac{\partial f}{\partial u}\right)\frac{\partial}{\partial v }\wedge\frac{\partial}{\partial w }$$

In this case, as the proof demonstrates, $d_{\pi}\pi_\phi=-[\pi,\pi_\phi]=0$, and so $\pi+\pi_\phi$ is a deformation of $\pi$. Using the cohomological computations in chapter 3 of \cite{Pichereau06}, conditions for when such deformations $\pi+\phi(*df)$ are trivial can be explicitly determined.  Using the notation from above, denote
$$\pi_\phi=\phi A=\phi\left(A_3\frac{\partial}{\partial u }\wedge\frac{\partial}{\partial v }+A_2 \frac{\partial}{\partial w }\wedge\frac{\partial}{\partial u } + A_1\frac{\partial}{\partial v }\wedge\frac{\partial}{\partial w }\right)$$
The deformation $\pi+\pi_\phi$ is a trivial deformation of $\pi$ if and only if $A_1=u^{n-2}\tilde{A}_1$ for some $\tilde{A}_1\in\CC[u,v,w]$.  The vector field which generates the deformation is then given by:
$$\left(\frac{\phi}{n(n-1)}\tilde{A}_1+\frac{n}{n-1}{\rm div}(\pi_\phi)\right)\frac{\partial}{\partial u}+\left(\phi A_2+w{\rm div}(\pi_\phi)\right)\frac{\partial}{\partial v}+\left(\phi A_3+v{\rm div}(\pi_\phi)\right)\frac{\partial}{\partial w}$$

The nontrivial deformations of $\pi$ are generated by: 
$$\pi+\pi_\phi=\pi+\phi^m u^k\frac{\partial}{\partial v}\wedge\frac{\partial}{\partial w}  \mbox{ \hspace{4pt} for $m>0$ and  } 0<k<(n-2)$$
\end{example}

\begin{remark} This sort of analysis also applies to the other 2-dimensional orbifolds as well.
\end{remark}

\section{Infinitesimal Extensions around $0$}

This section applies the analysis of infinitesimal extensions of Poisson structures in Section \ref{sec:InfNbd} to the Poisson embedding problem (Section \ref{sec:PoissEmb}).  Guided by the fact that a set of invariant polynomials $\{\sigma_1,\ldots,\sigma_n\}$ defines an embedding $F_\sigma\!:\! V/G\to\RR^n$ if and only if their images span $\m_0/\m_0^2$, it is natural to consider local problem of extending a Poisson bracket to infinitesimal neighborhoods of $0$.

Any embedding $F\!:\!V/G\hookrightarrow\RR^n$ such that $\phi(0)=0$ gives rise to embeddings into the infinitesimal $k$-neighborhoods of the origin $\mathcal{N}_{\{0\}}^k(\RR^n)$:
\[
\xymatrix{
V/G \ar@{^{(}->}[r]^{F} & \RR^n  \\
\mathcal{N}_{\{0\}}^\infty(V/G) \ar@{^{(}->}[r] \ar@{^{(}->}[u] & \mathcal{N}_{\{0\}}^\infty(\RR^n) \ar@{^{(}->}[u] \\
\vdots \ar@{^{(}->}[u] & \vdots \ar@{^{(}->}[u] \\
\mathcal{N}_{\{0\}}^1(V/G) \ar@{^{(}->}[r] \ar@{^{(}->}[u] & \mathcal{N}_{\{0\}}^1(\RR^n) \ar@{^{(}->}[u] \\
\{0\} \ar@{^{(}->}[r] \ar@{^{(}->}[u] & \{0\} \ar@{^{(}->}[u]
}
\]
Moreover, by Section \ref{sec:ExtPoissonInfNbd}, any Poisson structure on $\RR^n$ making $F$ a Poisson embedding in turn makes the diagram above a commutative diagram of Poisson maps.  Thus, one may attempt to construct a Poisson structure on $\RR^n$ by iteratively extending Poisson structures on each $\mathcal{N}^k_{\{0\}}(\RR^n)$.  This procedure, of course, begins with the bottom square:
\[
\xymatrix{
\mathcal{N}_{\{0\}}^1(V/G) \ar@{^{(}->}[r]  & \mathcal{N}_{\{0\}}^1(\RR^n) \\
\{0\} \ar@{^{(}->}[r] \ar@{^{(}->}[u] & \{0\} \ar@{^{(}->}[u]
}
\]

In the following proposition (1) describes the Poisson structures on $\nbd{1}{\pt}{\RR^n}$ that extend the given one on $\nbd{1}{\pt}{V/G}$, and (2) describes the conditions for each subsequent extension:

\begin{theorem}
\label{prop:EmbeddingCriteria}
Let $F\!:(Q,\brac_{Q})\hookrightarrow(\RR^n,\brac_\pi)$ be a Poisson embedding.  Denote by $\pi^{(k)}$ the Poisson bivector on $\nbd{k}{\pt}{\RR^n}$ that is $p_k$-related to $\sum_{l=1}^{k}\pi^l$, where $\pi^l$ has homogeneous coefficients of degree $l$, and let $\g\isom\RR^n$ be the Lie algebra corresponding to the Lie-Poisson structure $(\RR^n,\pi^1)$.  Then
\begin{enumerate}
\item The Lie algebra $\g$ is a Lie algebra extension of $\m_0/\m_0^2$
\item Each extension $\pi^{(k)}$ satisfies the following $\g$-cohomological equations:
$${\rm d}_\g\pi^{k}=[i_{k*}\pi^{(k-1)},i_{k*}\pi^{(k-1)}]   \mbox{\quad where  }  i_k:\nbd{k-1}{\pt}{\RR^n}\inj\nbd{k}{\pt}{\RR^n}$$
\end{enumerate}
\end{theorem}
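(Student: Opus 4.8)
The plan is to handle the two claims separately, each by assembling tools already set up: (1) is functoriality of the linearization construction (Definition~\ref{def:Linearization}) applied to the Poisson embedding $F$, and (2) is the degree-by-degree expansion of $[\pi,\pi]=0$ from Section~\ref{sec:GrPoiss}, reorganized through the infinitesimal-neighborhood picture of Section~\ref{sec:ExtPoissonInfNbd} so that the level-$k$ obstruction is rephrased in terms of the level-$(k-1)$ extension.

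\emph{Claim (1).} By the remarks following Definition~\ref{def:Linearization}, the cotangent Lie algebra of $(\RR^n,\brac_\pi)$ at $0$ is $\g$, with bracket $[\overline{x_i},\overline{x_j}]=\overline{\{x_i,x_j\}_\pi}$ (the linear part of $\pi$), and $(\RR^n,\pi^1)$ is its Lie--Poisson dual; similarly the cotangent Lie algebra of $Q$ at $0$ is $(\m_0/\m_0^2,\,[df,dg]=d\{f,g\}_Q)$, where $\m_0\subset\OO_Q$. Since the invariants $\sigma_i$ vanish at $0$ (they have no constant term) we have $F(0)=0$; as $F$ is moreover a closed Poisson embedding, $F^\ast$ is a surjective homomorphism of Poisson algebras carrying the origin's ideal $(x_1,\dots,x_n)\subset\RR[x_1,\dots,x_n]$ onto $\m_0$ and its square into $\m_0^2$. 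Hence $F^\ast$ descends to a linear surjection $\g\sur\m_0/\m_0^2$ of Zariski cotangent spaces, and since $F^\ast$ is a Poisson homomorphism, $F^\ast\big(\overline{\{x_i,x_j\}_\pi}\big)=\overline{\{F^\ast x_i,F^\ast x_j\}_Q}$, so this surjection is a homomorphism of Lie algebras; equivalently it is the fibre at $0$ of the exact sequence of Lie algebras $0\to\II/\II^2\to\Omega_{X/Y}\to\Omega_Y\to 0$ attached in Section~\ref{sec:InfNbd} to the closed Poisson subscheme $Q\subseteq\RR^n$. Its kernel is an ideal of $\g$, so $\g$ is a Lie-algebra extension $0\to\ker\to\g\to\m_0/\m_0^2\to0$, with trivial kernel exactly when $n$ equals the embedding dimension.

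\emph{Claim (2).} Since $\brac_\pi$ vanishes at $0$ we decompose $\pi=\pi^1+\pi^2+\cdots$ as in Section~\ref{sec:GrPoiss}. Because $\{0\}$ is a Poisson subscheme of $(\RR^n,\brac_\pi)$, Section~\ref{sec:InfNbd} furnishes a Poisson bivector on $\nbd{k}{\pt}{\RR^n}$ that is $p_k$-related to $\pi$; as $\pi$ and $\sum_{l\le k}\pi^l$ agree modulo $\m_0^{k+1}$ (now $\m_0$ denotes the origin's ideal in $\RR[x_1,\dots,x_n]$), this is the bivector $\pi^{(k)}$ of the statement, and $[\pi^{(k)},\pi^{(k)}]=0$ in $\bigwedge^3\TT_{\RR^n}\otimes\OO_{\nbd{k}{\pt}{\RR^n}}$; that is, $[\sum_{l\le k}\pi^l,\sum_{l\le k}\pi^l]$ has coefficients in $\m_0^{k+1}$. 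Expanding this Schouten bracket as $\sum_{i,j\le k}[\pi^i,\pi^j]$ and sorting by homogeneous degree (each $[\pi^i,\pi^j]$ has coefficients of degree $i+j-1$), the vanishing forces every degree-$d$ packet $\sum_{i+j=d+1}[\pi^i,\pi^j]$ to be zero for $d\le k$. Isolating the summands with $i=1$ or $j=1$ in the degree-$k$ packet rewrites it as $2[\pi^1,\pi^k]+\sum_{i,j\ge2,\,i+j=k+1}[\pi^i,\pi^j]=0$, so $\dd_\g\pi^k=[\pi^1,\pi^k]$ equals $-\tfrac12\sum_{i,j\ge2,\,i+j=k+1}[\pi^i,\pi^j]$ (up to the overall scalar fixed by one's normalization of $\dd_\g$). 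Finally, since the degree-$d$ packets with $d\le k-1$ already vanish, the Schouten square of $i_{k*}\pi^{(k-1)}$ — the polynomial bivector $\sum_{l\le k-1}\pi^l$ regarded modulo $\m_0^{k+1}$ — reduces modulo $\m_0^{k+1}$ to precisely its degree-$k$ part $\sum_{i,j\ge2,\,i+j=k+1}[\pi^i,\pi^j]$, which yields the asserted identity $\dd_\g\pi^k=[i_{k*}\pi^{(k-1)},i_{k*}\pi^{(k-1)}]$ (again up to the harmless constant).

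I do not expect a genuine obstacle: both parts are bookkeeping over Chapters~2--3. In (1) the only care needed is that $F$, being a closed embedding with $F(0)=0$, induces a surjection of local rings respecting the filtrations by powers of the maximal ideal, so that functoriality of the linearization applies. The delicate point is the last step of (2): one must check that the vanishing of all lower-degree packets is exactly what makes $[i_{k*}\pi^{(k-1)},i_{k*}\pi^{(k-1)}]$ collapse modulo $\m_0^{k+1}$ onto its degree-$k$ part, so that the recursion closes up with no residual contribution from $\pi^1,\dots,\pi^{k-1}$.
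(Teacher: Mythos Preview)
Your proposal is correct and follows essentially the same approach as the paper. For (1) you reproduce the paper's argument verbatim: the Poisson pullback $F^\ast$ is surjective, carries the maximal ideal onto $\m_0$, and so descends to a Lie-algebra surjection $\g\sur\m_0/\m_0^2$. For (2) the paper simply cites Section~\ref{sec:GrPoiss}; you have correctly unpacked what that citation means---sorting $[\pi,\pi]=0$ into homogeneous packets, isolating the $[\pi^1,\pi^k]$ terms in degree $k$, and then observing that the lower-degree vanishing collapses $[i_{k*}\pi^{(k-1)},i_{k*}\pi^{(k-1)}]$ modulo $\m_0^{k+1}$ onto exactly the remaining $\sum_{i,j\ge2,\,i+j=k+1}[\pi^i,\pi^j]$---and your acknowledgment of the stray factor of $-\tfrac12$ is appropriate, as the paper's own statement absorbs it into the normalization of $\dd_\g$.
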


\begin{remark} The proposition above may also be thought of as interpreting the Lie algebra extension $0\!\to\!K\!\to\!\g\!\to\!\m_0/\m_0^2\!\to\!0$ as a first approximation to the Poisson embedding $F$.  That is, $F$ is only a Poisson map {\it up to first order}.  The subsequent extensions are then nonlinear deformations of the Lie-Poisson structure $\pi^1$ to a Poisson structure $\pi$ that makes $F$ a Poisson embedding.
\end{remark}

\begin{proof}  The pullback map $F^*:\RR[x_1,\ldots,x_n]\twoheadrightarrow\Sym(V^*)$ is a surjective homomorphism of Poisson algebras.  Moreover, the pullback restricts to a surjective map $F^*(x_1,\ldots,x_n)\twoheadrightarrow\m_0$, and thus descends to a surjective linear map of vector spaces $\overline{F^*}:\Span\{x_1,\ldots,x_n\}\twoheadrightarrow\m_0/\m^2_0$.  As $F^*$ is a homomorphism of Poisson algebras, the induced map $\overline{F^*}$ is a Lie-algebra homomorphism.  Denoting the kernel of this linear map $K:=\ker(\overline{F^*})$, the Lie bracket on the Zariski cotangent space of $\RR^n$ is the Lie algebra extension of $\m_0/\m_0^2$ given by the exact sequence of Lie algebras:
$$0\to K\to\g\to\m_0/\m_0^2\to0$$
The proof of $(2)$ then follows from Section \ref{sec:GrPoiss} and the fact that the linear Poisson structure $(\RR^n,\brac_1)$ is isomorphic to the Lie-Poisson structure on $\g$.
\end{proof}

\begin{remark} This procedure describes, in different terminology, the procedure used by Egilsson \cite{Egilsson95} and Davis \cite{Davis02} to prove nonembedding results for the $(1,1,2)$-resonance space $\CC^3/S^1$.
\begin{itemize}
\item[i)] As $\dim(\m_0/\m_0^2)=11$, the first order approximation of any embedding $F: \CC^3/S^1\hookrightarrow\RR^{11}$ is completely determined (i.e. $\g\isom\m_0/\m_0^2$).  Then, using explicit computation, Egilsson proved the impossibility of solving the condition $\dd_\g\pi^{(2)}=[i_{1*}\pi^{(1)},i_{1*}\pi^{(1)}] =0$.
\item[ii)] Similarly, an embedding $F:\CC^3/S^1\hookrightarrow\RR^{12}$ is completely determined by the data attached to a 1-dimensional central extension$\tilde{\g}$ of $\g\isom\m_0/\m_0^2$.  For such $\tilde{\g}$, Davis showed the impossibility of solving $\dd_{\tilde{\g}}\pi^{(2)}=[i_{1*}\pi^{(1)},i_{1*}\pi^{(1)}] =0$
\item[iii)] This procedure quickly becomes intractable as the dimension of the target space increases.  To use this technique to prove that no Poisson embedding $F:\CC^3/S^1\hookrightarrow\RR^{11+n}$ exists, one first has to classify $n$-dimensional Lie algebra extensions of $\m_0/\m_0^2$ (which is no easy task), and then prove the insolvability of the higher degree equations {\it for each extension} of $\m_0/\m_0^2$.  
\end{itemize}
\end{remark}

The two approaches to infinitesimally extending a Poisson structure on an embedded variety $V(I)\subseteq\RR^n$ considered in this chapter have opposing limitations.
\begin{itemize} 
\item Extending a Poisson structure to formal neighborhoods of the variety itself $\mathcal{N}^k_{V(I)}(\RR^n)$ is a very tractable problem, in the sense that there are not many possible extensions at each step.  On the other hand, finding explicit descriptions of the extensions involves determining membership in the ideal $I$, which is often a difficult problem to solve.
\item Extending the germ of a Poisson structure to formal neighborhoods of the origin $\mathcal{N}^k_{\{0\}}(\RR^n)$ involves performing calculations degree-by-degree (that is, working modulo $\m_0^k$), which is computationally easy.  However, there are many extensions at each step, and the task of determining which of these arise from the Poisson structure results in having to perform a very large number of (easier) computations.
\end{itemize}

In the subsequent chapters, we will take the latter view, using Lie theoretic techniques to make the large number of computations manageable.

\chapter{Linearization of Poisson Structures}

One of the difficulties of studying extensions of Poisson algebras is that one must extend both the Poisson algebra as a Lie algebra and as an associative algebra, and satisfy compatibility between the two structures.  In the previous chapter, we laid out a procedure that (1) first extends the associative algebra, then (2) attempts to extend the Poisson bracket.  In other words, one finds a (perhaps infinitesimal) embedding of the Poisson space then tries to extend the Poisson structure. 

The final criteria for a Poisson embedding given in Proposition \ref{prop:EmbeddingCriteria} first establishes an embedding of a Poisson quotient space $Q$ {\it up to first order} into $(\g^*,\pi^1)$, for an extension $\g$ of its cotangent Lie algebra $\m_0/\m_0^2$.  Then, it states criteria for deforming the Lie-Poisson structure $\pi^1$ into a Poisson structure that extends the full Poisson structure on $Q$.  The nonlinear deformations of $(\g^*,\pi^1)$ are greatly influenced by the Lie algebra cohomology of $\g$.  One extreme is the result of Weinstein \cite{Weinstein83}:

\begin{theorem}[Weinstein] 
Let $\pi=\pi^1+\pi^2+\ldots$ be formal Poisson structure that vanishes at the origin with corresponding linear Poisson structure $(\g^*,\pi)$.  If $\g$ is semi-simple, then the Poisson structures $(\g^*,\pi)$ and $(\g^*,\pi^1)$ are formally isomorphic. 
\end{theorem}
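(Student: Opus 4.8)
The plan is to linearize $\pi$ one homogeneous degree at a time by a formal change of coordinates, using at each step the vanishing of the relevant piece of the Poisson cohomology of the linear structure $\pi^1$ --- which, by Theorem~\ref{thm:CECoh}, is Chevalley--Eilenberg cohomology of the semisimple Lie algebra $\g$ with coefficients in a finite-dimensional module $\Sym^k\g$.

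First I would set up the induction. Suppose that after finitely many formal diffeomorphisms of $\g^*$ tangent to the identity we have arranged $\pi=\pi^1+\pi^k+\pi^{k+1}+\cdots$ with $\pi^2=\cdots=\pi^{k-1}=0$ (the case $k=2$ being the hypothesis). Comparing homogeneous components of the identity $[\pi,\pi]=0$ as in Section~\ref{sec:GrPoiss}, one checks that the components of degree $2,\dots,k-1$ vanish automatically and that the lowest nontrivial constraint, in polynomial degree $k$, reads $[\pi^1,\pi^k]=0$, i.e. $\dd_{\pi^1}\pi^k=0$: the degree-$k$ term is a cocycle for the Lichnerowicz differential of $\pi^1$.

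Next I would kill that cocycle by a coordinate change. Since Theorem~\ref{thm:CECoh} holds on cochains and respects the polynomial grading, the degree-$k$ part of $H^2_{\pi^1}(\g^*)$ is $H^2_{CE}(\g,\Sym^k\g)$; as $\g$ is semisimple and $\Sym^k\g$ is finite dimensional, Whitehead's second lemma gives $H^2_{CE}(\g,\Sym^k\g)=0$. Hence $\pi^k=\dd_{\pi^1}Z=[\pi^1,Z]$ for some vector field $Z$ with homogeneous coefficients of degree $k$. Such a $Z$ vanishes to order $k\ge 2$ at the origin, so its time-one flow $\varphi_k:=\exp(Z)$ is a well-defined formal diffeomorphism of $\g^*$ tangent to the identity. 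Because $\LL_Z$ raises the polynomial degree of multivector fields by $k-1$, we get $\varphi_k^*\pi=\pi+\LL_Z\pi+\tfrac12\LL_Z^2\pi+\cdots$, whose components in degrees $2,\dots,k-1$ are still zero and whose degree-$k$ component is $\pi^k+\LL_Z\pi^1=\pi^k-[\pi^1,Z]=0$. Thus $\varphi_k^*\pi=\pi^1+(\text{terms of degree}\ge k+1)$, completing the inductive step. Passing to the limit, the composition $\varphi:=\cdots\circ\varphi_4\circ\varphi_3\circ\varphi_2$ converges in the formal category, since $\varphi_k$ is the identity modulo degree $k$ and hence the composite stabilizes in each fixed degree; by construction $\varphi^*\pi=\pi^1$, which is the desired formal isomorphism $(\g^*,\pi)\isom(\g^*,\pi^1)$.

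The main obstacle --- and essentially the only nonformal ingredient --- is the cohomological vanishing $H^2_{CE}(\g,\Sym^k\g)=0$: it is what allows every obstruction to be absorbed into a change of coordinates, and it is exactly where semisimplicity is used, via Whitehead's lemma together with the identification of Theorem~\ref{thm:CECoh}. The remaining points are bookkeeping: confirming the degree shift of $\LL_Z$, checking that $\exp(Z)$ makes sense as a formal diffeomorphism, and verifying convergence of the infinite composition --- all routine once the cohomology is in hand.
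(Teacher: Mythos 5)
Your argument is correct, and it is essentially the classical proof: the paper itself does not prove this statement but cites Weinstein, whose original argument is exactly this degree-by-degree absorption of the lowest-order term $\pi^k$ (a $\dd_{\pi^1}$-cocycle, hence exact since $H^2_{CE}(\g,\Sym^k\g)=0$ by Whitehead's lemma via Theorem \ref{thm:CECoh}) by the time-one flow of a homogeneous vector field, with the infinite composition converging formally because each step is the identity modulo its degree. The only point to watch is the sign convention relating $\varphi_k^*\pi$ to $\exp(\pm\LL_Z)$, which you can fix by replacing $Z$ with $-Z$.
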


This theorem may be applied to the embedding problem.  Suppose that the cotangent Lie algebra $\g:=\m_0/\m_0^2$ of $(Q,\brac_Q)$ is semi-simple.  Then there is a Poisson embedding into the minimal embedding space $\g^*$ if and only if there is a Poisson embedding into $(\g^*,\pi^1)$, where the linearization $\pi^1$ of $\pi$ is the linear Poisson structure on $\g^*$.  In fact, it seems likely (see Section \ref{sec:DeterminingPhi}) that, if $\g$ is semi-simple, then the natural map $Q\inj\nbd{1}{\pt}{Q}\isom\g^*$ is a Poisson embedding.

In general, when $\g$ is not semi-simple, the Poisson structure $(\g^*,\pi^1)$ tends to have nonlinear deformations.  However, one still has some control of the form of such deformations via the Levi decomposition for Poisson structures (see \cite{Dufour05}, \cite{Wade97}).    

\begin{definition}[Levi decomposition] Let $\g$ be a finite dimensional Lie algebra, $\mathfrak{r}$ be its radical and $\mathfrak{s}:=\g/\mathfrak{r}$ its semi-simple quotient.  By the Levi-Malcev Theorem, the exact sequence 
$$0\to\mathfrak{r}\to\g\to\mathfrak{s}\to0$$
splits.  The decomposition $\g\isom\mathfrak{s}\ltimes\mathfrak{r}$ given by this splitting is called the Levi decomposition of $\g$.
\end{definition}

One may partially linearize a Poisson bracket with respect to a Levi decomposition:

\begin{theorem}[Wade] 
\label{thm:LeviDecomp}
Let $\pi=\pi^1+\pi^2+\ldots$ be a formal Poisson structure that vanishes at the origin with corresponding linear Poisson structure $(\g^*,\pi)$, and let $\g=\mathfrak{s}\ltimes\mathfrak{r}$ be the Levi decomposition of $\g$, with coordinates $(\tilde{s}_1,\ldots,\tilde{s}_n,\tilde{r}_1,\ldots,\tilde{r}_m)$ and Lie brackets 
$$ [\tilde{s}_i,\tilde{s}_j]=\sum_{k}a_{ij}^k \tilde{s}_k  \mbox{\quad and \quad} [\tilde{s}_i,\tilde{r}_j]=\sum_{k}b_{ij}^k \tilde{r}_k$$
Then there exists a formal change of coordinates $(s_1,\ldots,s_n,r_1,\ldots,r_m)$ such that:
\begin{enumerate}
\item The change of coordinates is the identity up to first order.
\item The resulting Poisson structure has the form:
$$\pi=\sum_{i<j} a_{ij}^k s_k\bivect{s_i}{s_j}+\sum_{i<j}b_{ij}^k r_k\bivect{s_i}{r_j}+\sum_{i<j}P_{ij}\bivect{r_i}{r_j}$$
for formal functions $P_{ij}$.
\end{enumerate}
\end{theorem}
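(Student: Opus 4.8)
The plan is to prove the theorem by a recursive normalization carried out degree by degree in the standard filtration of the formal power series ring of functions on $\g^*$, with the homological engine being Whitehead's lemmas for the semisimple Lie algebra $\mathfrak{s}$: $H^1(\mathfrak{s},M)=H^2(\mathfrak{s},M)=0$ for every finite-dimensional $\mathfrak{s}$-module $M$. Degree by degree these apply to the relevant coefficient modules, since each graded piece is finite-dimensional. This is the formal analogue of Conn's analytic Levi decomposition, and the argument follows \cite{Dufour05}.

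First, I promote the embedded copy $\mathfrak{s}\subseteq\g\isom\m_0/\m_0^2$ — spanned by the images of the linear functions $\tilde s_i$ — to an honest Lie subalgebra of the Poisson algebra of formal functions. Concretely, I look for corrections $s_i=\tilde s_i+\phi_i$ with $\phi_i\in\m_0^2$ satisfying $\{s_i,s_j\}=\sum_k a_{ij}^k s_k$ on the nose. Writing $\phi_i=\sum_{\ell\geq 2}\phi_i^{(\ell)}$ in homogeneous pieces and equating homogeneous components, at each degree $\ell$ one gets an equation of the form $\delta\phi^{(\ell)}=G^{(\ell)}$, where $\delta$ is the Chevalley--Eilenberg differential $C^1(\mathfrak{s},\Sym^\ell\g)\to C^2(\mathfrak{s},\Sym^\ell\g)$ — the tuple $(\phi_i^{(\ell)})_i$ being read as a $1$-cochain on $\mathfrak{s}$ with values in the degree-$\ell$ homogeneous polynomials $\Sym^\ell\g$ — and $G^{(\ell)}$ is a $2$-cochain built only from $\pi$ and the already-chosen $\phi_i^{(m)}$ with $m<\ell$. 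The graded Jacobi identity $[\pi,\pi]=0$ (cf.\ Section \ref{sec:GrPoiss}), read in degree $\ell$, forces $G^{(\ell)}$ to be a $2$-cocycle, so $H^2(\mathfrak{s},\Sym^\ell\g)=0$ furnishes a solution $\phi^{(\ell)}$; no uniqueness is required, so any choice is fine. Since $s_i\equiv\tilde s_i\bmod\m_0^2$, the tuple $(s_1,\dots,s_n,\tilde r_1,\dots,\tilde r_m)$ is a formal coordinate system equal to the identity to first order, and in it the $\mathfrak{s}$--$\mathfrak{s}$ block of the bracket is already in the required linear form.

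Next, keeping the $s_i$ fixed, I linearize the transverse directions as an $\mathfrak{s}$-module. Because $[\{s_i,-\},\{s_j,-\}]=\{\{s_i,s_j\},-\}=\sum_k a_{ij}^k\{s_k,-\}$, the Hamiltonian vector fields $\{s_i,-\}$ define a formal $\mathfrak{s}$-action that is linear to first order; I seek transverse coordinates $r_j=\tilde r_j+\psi_j$ with $\psi_j\in\m_0^2$ so that $\{s_i,r_j\}=\sum_k b_{ij}^k r_k$, i.e.\ $\Span\{r_1,\dots,r_m\}$ is an $\mathfrak{s}$-submodule of the formal functions isomorphic to $\mathfrak{r}$. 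Expanding again by degree, the order-$\ell$ equation has the shape $\delta\psi^{(\ell)}=F^{(\ell)}$ with $\delta\colon C^0(\mathfrak{s},\Sym^\ell\g\otimes\mathfrak{r})\to C^1(\mathfrak{s},\Sym^\ell\g\otimes\mathfrak{r})$ and $F^{(\ell)}$ a $1$-cocycle assembled from lower-order data; here $H^1(\mathfrak{s},\Sym^\ell\g\otimes\mathfrak{r})=0$ delivers the correction. (Alternatively, one may invoke complete reducibility of the $\mathfrak{s}$-action on each $\m_0/\m_0^{k+1}$ and split off the isotypic copy of $\mathfrak{r}$.) This coordinate change alters only the transverse variables and leaves the $s_i$ as coordinate functions, so the relations $\{s_i,s_j\}=\sum_k a_{ij}^k s_k$ survive intact; together with the newly arranged $\{s_i,r_j\}=\sum_k b_{ij}^k r_k$ and with $\{r_i,r_j\}=:P_{ij}$ left arbitrary, this is precisely the asserted normal form, and the composite of the two coordinate changes is still the identity to first order.

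The main obstacle is the bookkeeping, in each phase, that identifies the order-$\ell$ error term ($G^{(\ell)}$, resp.\ $F^{(\ell)}$) with a genuine Chevalley--Eilenberg cocycle for $\mathfrak{s}$: one must unwind the graded identity $[\pi,\pi]=0$ — the same Maurer--Cartan hierarchy displayed in Section \ref{sec:GrPoiss} — to see that the obstruction to pushing the normalization one degree further is always closed, so that Whitehead's lemma applies. A subsidiary point is verifying that Phase~2 does not disturb Phase~1; this is why the two normalizations must be carried out in this order and why the second coordinate change is confined to the transverse slice. Formal convergence is not an issue: each phase is an inverse limit of solvable finite-dimensional linear problems, so the recursion produces a well-defined formal change of coordinates.
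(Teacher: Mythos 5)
Your argument is correct, and it is essentially the route the paper itself points to: the paper states Wade's theorem without proof, deferring to \cite{Dufour05,Wade97} and naming the Hochschild--Serre/cohomological argument as one of the two standard proofs, and your degree-by-degree normalization via Whitehead's lemmas (killing the $\mathfrak{s}$--$\mathfrak{s}$ errors with $H^2(\mathfrak{s},\Sym^\ell\g)=0$, then the $\mathfrak{s}$--$\mathfrak{r}$ errors with $H^1=0$, with the Jacobi identity guaranteeing closedness of each obstruction) is exactly that argument. The only slip is notational: in the second phase the coefficient module is ${\rm Hom}(\mathfrak{r},\Sym^\ell\g)\isom\Sym^\ell\g\otimes\mathfrak{r}^*$ rather than $\Sym^\ell\g\otimes\mathfrak{r}$, which is immaterial since Whitehead's lemmas apply to every finite-dimensional $\mathfrak{s}$-module.
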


This theorem's proof relies on either Hermann's linearization of vector fields \cite{Hermann68} or on the Hochschild-Serre spectral sequence \cite{Hochschild54}, depending on the point of view one wants to take (See \cite{Dufour05} and the references therein).  Additionally, this semi-linearization improves the embedding criteria given by Proposition \ref{prop:EmbeddingCriteria} as follows:

\begin{proposition} Let $F\!:(Q,\brac_{Q})\hookrightarrow(\RR^n,\brac)$ be a Poisson embedding.  Denote by the Poisson bivector corresponding to $\brac$ by $\pi$, and the induced Poisson bivector on $\nbd{k}{\pt}{\RR^n}$ by $\pi^{(k)}$.   Let $\g\isom\RR^n$ be the Lie algebra corresponding to the Lie-Poisson structure $(\RR^n,\pi^{(1)})$.  Then
\begin{enumerate}
\item The Lie algebra $\g$ is a Lie algebra extension of $\m_0/\m_0^2$.
\item If the Levi decomposition of $\g\isom\mathfrak{s}\ltimes\mathfrak{r}$, then the extended Poisson structure $\pi^{(k)}$ on $\nbd{k}{\pt}{\RR^n}$ is a solution to the following $\mathfrak{r}$-cohomological equations:
$${\rm d}_\mathfrak{r}\pi^{(k)}=[i_{k*}\pi^{(k-1)},i_{k*}\pi^{(k-1)}]$$
where we may consider $\pi^{(i)}\in{\rm C^2}(\mathfrak{r},\Sym^{\leq i}(\g))^{\mathfrak{s}}$ via the identification $\bigwedge^2\TT_{\nbd{k}{\pt}{\RR^n}}\isom C^2(\g,\Sym^{i}\g)$ given in Theorem \ref{thm:CECoh}.
\end{enumerate}
\end{proposition}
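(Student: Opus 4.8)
Part (1) is exactly Proposition~\ref{prop:EmbeddingCriteria}(1), so only (2) needs an argument. The plan is: first put $\pi$ into Levi normal form, so that its nonlinear part is carried entirely by the radical coordinates and is invariant under a linear $\mathfrak{s}$-action; then match the Chevalley--Eilenberg differentials of $\g$ and $\mathfrak{r}$ on the resulting $\mathfrak{s}$-invariant, $\mathfrak{s}$-basic subcomplex. Passing to the formal completion at $\pt$ (Section~\ref{sec:ReductionAlg}) and applying Wade's Theorem~\ref{thm:LeviDecomp} to $\pi=\pi^1+\pi^2+\cdots$, I obtain formal coordinates $(s_1,\dots,s_n,r_1,\dots,r_m)$, equal to the identity to first order, in which $\pi$ has the normal form of that theorem. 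Being the identity to first order, this change of coordinates leaves $\pi^1$, hence $\g$ and its Levi decomposition $\g\isom\mathfrak{s}\ltimes\mathfrak{r}$, untouched, and it carries the image of $F$ to a formally Poisson-isomorphic subscheme; so $F$ is still a (formal) Poisson embedding and Proposition~\ref{prop:EmbeddingCriteria}(2) applies, giving $\dd_\g\pi^k=[i_{k*}\pi^{(k-1)},i_{k*}\pi^{(k-1)}]$ for every $k\ge 2$.

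Two features of the normal form drive the rest. Since the $s\wedge s$ and $s\wedge r$ blocks of $\pi$ are linear, each homogeneous component $\pi^k$ with $k\ge 2$ is the degree-$k$ part of $\sum_{i<j}P_{ij}\bivect{r_i}{r_j}$ and hence has no $\vect{s_a}$ term; under the cochain-level identification of Theorem~\ref{thm:CECoh} --- with $\vect{r_j}$ corresponding to the span $\mathfrak{r}^*\subseteq\g^*$ of the duals of the $r_j$ --- this reads $\pi^k\in C^2(\mathfrak{r},\Sym^k\g)\subseteq C^2(\g,\Sym^k\g)$. Moreover, $\pi$ being a genuine formal Poisson bivector, each Hamiltonian vector field $X_{s_a}=\{s_a,\cdot\}$ preserves $\pi$; and from the normal form one reads off $\{s_a,s_b\}=\sum_k a_{ab}^k s_k$ and $\{s_a,r_b\}=\pm\sum_k b_{ab}^k r_k$, so the $X_{s_a}$ are \emph{linear} and span a copy of $\mathfrak{s}$ acting on $\RR^n$ by the coadjoint-type action of the splitting. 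A linear action preserves the grading, so invariance of $\pi$ forces invariance of each $\pi^l$; in particular $\pi^k\in C^2(\mathfrak{r},\Sym^k\g)^{\mathfrak{s}}$ for $k\ge 2$. This is the precise content of the phrase ``$\pi^{(i)}\in C^2(\mathfrak{r},\Sym^{\le i}\g)^{\mathfrak{s}}$'': one reads $\pi^{(i)}$ through its deformation part $\pi^{(i)}-\pi^1$, the linear term $\pi^1$ carrying all the $\mathfrak{s}$-directional content.

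Next, compare the differentials. For a cochain $c\in C^2(\mathfrak{r},M)^{\mathfrak{s}}$, extended by zero on the $\mathfrak{s}$-directions to an element of $C^2(\g,M)$, a direct evaluation of $\dd_\g c$ on triples in $\g=\mathfrak{s}\oplus\mathfrak{r}$ --- using only that $\mathfrak{r}$ is an ideal, so $[\mathfrak{s},\mathfrak{r}]\subseteq\mathfrak{r}$ and $[\mathfrak{s},\mathfrak{s}]\subseteq\mathfrak{s}$ --- shows that each value involving an element of $\mathfrak{s}$ equals, up to sign, a value of the $\mathfrak{s}$-action on $c$ and so vanishes, while on $\wedge^3\mathfrak{r}$ one recovers $\dd_{\mathfrak{r}} c$; thus $\dd_\g c=\dd_{\mathfrak{r}} c$, and $\mathfrak{s}$-equivariance of $\dd_{\mathfrak{r}}$ keeps the answer in $C^3(\mathfrak{r},M)^{\mathfrak{s}}$. (This is the relevant edge of the Hochschild--Serre spectral sequence; Whitehead's lemma applied degree-by-degree to the $\mathfrak{s}$-module $\Sym^{\le k}\g$ then even gives $H^*(\g,\Sym^{\le k}\g)\isom H^*(\mathfrak{r},\Sym^{\le k}\g)^{\mathfrak{s}}$, the natural home of the obstruction.) Taking $c=\pi^k$ turns the left-hand side of the displayed equation into $\dd_{\mathfrak{r}}\pi^k$. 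On the right-hand side, $[\pi^{(k-1)},\pi^{(k-1)}]=0$ on $\nbd{k-1}{\pt}{\RR^n}$ collapses $[i_{k*}\pi^{(k-1)},i_{k*}\pi^{(k-1)}]$ modulo $\m_0^{k+1}$ to $\sum_{i+j=k+1,\ i,j\ge 2}[\pi^i,\pi^j]$, and since each factor lies in the $\bivect{r_i}{r_j}$-block, the local formula for the Schouten bracket shows this again has no $\vect{s_a}$ term and is $\mathfrak{s}$-invariant, i.e. lands in $C^3(\mathfrak{r},\Sym^{\le k}\g)^{\mathfrak{s}}$. Substituting yields $\dd_{\mathfrak{r}}\pi^k=[i_{k*}\pi^{(k-1)},i_{k*}\pi^{(k-1)}]$, which is the assertion.

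The main obstacle is precisely this ``no leakage'' bookkeeping: verifying that the normal-form $\pi$ is $\mathfrak{s}$-invariant \emph{and} that the invariance respects the homogeneous decomposition (which hinges on $\pi$ being honestly Poisson and on the $X_{s_a}$ being linear --- exactly what the Levi normal form supplies), together with the two vanishing checks, that neither $\dd_\g$ on the $\bivect{r_i}{r_j}$-block nor the Schouten brackets on the right produce $\mathfrak{s}$-directional components. Granting these, everything else is the local Schouten-bracket formula in the normal-form coordinates and the standard Chevalley--Eilenberg / Hochschild--Serre comparison.
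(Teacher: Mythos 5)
Your argument is correct and is essentially the route the paper intends but leaves unwritten: the proposition is stated without proof as the combination of Wade's semi-linearization (Theorem \ref{thm:LeviDecomp}) with the degree-by-degree criteria of Theorem \ref{prop:EmbeddingCriteria}, which is exactly what you carry out --- normalize $\pi$ by a change of coordinates that is the identity to first order, observe that the components of degree $\geq 2$ lie in the $\mathfrak{s}$-invariant radical block, and check that $\dd_\g$ agrees with $\dd_{\mathfrak{r}}$ on such cochains. The only blemish is the parenthetical claim that $H^*(\g,\Sym^{\leq k}\g)\isom H^*(\mathfrak{r},\Sym^{\leq k}\g)^{\mathfrak{s}}$ in all degrees, which overstates the Hochschild--Serre conclusion (e.g.\ $H^3(\mathfrak{s},\KK)\neq 0$ contributes an extra summand in degree $3$); since you never use that aside, it does not affect the proof.
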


To prove the nonembedding theorem in Chapter 6, we need to make a slight improvement to the semi-linearization given in the Levi decomposition.  For a relevant class of Lie algebras, this improvement will give a normal form for some of the nonlinear $P_{ij}$ in Theorem  \ref{thm:LeviDecomp}.   This improvement is based on a technique developed by J.P. Dufour and N.T. Zung in \cite{Dufour02}, and employs the Poincar\'{e}-Dulac normal form of a vector field.\\

\subsubsection{Poincar\'{e} -- Dulac normal form}

A thorough exposition of this subject matter may be found in Appendix A of \cite{Dufour05}.  Roughly stated, the Poincar\'{e}-Dulac normal form is a decomposition of a vector field into a linear part and a resonant part.   This section will make this statement precise.

Let $\KK^n$ have coordinates $\{v_1,\ldots,v_n\}$ and let $X$ be formal a vector field on $\KK^n$ that vanishes at $0$.  In a neighborhood of $0$, we may decompose $X$ into a linear and nonlinear part $X=X^1+X^{nl}$.  Furthermore, the linear vector field decomposes into semi-simple and nilpotent parts $X^1=X^1_{ss}+X^1_n$.  Then $X$ is in {\it Poincar\'{e}-Dulac form} if $[X,X^1_{ss}]=0$.  

In particular, when $\KK=\CC$, the semi-simple part diagonalizes as: 
$$X^1_{ss}=\sum_{k=1}^{n}\lambda_k x_k\vect{x_k}   \mbox{\quad where   } \lambda_k\in\CC$$

We write $X$ in coordinates as $X=\sum_{K} (x_1^{k_1}x_2^{k_2}\ldots x_n^{k_n})\vect{x_k}$, where $K=(k_1,\ldots,k_n,k)$ is multi-index notation.  The condition that $[X,X^1_{ss}]=0$ then becomes

\begin{eqnarray}
\nonumber 0    &=&  [X,X^1_{ss}] \\
 \nonumber	&=& [\sum_{K} (x_1^{k_1}x_2^{k_2}\ldots x_n^{k_n})\vect{x_k} ,\sum_{k=1}^{n}\lambda_k x_k\vect{x_k}] \\
	&=& \sum_{K}\left( \sum_{i=1}^{n} k_i \lambda_i-\lambda_k\right)(x_1^{k_1}x_2^{k_2}\ldots x_n^{k_n})\vect{x_k} 
\end{eqnarray} 

Therefore, a vector field is in Poincar\'{e}-Dulac normal form if the nonlinear coefficients $(x_1^{k_1}x_2^{k_2}\ldots x_n^{k_n})$ are resonant with respect to the eigenvalues $(\lambda_1,\ldots,\lambda_n)$ of $X^1$:
$$\left( \sum_{i=1}^{n} k_i \lambda_i-\lambda_k\right)=0$$ 

\begin{theorem}[Poincar\'{e} -- Dulac]  Let $X$ be a formal vector field vanishing at the origin.  Then there is a formal change of coordinates that puts $X$ into Poincar\'{e}-Dulac normal form.
\end{theorem}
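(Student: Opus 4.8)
The plan is to prove the theorem in the classical way, by successively eliminating the non-resonant terms of $X$ one homogeneous degree at a time; the only delicate point is the treatment of the nilpotent part of the linear term.

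First I would record the relevant decompositions. Write $X = X^1 + X^{nl}$ with $X^1$ the linear part, and let $X^1 = X^1_{ss} + X^1_n$ be the Jordan decomposition of the corresponding endomorphism of $\KK^n$, so that $X^1_{ss}$ and $X^1_n$ commute. Every coordinate change used below is tangent to the identity, hence leaves $X^1$ --- and in particular $X^1_{ss}$ --- unchanged, so the target condition $[X, X^1_{ss}] = 0$ is well posed. Let $\mathcal{V}_d$ be the finite-dimensional space of vector fields on $\KK^n$ with homogeneous coefficients of degree $d$, so $X = X^1 + \sum_{d \ge 2} X_d$ with $X_d \in \mathcal{V}_d$. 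The homological operator $\mathrm{ad}_{X^1_{ss}} = [X^1_{ss},\,\cdot\,]$ preserves each $\mathcal{V}_d$, and since a semisimple element of $\mathfrak{gl}_n$ acts semisimply in every finite-dimensional representation (over $\KK$ of characteristic $0$), $\mathrm{ad}_{X^1_{ss}}|_{\mathcal{V}_d}$ is semisimple; hence
$$\mathcal{V}_d = \ker\bigl(\mathrm{ad}_{X^1_{ss}}|_{\mathcal{V}_d}\bigr) \oplus \operatorname{im}\bigl(\mathrm{ad}_{X^1_{ss}}|_{\mathcal{V}_d}\bigr),$$
the first summand being precisely the resonant part in degree $d$ (over $\CC$, the span of the monomial vector fields with $\sum_i k_i \lambda_i - \lambda_j = 0$, as displayed above).

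The induction on $d$ then runs as follows. Assume a tangent-to-identity formal coordinate change has already been performed so that $X_j \in \ker(\mathrm{ad}_{X^1_{ss}})$ for every $2 \le j \le d-1$ (vacuous when $d = 2$). Split $X_d = X_d^{r} + X_d^{nr}$ along the decomposition above. For homogeneous $Y_d \in \mathcal{V}_d$, the time-one flow $\Phi$ of $Y_d$ is a tangent-to-identity formal diffeomorphism with
$$\Phi_* X \equiv X^1 + \sum_{j=2}^{d-1} X_j + \bigl(X_d + \mathrm{ad}_{X^1}(Y_d)\bigr) \pmod{\mathcal{V}_{\ge d+1}},$$
because $[X^1, Y_d] \in \mathcal{V}_d$ while $[X_j, Y_d] \in \mathcal{V}_{j+d-1}$ for $j \ge 2$; in particular the components of degree $< d$ are untouched. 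Now $\mathrm{ad}_{X^1} = \mathrm{ad}_{X^1_{ss}} + \mathrm{ad}_{X^1_n}$, and the key observation is that $\mathrm{ad}_{X^1_n}$ commutes with $\mathrm{ad}_{X^1_{ss}}$ (as $X^1_{ss}$ and $X^1_n$ commute), hence stabilizes each summand of the splitting of $\mathcal{V}_d$; therefore $\mathrm{ad}_{X^1}$ restricted to $\operatorname{im}(\mathrm{ad}_{X^1_{ss}}|_{\mathcal{V}_d})$ is an invertible semisimple operator plus a commuting nilpotent one, and so is invertible there. Choosing $Y_d \in \operatorname{im}(\mathrm{ad}_{X^1_{ss}}|_{\mathcal{V}_d})$ with $\mathrm{ad}_{X^1}(Y_d) = -X_d^{nr}$, the transformed field has degree-$d$ part equal to $X_d^{r}$, which is resonant. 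This completes the inductive step.

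Finally, the coordinate change introduced at stage $d$ is the identity modulo order $d \ge 2$ and modifies only the homogeneous components of degree $\ge d$, so the infinite composition of all these changes converges in $\KK[[v_1,\ldots,v_n]]$ to a single tangent-to-identity formal diffeomorphism putting $X$ into Poincar\'e--Dulac normal form. The step I expect to be the genuine obstacle is exactly this interplay with $X^1_n$: a naive attempt to solve only $\mathrm{ad}_{X^1_{ss}}(Y_d) = -X_d^{nr}$ leaves an uncontrolled residue $[X^1_n, Y_d]$, and the remedy --- work inside $\operatorname{im}(\mathrm{ad}_{X^1_{ss}})$, where the full $\mathrm{ad}_{X^1}$ is invertible because its commuting semisimple and nilpotent parts both preserve this subspace --- is what makes the induction close. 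A minor caveat: over a non-algebraically-closed $\KK$ one must read ``semisimple'' abstractly rather than as ``diagonalizable,'' but the splitting of $\mathcal{V}_d$ and the invertibility argument are unaffected.
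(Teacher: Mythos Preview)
The paper does not supply its own proof of this theorem; it is stated as a classical result, with the reader directed to Appendix~A of \cite{Dufour05} for a full treatment. Your argument is the standard one: decompose each $\mathcal{V}_d$ as kernel plus image of the semisimple operator $\mathrm{ad}_{X^1_{ss}}$, note that $\mathrm{ad}_{X^1}$ is invertible on the image because its commuting semisimple and nilpotent parts both preserve that subspace, and then kill the nonresonant piece degree by degree via the flow of a homogeneous $Y_d$, with the infinite composition converging formally since the stage-$d$ change is the identity modulo order~$d$. The proof is correct, including your handling of the nilpotent part $X^1_n$, which is indeed the only subtle point.
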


\begin{remark} Under an additional diophantine condition, a theorem of Bruno actually gives an analytic version of the  Poincar\'{e}-Dulac theorem (see \cite{Dufour05}).
\end{remark}

\begin{example}If the nonlinear part of $X$ is nonresonant, then the Poincar\'{e}-Dulac theorem states that $X$ is linearizable.
\end{example}

\subsubsection{The example of $\g=(\KK\oplus\mathfrak{s})\ltimes \mathfrak{r}$ }

We now prove, for a certain Lie algebra $\g$, the existence of a slight improvement of the semi-linearization given by Theorem \ref{thm:LeviDecomp}.   Let $\g=(\KK\oplus\mathfrak{s})\ltimes \mathfrak{r}$ be the semi-direct product of (1) a 1-dimensional central extension of a semi-simple Lie algebra $\mathfrak{s}$ and (2) a nilpotent Lie algebra $\mathfrak{r}$.  Denote the basis of $\g=(\KK\oplus\mathfrak{s})\ltimes \mathfrak{r}$ by $\{\tilde{t},\tilde{s}_1,\ldots,\tilde{s}_k,\tilde{r}_1,\ldots,\tilde{r}_m\}$.

\begin{example} Let the semi-simple part be $\mathfrak{s}=\mathfrak{sl}_n(\CC)$ and the Lie ideal be $\mathfrak{r}=\CC^n$.  If the action of $\mathfrak{s}=\mathfrak{sl}_n(\CC)$ on $\CC^n$ is the usual one and $\CC$ acts via scalar multiplication, then $\g$ is the affine Lie-algebra $\mathfrak{aff}(\CC^n)$.  This example, considered in \cite{Dufour02}, originally necessitated the use of the Poincar\'{e}-Dulac normal form to improve the Levi decomposition.
\end{example}

\begin{proposition} 
\label{prop:LeviImprove}
Let $\pi=\sum\pi^k$ be a formal Poisson structure on $\KK^n$ that vanishes at the origin, with corresponding linear Poisson structure $(\g^*,\pi^1)$, where $\g=(\KK\oplus\mathfrak{s})\ltimes \mathfrak{r}$. There exists a formal change of coordinates $(t,s_1,\ldots,s_k,r_1,\ldots,r_m)$ such that 
\begin{enumerate}
\item The change of coordinates is the identity up to first order.
\item The resulting Poisson structure has the form:
$$\pi=\sum_{\stackrel{i<j}{k}} a_{ij}^k s_k\bivect{s_i}{s_j}+\sum_{\stackrel{i<j}{k}}b_{ij}^k r_k\bivect{s_i}{r_j}+\sum_{k} (\lambda_k r_k+R_{k})\bivect{t}{r_k}+\sum_{i<j}P_{ij}\bivect{r_i}{r_j}$$
where $a_{ij}^k,b_{ij}^k,\lambda_k\in\KK$, the $P_{ij}$ are formal nonlinear functions and the $R_{k}$ are nonlinear functions that are {\it resonant} with respect to the action of $\KK$ on $\mathfrak{r}$.
\end{enumerate}
\end{proposition}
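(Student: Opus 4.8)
The plan is to combine Wade's semi-linearization (Theorem~\ref{thm:LeviDecomp}) with a Poincar\'e--Dulac normalization of the sub-bracket coupling the central coordinate $t$ to the radical coordinates $r_k$, in the spirit of Dufour--Zung \cite{Dufour02}. First I would pin down the Levi data of $\g=(\KK\oplus\mathfrak{s})\ltimes\mathfrak{r}$: since $\KK$ is central and $\mathfrak{r}$ is a nilpotent ideal, $\mathfrak{r}':=\KK\ltimes\mathfrak{r}$ is a solvable ideal, hence it is the radical of $\g$ and $\mathfrak{s}=\g/\mathfrak{r}'$ is the semisimple quotient. Applying Theorem~\ref{thm:LeviDecomp} to $\g=\mathfrak{s}\ltimes\mathfrak{r}'$ with radical coordinates $(t,r_1,\dots,r_m)$ gives a formal change of coordinates, the identity to first order, after which
$$\pi=\sum_{i<j}a_{ij}^k s_k\bivect{s_i}{s_j}+\sum b_{ij}^k r_k\bivect{s_i}{r_j}+\sum_k P_{0k}\bivect{t}{r_k}+\sum_{i<j}P_{ij}\bivect{r_i}{r_j},$$
where $[\tilde{s}_i,\tilde{t}]=0$ rules out any $\bivect{s_i}{t}$ term and $[\tilde{s}_i,\tilde{r}_j]\in\mathfrak{r}$ keeps the $\mathfrak{s}$--radical block free of $t$. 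Only the functions $P_{0k}$ still need to be brought into the form $\lambda_k r_k+R_k$.

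The key observation is that these functions are packaged into a single vector field: contracting with $dt$ gives the Hamiltonian vector field $X:=\iota_{dt}\pi=\sum_k P_{0k}\vect{r_k}$, which has components only along the $r_k$ and whose linear part $X^1$ is the linear vector field induced by $\ad_{\tilde{t}}$ on $\mathfrak{r}$; write $X^1=X^1_{ss}+X^1_n$ for its commuting semisimple and nilpotent parts. Because $\{t,s_i\}=0$ in the displayed bracket, the field $X$ commutes with the (now linear) Hamiltonian vector fields of the $s_i$, so $X$ is invariant under the linear $\mathfrak{s}$-action on $\KK^n$. Passing to $\KK=\CC$ if necessary, I would choose $r$-coordinates in which $X^1_{ss}=\sum_k\lambda_k r_k\vect{r_k}$, the $\lambda_k$ being the eigenvalues of $\ad_{\tilde{t}}$ on $\mathfrak{r}$.

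I would then normalize degree by degree, running the proofs of Theorem~\ref{thm:LeviDecomp} and of the Poincar\'e--Dulac theorem as a single induction on polynomial degree. Assume inductively that, after a further identity-to-first-order change of coordinates, $\pi$ has the asserted form modulo terms whose coefficients have degree $\geq N$. The degree-$N$ part of the equation $[\pi,\pi]=0$ makes $\pi^N$ a $d_{\pi^1}$-cocycle up to a correction determined by lower degrees, so the remaining freedom in $\pi^N$ is measured by $H^2(\g,\Sym^{N}\g)$ and is exercised by a coordinate change $\exp(Z_N)$ with $Z_N$ homogeneous of degree $N$ (which is the identity through order $N-1$, hence does not disturb what is already normalized). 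By the Hochschild--Serre spectral sequence for the ideal $\mathfrak{r}'\subseteq\g$ together with Whitehead's lemmas ($H^1(\mathfrak{s},-)=H^2(\mathfrak{s},-)=0$ on the finite-dimensional module $\Sym^{N}\g$), this reduces to $\mathfrak{s}$-invariant classes supported on $\mathfrak{r}'$; a second application of Hochschild--Serre for $\mathfrak{r}\subseteq\mathfrak{r}'$ with one-dimensional abelian quotient $\KK$ identifies the leftover freedom in the $\bivect{t}{r_k}$-block with $\operatorname{coker}(\ad_{X^1_{ss}})$, which is spanned by the $X^1_{ss}$-resonant monomials. Consequently $H^2(\g,\Sym^{N}\g)$ is spanned by the classes of the $\bivect{r_i}{r_j}$-bivectors and the resonant $\bivect{t}{r_k}$-bivectors, so $\pi^N$ can be brought to a representative supported only on those; passing to the limit gives $P_{0k}=\lambda_k r_k+R_k$ with $R_k$ resonant for the action of $\KK$ on $\mathfrak{r}$, and with the $P_{ij}$ arbitrary formal functions.

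The main obstacle is carrying out the Poincar\'e--Dulac normalization of the $t$--$\mathfrak{r}$ block without undoing the Levi--Wade normal form: concretely, one must show that the ``bad'' components of $\pi^N$ (the degree-$\geq 2$ parts of the $\mathfrak{s}$--$\mathfrak{s}$ and $\mathfrak{s}$--radical blocks, and the non-resonant part of the $\bivect{t}{r_k}$-block) are exactly the coboundaries in $H^2(\g,\Sym^{N}\g)$, so that a single $Z_N$ removes all of them at once. That this identification holds rests essentially on the hypothesis that the extra one-dimensional factor is \emph{central} in $\KK\oplus\mathfrak{s}$: this makes $X=\iota_{dt}\pi$ an $\mathfrak{s}$-invariant field, so that its normalization is an equivariant problem with obstructions controlled, on the one hand, by the vanishing of $H^1$ and $H^2$ of the semisimple $\mathfrak{s}$ with finite-dimensional coefficients and, on the other, by the coinvariants of the one-dimensional $\KK$-action, which is precisely where the resonance condition on the $R_k$ enters. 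I expect the bookkeeping in this cohomology computation — keeping the $\mathfrak{s}$-equivariant and the $\KK$-resonance reductions from interfering — to be the delicate point; the rest is the routine order-by-order iteration common to Levi decomposition and Poincar\'e--Dulac.
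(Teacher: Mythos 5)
Your proposal follows the paper's own proof: apply Wade's semi-linearization (Theorem \ref{thm:LeviDecomp}) with radical $\KK\ltimes\mathfrak{r}$, note that no $\bivect{s_i}{t}$-terms appear and that the Hamiltonian vector field $X_t=\iota_{dt}\pi$ is $\mathfrak{s}$-invariant precisely because $\{t,s_i\}_1=0$, and then normalize $X_t$ so that its nonlinear part is resonant. The only divergence is in the last step, where the paper simply invokes the $\mathfrak{s}$-equivariant Poincar\'e--Dulac theorem and observes that the resulting change of coordinates (supported on the $r$-variables, chosen $\mathfrak{s}$-equivariantly) leaves the Levi normal form untouched, whereas you unpack that same step into a degree-by-degree cohomological induction via Hochschild--Serre and Whitehead's lemmas --- a heavier but equivalent implementation of the identical idea.
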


\begin{proof}
By the Levi decomposition, we may assume that $\pi$ is already in the form of Theorem  \ref{thm:LeviDecomp} -- that is, $\pi$ has the form in the proposition, where the $R_k$ are not necessarily resonant.  The Hamiltonian vector fields $\{X_{s_1},\ldots,X_{s_k}\}$ form a Lie subalgebra of ${\rm Ham}(\g)$ isomorphic to $\mathfrak{s}$.  Moreover, the Hamiltonian vector field
$$X_{t}=\sum_{k} \lambda_k r_k\vect{r_k}+R_{k}\vect{r_k}$$ 
is invariant under the action of $\mathfrak{s}$ (as $\{t,s_i\}_1=0$), and so we may $\mathfrak{s}$-equivariantly apply the Poincar\'{e}-Dulac theorem to obtain coordinates such that
$$X_{t}=\sum_{k} \lambda_k r_k\vect{r_k}+R_{k}\vect{r_k}  \mbox{\quad  where $R_k$ are resonant }$$ 
Furthermore, this coordinate change does not affect the Levi decomposition -- that is,
$$\pi=\sum_{\stackrel{i<j}{k}} a_{ij}^k s_k\bivect{s_i}{s_j}+\sum_{\stackrel{i<j}{k}}b_{ij}^k r_k\bivect{s_i}{r_j}+\sum_{k} (\lambda_kr_k+R_{k})\bivect{t}{r_k}+\sum_{i<j}P_{ij}\bivect{r_i}{r_j}$$
\end{proof}

\begin{example}[ The cotangent Lie algebra $\g=\gl(\CC)\ltimes(V_n\oplus V_n)$ ]
\label{ex:ZnCotangent}
This example will apply Proposition \ref{prop:LeviImprove} to the cotangent Lie algebra of the $4$-dimensional symplectic quotient $V/\ZZ_n$ considered in Chapter 6.  Let $\g=(\CC\oplus\slt(\CC))\ltimes(V_n\oplus V_n)$ be the Lie algebra with basis $\{t,H,E,F,x_0,\ldots,x_n,y_0,\ldots,y_n\}$ and Lie brackets given by: 
\begin{enumerate}
\item The standard $\gl(\CC)=(\CC\oplus\slt(\CC))$ structure on the first factor.
\item The action of the semi-simple factor $\slt(\CC)$ on $V_n\oplus V_n$ given by $\rho_n\oplus-\rho_n$, where $\rho_n:\slt(\CC)\to V_n$ is the standard $(n+1)$-dimensional irreducible representation of $\slt(\CC)$. 
\item The action of $\CC$ on $V_n\oplus V_n$ is given by ${\rm Id}\oplus-{\rm Id}$.
\item The trivial Lie bracket on $V_n\oplus V_n$.
\end{enumerate}

The eigenvalues of the $\CC$-action are 
$$(\lambda_0,\ldots,\lambda_n,\lambda_{n+1},\ldots,\lambda_{2n+1})=(1,\ldots,1,-1,\ldots,-1)$$
and the $\lambda$-resonant polynomials in $\Sym(\g)$ are generated by
$$ \{ (x_{i}y_{j}) \mbox{ ;  } 0\leq i,j \leq n \}  $$

Therefore, if $\pi=\sum\pi^k$ is a Poisson structure on $\CC^{2(n+1)+4}$ with linear Poisson structure $((\gl(\CC)\ltimes(V_n\oplus V_n)^*,\pi^1)$, then Proposition \ref{prop:LeviImprove} provides coordinates in which $\pi$ has the form:

\begin{eqnarray}
\label{eqn:LeviDecomp}
\pi &=& \pi^1+\sum_{k}P^t_{k}\bivect{t}{w_k}+\sum_{i<j}P_{ij}\bivect{w_i}{w_j}   \mbox{\quad  where  } w_i\in\{ x_i, y_i\} \\
\nonumber   &   & \hspace{6cm} \mbox{ and } P^t_{k}\in\CC[x_iy_j;\, 0\leq i,j\leq n]
\end{eqnarray}

\end{example}

\chapter{A Nonembedding Result for $V/\ZZ_{\MakeLowercase{n}}$}

This chapter consists of the proof of Theorem \ref{thm:NonEmb}, which asserts that the Poisson embedding dimension of the singular symplectic quotient $V/\ZZ_n$ of 4-dimensional real affine space $V$ by the cyclic group $\ZZ_n$ of odd order is strictly greater than its smooth embedding dimension.  To prove such a result, we need a description of the quotient and its Poisson structure.

\section{Basic Properties of $V/\ZZ_n$}

Throughout this chapter, let $G=\ZZ_n$ be the cyclic group of order $n$.  Let $V=\RR^4$ be equipped with the standard symplectic structure (see Example \ref{ex:SympVS}), and let $\ZZ_n\subseteq S^1\subseteq{\rm Sp}(V)$ act diagonally on $V=\RR^2\times\RR^2$, with the action of $\ZZ_n$ on $\RR^2$ given in Example \ref{ex:Z_n}.  As in Example \ref{ex:Complexification}, denote the complexified coordinates of $V_\CC$ by $\{z,\zb,w,\wb\}$.  The resulting linear action of $\ZZ_n$ on the 4-dimensional complex vector space  $V_\CC$ is given by:
\begin{eqnarray}
\label{eqn:CxAction}
\zeta\cdot(z,\zb,w,\wb)=(\zeta z,\zeta^{-1}\zb,\zeta w,\zeta\wb^{-1}) \mbox{,   for  }  \zeta\in\ZZ_n\subseteq\CC.
\end{eqnarray}

By Section \ref{sec:Complexification}, if there is no Poisson structure on $\CC^{N}$ extending the Poisson structure on $V_\CC/\ZZ_n$, then there cannot be a Poisson embedding $V/\ZZ_n\inj\RR^N$.   We will take this approach and therefore only be concerned with $V_\CC$. Thus, to reduce notational clutter, we will drop the $\CC$ from $V_\CC$. 

$V$ is now a complex vector space equipped with the action in Equation \ref{eqn:CxAction}.  However, to prevent a proliferation of $i=\sqrt{-1}$ throughout the chapter, we will scale this Poisson bracket by $\frac{-1}{2i}$ so that $$\{z,\zb\}=\{w,\wb\}=1.$$
As this action is a Poisson action, the quotient $V/\ZZ_n$ inherits a natural Poisson structure.  Denote this Poisson variety $(V/\ZZ_n,\brac_{V/\ZZ_n})$ by $(Q,\brac_Q)$.  

The following lemma will be used repeatedly to prove properties of the quotient $Q$.  It arises from the fact that the $\ZZ_n$-action is a restriction of the standard action of $U(2)$ on $V$ and therefore defines a homomorphism of (complexified) Lie algebras:

$$\xymatrix{  \mathfrak{gl}_2(\CC) \ar[r]^{\mu^*\otimes\CC}  \ar@{-->}[dr]  & \Sym(V^*) \ar[r]^-{\rm Ham}  & {\rm Ham}(V)\subseteq \TT_V  \\
							& \Sym(V^*)^G  \ar@{^{(}->}[u] \ar[r]^-{\rm Ham} & {\rm Ham}(Q) \subseteq \TT_Q \ar@{^{(}->}[u]\\
							}
$$
where $\mu$ is the moment map for the $U(2)$-action, and ${\rm Ham}(f)=\{\cdot,f\}$ is the map that sends a function to its Hamiltonian vector field.  Explicitly, the homomorphism is given by:

\begin{lemma}
Let $\{t,H,E,F\}$ be a basis of $\gl(\CC)$ such that 
\begin{enumerate}
\item $t$ is in the center of $\gl(\CC)$
\item $\{H,E,F\}$ is the standard basis of $\slt(\CC)$ (see Appendix \ref{sec:AppendixSL2}).
\end{enumerate}
We may identify $\gl(\CC)$ with a subalgebra of $\TT_V$ via the map 
$\left\{\begin{array}{rcl}
t &\mapsto& \{\cdot,z\zb+w\wb\} \\
H &\mapsto& \{\cdot,z\zb-w\wb\} \\
E &\mapsto& \{\cdot,\zb w\} \\
F &\mapsto& \{\cdot,z\wb\} 
\end{array}\right.$.
\end{lemma}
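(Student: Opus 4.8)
The plan is to exhibit the assignment in the statement as a composite of three maps, each a Lie-algebra homomorphism up to an overall sign, and then to check injectivity. The first map is the inclusion of $\gl(\CC)=\CC t\oplus\slt(\CC)$ into its symmetric algebra $\Sym(\gl(\CC))=\OO(\gl(\CC)^*)$ with its Lie--Poisson bracket (Example \ref{ex:LP}), which on linear functions is just the Lie bracket of $\gl(\CC)$. The second is the pullback $\mu^*\otimes\CC\colon \Sym(\gl(\CC))\to\Sym(V^*)$ along the complexified moment map of the diagonal $U(2)$-action on $V$; since a moment map is a Poisson map, this is a homomorphism of Poisson algebras, hence restricts to a Lie-algebra homomorphism on the linear part. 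The third is $\mathrm{Ham}\colon\Sym(V^*)\to\TT_V$, $f\mapsto\{\cdot,f\}$, which by the Jacobi identity satisfies $[\{\cdot,f\},\{\cdot,g\}]=-\{\cdot,\{f,g\}\}$ and has kernel the Casimir functions --- which, the bracket on $V$ being nondegenerate, are only the constants. Composing these maps (and absorbing the sign via $\gl(\CC)\isom\gl(\CC)^{\rm op}$, an isomorphism since $X\mapsto-X$ works for any Lie algebra) produces exactly the map of the lemma, so it remains only to identify $\mu^*$ on the chosen basis and to check that the four resulting quadratics have linearly independent Hamiltonian vector fields.

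For the identification of $\mu^*$: the moment map of the linear $U(2)$-action on $V=\CC^2$ with coordinates $(z,w)$ sends $v$ to the Hermitian matrix with entries $i\,v_a\overline{v_b}$; its trace direction is $z\zb+w\wb$ and its $\slt$-directions, for the standard triple of Appendix \ref{sec:AppendixSL2}, are $z\zb-w\wb$, $\zb w$, $z\wb$ --- and the rescaling of the K\"{a}hler bracket by $\tfrac{-1}{2i}$ fixed at the start of the chapter is what accounts for the normalization of the resulting structure constants. Equivalently, and this is the one concrete computation I would carry out, one checks directly from $\{z,\zb\}=\{w,\wb\}=1$ and the Leibniz rule that
\[
\{z\zb-w\wb,\ \zb w\}=2\,\zb w,\qquad \{z\zb-w\wb,\ z\wb\}=-2\,z\wb,\qquad \{\zb w,\ z\wb\}=z\zb-w\wb,
\]
together with $\{z\zb+w\wb,\ z\zb-w\wb\}=\{z\zb+w\wb,\ \zb w\}=\{z\zb+w\wb,\ z\wb\}=0$. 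These are exactly the structure constants of $\gl(\CC)=\CC t\oplus\slt(\CC)$ in the basis $\{t,H,E,F\}$, so $\{t,H,E,F\}\mapsto\{z\zb+w\wb,\ z\zb-w\wb,\ \zb w,\ z\wb\}$ is an isomorphism onto a Lie subalgebra of $(\Sym(V^*),\brac)$. Each of the four quadratics is nonconstant, hence not Casimir, so has nonzero Hamiltonian vector field; independence of the four vector fields follows from faithfulness of the $U(2)$-action (then complexify), or by a direct check, so $\mathrm{Ham}$ is injective on this subalgebra and we obtain the asserted identification of $\gl(\CC)$ with a Lie subalgebra of $\TT_V$.

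None of this is genuinely hard; the only thing demanding care is the bookkeeping of conventions --- the sign in $[\{\cdot,f\},\{\cdot,g\}]=-\{\cdot,\{f,g\}\}$ (handled by the identification through $\gl(\CC)^{\rm op}$) and the factor $\tfrac{-1}{2i}$ by which the K\"{a}hler bracket was normalized --- so I expect the ``main obstacle'' to be purely notational rather than mathematical. Finally, commutativity of the diagram preceding the statement --- that this subalgebra lies in $\mathrm{Ham}(Q)\subseteq\TT_Q$ --- is immediate, since each of $z\zb$, $w\wb$, $\zb w$, $z\wb$ is invariant under $\zeta\cdot(z,\zb,w,\wb)=(\zeta z,\zeta^{-1}\zb,\zeta w,\zeta^{-1}\wb)$, so each lies in $\Sym(V^*)^{G}$ and its Hamiltonian vector field descends to $Q$.
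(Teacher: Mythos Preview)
Your proposal is correct and follows essentially the same approach as the paper. The paper does not give an explicit proof of this lemma; rather, it states the result as the explicit form of the homomorphism arising from the commutative diagram immediately preceding it (the composite $\gl(\CC)\xrightarrow{\mu^*\otimes\CC}\Sym(V^*)\xrightarrow{\mathrm{Ham}}\TT_V$ through the moment map of the $U(2)$-action), which is exactly the factorization you spell out, and your direct verification of the structure constants and injectivity simply fills in the details the paper leaves implicit.
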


Such an identification of $\gl(\CC)\subseteq\TT_V$  induces an action of $\gl(\CC)$ on $\Sym(V^*)$ and $\Sym(V^*)^{\ZZ_n}$.  Therefore

\begin{lemma} The action above makes $\Sym(V^*)$ and $\Sym(V^*)^{\ZZ_n}$ representations of the Lie algebra $\gl(\CC)$.   Moreover,
\begin{enumerate}
\item The action decomposes $\Sym(V^*)$ and $\Sym(V^*)^{\ZZ_n}$ into $t$-eigenspaces and $H$-eigenspaces.
\item As the Poisson bracket on $V$ has degree-$(-2)$ and $\gl(\CC)$ is generated by (the Hamiltonian vector fields of) quadratic functions, this action preserves the natural grading on $\Sym(V^*)=\bigoplus_{k}\Sym^k(V^*)$.
\end{enumerate}
\end{lemma}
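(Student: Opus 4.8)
The first claim costs essentially nothing once the preceding lemma is in hand: that lemma exhibits $\gl(\CC)$ as a Lie subalgebra of $\TT_V$, and since $V$ is affine we have $\TT_V=\Der(\Sym(V^*))$, so a Lie algebra homomorphism $\gl(\CC)\to\TT_V$ is, by definition, a representation of $\gl(\CC)$ on $\Sym(V^*)$ by derivations. To see that $\Sym(V^*)^{\ZZ_n}$ is a subrepresentation I would first check --- directly from the action (\ref{eqn:CxAction}) --- that the four functions $z\zb+w\wb$, $z\zb-w\wb$, $\zb w$, $z\wb$ whose Hamiltonian vector fields generate $\gl(\CC)$ are themselves $\ZZ_n$-invariant; then, for $g\in\ZZ_n$ and invariant $f,h$, the identity $g^*\{f,h\}_V=\{g^*f,g^*h\}_V=\{f,h\}_V$ (the computation already used in Section \ref{sec:PoissEmb}) shows that every Hamiltonian derivation $\{\cdot,h\}$ with $h$ invariant preserves $\Sym(V^*)^{\ZZ_n}$, so the $\gl(\CC)$-action restricts.

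For part (1) I would simply compute. Using $\{z,\zb\}=\{w,\wb\}=1$ (the remaining brackets among $z,\zb,w,\wb$ being zero) together with the Leibniz rule, the derivation $D_t:=\{\cdot,z\zb+w\wb\}$ scales $z,\zb,w,\wb$ by $1,-1,1,-1$ and $D_H:=\{\cdot,z\zb-w\wb\}$ scales them by $1,-1,-1,1$; extending by Leibniz, both operators are diagonal in the monomial basis of $\Sym(V^*)$, with $D_t$ acting on $z^a\zb^b w^c\wb^d$ by $a-b+c-d$ and $D_H$ by $a-b-c+d$. Hence $D_t$ and $D_H$ act semisimply, and since $t$ is central the homomorphism property gives $[D_t,D_H]=D_{[t,H]}=0$, so they are simultaneously diagonalizable; this yields the decomposition of $\Sym(V^*)$ into joint $t$- and $H$-eigenspaces, and by the first paragraph it restricts to $\Sym(V^*)^{\ZZ_n}$. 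For part (2) the point is that each of $t,H,E,F$ maps to the Hamiltonian vector field of a homogeneous quadratic, so --- since $\brac_V$ has degree $-2$ in the sense of Definition \ref{def:GrPoissonAlg}, i.e. $\{\Sym^i(V^*),\Sym^j(V^*)\}\subseteq\Sym^{i+j-2}(V^*)$ --- the derivation $\{\cdot,h\}$ with $h$ quadratic carries $\Sym^k(V^*)$ into itself. Thus $\gl(\CC)$ acts by degree-preserving derivations, and, $\ZZ_n$ acting linearly so that $\Sym(V^*)^{\ZZ_n}=\bigoplus_k\Sym^k(V^*)^{\ZZ_n}$, each graded piece of the invariants is preserved as well.

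I do not expect any genuine obstacle here; the statement is foundational bookkeeping. The only things that need care are getting the signs right in the explicit formulas for $D_t$ and $D_H$, and --- more conceptually --- keeping the two notions of ``degree'' distinct: the polynomial grading $\Sym(V^*)=\bigoplus_k\Sym^k(V^*)$ on one side, and the degree $-2$ of the bracket $\brac_V$ on the other. Once those are cleanly separated, both (1) and (2) reduce to the single observation that bracketing against a quadratic function is a degree-zero operation with respect to the polynomial grading, while bracketing against the specific quadratics $z\zb\pm w\wb$ is moreover diagonal.
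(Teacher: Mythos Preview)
Your proposal is correct and matches the paper's intended reasoning. The paper actually gives no separate proof of this lemma: part (2) already contains its own justification in the statement, and the eigenvalue computation you carry out for part (1) is exactly what the paper does in the proof of the \emph{next} proposition (where it computes $t\cdot(z^a\zb^bw^c\wb^d)=(a-b+c-d)\,z^a\zb^bw^c\wb^d$). Your write-up simply fills in the routine details the paper leaves implicit.
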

 
\subsection{A Hilbert Basis for $V/\ZZ_n$}
\label{sec:HilbertBasis}

In this section, we will determine a Hilbert basis for the quotient $Q=V/\ZZ_n$ --- that is, a minimal generating set for the algebra of invariant polynomials $\Sym(V^*)^{\ZZ_n}$.  The previous section provides three different gradings on the algebras $\Sym(V^*)$ and $\Sym(V^*)^{\ZZ_n}$.  The polynomial algebra decomposes as 
$$\Sym(V^*)=\bigoplus_{i,j,k}\Sym^k(V^*)_{(i,j)}$$
where 
$$p\in\Sym^k(V^*)_{(i,j)}, \mbox{\quad if and only if }
\left\{\begin{array}{l} \mbox{ $p$ is a polynomial of degree $k$}, \\
t\cdot p:=\{p,z\zb+w\wb\}=ip, \\
H\cdot p:=\{p,z\zb-w\wb\}=jp.\\
\end{array}\right.  $$
This decomposition gives a useful characterization of the invariants $\Sym(V^*)^{\ZZ_n}\subseteq\Sym(V^*)$:

\begin{proposition} A homogeneous polynomial $p\in\Sym^k(V^*)_{(i,j)}$ is ${\ZZ_n}$-invariant if and only if the $t$-eigenvalue  $i \equiv0\bmod n$.  Thus:
$$\Sym(V^*)^{\ZZ_n}=\bigoplus_{\stackrel{j\in\ZZ,k\in\NN}{i\equiv0 \bmod n}}\Sym^k(V^*)_{(i,j)}.$$
\end{proposition}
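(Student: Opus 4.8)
The plan is to reduce the statement to a short weight computation, using the observation that the operator $t$ is, up to a nonzero scalar, the infinitesimal generator of the circle action whose restriction to the $n$-th roots of unity is the given $\ZZ_n$-action.

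First I would make the $t$-action explicit. With the normalization $\{z,\zb\}=\{w,\wb\}=1$ already fixed in this chapter, the Leibniz identity gives
\[
\{\cdot,\, z\zb+w\wb\}\;=\;z\vect{z}-\zb\vect{\zb}+w\vect{w}-\wb\vect{\wb}\;\in\;\TT_V,
\]
which is exactly the Euler-type vector field generating the $\CC^{*}$-action extending $\ZZ_n\subseteq S^1\subseteq{\rm Sp}(V)$. In particular $z,w$ have $t$-weight $+1$ and $\zb,\wb$ have $t$-weight $-1$, matching precisely their $\ZZ_n$-weights under the action of Equation \ref{eqn:CxAction}. Hence for a monomial $m=z^{a}\zb^{b}w^{c}\wb^{d}$ one reads off $t\cdot m=(a-b+c-d)\,m$ while $\zeta\cdot m=\zeta^{\,a-b+c-d}m$ for $\zeta\in\ZZ_n$; that is, the $t$-eigenvalue of $m$ and its $\ZZ_n$-weight are the same integer $i_m$, and $m$ is $\ZZ_n$-invariant iff $\zeta^{i_m}=1$ for all $\zeta\in\ZZ_n$, i.e.\ iff $n\mid i_m$.

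Next I would pass from monomials to a general homogeneous $p\in\Sym^{k}(V^{*})_{(i,j)}$. Since the degree-$k$ monomials form an eigenbasis for the (diagonalizable) operator $t$, the condition $t\cdot p=ip$ forces $p$ to be a linear combination of monomials $m$ all having $i_m=i$. Each such monomial is scaled by $\zeta\in\ZZ_n$ by the same factor $\zeta^{i}$, so $\zeta\cdot p=\zeta^{i}p$. Therefore $p$ is $\ZZ_n$-invariant if and only if $\zeta^{i}=1$ for every $\zeta\in\ZZ_n$, i.e.\ if and only if $i\equiv 0\bmod n$. Collecting the surviving summands $\Sym^{k}(V^{*})_{(i,j)}$ over all $j\in\ZZ$, $k\in\NN$ and $i\equiv 0\bmod n$ then yields the displayed decomposition of $\Sym(V^{*})^{\ZZ_n}$.

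There is no deep obstacle here; the only point requiring care is conventions. One must verify that the abstract $t$ introduced via $\gl(\CC)\inj\TT_V$ is genuinely the derivative of the $\ZZ_n\subseteq S^1$ action — so that "$t$-eigenvalue $\bmod\, n$" literally detects the failure of $\ZZ_n$-invariance — and that the earlier rescaling of the bracket by $\tfrac{-1}{2i}$ has been used, since it is exactly this normalization that makes the $t$-eigenvalue of a monomial the integer equal to its $\ZZ_n$-weight rather than a scalar multiple of it. Once these normalizations are pinned down, the equivalence is immediate from the monomial computation above.
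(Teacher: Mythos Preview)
Your proof is correct and follows essentially the same approach as the paper: compute the $\ZZ_n$-action and the $t$-action on a monomial $z^{a}\zb^{b}w^{c}\wb^{d}$ and observe that both are governed by the same integer $a-b+c-d$. You are slightly more careful than the paper in two respects: you make explicit the passage from monomials to an arbitrary $p\in\Sym^{k}(V^{*})_{(i,j)}$ via diagonalizability of $t$, and you flag the role of the bracket normalization in making the $t$-eigenvalue an integer equal to the $\ZZ_n$-weight rather than a scalar multiple thereof.
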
 

\begin{proof} A general element of $\Sym(V^*)$ may be written as a linear combination of monomials $p=z^a\zb^b w^c\wb^d$, where $a,b,c,d\in\NN$.  The action of $\zeta$ on $p$ gives $\zeta\cdot p = \zeta^{(a-b+c-d)} p$ and so $p$ is invariant if and only if $(a-b+c-d)\equiv 0 \bmod n$.  Whereas the action of $t$ on $p$ is given by:
\begin{eqnarray} 
\nonumber t\cdot p &=& \{p,z\zb\}+\{p,w\wb\}  \\
\nonumber	&=& (a-b)p+(c-d)p \\
\nonumber	&=&	\left[(a-b+c-d)\right]p. 
\end{eqnarray}

Thus $p$ is ${\ZZ_n}$-invariant if and only if the $t$-eigenvalue of $p$ is divisible by $n$.
\end{proof}

Now we present a Hilbert basis for $Q$:

\begin{proposition} A Hilbert basis for the action of $\ZZ_n$ on $V$ given above is given by:
$$\{z\zb,w\wb,\zb w,z\wb,w^n,zw^{n-1},\ldots z^n,\wb^n,\zb^{}\wb^{n-1},\ldots,\zb^n\}$$
\end{proposition}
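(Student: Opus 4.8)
The plan is to use that $\ZZ_n$ acts diagonally on the coordinates $z,\zb,w,\wb$, so that $\Sym(V^*)^{\ZZ_n}$ is spanned by the $\ZZ_n$-invariant monomials; I would then reduce the statement to a purely combinatorial fact about exponent vectors and verify separately that the listed monomials generate and that none of them is redundant. Writing a monomial as $z^a\zb^bw^c\wb^d$, the computation in the proof of the preceding proposition shows it is $\ZZ_n$-invariant exactly when $a+c\equiv b+d\pmod n$; I will call $z,w$ the \emph{positive} variables and $\zb,\wb$ the \emph{negative} ones, so that an invariant monomial has as many positive factors as negative factors modulo $n$.

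For generation, I would take an invariant monomial $m=z^a\zb^bw^c\wb^d$ and, assuming first that $a+c\ge b+d$, injectively pair each of its $b+d$ negative factors with a positive factor. Every such pair is one of $z\zb$, $z\wb$, $w\zb$, $w\wb$ --- and these are exactly the four quadratic elements in the claimed Hilbert basis --- so $m$ becomes a product of these quadratics times a leftover monomial $z^{a'}w^{c'}$ with $a'+c'=(a+c)-(b+d)$, which invariance forces to be a multiple $kn$ of $n$. Partitioning the leftover $z$'s and $w$'s into $k$ blocks of size $n$ writes $z^{a'}w^{c'}$ as a product of monomials $z^iw^{n-i}$ with $0\le i\le n$, again in the list; the case $a+c\le b+d$ is symmetric and uses the monomials $\zb^j\wb^{n-j}$ instead. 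Hence every invariant monomial, and so every invariant, is a polynomial in the listed generators.

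For minimality I would check that no listed generator $g$ lies in the square of the maximal graded ideal $\m$ of $\Sym(V^*)^{\ZZ_n}$; since this ring is multigraded by exponent vectors, each graded piece being spanned by a single monomial, this amounts to showing $g$ is not a product $g_1\cdots g_\ell$ with $\ell\ge 2$ and each $g_t$ an invariant monomial of positive degree. There are no invariant monomials of degree $1$ (a single variable has weight $\pm 1\not\equiv 0\bmod n$ since $n\ge 3$), which rules out any such factorization of the four degree-$2$ generators; and if $z^iw^{n-i}=g_1\cdots g_\ell$ with $\ell\ge 2$, each $g_t$ divides it and hence equals $z^{a_t}w^{c_t}$ with $1\le a_t+c_t\le n-1$, contradicting $a_t+c_t\equiv 0\bmod n$ (and symmetrically for $\zb^j\wb^{n-j}$). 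Thus all $2n+6$ monomials are needed, and they form a Hilbert basis.

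The routine part is the weight bookkeeping; the one step deserving care is the pairing argument in the generation step --- one must make sure the $b+d$ negative factors can be matched injectively into the positive factors when $a+c\ge b+d$, and that what remains lives in only the positive (resp.\ negative) variables with total degree divisible by $n$, so that it splits into the degree-$n$ blocks $z^iw^{n-i}$ (resp.\ $\zb^j\wb^{n-j}$). Everything else follows immediately.
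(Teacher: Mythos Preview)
Your proof is correct and follows the same overall strategy as the paper: reduce to invariant monomials, peel off quadratic invariants, then block the remainder into degree-$n$ pieces. Your organization is a bit cleaner than the paper's. The paper splits into four cases according to the separate signs of $a-b$ and $c-d$; it first extracts $(z\zb)^{\min(a,b)}(w\wb)^{\min(c,d)}$, and in the mixed cases (e.g.\ $a>b$, $c<d$) must then further extract powers of $z\wb$ or $\zb w$ before reaching a pure leftover built from a single pair of variables. By pairing each negative factor with \emph{any} available positive factor, you collapse this to two symmetric cases and land directly on a pure positive (or pure negative) remainder $z^{a'}w^{c'}$ of total degree divisible by $n$, which you then block. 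You also supply a sharper minimality argument via the monomial multigrading and the absence of degree-$1$ invariants; the paper simply asserts that no listed generator can be written in terms of the others. (Minor remark: your parenthetical ``since $n\ge 3$'' can be relaxed to $n\ge 2$, though the chapter's standing hypothesis is $n>2$ anyway.)
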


\begin{proof} One can easily verify (using the proposition above) that each of these are ${\ZZ_n}$-invariant and that no member of this list can be written in terms of the other.  Thus, it only remains to show that any ${\ZZ_n}$-invariant monomial can be written in terms of those on the list.  Let $p$ be a ${\ZZ_n}$-invariant monomial, so that we may write $p=z^a\zb^b w^c\wb^d$, where $(a-b+c-d)\equiv 0\bmod n$.  There are four cases to consider:
\vspace{.5cm}
\\
(1) Suppose $a>b$ and $c<d$.  We factor $p=(z\zb)^b(w\wb)^c z^{a'}\wb^{d'}$.  As $t\cdot p\equiv 0\bmod n$, and $t\cdot(z\zb)=t\cdot(w\wb)=0$, we have that $t\cdot(z^{a'}\wb^{d'})=0$.  Thus, $a'-d'\equiv 0\bmod n$ and there exists a $k\in\ZZ$ such that $(a'-d')=kn$.  If $k\geq0$, then $a'=kn+d'\geq0$ and so $z^{a'}\wb^{d'}=z^{kn+d'}\wb^{d'}$.  Thus $p$ may be written as the product of elements in the proposed Hilbert basis:
$$ p=(z\zb)^b(w\wb)^c(z\wb)^{d'}(z^n)^{k}. $$
If $k<0$, then $d'=(-k)n+a'>0$ and so $z^{a'}\wb^{d'}=z^{n|k|+a'}\wb^{d'}$.  Thus, $p$ may be written as
$$p=(z\zb)^a(w\wb)^c(z\wb)^{a'}(\wb^n)^{|k|}.$$

\noindent (2) Suppose $a>b$ and $c>d$.  We can factor $p$ in a similar manner to the previous case: $p = (z\zb)^b(w\wb)^d(z^{a'})(w^{b'})$.  Then, writing $a'=nk_a+r_a$ and $b'=nk_b+r_a$ with $0\leq r_a,r_b<n$, one can further factor $p$:
$$p = (z\zb)^b(w\wb)^d(z^n)^{k_a}(w^n)^{k_b}(z^{r_a})(w^{r_b}).$$
Moreover, as $a'+b'\equiv0\bmod n$, it follows that $r_a+r_b\equiv 0\bmod n$.  As $0\leq r_a,r_b<n$, it follows that either 
$$\begin{array}{lr}
 p = (z\zb)^b(w\wb)^d(z^n)^{k_a}(w^n)^{k_b},   & \mbox{ if  } r_a=r_b=0 \\
p = (z\zb)^b(w\wb)^d(z^n)^{k_a}(w^n)^{k_b}(z^{r_a}w^{n-r_a}), & \mbox{ if } r_a+r_b=n 
\end{array}$$
Cases (3) and (4) are proved analogously.  Thus, an arbitrary $p\in\Sym(V^*)^{\ZZ_n}$ may be written as a product of elements in the list and therefore the list forms a Hilbert basis for the $\ZZ_n$-action on $V$.
\end{proof}

Thus the quotient $Q$ embeds into $\CC^{4+2(n+1)}$ via the Hilbert map $F:V/\ZZ_n\hookrightarrow\CC^{4+2(n+1)}$ given by
$$
\begin{array}{lcl}
		      &  & \hspace{2pt} (a_0,a_1,a_2,a_3) \qquad  (x_0,x_1,\ldots,x_n)  \quad(y_0,y_1,\ldots,y_n) \\
F(z,\zb,w,\wb)&=& (\overbrace{z\zb,w\wb,\zb w,z\wb},\overbrace{w^n,zw^{n-1},\ldots, z^n},\overbrace{\wb^n,\zb^{}\wb^{n-1},\ldots,\zb^n}),
\end{array}
$$
where we write $\CC^{4+2(n+1)}=\gl(\CC)^*\times W^*$ and $W:=V\times\overline{V}$, with coordinates:
$$\begin{array}{lcl}
\gl(\CC) &=& \Span\{a_0,\ldots,a_3\} \\
V &=& \Span\{x_0,\ldots\ldots,x_n\} \\
\overline{V} &=&   \Span\{y_0,\ldots\ldots,y_n\}.
\end{array}$$

\subsubsection{The defining ideal}
\label{sec:DefIdeal}
It will prove useful to have explicit description of the defining ideal of the image of $Q\subseteq\CC^{4+2(n+1)}$.  Table \ref{table:GenKer} lists the generators of the ideal $\II:=\ker(F^*)\subseteq\Sym(\gl(\CC)\times W)$.

\begin{table}[h!]
\caption{The generators of the defining ideal $\II$ of $V/\ZZ_n$ in $\CC^{4+2(n+1)}$}
$$
\begin{array}{|ccl|c|c|c|} \hline
	& & \mbox{Name}        & \mbox{Index Range}  & H-\deg  \\  \hline
C\!	&=& a_0a_1-a_2a_3  &  $n/a$					    &  0             \\ \hline
A_k\!	&=& (n-k)(a_0y_k-a_3y_{k+1})+k(a_1y_k-a_2y_{k-1})  & k=0\ldots n   & (2k-n) \\ 
B_k\!  &=& (a_0y_k-a_1y_k)+(a_2y_{k-1}-a_3y_{k+1})	& k=1\ldots (n-1)  & (2k-n)  \\ \hline \noalign{\smallskip}
\overline{A}_k\! &=& (n-k)(a_0x_k-a_2x_{k+1})+k(a_1x_k-a_3x_{k-1})   & k=0\ldots n  & (n-2k) \\
\overline{B}_k\! &=& (a_0x_k-a_1x_k)+(a_3x_{k-1}-a_2x_{k+1})  &  k=1\ldots (n-1)  & (n-2k)  \\ \hline
D_{ijk}\! &=&  y_iy_j-y_ky_{i+j-k}   & 0\leq i,j,k \leq n  & (i+j)-2n  \\
\overline{D}_{ijk}\! &=&  x_ix_j-x_kx_{i+j-k}  & 0\leq i,j,k \leq n  & 2n-(i+j)  \\ \hline
M_{ij}\! &=& x_iy_j-a_0^ja_1^{n-i}a_3^{i-j} & 0\leq j\leq i\leq n  & 2(j-i) \\
M_{ij}\! &=& x_iy_j-a_0^ia_1^{n-j}a_2^{j-i} & 0 \leq i\leq j \leq n  & 2(j-i) \\ \hline
\end{array}
$$
\label{table:GenKer}
\end{table}

\begin{remark} These generators are given in this form because of their representation theoretic properties.  It is tedious, but straightforward to check that these generate $\ker(F^*)$.
\end{remark}

\subsection{The Lie-Poisson Structure of the Minimal Embedding}
\label{sec:LPg}

Recall that the Hilbert basis in Section \ref{sec:HilbertBasis} has a Lie algebra structure (the cotangent Lie algebra) inherited from the Poisson bracket on $\Sym(V^*)^{\ZZ_n}$.  Moreover, as was proved in Chapter 3, this structure determines the linear part of any extension of the Poisson structure on $Q$ to $\CC^{4+2(n+1)}$ (one may also see this directly; there are no linear terms in $\II$).  Direct computation gives that the cotangent Lie algebra is isomorphic to
$$\g:=\gl(\CC)\ltimes(V_n\oplus V_n) $$
where $V_n$ is the $(n+1)$-dimensional irreducible representation of $\slt(\CC)$.  The action is given by $\rho_n\oplus n\cdot(Id)$ on the first $V_n$-factor and $-\left[\rho_n\oplus n\cdot(Id)\right]$ on the second $V_n$-factor (see Appendix \ref{sec:AppendixSL2} for an explanation of the notation).  The corresponding Lie-Poisson structure determined by the cotangent Lie algebra is given by the follow skew-symmetric coefficient matrix:

\begin{table}[h!]
\caption{The coefficient matrix for the Lie-Poisson algebra $(\gl(\CC)\times(V_n\oplus V_n))^*$}
$$
\begin{array}{c|cccc|ccc|ccc}
  &a_0&a_1&a_2&a_3&\ldots&x_k&\ldots&\ldots&y_k&\ldots\\ \hline
 a_0&0&0&-a_2&a_3& &-kx_k& & &ky_k& \\
 a_1&0&0&a_2&-a_3& &-(n-k)x_k& & &(n-k)y_k& \\
 a_2&a_2&-a_2&0&a_0-a_1& &-kx_{k-1}& & &(n-k)y_{k+1}& \\
 a_3&-a_3&a_3&a_1-a_0&0& &-(n-k)x_{k+1}& & &ky_{k-1}& \\ \hline
 \vdots &&&&&&&&&&\\
 x_k& & & & & & 0 &  & &  0 &\\
 \vdots &&&&&&&&&&\\ \hline
 \vdots &&&&&&&&&&\\
 y_k& & & & & &  &  & &  0 &\\
 \vdots &&&&&&&&&&\\ 
 \end{array}
$$
\end{table}
\begin{remark} The $\gl(\CC)$ subalgebra of the Lie-Poisson structure above is related to the usual basis by:
$$ t\mapsto (a_0+a_1), \qquad H\mapsto (a_0-a_1), \qquad E\mapsto a_2, \qquad F\mapsto a_3. $$
In the following, we will identify the elements of $\gl(\CC)$ with this subalgebra.
\end{remark}

\subsubsection{ The $\slt(\CC)$-action and $\gl(\CC)$-action}

The Lie algebra $\slt(\CC)\isom\Span\{a_0-a_1,a_2,a_3\}\subseteq\Sym(\gl(\CC)\ltimes W)$ acts on $\Sym(\gl(\CC)\ltimes W)$ via their Hamiltonian actions with respect to the Lie-Poisson structure: 
$$s\cdot f:=\{f,s\}_1 \mbox{\quad for } s\in\slt(\CC), \, f\in \Sym(\gl(\CC)\ltimes W)$$   
As the Lie-Poisson bracket has degree-$(-1)$, this action preserves polynomial degree and thus restricts to actions on each $\Sym^k(\gl(\CC)\ltimes W)$.  In particular, it acts on the generators of $\II$, decomposing them into direct sums of irreducible representations:

\begin{proposition} 
\label{prop:RepIdeal}
The generators given in Table \ref{table:GenKer} span the following representations of $\slt(\CC)$:
\begin{enumerate}
\item $\Span\{C\}\isom V_0$
\item $\Span\{\overline{A_k}\}\isom\Span\{A_k\}\isom V_n$
\item $\Span\{\overline{B_k}\}\isom\Span\{B_k\}\isom V_{n-2}$
\item $\Span\{\overline{D_{ijk}}\}\isom\Span\{D_{ijk}\} \isom\bigoplus_{k=0}^{\lfloor\frac{n-1}{2}\rfloor}V_{2(n-(2k+1))}$
\item $\Span(M_{ij})\isom\bigoplus_{k=0}^n V_{2(n-k)}$
\end{enumerate}
\end{proposition}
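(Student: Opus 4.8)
The plan is to exploit the $\slt(\CC)$-equivariance of the Hilbert map. The pullback $F^*$ is an algebra homomorphism that is equivariant on the degree-one generators — these span the cotangent Lie algebra $\g$, on which the subalgebra $\slt(\CC)\subseteq\g$ acts by its adjoint action, matching the $\gl(\CC)$-action on $\Sym(V^*)^{\ZZ_n}$ of the Lemma — so $F^*$ is $\gl(\CC)$-equivariant and $\II=\ker(F^*)$ is a $\gl(\CC)$-submodule, a fortiori an $\slt(\CC)$-submodule, of $\Sym(\gl(\CC)\ltimes W)$. Since $\slt(\CC)$ is semisimple and every (polynomial degree, $H$-weight)-graded piece is finite dimensional, $\II$ decomposes as a direct sum of finite-dimensional irreducibles; it therefore suffices to show that each family in Table~\ref{table:GenKer} spans an $\slt(\CC)$-submodule and to name that submodule by its weights and dimension.

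The low-degree families are bookkeeping. Reading off the structure constants of the Lie--Poisson coefficient matrix, $\{C,E\}_1=\{C,F\}_1=\{C,H\}_1=0$ since $C$ is the $\gl(\CC)$-determinant, so $\Span\{C\}\isom V_0$, which is (1). For $\{A_k\}$ I would check that $\{A_k,E\}_1$ and $\{A_k,F\}_1$ are scalar multiples of $A_{k-1}$ and $A_{k+1}$ (with $A_{-1}=A_{n+1}=0$); as the $H$-weights of $A_0,\dots,A_n$ are $n,n-2,\dots,-n$, each with multiplicity one, a weight string of length $n+1$ with all multiplicities $1$ must be $V_n$. The analogous computation for the $(n-1)$-element families $\{B_k\},\{\overline{B}_k\}$ produces weights $n-2,\dots,-(n-2)$, hence $V_{n-2}$; and $\Span\{\overline{A}_k\}\isom\Span\{A_k\}$, $\Span\{\overline{B}_k\}\isom\Span\{B_k\}$ follow from the symmetry of $V_n\oplus V_n$ exchanging the $x$- and $y$-variables. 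This gives (2) and (3).

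For (4), $\Span\{y_iy_j\}=\Sym^2(V_n)$, which Clebsch--Gordan decomposes into irreducibles of strictly decreasing highest weight; the restriction of $F^*$ to this space has image the irreducible span of $\{\zb^{\,i+j}\wb^{\,2n-i-j}\}$, and the relations $D_{ijk}=y_iy_j-y_ky_{i+j-k}$ span precisely the kernel of that restriction, so $\Span\{D_{ijk}\}$ is the sum of the complementary Clebsch--Gordan summands, which one then matches against the listed indexing (and $\Span\{\overline{D}_{ijk}\}$ similarly, with $x$'s in place of $y$'s). For (5), $M_{ij}=x_iy_j-(\text{a degree-}n\text{ monomial in }a_0,\dots,a_3\text{ of the same }H\text{-weight }2(i-j))$, so $x_iy_j\mapsto M_{ij}$ is a weight-preserving injection; once one checks from the coefficient matrix that $\{M_{ij},E\}_1$ and $\{M_{ij},F\}_1$ again lie in $\Span\{M_{i'j'}\}$ — equivalently, that the $a$-monomials transform compatibly — this map is an $\slt(\CC)$-isomorphism, whence $\Span(M_{ij})\isom\Span\{x_iy_j\}=V_n\otimes V_n\isom\bigoplus_{k=0}^{n}V_{2(n-k)}$.

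The main obstacle is doing the closure checks honestly: verifying that each implicitly-defined family — $\{A_k\}$, $\{B_k\}$, $\{D_{ijk}\}$, and especially the inhomogeneous family $\{M_{ij}\}$ — is stable under the Hamiltonian action of $E$ and $F$. This is exactly the property the generators in Table~\ref{table:GenKer} were set up to have, but establishing it requires a somewhat tedious induction on the indices using the structure constants; once closure is in hand, reading off the isomorphism type from the weights and dimensions already recorded is immediate.
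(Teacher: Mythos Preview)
The paper offers no proof of this proposition: it is stated as a fact, with the earlier remark that the generators ``are given in this form because of their representation theoretic properties'' serving as the only justification. Your plan --- verify that each family in Table~\ref{table:GenKer} is closed under the Hamiltonian action of $E$ and $F$, then read off the isomorphism type from the weight string and dimension --- is exactly the intended verification, and it goes through for (1)--(3) and (5) as you outline.

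One warning about (4): your method is correct, but it will not reproduce the printed formula. The restriction of $F^*$ to $\Span\{y_iy_j\}=\Sym^2(V_n)$ is the projection onto the top Clebsch--Gordan summand $V_{2n}$, so $\Span\{D_{ijk}\}$ is its kernel, namely $\bigoplus_{j\geq 1}V_{2n-4j}$. The formula stated in the proposition, $\bigoplus_{k=0}^{\lfloor(n-1)/2\rfloor}V_{2(n-(2k+1))}$, is instead the decomposition of $\wedge^2(V_n)$ (compare the appendix). Already for $n=3$ the three independent relations $y_0y_2-y_1^2$, $y_0y_3-y_1y_2$, $y_1y_3-y_2^2$ span a copy of $V_2$, whereas the printed formula gives $V_4\oplus V_0$. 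So when you ``match against the listed indexing'' you will find a mismatch; your argument in fact corrects a typo in the statement rather than confirming it.
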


Similarly, the central element $t=(a_0+a_1)\in\gl(\CC)$ acts on $\Sym(\gl(\CC)\ltimes W)$ via 
$$t\cdot f:=\{f,t\}_1  \mbox{\quad for }  f\in \Sym(\gl(\CC)\ltimes W)$$  
which, as before, gives a notion of $t$-degree on $ \Sym(\gl(\CC)\ltimes W)$.

\subsection{ A non-Poisson extension of $\brac_Q$ }
\label{sec:BetaExt}

We will now fix an extension of $\brac_Q$ to a bilinear bracket $\brac_\beta$ with corresponding bivector $\beta=\beta^1+\beta^{n-1}$.  The linear part $\beta^1$ (with corresponding bracket $\brac_1$) must be the Lie-Poisson bivector on $\g^*$ in the previous section.  We define the nonlinear summand explicitly by the formula 
 $$\beta^{n-1}(dx_i,dy_j)= \left\{ \begin{array}{ll} 
 ((n-i)(n-j)a_0^ja_1^{n-i-1}+(ij)a_0^{j-1}a_1^{n-i})a_3^{i-j} & \mbox{if $i \geq j$};\\ 
 ((n-i)(n-j)a_0^ia_1^{n-j-1}+(ij)a_0^{i-1}a_1^{n-j})a_2^{j-i}& \mbox{if $i \leq j$}.\end{array} \right.$$
 where $\beta^{n-1}$ is defined to be zero on all other basis elements of $\g^*\wedge\g^*$.  This extension of $\brac_Q$ does {\it not} define a Poisson structure on $\g^*\isom\CC^{4+2(n+1)}$, as the vanishing condition on the Schouten bracket is {\it not} satisfied:
$$[\beta,\beta]=[\beta^1,\beta^1]+[\beta^1,\beta^{n-1}]+[\beta^{n-1},\beta^{n-1}]\neq 0.$$
The discussion in Section \ref{sec:LPg} implies that $[\beta^1,\beta^1]=0$.  The last term $[\beta^{n-1},\beta^{n-1}]=0$ also vanishes, which follows from the fact that $\beta^{n-1}$ is nonzero only on elements of the form $dx_i\wedge dy_j\in\g^*\wedge\g^*$, while $\beta^{n-1}$ has coefficients in $\Sym(\gl(\CC))$ --- i.e.  the image of the bracket corresponding to $\beta^{n-1}$ is a subset of its kernel.  On the other hand, one can directly note that $[\beta^1,\beta^{n-1}](da_2,dx_1,dy_3)\neq0$.  

\begin{proposition} A partial description of the jacobiator $[\beta,\beta]$ is given by:

$$[\beta,\beta](du,dv,dw)=\left\{\begin{array}{ll}  
r(a)C, \mbox{ with } r\in\Sym^{n-1}(\gl(\CC)), &  (da_i,dx_j,dy_k), \, i=2,3 \\
p(a)A_l+q(a)B_l \mbox{ with } p,q\in\Sym^{n-1}(\gl(\CC)), &  (dx_i,dy_j,dy_k) \\
\overline{p}(a)\overline{A}_l+\overline{q}(a)\overline{B}_l \mbox{ with } \overline{p},\overline{q}\in\Sym^{n-1}(\gl(\CC)), & (dy_i,dx_j,dx_k)
\end{array}\right.$$
For those $(du,dv,dw)$ not included above, or not achievable from the above using permutations, the jacobiator is zero. In particular, we have the contractions with $a_0,a_1$ are zero:
$$\iota_{da_0}[\beta,\beta]=0  \mbox{\quad and } \iota_{da_1}[\beta,\beta]=0 $$
\end{proposition}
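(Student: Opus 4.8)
\emph{Proof sketch.} The first step is to reduce to the cross-term: as already noted, $[\beta,\beta]=[\beta^1,\beta^1]+2[\beta^1,\beta^{n-1}]+[\beta^{n-1},\beta^{n-1}]=2[\beta^1,\beta^{n-1}]$, so it suffices to evaluate the trivector $[\beta^1,\beta^{n-1}]$ on triples of coordinate differentials via the polarized Jacobiator identity
$$[\beta^1,\beta^{n-1}](df,dg,dh)=\sum_{\text{c.p.}}\Big(\{f,\{g,h\}_{n-1}\}_1+\{f,\{g,h\}_1\}_{n-1}\Big).$$
Two structural facts drive the analysis. Writing $P_{ij}:=\beta^{n-1}(dx_i,dy_j)$, the bivector $\beta^{n-1}$ is supported only on the slots $\bivect{x_i}{y_j}$, with coefficients $P_{ij}\in\Sym(\gl(\CC))=\CC[a_0,a_1,a_2,a_3]$; and in $\brac_1$ the block spanned by $W=V_n\oplus V_n$ vanishes, i.e.\ $\{x_i,x_j\}_1=\{x_i,y_j\}_1=\{y_i,y_j\}_1=0$. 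I would also record two global features of $[\beta,\beta]$: since $F$ identifies $\brac_Q$ with the restriction of $\brac_\beta$ and $\brac_Q$ genuinely satisfies Jacobi, one has $\{\Sym(\g),\II\}_\beta\subseteq\II$, so $[\beta,\beta]$ takes all its values in $\ker F^*=\II$; and, $\beta^1$ being the Lie--Poisson bivector of $\g$ and $\beta^{n-1}$ being assembled from the $\slt(\CC)$-equivariant $P_{ij}$, the trivector $[\beta,\beta]$ is $\slt(\CC)$-equivariant.

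Next I would run the case analysis on a triple $(du,dv,dw)$ by how many of $u,v,w$ lie among the $a$'s, the $x$'s and the $y$'s. The support condition kills every case except, up to permutation, those of type $(a_i,x_j,y_k)$, $(x_i,y_j,y_k)$ and $(y_i,x_j,x_k)$: whenever no $x$ or no $y$ is present, or two $a$'s are present, every summand in the identity above is a $\brac_{n-1}$ of $a$'s or of pure $x$'s/$y$'s and hence zero. For the $(a_i,x_j,y_k)$-type the value is a polynomial in the $a$'s alone, so it lies in $\II\cap\Sym(\gl(\CC))$, which is the principal ideal $(C)$ ($C=a_0a_1-a_2a_3$ is the only relation among $z\zb,w\wb,\zb w,z\wb$); hence the value is $rC$ for some polynomial $r$ in the $a$'s. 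For the $(x_i,y_j,y_k)$-type the value is linear in the $y$'s, free of $x$, $\slt(\CC)$-equivariant and in $\II$; by Table~\ref{table:GenKer} and Proposition~\ref{prop:RepIdeal} the $\Sym(\gl(\CC))$-submodule of $y$-linear, $x$-free elements of $\II$ is generated by the copies $\Span\{A_l\}\isom V_n$ and $\Span\{B_l\}\isom V_{n-2}$ (the $D_{ijk}$ are quadratic in $y$, the $M_{ij}$ contain $x$'s, and each $C\cdot y_l$ already lies in $\Sym^1(\gl(\CC))\cdot\Span\{A_l,B_l\}$), so the value is $\sum_l(p_lA_l+q_lB_l)$; the $(y_i,x_j,x_k)$-type is symmetric, with $\overline{A}_l,\overline{B}_l$.

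Finally, for $\iota_{da_0}[\beta,\beta]=\iota_{da_1}[\beta,\beta]=0$ I would use the identity $\iota_{df}[\beta,\beta]=\pm 2\,\LL_{X_f}\beta$, where $X_f$ is the $\brac_\beta$-Hamiltonian vector field of $f$. Since $\beta^{n-1}$ has no component containing $\vect{a_0}$, $X_{a_0}$ equals the $\beta^1$-Hamiltonian vector field of $a_0$, a semisimple linear vector field generating the (complexified) residual rotation in the $z$-plane; that rotation is a symplectomorphism commuting with $\ZZ_n$, so it preserves $\brac_Q$, and --- the splitting $\beta=\beta^1+\beta^{n-1}$ being by polynomial degree and the flow being linear --- it preserves $\beta^1$ and $\beta^{n-1}$ separately, whence $\LL_{X_{a_0}}\beta=0$; likewise $\LL_{X_{a_1}}\beta=0$ using the $w$-plane rotation. (One can also see this directly: in the identity above, $\{a_0,P_{ij}\}_1$ cancels the two contributions coming from $\{a_0,x_i\}_1$ and $\{y_j,a_0\}_1$.) The step I expect to be the main obstacle is the module-theoretic one: pinning each mixed-slot value down to exactly $\{A_l,B_l\}$ (resp.\ $\{\overline{A}_l,\overline{B}_l\}$) requires controlling which generators of $\II$ can contribute --- in particular ruling out $D_{ijk}$- and $C\cdot y_l$-terms --- while the remaining Schouten-bracket computations, though routine, are quite sign-sensitive.
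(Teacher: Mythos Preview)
Your argument is correct and, for the bulk of the proposition, tracks the paper's proof closely: both reduce to $[\beta^1,\beta^{n-1}]$, exploit the support of $\beta^{n-1}$ on $\bivect{x_i}{y_j}$ to kill all but the three listed slot-types, and then use the global fact that $[\beta,\beta]$ takes values in $\II$ together with the explicit description of the relevant graded piece of $\II$ (e.g.\ $\II\cap\Sym(\gl(\CC))=(C)$) to pin down the form of each value. You are more explicit than the paper on the module-theoretic step for the $(x,y,y)$ and $(y,x,x)$ cases --- in particular your remark that $C\cdot y_l$ must already lie in $\Sym(\gl(\CC))\cdot\Span\{A_l,B_l\}$ is exactly the content the paper sweeps under ``the same reasoning applies.''

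The one place where the two diverge is the vanishing of $\iota_{da_0}[\beta,\beta]$ and $\iota_{da_1}[\beta,\beta]$. The paper does \emph{not} argue via a Cartan-type identity or an ambient symmetry; instead it linearly changes basis to $t=a_0+a_1$ and $H=a_0-a_1$, observes from the explicit formula that $[\beta,\beta](dt,dx_j,dy_k)=0$ iff $\beta^{n-1}$ preserves the $t$-degree and likewise for $H$, and then simply checks from the definition of $P_{ij}$ that both degrees are preserved. Your geometric argument (lifting $X_{a_0}$ to the rotation in the $z$-plane and invoking its $\ZZ_n$-equivariant symplectomorphism property) is an appealing alternative, but as stated it conflates a flow on $V$ with a flow on $\g^*$: what you actually need is that the linear vector field $X_{a_0}^{\beta^1}$ on $\g^*$ preserves $\beta^{n-1}$, and that is precisely the degree-preservation check the paper performs --- which is also your parenthetical ``direct'' alternative. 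So the substance is the same; the paper's $t,H$ framing is just a cleaner way to package the cancellation you describe.
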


\begin{proof} A direct computation using the definition of $\beta$ gives that the jacobiator is zero on those triples not on the above list.  That the jacobiator takes the form above follows from two observations.  For the case $(da_i,dx_j,dy_k)$,  direct computation using the definition of $\beta$ shows $[\beta,\beta](da_i,dx_j,dy_k)\in\Sym(\gl(\CC))$:
\begin{eqnarray}
\nonumber  [\beta,\beta](da_i,dx_j,dy_k) &=& [\beta^1,\beta^{n-1}](da_i,dx_j,dy_k) \\
\nonumber		&=& \dd_{\g}\beta^{n-1}(da_i,dx_j,dy_k) \\
\nonumber		&=&\! \{a_i,\beta^{n-1}(dx_j,dy_k)\}_1 +\!\{x_i,\overbrace{\beta^{n-1}(dy_k,da_i)}^0\}_1+\{y_k,\overbrace{\beta^{n-1}(da_i,dx_j)}^0\}_1\!+\\
\nonumber		&  &\!	+ \beta^{n-1}(da_i,\underbrace{d\{x_j,y_k\}_1}_0)+\beta^{n-1}(dx_j,d\{y_k,a_i\}_1)+\beta^{n-1}(dy_k,d\{a_i,x_j\}_1) \\
\nonumber		&=& \{a_i,\beta^{n-1}(dx_j,dy_k)\}_1+\beta^{n-1}(dx_j,d\{y_k,a_i\}_1)+\beta^{n-1}(dy_k,d\{a_i,x_j\}_1). 
\end{eqnarray}

The remaining nonzero terms are all in $\Sym(\gl(\CC))$ by definition of $\beta$. However, $[\beta,\beta]$ has coefficients in $\II$, as $\beta$ corresponds to the extension of a Poisson bracket on $V(\II)\isom Q$.  The only way $[\beta,\beta](da_i,dx_j,dy_k)\in\Sym(\gl(\CC))\cap\II$ is for $[\beta,\beta](da_i,dx_j,dy_k)=r(a)C$ as given in the proposition.  The same reasoning applies to the other triples.  

To prove the last statement, it is enough to show the contractions are zero for $t=a_0+a_1$ and $H=a_0-a_1$, as contraction is linear.  However, from the equality above, $[\beta,\beta](dt,dx_j,dy_k)=0$ if and only if 
$$\{t,\beta^{n-1}(dx_j,dy_k)\}_1=\beta^{n-1}(dx_j,d\{t,y_k\}_1)+\beta^{n-1}(d\{t,x_j\}_1,dy_k)$$
That is, it holds if and only if $\beta^{n-1}$ preserves the $t-\deg$.  Similarly, $[\beta,\beta](dH,dx_j,dy_k)=0$ if and only if $\beta^{n-1}$ preserves the $H-\deg$.  Both of these are easily verifiable from the definition of $\beta^{n-1}$.
\end{proof}

\begin{lemma} The restriction of the skew-symmetric, bilinear bracket $\brac_\beta$ to the image of the embedding $Q$ agrees with the Poisson bracket $\brac_Q$.
\end{lemma}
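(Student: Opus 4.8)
The plan is to prove the stronger statement that $F^{*}$ is a homomorphism for the two brackets, i.e.\ that $F^{*}(\{u,v\}_\beta)=\{F^{*}u,F^{*}v\}_Q$ for all $u,v$; this is precisely the assertion that the restriction of $\brac_\beta$ to the image $V(\II)\isom Q$ is $\brac_Q$. Since $\brac_\beta$ is the biderivation attached to the bivector $\beta$, and $\brac_Q$ is likewise a biderivation, and $F^{*}$ is a ring homomorphism onto $\Sym(V^{*})^{\ZZ_n}$, it suffices by the Leibniz rule to verify this identity when $u,v$ range over the coordinate generators $a_0,\dots,a_3,x_0,\dots,x_n,y_0,\dots,y_n$ of $\Sym(\g)$.

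First I would dispose of every pair other than an $(x_i,y_j)$ pair, for which $\beta^{n-1}$ contributes nothing and so $\{u,v\}_\beta=\{u,v\}_1$. For $(x_i,x_j)$ and $(y_i,y_j)$ both sides vanish, since $V_n\oplus V_n$ is an abelian ideal of $\g$ while $F^{*}x_k=z^k w^{n-k}$ and $F^{*}y_k=\zb^k\wb^{n-k}$ are functions of the Poisson-commuting pairs $\{z,w\}$ and $\{\zb,\wb\}$ respectively. For $(a_i,a_j)$, $(a_i,x_j)$ and $(a_i,y_j)$ the identification of the cotangent Lie algebra $\m_0/\m_0^{2}$ of $Q$ with $\g$ (Section \ref{sec:LPg}) gives the identity modulo $\m_0^{2}$; to promote it to equality I would use a degree count. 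The bracket on $V$ has degree $-2$, so $\{F^{*}a_i,F^{*}a_j\}_Q$ is homogeneous of degree $2$ and $\{F^{*}a_i,F^{*}x_j\}_Q$ of degree $n$ in $z,\zb,w,\wb$, while $\beta^{1}$ being the Lie--Poisson structure of $\g=\gl(\CC)\ltimes(V_n\oplus V_n)$ forces $F^{*}\{a_i,a_j\}_1\in\Span\{F^{*}a_k\}$ and $F^{*}\{a_i,x_j\}_1$ (resp.\ $F^{*}\{a_i,y_j\}_1$) $\in\Span\{F^{*}x_k,F^{*}y_k\}$. Because $n$ is odd, a weight/parity argument shows $\Sym^{2}(V^{*})^{\ZZ_n}=\Span\{z\zb,w\wb,\zb w,z\wb\}$ and $\Sym^{n}(V^{*})^{\ZZ_n}=\Span\{z^k w^{n-k},\zb^k\wb^{n-k}\}$, and these generators map to part of a basis of $\m_0/\m_0^{2}$; hence the reduction map is injective on each of these graded pieces and congruence modulo $\m_0^{2}$ becomes equality.

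The one genuine computation is the $(x_i,y_j)$ case, which is where $\beta^{n-1}$ was chosen. Here $\{x_i,y_j\}_1=0$, so $\{x_i,y_j\}_\beta=\beta^{n-1}(dx_i,dy_j)\in\CC[a_0,a_1,a_2,a_3]$, and using $\{z,\zb\}=\{w,\wb\}=1$ with all other generator brackets zero one gets
$$\{F^{*}x_i,F^{*}y_j\}_Q=\{z^i w^{n-i},\zb^j\wb^{n-j}\}_V = ij\,z^{i-1}w^{n-i}\zb^{j-1}\wb^{n-j}+(n-i)(n-j)\,z^i w^{n-i-1}\zb^j\wb^{n-j-1}.$$
For $i\ge j$ the two monomials are $F^{*}(a_0^{\,j-1}a_1^{\,n-i}a_3^{\,i-j})$ and $F^{*}(a_0^{\,j}a_1^{\,n-i-1}a_3^{\,i-j})$ respectively (the apparent $a_0^{-1}$ at $j=0$ is harmless since its coefficient $ij$ then vanishes, and likewise $(n-i)(n-j)$ absorbs the cases $i=n$, $j=n$), so the right-hand side equals $F^{*}\bigl((ij)a_0^{\,j-1}a_1^{\,n-i}a_3^{\,i-j}+(n-i)(n-j)a_0^{\,j}a_1^{\,n-i-1}a_3^{\,i-j}\bigr)$, which is exactly $F^{*}(\beta^{n-1}(dx_i,dy_j))$ by the defining formula. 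The case $i\le j$ is the same computation after writing the two monomials with $a_2=\zb w$ in place of $a_3=z\wb$, and the two formulas agree on the overlap $i=j$. Reassembling the cases and extending by the Leibniz rule then yields the lemma.

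I expect the only delicate point to be this last computation: matching the two monomials produced by the bracket on $V$ to the $a$-monomials in the definition of $\beta^{n-1}$, and checking that the boundary exponents ($j=0$, $i=n$, and the overlap $i=j$) cause no trouble. Everything else is structural, relying only on $\brac_\beta$ and $\brac_Q$ being biderivations, on $\beta^{1}$ being the cotangent Lie--Poisson structure, and on the parity of $n$.
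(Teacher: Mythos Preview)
The paper states this lemma without proof; it is evidently regarded as a routine verification once $\beta=\beta^1+\beta^{n-1}$ has been written down. Your argument is correct and fills in exactly the details one would expect.

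A few remarks on your approach. The reduction to generators via the Leibniz rule is standard and valid, since $F^{*}$ is an algebra homomorphism and both brackets are biderivations. For the pairs involving an $a_i$, your strategy of invoking the cotangent--Lie--algebra identification (Section~\ref{sec:LPg}) to get agreement modulo $\m_0^{2}$, and then promoting this to equality via a degree count, is clean; it leans on the paper's assertion that the cotangent Lie algebra was computed directly, but that is a legitimate input. The parity argument showing $\m_0^{2}\cap\Sym^{n}(V^{*})^{\ZZ_n}=0$ for $n$ odd is correct and is indeed where the oddness hypothesis enters (for even $n$ one would have $t$-degree--zero monomials of degree $n$ such as $(z\zb)^{n/2}$, and the argument would need adjustment). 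Your $(x_i,y_j)$ computation matches the definition of $\beta^{n-1}$ exactly, including the boundary cases $j=0$, $i=n$, and $i=j$, where the vanishing coefficients absorb the formally negative exponents.

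The alternative the paper implicitly has in mind is simply brute force: compute $\{F^{*}u,F^{*}v\}_V$ for every pair of Hilbert-basis elements and compare with the table defining $\beta$. Your route replaces most of those computations by structural observations (degree, $t$-weight, abelian ideal), isolating the one genuinely new ingredient, namely the $(x_i,y_j)$ formula that defines $\beta^{n-1}$.
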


\section{Proof of the Nonembedding Result}

In this section, we begin the proof of Theorem \ref{thm:NonEmb}:
\begin{reptheorem}{thm:NonEmb} Given $n>2$ with $n$ odd, there does not exist a Poisson bracket on $\RR^{4+2(n+1)}$ extending the Poisson structure on $Q=V/\ZZ_n$.
\end{reptheorem}

\begin{remark} The proof given only requires $n$ to be odd at one step, which is indicated in bold.  It is likely that the result also holds for even $n$.
\end{remark}

\begin{remark}[Outline of proof]
The proof of Theorem \ref{thm:NonEmb} roughly follows the steps below:
\begin{enumerate}
\item For the sake of contradiction, we assume the existence of a Poisson structure $\pi$ extending the Poisson structure on $Q$.
\item Then, we apply the semi-linearization techniques of Chapter 5 to obtain a change of coordinates that transforms $\pi$ into an easy-to-work-with form.
\item We obtain an explicit description of the resulting embedding.  In particular, we show it coincides with the Hilbert map $F$ in the section above on the $\gl(\CC)$-coordinates.  Such a description in turn yields a  tangible description of $\pi$ in terms of the ``non-Poisson extension'' $\beta$ and a bivector $\alpha$ that vanishes on $Q$.
\item We analyze the jacobiator $[\pi,\pi]$ in terms of the description obtained in the step above.  This analysis results in $[\pi,\pi]\neq0$, contradicting the assumption that $\pi$ is a Poisson structure.  This final step consists of three parts:
\begin{enumerate}
\item We define a grading on $\Sym(\g)$ that corresponds to infinitesimal neighborhoods $\nbd{k}{\gl(\CC)}{\CC^{4+2(n+1)}}$ and examine the restriction of $[\pi,\pi]$ to 
$$\nbd{1}{\gl(\CC)}{\CC^{4+2(n+1)}}\cap\nbd{n-1}{\pt}{\CC^{4+2(n+1)}}.$$
\item We obtain a refinement of our description of $\pi$ on the neighborhood in the previous step.  By examining the restriction of $[\pi,\pi]$ applied to $(dt,dw_i,dw_j)$, we conclude the term $[\pi,\pi](dt,dw_i,dw_j)$ vanishes on this neighborhood if and only if the restriction of $\alpha$ vanishes on $V(C)=V(a_0a_1-a_2a_3)$.
\item Using this refinement of $\alpha$, we determine that the restriction of $[\pi,\pi](dw_i,dw_j,dw_k)$ on $\nbd{1}{\gl(\CC)}{\CC^{4+2(n+1)}}\cap\nbd{n-1}{\pt}{\CC^{4+2(n+1)}}$ cannot vanish, contrary to our first assumption.  Essentially, this contradiction follows from the fact that an $\alpha$ vanishing on $V(C)$ cannot cancel the nonzero terms of the jacobiator $[\beta,\beta]$ computed in the section above.
\end{enumerate}
\end{enumerate}
\end{remark}

\noindent {\it Proof:} As noted at the start of the chapter, it is enough to show that the Poisson bracket on $V_\CC/{\ZZ_n}$ does not extend to a Poisson bracket on $\CC^{4+2(n+1)}$ in a formal neighborhood of $\pt\subseteq\CC^{4+2(n+1)}$ (see Sections \ref{sec:Complexification} and \ref{sec:ReductionAlg}).  

Assume, for the sake of contradiction, there is such a Poisson bracket $\brac=\brac_\pi$ (with corresponding bivector $\pi$) extending the Poisson bracket $\brac_Q$.  The linear Poisson structure $\pi^1$ coincides with $\beta^1$ of Section \ref{sec:LPg}.   Applying the semi-linearization in Proposition \ref{prop:LeviImprove} (see Example \ref{ex:ZnCotangent}), we obtain a formal system of coordinates

$$\phi: \CC[[a_0,\ldots a_3,x_0,\ldots,x_n,y_0,\ldots,y_n]] \twoheadrightarrow\CC[[z\zb,\ldots,z\wb,\ldots z^iw^{n-i}\ldots \zb^i\wb^{n-i}\ldots]] $$
in which the Poisson bivector takes the form $\pi=\beta^1+\pi^{nl}$, where  $\pi^{nl}$ is the nonlinear part: 
$$\pi^{nl}=P_{ij}\frac{\partial}{\partial w_i}\wedge\frac{\partial}{\partial w_j} + \sum_{l,i,j}\lambda_l(x_iy_j)^l\bivect{t}{w_k}  \mbox{\qquad   where the $P_{ij}$ are nonlinear, and } w_i\in\{x_i,y_i\}$$

Denote by $F_\phi$ the embedding with pullback $\phi$.  By the semi-linearization procedure in Proposition  \ref{prop:LeviImprove}, $F$ coincides with $F_\phi$ up to first order.  In the next section, we will determine $F_\phi$ in higher orders.

\subsection{Determining the Embedding $F_\phi$}
\label{sec:DeterminingPhi}

Applying the Levi decomposition has the effect of putting the extension $\pi$ into a specific, easy to work with form, at the expense of losing a description of the embedding $F_\phi$.  The construction of the change of coordinates for the Levi decomposition proceeds in two steps: (see Theorem \ref{thm:LeviDecomp} and Theorem 3.2.3 in \cite{Dufour05})
\begin{enumerate}
\item First, a change of coordinates is applied to the semi-simple subalgebra $\g$, linearizing bracket $\brac$ on this subalgebra.
\item Then, the coordinates of the radical are deformed in a way that linearizes the cross terms of $\brac$.
\end{enumerate}

However, in the case at hand, the first change of coordinates is unnecessary:

\begin{lemma} The map $F_\phi$ coincides with the Hilbert map $F$ on the semi-simple coordinates.  That is, $\phi$ takes the subalgebra $\Sym(\slt(\CC))\subseteq\Sym(\g)$ to the subalgebra $\Sym(\slt(\CC))\subseteq\Sym(V^*)^{\ZZ_n}$:
$$\phi(a_0-a_1)=z\zb-w\wb, \quad \phi(a_2)=\zb w, \quad \phi(a_3)=z\wb. $$
\end{lemma}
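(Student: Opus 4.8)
The plan is to exploit the freedom in the Levi-normalizing change of coordinates rather than to argue it is forced. Recall that the coordinate change producing the form \eqref{eqn:LeviDecomp} (Theorem \ref{thm:LeviDecomp} together with the Poincar\'e--Dulac refinement of Proposition \ref{prop:LeviImprove}) is assembled in stages: one first linearizes the bracket on the semisimple summand $\slt(\CC)\isom\Span\{a_0-a_1,a_2,a_3\}$, i.e.\ makes $\Span\{a_0-a_1,a_2,a_3\}$ a Poisson-$\slt(\CC)$ for $\brac_\pi$; one then deforms the radical coordinates $\slt(\CC)$-equivariantly to linearize the mixed brackets; and finally one applies the $\slt(\CC)$-equivariant Poincar\'e--Dulac normalization to $X_t$. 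The last two stages fix the semisimple coordinates, so the whole content of the lemma is that the first stage can be carried out with the semisimple coordinates equal to the pullbacks $\zb w$, $z\wb$, $z\zb-w\wb$ of the quadratic invariants.

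First I would record the key point, already contained in the identification $\gl(\CC)\inj\TT_V$ of the earlier lemma: the quadratic invariants $z\zb-w\wb$, $\zb w$, $z\wb$ span a copy of $\slt(\CC)$ inside $(\Sym(V^*)^{\ZZ_n},\brac_Q)$ whose bracket relations are exactly those carried by $\beta^1$ on $\Span\{a_0-a_1,a_2,a_3\}$ --- equivalently, the Hamiltonian action these quadratics generate on $\Sym(V^*)^{\ZZ_n}$ is the (already linear) differential of the $SL_2$-action on $V$. Thus on $Q$ the semisimple part needs no linearization, and the obstruction lives entirely in lifting this $\slt(\CC)$ through the embedding.

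Next I would produce the lift. The pullback $F^*\colon \CC[[a_0,\dots,a_3,x_0,\dots,x_n,y_0,\dots,y_n]]\sur \widehat{\Sym(V^*)^{\ZZ_n}}$ is a surjection of Poisson algebras for $\brac_\pi$. Giving its source the weight grading $\deg a_i=2$, $\deg x_j=\deg y_j=n$ (the grading by degree in $z,\zb,w,\wb$ that makes $F^*$ graded), the kernel $\II$ is a graded, $\slt(\CC)$-stable ideal with finite-dimensional homogeneous pieces --- the $\slt(\CC)$-stability is the equivariance recorded in Proposition \ref{prop:RepIdeal}, and $\II$ has no part of weight $<4$. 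One then constructs $\tilde H,\tilde E,\tilde F$ with $F^*(\tilde H)=z\zb-w\wb$, $F^*(\tilde E)=\zb w$, $F^*(\tilde F)=z\wb$ and $\tilde H\equiv a_0-a_1$, $\tilde E\equiv a_2$, $\tilde F\equiv a_3$ modulo $\m_0^2$, satisfying $\{\tilde H,\tilde E\}_\pi=2\tilde E$, $\{\tilde H,\tilde F\}_\pi=-2\tilde F$, $\{\tilde E,\tilde F\}_\pi=\tilde H$, by solving these relations one homogeneous weight at a time: at each weight $d$ the correction may be chosen in $\II_d$ (so that the $F^*$-images are unchanged), and the obstruction to passing from weight $d$ to weight $d+1$ is a class in $H^2(\slt(\CC),\II_d)$, which vanishes by Whitehead's lemma since $\II_d$ is finite-dimensional. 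Because $\II$ carries no linear term, $\{\tilde H,\tilde E,\tilde F,t,x_0,\dots,x_n,y_0,\dots,y_n\}$ is again a formal coordinate system; taking $\tilde H,\tilde E,\tilde F$ for the semisimple coordinates executes the first stage, and running the remaining two stages (which do not disturb them) lands $\pi$ in the form \eqref{eqn:LeviDecomp} with $\phi(a_0-a_1)=z\zb-w\wb$, $\phi(a_2)=\zb w$, $\phi(a_3)=z\wb$.

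I expect the main obstacle to be the middle step --- verifying that an $\slt(\CC)$-triple produced by this order-by-order lift really can serve as the semisimple coordinates in the construction of the Levi normal form used in \cite{Dufour05,Wade97} (so that the subsequent $\slt(\CC)$-equivariant steps then apply verbatim and deliver \eqref{eqn:LeviDecomp}), together with the bookkeeping that those subsequent steps fix the semisimple coordinates. Everything else is routine: the vanishing $H^{\ge 1}(\slt(\CC),M)=0$ for finite-dimensional $M$ drives the argument, and the explicit form of $F$ on the $\gl(\CC)$-coordinates then yields the three stated identities.
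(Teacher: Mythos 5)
Your strategy is genuinely different from the paper's: you try to \emph{choose} the semisimple coordinates of the Levi normalization to be lifts of the quadratic invariants, whereas the paper shows the linearizing change of coordinates is \emph{forced} to be trivial (it writes the change as $u\mapsto u+\{p_{ss},u\}_1$, uses that $\II$ has no linear part to get $p_{ss}\in\m_0^2$, and then a degree count on the conjugacy relation $2\{p_{ss},\{s_i,s_j\}_1\}_1=\{\{p_{ss},s_i\}_1,\{p_{ss},s_j\}_1\}_1$ to conclude $\{p_{ss},\slt(\CC)\}_1=0$, so the transformation is the identity). The change of strategy is legitimate, and your observation that the error terms $\epsilon_{ij}=\{\tilde s_i,\tilde s_j\}_\pi-\sum_k c_{ij}^k\tilde s_k$ automatically lie in $\II$ (because $F^*$ is a Poisson map and the quadratic invariants close under $\brac_Q$) is correct. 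The gap is in the inductive step of your lift. You run the induction in the weight grading $\deg a_i=2$, $\deg x_j=\deg y_j=n$ precisely because $\II$ is homogeneous for it, so that choosing corrections in $\II_d$ keeps $F^*(\tilde H)=z\zb-w\wb$ on the nose. But the unknown bracket $\brac_\pi$ is not compatible with this grading: only $\pi^1$ (and the special extension $\beta$) has weight-degree $-2$, while a general extension has homogeneous components of other weight-degrees, and these are only constrained modulo $\II$ --- for instance $\pi(dx_i,dy_j)$ may contain a multiple of $C=a_0a_1-a_2a_3$ (weight $4$, weight-degree $4-2n$), and for $n\geq5$ the pieces $\pi(dx_i,da_l)$ may contain $\II$-valued terms of weight $<n$, giving weight-degree strictly below $-2$. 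Consequently the weight-$d$ effect of a weight-$d$ correction is not the Chevalley--Eilenberg differential of the linear $\slt(\CC)$-action alone (lower weight-degree pieces of $\pi$ paired with earlier corrections also land in weight $d$, and the cocycle property of the weight-$d$ error with respect to the linear action is no longer automatic from Jacobi), and for $n\geq5$ a weight-$d$ correction can create new error in weights $<d$ already cleared, so the induction ``one homogeneous weight at a time'' is not well-founded. The filtration for which the standard Whitehead argument works is the polynomial one, but $\II$ is not homogeneous for it (the generators $M_{ij}=x_iy_j-a_0^ja_1^{n-i}a_3^{i-j}$ mix degrees $2$ and $n$), and then you lose exactly the feature you need: corrections can no longer be taken in $\ker F^*$, so you only recover $F^*(\tilde s_i)=\sigma_i$ to leading order, which is the standard Levi statement, not the lemma.

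So as written the central step ``the obstruction to passing from weight $d$ to weight $d+1$ is a class in $H^2(\slt(\CC),\II_d)$'' is unsupported; repairing it would require either first proving that $\pi$ can be arranged to respect the weight filtration (which is close to what is being proved) or a genuinely more delicate two-filtration bookkeeping. A smaller point: your argument, if completed, produces \emph{some} semi-linearizing coordinate system whose $\phi$ has the stated values, not the statement for the $\phi$ already fixed by Proposition \ref{prop:LeviImprove}; that is harmless for the rest of the chapter provided one also justifies the claim you flag yourself --- that the Wade/Dufour--Zung construction can be launched from an arbitrarily prescribed formal $\slt(\CC)$-triple and that the subsequent equivariant radical and Poincar\'e--Dulac stages leave it untouched. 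The paper's degree-count argument avoids both difficulties.
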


\begin{proof}  The Levi factor $\slt(\CC)$ is unique up to conjugation by an element of $\Sym(\g)$.  Thus the change of coordinates that linearizes $\brac$ on $\slt(\CC)$ is given by 
$$u\mapsto u+\{p_{ss},u\}_1 \mbox{\qquad where } p_{ss}\in\Sym(\g) \mbox{ and } u\in\Sym(\g).$$
For this change of coordinates to define a Poisson bracket extending $\brac_Q$, the function $\{p_{ss},u\}\in\II$ must vanish on $V/\ZZ_n$ for all $u\in\Sym(\g)$.  We claim that the $p_{ss}$ has no linear terms.  If this were the case, then $\{p_{ss},u\}_1$ would have linear terms (by considering a linear $u$).  As $\{p_{ss},u\}_1\in\II$ and $\II$ contains no linear terms, this is impossible.  Thus, $p_{ss}$ has no linear terms.

We now use the fact that the two Levi factors are conjugate to obtain conditions on $p_{ss}$.  Let $s_i\in\slt(\CC)$, and write $\tilde{s}_i=s_i+\{p_{ss},s_i\}_1$.  As the span of $\tilde{s}_i$ is isomorphic to $\slt(\CC)$, we arrive at the following equalities:

\begin{eqnarray}
\{\tilde{s}_i,\tilde{s}_i\}_1 &=& \{s_i+\{p_{ss},s_i\}_1,s_j+\{p_{ss},s_j\}_1\}_1 \nonumber\\
s_k+\{p_{ss},\{s_i,s_j\}_1\}_1 &=& s_k + \{s_i,\{p_{ss},s_j\}_1\}_1+ \{\{p_{ss},s_i\}_1,s_j\}_1\}_1+ \nonumber\\
						&& \hspace{4cm} + \{\{p_{ss},s_i\}_1,\{p_{ss},s_j\}_1\}_1 \nonumber\\
2\{p_{ss},\{s_i,s_j\}_1\}_1 &=&  \{\{p_{ss},s_i\}_1,\{p_{ss},s_j\}_1\}_1,    \nonumber
\end{eqnarray}
where on the left hand side we used that
$$\{\tilde{s}_i,\tilde{s}_i\}_1=\tilde{s}_k=s_k+\{p_{ss},s_k\}_1=s_k+\{p_{ss},\{s_i,s_j\}_1\}_1.$$
Now suppose that the first nonzero term in $p_{ss}$ has degree $d_{ss}$.  Then the minimal degree of the left-hand term $2\{p_{ss},\{s_i,s_j\}_1\}_1$ is also $d_{ss}$, whereas the minimal degree of the right-hand term $\{\{p_{ss},s_i\}_1,\{p_{ss},s_j\}_1\}_1$ is $2d_{ss}-1$.  This only occurs if $d_{ss}=1$ or both sides are zero.  As, $p_{ss}\neq0$, it follows that 
$$2\{p_{ss},\{s_i,s_j\}_1\}_1=0.$$
As $\slt(\CC)$ is semi-simple, it follows that $p_{ss}$ commutes with every element of $\slt(\CC)$.  Thus the coordinate transformation
$$u\mapsto u+\{p_{ss},u\}_1=u+0=u$$
is the identity transformation.
\end{proof}

We can in fact improve the above lemma:

\begin{lemma} The surjection $\phi$ sends the subalgebra $\rm{Sym}(\gl(\CC))$ to the subalgebra $$\rm{Sym}(\gl(\CC))\subseteq\Sym(V^*)^{\ZZ_n}$$ generated by quadratic terms. That is:
$$\phi(t)=z\zb+w\wb.$$
\end{lemma}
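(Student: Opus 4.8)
\emph{Proposed proof.} The plan is to use the preceding lemma to confine $\phi(t)$ to the one–parameter family $\CC[\mu]$ of $\mathfrak{sl}_2(\CC)$–invariants inside $\Sym(V^*)^{\ZZ_n}$ (where $\mu:=z\zb+w\wb$), and then to kill the remaining higher–order ambiguity using the special form of the semi–linearized bivector $\pi$. First I would note that, by the semi–linearization of Example \ref{ex:ZnCotangent} (Equation \ref{eqn:LeviDecomp}), $\pi$ carries no nonlinear terms involving only the $\mathfrak{gl}_2(\CC)$–coordinates; in particular $\{a_i,\cdot\}_\pi=\{a_i,\cdot\}_1$ is linear for $a_i\in\mathfrak{sl}_2(\CC)$, and $\{t,a_i\}_\pi=0$ since $t$ is central in $\mathfrak{gl}_2(\CC)$. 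Since $\phi$ is a Poisson homomorphism and, by the preceding lemma, sends $a_0-a_1,\ a_2,\ a_3$ to the quadratic generators $z\zb-w\wb,\ \zb w,\ z\wb$ of the $\mathfrak{sl}_2(\CC)\subseteq\mathfrak{gl}_2(\CC)$–action on $\Sym(V^*)^{\ZZ_n}$, it follows that $\phi$ is $\mathfrak{sl}_2(\CC)$–equivariant; combining this with $\{t,a_i\}_\pi=0$ gives $\{\phi(t),\phi(a_i)\}_Q=0$, so $\phi(t)$ is $\mathfrak{sl}_2(\CC)$–invariant. As $V^*\isom V_1\oplus V_1$ as an $\mathfrak{sl}_2(\CC)$–module (immediate from $\{z,\zb\}=\{w,\wb\}=1$), the first fundamental theorem for $SL_2$ identifies $\Sym(V^*)^{\mathfrak{sl}_2(\CC)}$ with the polynomial ring on the single bracket invariant $\mu$, which is already $\ZZ_n$–invariant; hence $\phi(t)=P(\mu)$ for a formal power series $P(s)=\sum_{m\ge1}c_m s^m$, with no constant term because $\phi$ preserves maximal ideals, and with $c_1=1$ because $\phi$ agrees with the Hilbert map to first order.

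It then remains to show $c_m=0$ for $m\ge2$. For this I would exploit the $U(1)$–weight grading: under $\phi$ the grading of $\Sym(\mathfrak{g})$ by the eigenvalues of $X_t^1=\{\cdot,t\}_1$ (on which $x_k$ has weight $n$, $y_k$ weight $-n$, and the $a_i$ weight $0$) matches the rotation grading of $\Sym(V^*)^{\ZZ_n}$ generated by $\{\cdot,\mu\}_V$, so that $\phi(x_k)$ is homogeneous of weight $n$ and each $\phi(x_iy_j)$ has weight $0$. Now evaluate $\phi$ on $\{x_k,t\}_\pi=nx_k-P^t_k$, where $P^t_k\in\CC[x_iy_j;\,0\le i,j\le n]$ by Example \ref{ex:ZnCotangent}: the image $n\phi(x_k)-\phi(P^t_k)$ must equal $\{\phi(x_k),\phi(t)\}_Q=P'(\mu)\{\phi(x_k),\mu\}_V=nP'(\mu)\phi(x_k)$, which is homogeneous of weight $n$. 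Comparing weights forces the weight–$0$ term $\phi(P^t_k)$ to vanish and then $n\phi(x_k)\big(1-P'(\mu)\big)=0$; since $\Sym(V^*)^{\ZZ_n}$ is an integral domain and $\phi(x_k)\neq0$, this gives $P'\equiv1$, hence $c_m=0$ for $m\ge2$ and $\phi(t)=\mu=z\zb+w\wb$. The stated inclusion $\phi(\rm{Sym}(\mathfrak{gl}_2(\CC)))\subseteq\rm{Sym}(\mathfrak{gl}_2(\CC))$ then follows, since $\phi$ already sends $\mathfrak{sl}_2(\CC)$ to the quadratic $\mathfrak{sl}_2(\CC)\subseteq\Sym(V^*)^{\ZZ_n}$.

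The step I expect to be the main obstacle is the claim that $\phi$ respects the two weight gradings used in the second paragraph — equivalently, that the semi–linearizing change of coordinates $\psi$ (the composition of the trivial $\mathfrak{sl}_2(\CC)$–linearization of the preceding lemma, the Levi deformation of Theorem \ref{thm:LeviDecomp}, and the Poincar\'{e}--Dulac step of Proposition \ref{prop:LeviImprove}) can be performed equivariantly for the residual $U(1)$–action, so that $\psi^*$ is weight–preserving. Since the linear part $\pi^1=\beta^1$ is itself $U(1)$–invariant and each stage only deforms coordinates transverse to the center $\CC t$ (there being no nonlinearity in $\{s_i,t\}$ to remove, and $t$ being a first integral of $X_t$), I would expect $\psi^*(t)=t$ outright, which already yields $\phi(t)=F^*(t)=z\zb+w\wb$ directly; but verifying that the cohomological constructions underlying the Levi decomposition and its Poincar\'{e}--Dulac refinement in \cite{Dufour05} genuinely fix $t$ is the one delicate point, and the rest of the argument is routine.
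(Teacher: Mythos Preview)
Your reduction to $\phi(t)=P(\mu)$ and the idea of testing on $\{x_k,t\}_\pi$ match the paper's opening. The gap you flag is real, however, and your suggested patch does not close it. The computation in your second paragraph needs $\{\phi(x_k),\mu\}_Q=n\phi(x_k)$, i.e.\ that $\phi(x_k)$ has \emph{pure} $\mu$-weight $n$; but $\phi$ is Poisson only for $\brac_\pi$, and $\{x_k,t\}_\pi\neq\{x_k,t\}_1$ because of the resonant terms, so nothing a priori forces the higher-order part of $\phi(x_k)$ to remain in weight $n$. Your fallback $\psi^*(t)=t$ fails for a structural reason: $t$ lies in the \emph{radical}, and the Levi step of Theorem~\ref{thm:LeviDecomp} deforms precisely the radical coordinates. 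The previous lemma pins down only the $\slt(\CC)$-coordinates, and neither Wade's theorem nor the Poincar\'e--Dulac refinement claims to preserve $t$.

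The paper avoids this by $\slt(\CC)$-representation theory, and this is exactly the (bolded) step where the hypothesis that $n$ is odd enters. Rather than assume $\phi(x_k)$ is weight-homogeneous, the paper notes that the $\mu$-weight-$0$ component of $\{p_t(\mu),\phi(x_i)\}_Q$ vanishes automatically, since bracketing with a weight-$0$ element annihilates weight-$0$ parts. On the left-hand side the lowest-degree contribution of $\phi\bigl(\sum_K\lambda_K(x_{k_1}y_{k_2})^k\bigr)$ is a polynomial in $z\zb,\,w\wb,\,\zb w,\,z\wb$, hence lies in $\bigoplus_k V_{2k}$; but the assignment $x_i\mapsto\phi(x_i)$ is $\slt(\CC)$-equivariant with source $V_n$, and for $n$ odd there is no nonzero $\slt(\CC)$-map $V_n\to\bigoplus_k V_{2k}$. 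This forces the resonant contribution to vanish, leaving $-n\phi(x_i)=\{p_t(\mu),\phi(x_i)\}_Q$, whence $p_t$ is linear. Your approach, if the grading claim could be established, would remove the parity restriction the paper itself suspects is artificial; but as it stands it assumes a compatibility that is essentially as hard as the lemma.
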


\begin{proof} As $\slt(\CC)$ acts on both $\Sym(\g)$ and $\Sym(V^*)^{\ZZ_n}$, the lemma above implies that $\phi$ is a homomorphism of $\slt(\CC)$-representations.  This imposes conditions on the image of the coordinates $\{t,x_i,y_j\}$ under $\phi$:

\begin{enumerate}
\item $\Span\{\phi(t)\}\subseteq\Sym(V^*)^{\ZZ_n}$ is isomorphic to $V_0$, where $t=a_0+a_1$,
\item $\Span\{\phi(x_i),\, i=0\ldots n\} \subseteq\Sym(V^*)^{\ZZ_n}$ is isomorphic to $V_n$,
\item $\Span\{\phi(y_i),\, i=0\ldots n\}\subseteq\Sym(V^*)^{\ZZ_n}$ is isomorphic to $V_n$,
\end{enumerate}
where $V_k$ is the $(k+1)$-dimensional irreducible representation of $\slt(\CC)$.

As the 1-dimensional irreducible representations of $\slt(\CC)$ in $\Sym(V^*)^{\ZZ_n}$ are spanned by powers of $(z\zb+w\wb)$, the map $\phi(t)=p_t(z\zb+w\wb)$ is a polynomial in $(z\zb+w\wb)$ with zero constant term and linear term equal to $(z\zb+w\wb)$.

We need to show that the polynomial $p_t$ is linear.  As $\phi$ is a Poisson map, we have:
\begin{eqnarray}
\phi(\{t,x_i\}) &=& \{\phi(t),\phi(x_i)\}_Q \nonumber \\
-n\phi(x_i)+\phi(\sum_{K}\lambda_k(x_{k_1}y_{k_2})^k) &=& \{p_t(z\zb+w\wb),\phi(x_i)\}_Q, 
\end{eqnarray}
where $K=(k,k_1,k_2)$ is a multi-index with $k>0$.  Writing $\phi=F^*+\psi$ as the sum of the first order terms and higher order terms, we examine the term $\phi(\sum_{K}\lambda_k(x_{k_1}y_{k_2})^k)$:

\begin{eqnarray}
\phi(\sum_{K}\lambda_k(x_{k_1}y_{k_2})^k) &=& \sum_{K}\lambda_k(\phi(x_{k_1})\phi(y_{k_2}))^k \nonumber \\
    &=& \sum_K \lambda_k\left((z^{k_1}w^{n-k_1}+\psi(x_{k_1}))(\zb^{k_2}\wb^{n-k_2}+\psi(y_{k_2}))\right)^k. \nonumber
    \end{eqnarray}
    
Expanding the terms inside each summand, the term $(z^{k_1}w^{n-k_1})(\zb^{k_2}\wb^{n-k_2})$ has degree strictly less than the terms involving $\psi$.  Furthermore, $(z^{k_1}w^{n-k_1})(\zb^{k_2}\wb^{n-k_2})$ has $t$-degree 0 (i.e.\! an equal number of $\{z,w\}$-terms and $\{\zb,\wb\}$-terms).  The right hand side of Equation 6.2 has no $t$-degree 0 component as $\{t,\cdot\}_Q$ vanishes on $t$-degree zero functions.  It follows that each $\lambda_k(z^{k_1}w^{n-k_1})(\zb^{k_2}\wb^{n-k_2})$ term must be cancelled by the $\phi(x_i)$ term.  However, any nontrivial cancellation is impossible as: 
\begin{enumerate}
\item $\Span\{\phi(x_l);\, l=0\ldots n\}\isom V_n$ and {\bf $n$ is odd}, 
\item while each term
$$\lambda_k\left((z^{k_1}w^{n-k_1})(\zb^{k_2}\wb^{n-k_2})\right)^k\in\CC[z\zb,w\wb,\zb w,z\wb]\isom \bigoplus_{k\in\NN}V_{2k}$$
is contained in a direct sum of even-dimensional irreducible representations.
\end{enumerate}
Thus $\phi(\sum_{K}\lambda_k(x_{k_1}y_{k_2})^k)=0$ and Equation 6.2 becomes 
$$-n\phi(x_i)= \{p_t(z\zb+w\wb),\phi(x_i)\}_Q.$$
The only way for this equality to hold is if $p_t(z\zb+w\wb)=z\zb+w\wb$.  
\end{proof}

Finally, we give a more explicit description of $\phi(x_i)$ and $\phi(y_i)$. Using the lemma above, we see that
 $$-n\phi(x_i)=\{z\zb+w\wb,\phi(x_i)\}_Q.$$
This implies:
\begin{enumerate}
\item $\phi(x_i)=p_i\cdot( z^iw^{n-i})$ where $p_i\in\CC[z\zb,w\wb,\zb w,z\wb]$ has $t$-degree 0.  
\item As $\phi$ is a homomorphism of $\slt(\CC)$-representations, the image $\{\phi(x_i);\, i=0\ldots n\}$
 spans a subspace isomorphic to the irreducible representation $V_n$.  
 \item $\Span\{z^iw^{n-i};\, i=0\ldots n\}\isom V_n$.
 \end{enumerate}
It follows that 
$$\Span\{\phi(x_i);\, i=0,\ldots,n\}=\Span\{p_i \cdot(z^iw^{n-i});\, i=0,\ldots,n\}\isom V_0\otimes V_n.$$ 
Thus $p_i=p(z\zb+w\wb)$ is a polynomial in $z\zb+w\wb$ with constant term 1.  Making the same argument with the $y_i$ implies:
$$\phi(y_i) = q(z\zb+w\wb) \zb^i\wb^{n-i}, $$
where $q$ is a polynomial in $z\zb+w\wb$ with constant term 1.  The following proposition summarizes the above:
 
\begin{proposition} The map $\phi$ defining the embedding $F_\phi$ has the form:\vspace{.5cm}\\ 
$\left\{\begin{array}{lcl}
\phi(a_0) &=& z\zb \\
\phi(a_1) &=& w\wb  \\
\phi(a_2) &=& \zb w  \\
\phi(a_3) &=& z\wb  \\
\end{array}\right.,$ \hspace{2cm}
$\left\{\begin{array}{lcl}
\phi(x_i) &=& p(z\zb+w\wb) z^iw^{n-i}\\
\phi(y_i) &=& q(z\zb+w\wb) \zb^i\wb^{n-i} \\
\end{array}\right.,$
\vspace{.5cm}

where $p,q$ are polynomials of $(z\zb+w\wb)$ with constant term $1$.
\end{proposition}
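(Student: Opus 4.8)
The statement collects the conclusions of the two lemmas just proved together with one further computation, so the plan is to assemble them. First, for the $\gl(\CC)$-coordinates: the first of the two preceding lemmas already gives $\phi(a_0-a_1)=z\zb-w\wb$, $\phi(a_2)=\zb w$, $\phi(a_3)=z\wb$ (the Levi factor $\slt(\CC)$ being unique up to conjugation, with the conjugating element forced to centralize $\slt(\CC)$), and the second gives $\phi(t)=\phi(a_0+a_1)=z\zb+w\wb$ (the only $\slt(\CC)$-invariant lift of the linear term $a_0+a_1$ compatible with $\phi$ being a Poisson map). Adding and subtracting these two equations yields $\phi(a_0)=z\zb$ and $\phi(a_1)=w\wb$ at once.

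For the $x_i$ (the $y_i$ being identical under the swap $z\leftrightarrow\zb$, $w\leftrightarrow\wb$), I would argue by weights for the $t$- and $\slt(\CC)$-actions. Since $\phi$ intertwines the $\gl(\CC)$-actions, once one knows (see below) that $\{t,x_i\}_\pi=-n\,x_i$ with no correction terms, one has the clean eigenvalue equation $\{z\zb+w\wb,\phi(x_i)\}_Q=-n\,\phi(x_i)$. Thus $\phi(x_i)$ lies in the $t$-weight-$n$ subspace of $\Sym(V^*)^{\ZZ_n}$; as a module over the $t$-weight-$0$ subalgebra $\CC[z\zb,w\wb,\zb w,z\wb]$ this subspace is generated by the monomials $z^iw^{n-i}$ (a routine check on monomials $z^a\zb^bw^c\wb^d$ with $a-b+c-d=n$), so $\phi(x_i)=p_i\cdot z^iw^{n-i}$ for some $p_i$ of $t$-weight $0$. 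Finally, because $\phi$ is $\slt(\CC)$-equivariant and the $\phi(x_l)$ are linearly independent (their linear parts form part of a basis of $\m_0/\m_0^2$), the span of the $\phi(x_l)$ is a copy of $V_n$; since $\Span\{z^iw^{n-i}\}$ is already such a copy and the only way to produce another copy of $V_n$ inside the ambient isotypic component is to multiply by a power of the $\slt(\CC)$-invariant $z\zb+w\wb$, the $p_i$ must all equal one and the same $p(z\zb+w\wb)$, with constant term $1$ to match $\phi(x_i)=z^iw^{n-i}+\cdots$.

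The one step with genuine content — and, as flagged in the introduction, the only place oddness of $n$ is used — is the vanishing $\phi\bigl(\sum_l\lambda_l(x_iy_j)^l\bigr)=0$ of the coefficient of $\bivect{t}{w_k}$ in $\pi^{nl}$; this is exactly what removes the correction term in $\{t,x_i\}_\pi=-n\,x_i$. I would prove it as follows: that coefficient is a polynomial in the products $x_ay_b$, so its $\phi$-image is built from $\phi(x_a)\phi(y_b)\in\CC[z\zb,w\wb,\zb w,z\wb]\isom\bigoplus_k V_{2k}$, hence lies in $t$-weight $0$ and in a sum of \emph{even}-dimensional $\slt(\CC)$-irreducibles; but in the equation $-n\phi(x_i)+\phi\bigl(\sum_l\lambda_l(x_iy_j)^l\bigr)=\{z\zb+w\wb,\phi(x_i)\}_Q$ the right-hand side has no $t$-weight-$0$ part, so this even-dimensional contribution would have to be absorbed into $\Span\{\phi(x_l)\}\isom V_n$, which is impossible for odd $n$. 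I expect this parity argument, together with the routine-but-fiddly verification of the module-generation claim and the bookkeeping of which $V_n$'s occur, to be the main work; the remainder is assembly of the two preceding lemmas.
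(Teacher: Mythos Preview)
Your proposal is correct and follows essentially the same route as the paper: the proposition is a summary of the two preceding lemmas together with the $t$-weight/$\slt(\CC)$-equivariance argument for the $x_i$ and $y_i$, and you have identified all the pieces, including that the parity argument (odd $n$) is the only place with real content.

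One small repair is needed in your parity step. You write that $\phi(x_a)\phi(y_b)\in\CC[z\zb,w\wb,\zb w,z\wb]$, but at this point that is exactly what you do not yet know (it would follow from the conclusion you are proving). What you actually need---and what suffices---is only the $\slt(\CC)$-isotypic information: since $\Span\{\phi(x_a)\}\isom V_n\isom\Span\{\phi(y_b)\}$, every $\phi(x_a)\phi(y_b)$ lies in the image of $V_n\otimes V_n=\bigoplus_{k}V_{2(n-k)}$, hence in even-indexed isotypic components. The right-hand side $\{z\zb+w\wb,\phi(x_i)\}_Q$ and the term $-n\phi(x_i)$ both lie in the $V_n$-isotypic component (odd-indexed for odd $n$), so the equation forces both sides to vanish separately, giving $\phi(R_i)=0$ and the clean eigenvalue relation $-n\phi(x_i)=\{z\zb+w\wb,\phi(x_i)\}_Q$ at once. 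The paper reaches the same conclusion by a degree-by-degree argument (isolating the lowest-polynomial-degree term of $\phi(R_i)$, which \emph{is} visibly in $\CC[z\zb,w\wb,\zb w,z\wb]$); your isotypic version is a little cleaner once the circular phrasing is fixed.
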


From the above description of the embedding $F_\Phi$, we easily see:

\begin{corollary} The ideal $\ker(\phi)$ agrees with the na\"{i}ve ideal of definition $\II=\ker(F^*)$ given in Table \ref{table:GenKer}, with the exception of the $M_{ij}$ which are deformed to
$$\left\{\begin{array}{lcll}
M_{ij} &=& x_iy_j-p(t)q(t)a_0^ja_1^{n-i}a_3^{i-j} & 0\leq j\leq i\leq n \\
M_{ij} &=& x_iy_j-p(t)q(t)a_0^ia_1^{n-j}a_2^{j-i} & 0 \leq i\leq j \leq n \\
\end{array}\right.$$
\end{corollary}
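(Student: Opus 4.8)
The plan is to realize $\phi$ as the composition of the (completed) Hilbert pullback $F^*$ with an explicit formal automorphism of $R:=\CC[[a_0,\dots,a_3,x_0,\dots,x_n,y_0,\dots,y_n]]$, and then transport the generating set of $\II$ from Table~\ref{table:GenKer} through that automorphism. First I would promote the polynomials $p,q$ of the preceding proposition to elements $P:=p(a_0+a_1)$ and $Q:=q(a_0+a_1)$ of $R$; since $p$ and $q$ have constant term $1$, both $P$ and $Q$ are units in $R$. Define the continuous $\CC$-algebra endomorphism $\Psi$ of $R$ by $\Psi(a_j)=a_j$, $\Psi(x_i)=Px_i$, $\Psi(y_i)=Qy_i$. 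Because $\Psi(P)=P$ and $\Psi(Q)=Q$, the assignment $a_j\mapsto a_j$, $x_i\mapsto P^{-1}x_i$, $y_i\mapsto Q^{-1}y_i$ is a two-sided inverse, so $\Psi$ is an automorphism. Comparing the two maps on generators — using $F^*(a_0+a_1)=z\zb+w\wb$, hence $F^*(P)=p(z\zb+w\wb)$ and $F^*(Q)=q(z\zb+w\wb)$ — one checks directly from the explicit formulas for $\phi$ that $\phi=\widehat{F^*}\circ\Psi$, where $\widehat{F^*}$ is the extension of the Hilbert pullback to completed local rings.

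It then follows that $\ker(\phi)=\Psi^{-1}(\ker\widehat{F^*})=\Psi^{-1}(\widehat{\II})$, and since $\widehat{\II}$ is topologically generated by the elements of Table~\ref{table:GenKer}, the ideal $\ker(\phi)$ is generated by their images under $\Psi^{-1}$. A short case check computes these: $\Psi^{-1}(C)=C$; the relations $A_k,B_k$ are linear in the $y_i$ with coefficients in the $a_j$, so $\Psi^{-1}(A_k)=Q^{-1}A_k$ and $\Psi^{-1}(B_k)=Q^{-1}B_k$, and likewise $\Psi^{-1}(\overline{A}_k)=P^{-1}\overline{A}_k$, $\Psi^{-1}(\overline{B}_k)=P^{-1}\overline{B}_k$; the $D$-relations are quadratic in one family of variables, giving $\Psi^{-1}(D_{ijk})=Q^{-2}D_{ijk}$ and $\Psi^{-1}(\overline{D}_{ijk})=P^{-2}\overline{D}_{ijk}$; and for $j\le i$ one gets $\Psi^{-1}(M_{ij})=P^{-1}Q^{-1}(x_iy_j-PQ\,a_0^ja_1^{n-i}a_3^{i-j})$, with the symmetric formula for $i\le j$. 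Since $P,Q$ (and hence all the scalar prefactors $P^{-1}Q^{-1},Q^{-2},\dots$) are units of $R$, the ideal generated by these elements is unchanged if the prefactors are dropped. This yields exactly the generating set of Table~\ref{table:GenKer} with $M_{ij}$ replaced by $x_iy_j-p(t)q(t)a_0^ja_1^{n-i}a_3^{i-j}$ (resp.\ $x_iy_j-p(t)q(t)a_0^ia_1^{n-j}a_2^{j-i}$), where $p(t)q(t)$ denotes $PQ=p(a_0+a_1)q(a_0+a_1)$, as claimed.

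The only input beyond this bookkeeping is the fact — asserted in the remark following Table~\ref{table:GenKer} — that those generators genuinely generate $\II=\ker(F^*)$; granting it, there is no serious obstacle. The one point that requires care, and which I would flag explicitly, is the passage to completions: the statement ``$\ker(\phi)$ agrees with $\II$'' must be read as an equality of formal ideals in $R$, so that $\Psi$ and $\widehat{F^*}$ act on the completed ring and the transport argument above is literally valid.
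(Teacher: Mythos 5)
Your proposal is correct and is essentially the argument the paper leaves implicit behind ``we easily see'': since $\phi$ and $F^*$ differ only by rescaling $x_i$ by the unit $p(t)$ and $y_i$ by the unit $q(t)$, factoring $\phi=\widehat{F^*}\circ\Psi$ and transporting the generators of Table \ref{table:GenKer} through the automorphism $\Psi^{-1}$ yields exactly the stated generating set, with the unit prefactors harmlessly discarded. Your explicit flag about working with completed ideals (so that $\ker(\widehat{F^*})$ is topologically generated by the table's elements) is the right precaution and matches the formal-neighborhood setting of the chapter.
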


\begin{corollary} The Poisson bivector $\pi$ has the decomposition $\pi=(\beta+\alpha)+\tilde{\pi}$ where
\begin{enumerate}
\item $\beta=\beta^1+\beta^{n-1}$
\item $\alpha$ vanishes on $\ker(\phi)$
\item $\tilde{\pi}=\sum_{k\geq n}\pi^k$ is the sum of homogeneous components of $\pi$ greater than or equal to $n$.
\end{enumerate}
\end{corollary}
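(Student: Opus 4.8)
The plan is to read the decomposition off the homogeneous expansion $\pi=\sum_{k\geq 1}\pi^k$ directly and then establish the single nontrivial point, that $\alpha$ vanishes on $Q\isom V(\ker\phi)$. Put $\tilde{\pi}:=\sum_{k\geq n}\pi^k$ and $\alpha:=\bigl(\sum_{k=1}^{n-1}\pi^k\bigr)-\beta^1-\beta^{n-1}$. Since $\beta=\beta^1+\beta^{n-1}$ we then have $\pi=(\beta+\alpha)+\tilde{\pi}$, so (1) and (3) hold by construction, and the entire burden is (2): that $\phi\bigl(\alpha(du,dv)\bigr)=0$ for every pair of coordinates $u,v$ on $\CC^{4+2(n+1)}$.

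First I would shrink the set of coefficients that need attention. The linear part of any extension is the Lie--Poisson bivector $\beta^1$ of the cotangent Lie algebra $\g$ (Proposition \ref{prop:EmbeddingCriteria} together with Section \ref{sec:LPg}), so $\pi^1=\beta^1$ and the degree-$1$ component of $\alpha$ vanishes; thus $\alpha$ has coefficients of pure polynomial degree between $2$ and $n-1$. Next, the semilinearized normal form of Proposition \ref{prop:LeviImprove} (Example \ref{ex:ZnCotangent}, Equation \ref{eqn:LeviDecomp}) confines the nonlinear part of $\pi$ — and hence of $\alpha$ — to the bivectors $\bivect{t}{x_j}$, $\bivect{t}{y_j}$, $\bivect{x_i}{x_j}$, $\bivect{x_i}{y_j}$ and $\bivect{y_i}{y_j}$, with the coefficients of the two $t$-families lying in $\CC[x_iy_j]$. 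So it suffices to prove $\phi(\alpha(du,dv))=0$ for these five types of pair.

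For each of them one computes $\phi$ of the corresponding $\pi$-bracket using the explicit form $\phi(t)=t_Q$, $\phi(x_j)=p(t_Q)\,z^jw^{n-j}$, $\phi(y_j)=q(t_Q)\,\zb^j\wb^{n-j}$ from the preceding Proposition, where $t_Q:=z\zb+w\wb$ and $p,q$ are polynomials with constant term $1$, together with the fact that $t_Q$ is a Casimir of $\brac_{\gl(\CC)}$ and the identities $\{z^iw^{n-i},z^jw^{n-j}\}_Q=\{\zb^i\wb^{n-i},\zb^j\wb^{n-j}\}_Q=0$. This yields $\phi(\{x_i,x_j\}_\pi)=\phi(\{y_i,y_j\}_\pi)=0$, $\phi(\{t,x_j\}_\pi)=\phi(-nx_j)$, $\phi(\{t,y_j\}_\pi)=\phi(ny_j)$, and $\phi(\{x_i,y_j\}_\pi)=p(t_Q)q(t_Q)\,F^*\!\bigl(\beta^{n-1}(dx_i,dy_j)\bigr)+n(pq)'(t_Q)\,z^iw^{n-i}\zb^j\wb^{n-j}$. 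Since $\phi$ and $F^*$ agree on $\Sym(\gl(\CC))$ and $\beta^{n-1}$ has coefficients in $\Sym(\gl(\CC))$, it follows in every case that the coefficient of $\pi$ differs from the matching coefficient of $\beta$ by a power series whose $\phi$-image has no terms of $\Sym(V^*)$-degree below $2n$; and because the matching coefficient of $\tilde{\pi}$ has polynomial degree $\geq n$ (so its $\phi$-image likewise sits in $\Sym(V^*)$-degrees $\geq 2n$), the $\Sym(V^*)$-degree-$<2n$ part of $\phi\bigl(\alpha(du,dv)\bigr)$ vanishes automatically.

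The remaining — and genuinely hardest — step is to upgrade this to $\phi\bigl(\alpha(du,dv)\bigr)=0$ outright. The obstruction is that $\phi$ does not respect the polynomial grading and $\ker\phi$ is not a homogeneous ideal: by the preceding Corollary the $M_{ij}$ of Table \ref{table:GenKer} acquire, inside $\ker\phi$, an inhomogeneous correction living in polynomial degree $\geq n+1$. I would control this with the two auxiliary gradings already present: $\phi$ intertwines the derivations $\{t,\cdot\}$ and $\{a_0-a_1,\cdot\}$ on source and target, so $\ker\phi$ is bigraded by $t$-weight and $H$-weight; each surviving coefficient of $\pi$ is bihomogeneous of a definite weight (by the Jacobi identity applied to $t$, to $a_0-a_1$, and to the coordinate pair), hence lands in a single bigraded piece of $\ker\phi$; and in that piece $\ker\phi$ coincides, up to polynomial degree $n$, with the honestly-(bi)graded sub-ideal of $\II$ generated by $C,A_k,B_k,\overline{A}_k,\overline{B}_k,D_{ijk},\overline{D}_{ijk}$, because in a nonzero $t$-weight the generators $M_{ij}$ can enter only after multiplication by a monomial of degree $\geq 2$, i.e.\ at total degree $\geq n$. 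Truncating to degrees $\leq n-1$ is then harmless. The $\bivect{x_i}{y_j}$ case is the one demanding the most care, since there the target value $\beta^{n-1}(dx_i,dy_j)$ is itself nonzero and the relevant $t$-weight is $0$ — exactly where the $M_{ij}$-deformation sits — so it must be handled by matching the $\Sym(V^*)$-degree-$2(n-1)$ part of $\phi(\{x_i,y_j\}_\pi)$ against $F^*\bigl(\beta^{n-1}(dx_i,dy_j)\bigr)$ and absorbing everything else into $(C)\subseteq\ker\phi$ and into $\tilde{\pi}$.
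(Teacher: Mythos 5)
Your first two steps reproduce the paper's own proof. The paper writes $\phi=F^*+\psi$, notes that $\psi(x_i),\psi(y_j)$ have degree at least $n+2$ (they have degree $>n$ and the invariants are generated in degrees $2$ and $n$), and uses $\deg\brac_Q=-2$ to conclude that $\phi\left(\{w_i,w_j\}_\pi-\{w_i,w_j\}_\beta\right)$ lies in $\bigoplus_{k\geq n}\Sym^k(V^*)^{\ZZ_n}$, ``the other cases being analogous''; your explicit computation with $p,q$ and the bound $2n$ is the same estimate in slightly sharper form, and your reduction to the five pair types via Proposition \ref{prop:LeviImprove} is implicit in the paper's choice of $w_i\in\{x_i,y_i,t\}$. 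The paper stops there: it does not attempt the further upgrade to $\phi(\alpha(du,dv))=0$ ``outright'' that occupies your last paragraph, and indeed the truncated statement you have already established (vanishing of the low-degree part of $\phi(\alpha)$, sorted by $W$-degree and $t$-weight) is what the degree-by-degree arguments of the rest of Chapter 6 actually consume.

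The last paragraph, which you single out as the hardest step, is where your proposal goes wrong. First, the asserted $t$-bihomogeneity of the coefficients of $\pi$ is not available at this stage: after Proposition \ref{prop:LeviImprove} one only knows $\{t,w_k\}_\pi=\lambda_k w_k+R_k$ with $R_k$ nonlinear and resonant, so the Jacobi identity applied to $t$ does not force the coefficients of $\pi$ into single $t$-weights (only the $\slt(\CC)$-equivariance, hence $H$-homogeneity, comes for free). Establishing that $\pi^k$ preserves the $t$-degree for $k<n$ --- and in $W$-degree $(-1)$ only modulo $\langle C\rangle$ --- is precisely the content of Propositions \ref{prop:tdeg} and \ref{prop:degt2}, whose proofs invoke the Facts of Proposition \ref{prop:Facts}, i.e.\ this very corollary; so your route is circular as organized. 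Second, the claim that in a nonzero $t$-weight the generators $M_{ij}$ of $\ker(\phi)$ enter only after multiplication by a monomial of degree at least $2$, hence only in total degree at least $n$, is false: the degree-one generators $x_k,y_k$ carry $t$-weight $\mp n$, so for example $x_kM_{ij}$ already contributes to a nonzero $t$-weight component of $\ker(\phi)$ in polynomial degree $3$, and consequently the truncation of $\ker(\phi)$ below degree $n$ in those weights is not exhausted by the bigraded sub-ideal generated by $C,A_k,B_k,\overline{A}_k,\overline{B}_k,D_{ijk},\overline{D}_{ijk}$ of Table \ref{table:GenKer}. If you want the full-strength claim, the workable bookkeeping is the one you began in the middle of your argument: separate $\phi(\alpha(du,dv))$ by $t$-weight, observe that below $\Sym(V^*)$-degree $2n$ each weight can only receive contributions from coefficients with a prescribed number of $W$-factors, and use the degree bound $2n$ weight by weight --- no homogeneity of $\pi$ and no structure theory of $\ker(\phi)$ beyond $\ker(\phi)\cap\Sym(\gl(\CC))=\langle C\rangle$ is needed.
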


\begin{proof}  We will show
$$\phi\left(\{w_i,w_j\}-\{w_i,w_j\}_{\beta}\right)\in\bigoplus_{k\geq n}\Sym^{k}(V^*)^{\ZZ_n} \mbox{ for $w_i\in\{x_i,y_i,t\}$ }$$

To prove this, simply calculate the term:
$$\begin{array}{rcl}
\phi(\{x_i,y_j\}-\{x_i,y_j\}_{\beta}) &=& \{\phi(x_i),\phi(y_j)\}_Q-  \{z^iw^{n-i},\zb^j\wb^{n-j}\}_Q  \vspace{2pt}\\
&=& \{z^iw^{n-i},\psi(y_j)\}_Q+ \{\psi(x_i),\zb^j\wb^{n-j}\}_Q+\{\psi(x_i),\psi(y_j)\}_Q  \vspace{2pt}\\
\end{array}$$

However, $\psi(x_i),\psi(y_j)\in\Sym(V^*)^{\ZZ_n}$ both have degree greater than or equal $n+2$ as
\begin{enumerate}
\item both have degree greater than $n$, and
\item the algebra of invariants $\Sym(V^*)^{\ZZ_n}\subseteq\Sym(V^*)$ is generated by elements of degrees $2$ and $n$.  
\end{enumerate}
Therefore, as the Poisson bracket $\brac_Q$ has degree $-2$, the right-hand side has degree greater than or equal to $n$.  The other cases are analogous.

\end{proof}

\subsection{The $W$-grading}

While our approach often uses the usual grading on the polynomial algebra $\Sym(\g)$, we will now introduce a grading on the coordinate algebra of $(\gl(\CC)\ltimes W)^*\isom\CC^{4+2(n+1)}$ that takes advantage of the semi-linearization of $\pi$.  

\begin{definition}  Define the $W$-grading on ${\rm Sym}(\g)$ by giving the generators the following degrees: $\deg_W(a_i)=0$ and $\deg_W(w_i)=1$ for all $a_i\in\gl(\CC)$ and $w_i\in W$.  This grading results in the decomposition 
$${\rm Sym}(\g)=\bigoplus_{k\in\NN}\Sym^k(W)$$
where each $\Sym^k(W)$ is considered as a $\Sym(\gl(\CC))$-algebra via the associative multiplication.  To distinguish between the $W$-grading and the usual polynomial grading, we will denote the $k^{\rm th}$ $W$-graded homogeneous component of $\Sym(\g)$ by $\Sym(\g)_{k}$.   Thus $\Sym_\CC(\g)_k=\Sym^k_{\gl(\CC)}(W)$.
\end{definition}

Recall from Definition \ref{def:GrPoissonAlg}, that a grading on an algebra of functions induces a grading on the space of brackets and the space of multivectors.  We will use subscripts to denote the decomposition of a multivector $X$ with respect to the $W$-grading: 
$$X=\sum_{d\in\ZZ} X_d  \mbox{,     where $X_d$ has $W$-degree $d$ }.$$
Moreover, with respect to a grading on the space of multivectors, the Schouten bracket $[\cdot,\cdot]$ preserves the grading. 

While the usual graded decomposition of $\pi$ corresponds to extensions of Poisson structures on infinitesimal neighborhoods of the origin $\nbd{k}{\pt}{\g^*}$, the $W$-grading corresponds to extending Poisson structures on infinitesimal neighborhoods of $\gl(\CC)^*$ --- i.e. on $\nbd{k}{\gl(\CC)^*}{\g^*}$.

We will need two properties with respect to this grading:

\begin{proposition}
The linear Poisson bivector $\beta^1$ has $W$-deg 0,  and therefore the $\beta^1$-differential ${\rm d}_{\beta^1}$ preserves the $W$-grading.
\end{proposition}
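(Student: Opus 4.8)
The plan is to verify the two assertions directly from the coordinate description of $\beta^1$ and the local coordinate formula for the Schouten bracket. First I would recall that $\beta^1$ is the Lie--Poisson bivector on $\g^*$ associated to the cotangent Lie algebra $\g=\gl(\CC)\ltimes(V_n\oplus V_n)$, whose bracket table is displayed in Section~\ref{sec:LPg}. The point is that every structure constant of $\g$ that is nonzero has exactly one of the following three shapes: (i) $[\gl(\CC),\gl(\CC)]\subseteq\gl(\CC)$, contributing terms $\{a_i,a_j\}_1=c\,a_k$ with all of $a_i,a_j,a_k$ of $W$-degree $0$; or (ii) $[\gl(\CC),W]\subseteq W$, contributing terms $\{a_i,w_j\}_1=c\,w_k$ with $a_i$ of $W$-degree $0$ and $w_j,w_k$ of $W$-degree $1$; or (iii) $[W,W]=0$. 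Thus for any two generators $u,v$, the coefficient $\beta^1(du,dv)=\{u,v\}_1$ is a linear form whose $W$-degree equals $\deg_W(u)+\deg_W(v)-0$ in cases (i) and (ii) (in case (i) both sides have $W$-degree $0$; in case (ii) both sides have $W$-degree $1$), and is zero in case (iii). Hence, viewing $\beta^1\in\bigwedge^2\TT_{\g^*}$ via $\xi_u=\partial/\partial u$, we have $\beta^1=\sum_{u,v}\{u,v\}_1\,\xi_u\wedge\xi_v$ where in every surviving summand the coefficient has $W$-degree $\deg_W(u)+\deg_W(v)$, while $\xi_u\wedge\xi_v$ has $W$-degree $-\deg_W(u)-\deg_W(v)$. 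Therefore each surviving term of $\beta^1$ has total $W$-degree $0$, i.e. $\beta^1$ lies in $W$-degree $0$.

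Second, I would invoke the remark already made in Section~\ref{sec:InfNbd}'s surrounding discussion that the Schouten bracket is a bidifferential operator of degree $(-1)$ that preserves any grading induced on multivector fields by an algebra grading on $\OO_X$ --- concretely, from the local coordinate formula
$$[\alpha,\beta]=\sum_{i}\left(\pderiv{\alpha}{\xi_i}\pderiv{\beta}{x_i}-(-1)^{(a-1)(b-1)}\pderiv{\beta}{\xi_i}\pderiv{\alpha}{x_i}\right),$$
one checks that $\partial/\partial x_i$ lowers $W$-degree by $\deg_W(x_i)$ while $\partial/\partial\xi_i$ raises it by $\deg_W(x_i)$ (since $\xi_i$ carries $W$-degree $-\deg_W(x_i)$), so each monomial term of $[\alpha,\beta]$ has $W$-degree equal to $\deg_W(\alpha)+\deg_W(\beta)$. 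Consequently, since $\dd_{\beta^1}=[\beta^1,\cdot\,]$ and $\beta^1$ has $W$-degree $0$, the operator $\dd_{\beta^1}$ maps $W$-degree $d$ to $W$-degree $d$, i.e. it preserves the $W$-grading.

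The only mild subtlety --- and the one step I would write out carefully --- is the bookkeeping in the first paragraph: one must confirm that the Lie-algebra structure really does decompose the generators cleanly into $W$-degree $0$ (the $\gl(\CC)$ directions $a_0,\dots,a_3$) and $W$-degree $1$ (the $x_i,y_i$), with no cross-terms of the form $\{w_i,w_j\}_1$ landing in $\gl(\CC)$ and no terms $\{a_i,a_j\}_1$ landing in $W$. This is immediate from the semidirect-product form $\g=\gl(\CC)\ltimes(V_n\oplus V_n)$ with abelian radical $V_n\oplus V_n$: the radical being abelian kills case (iii), and the subalgebra/ideal conditions of a semidirect product force $[\gl(\CC),\gl(\CC)]\subseteq\gl(\CC)$ and $[\gl(\CC),V_n\oplus V_n]\subseteq V_n\oplus V_n$. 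One reads this off the coefficient matrix in the table of Section~\ref{sec:LPg} directly, so no real obstacle arises; the statement is essentially a formal consequence of the fact that $\beta^1$ is the linear (Lie--Poisson) bivector attached to a Lie algebra graded so that the bracket is compatible with the grading.
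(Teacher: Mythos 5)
Your proof is correct and follows essentially the same route as the paper: the paper's argument is precisely that $\g=\gl(\CC)\ltimes W$ is a semi-direct product with $\{W,W\}_1=0$, so $\beta^1$ has $W$-degree $0$, combined with the already-stated fact that the Schouten bracket respects any grading induced from a grading on the function algebra. You simply spell out the bookkeeping (the three cases of structure constants and the coordinate formula check for $[\cdot,\cdot]$) that the paper leaves implicit.
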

\begin{proof}
This follows from the fact that $\gl(\CC)\ltimes W$ is a semi-direct product and $\{W,W\}_1$ is identically zero.
\end{proof}

Moreover, for any $k$, the decomposition of the space of $k$-vectors with respect to $W$-degree is bounded below: 

\begin{proposition} The minimal $W$-degree of a $W$-homogeneous $p$-vector is $-p$.
\end{proposition}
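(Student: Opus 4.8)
The plan is to reduce the statement to elementary bookkeeping on the free $\Sym(\g)$-module $\bigwedge^p\TT_{\g^*}$, once the induced $W$-grading on that module has been pinned down. First I would make the induced grading explicit: by Definition~\ref{def:GrPoissonAlg} applied to the action of vector fields on $\Sym(\g)=\bigoplus_k\Sym^k(W)$, a coordinate derivation $\partial/\partial a_i$ sends $\Sym(\g)_k$ into $\Sym(\g)_k$ and hence has $W$-degree $0$, while $\partial/\partial w_j$ sends $\Sym(\g)_k$ into $\Sym(\g)_{k-1}$ and hence has $W$-degree $-1$. Therefore each monomial $p$-vector $g\,\partial_{\xi_{i_1}}\wedge\cdots\wedge\partial_{\xi_{i_p}}$, with $g\in\Sym(\g)$ and each $\xi_{i_\ell}$ one of the coordinates $a_0,\ldots,a_3,w_0,\ldots$, is $W$-homogeneous of $W$-degree $\deg_W(g)+\sum_{\ell=1}^{p}\deg_W(\partial_{\xi_{i_\ell}})$, and such monomials form a $\KK$-basis of $\bigwedge^p\TT_{\g^*}$ compatible with the $W$-grading.

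Next I would take an arbitrary $W$-homogeneous $p$-vector $X$ of $W$-degree $d$ and expand it in this basis; every monomial occurring in $X$ then has $W$-degree exactly $d$. For such a monomial, $\deg_W(g)\geq 0$ because $g$ is a polynomial (or formal power series) in generators all of nonnegative $W$-degree, and $\sum_{\ell}\deg_W(\partial_{\xi_{i_\ell}})\geq -p$ since each of the $p$ summands is $0$ or $-1$. Hence $d\geq -p$, giving the asserted lower bound. For sharpness, note that $\dim_\KK W=2(n+1)\geq p$ for every $p$ relevant here (and for every $p\leq 2(n+1)$ in general), so the $p$-vector $\partial/\partial w_0\wedge\partial/\partial w_1\wedge\cdots\wedge\partial/\partial w_{p-1}$ is a nonzero element of $\bigwedge^p\TT_{\g^*}$ of $W$-degree precisely $-p$; thus $-p$ is attained and is the minimal $W$-degree of a $W$-homogeneous $p$-vector.

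There is no genuine obstacle in this proof — it is a degree count — and the only point requiring care is the sign convention for the induced grading: a derivation in a $w$-variable \emph{lowers} $W$-degree by one and so contributes $-1$ rather than $+1$. This is exactly what makes the Schouten bracket $W$-degree preserving and $\dd_{\beta^1}$ $W$-degree preserving (as recorded just above), so it is the consistent choice; once it is fixed, both the bound $d\geq -p$ and its sharpness are immediate.
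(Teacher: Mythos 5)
Your argument is correct and is essentially the paper's: the paper disposes of this with the remark that it ``is a consequence of the Leibniz identity,'' which is precisely your bookkeeping that the coefficient of a monomial multivector has nonnegative $W$-degree while each coordinate derivation contributes $0$ (for $\partial/\partial a_i$) or $-1$ (for $\partial/\partial w_j$), giving the bound $\geq -p$. Your additional check that $-p$ is actually attained (via $\partial/\partial w_0\wedge\cdots\wedge\partial/\partial w_{p-1}$, valid for $p\leq\dim W$, which covers the bivectors and trivectors used later) is a small refinement the paper leaves implicit.
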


\begin{proof} This proposition is a consequence of the Leibniz identity. 
\end{proof}

\begin{example} $\beta$ decomposes as $\beta=\beta_{-2}+\beta_{0}$, where $\beta_{-2}=\beta^{n-1}$ and $\beta_{0}=\beta^1$.  Thus, we have the decomposition $\pi=\sum_{k\geq-2} \pi_k$ where $\pi_k=0$ for $k<-2$.  In particular,	 
$$\begin{array}{lcl}
\pi_{-2} &=&\beta^{n-1}+(\alpha_{-2}+\tilde{\pi}_{-2})\\

\pi_{-1}&=&(\alpha_{-1}+\tilde{\pi}_{-1})\\

\pi_{0}&=&\beta^1+(\alpha_{0}+\tilde{\pi}_{0})\\
\end{array}
$$
In the next section, we will examine the following $W$-graded components of the jacobiator $[\pi,\pi]$:
$$  [\pi,\pi]_{-2} = 2[\pi_{-2},\pi_0]+[\pi_{-1},\pi_{-1}] $$
$$ [\pi,\pi]_{-1} = 2[\pi_{-2},\pi_1]+[\pi_{-1},\pi_0]  $$

\end{example}

\subsection{Degree Conditions}

When expressing $\pi$ as a sum of its homogeneous components (with respect to the {\it usual} polynomial degree), the vanishing of the Schouten bracket becomes

$$0=[\pi,\pi]=[\sum_{k\in\NN}\pi^k,\sum_{k\in\NN}\pi^k]=\sum_{k\in\NN}\left(\sum_{i=1}^{k}[\pi^i,\pi^{k-i+1}]\right).$$
Note that for each $k$, the sum inside the parentheses on the right hand side must be zero.  In this section, we will examine the $W$-degree $(-2)$ and $(-1)$ components of $[\pi,\pi]$ for each $k<n$.

\subsubsection{The component $\pi^k_{-2}$ preserves the $t$-degree for $k<n$ }

\begin{proposition}[Facts about the bivector $\pi$]
\label{prop:Facts}
Let $1<k<n$.  The following easily verifiable facts of the bivector $\pi^k=\beta^k+\alpha^k$ will be used repeatedly:
\begin{enumerate}
\item[](Fact 1)  For all $a_i,a_j\in\gl(\CC)$, the bivector $\alpha^k(da_i,da_j)=0$.
\item[](Fact 2)  For $d=-2,-1,0$, the bivector $\alpha^k_{d}(dt,dw_i)=0$ for all $w_i\in\{x_i,y_i\}$.
\item[](Fact 3)  The $(-2)$-component vanishes on $V(C)$, where $C=a_0a_1-a_2a_3$.  Specifically,
 $$\alpha_{-2}^k(dw_i,dw_j)=p^k_{ij}C \mbox{\qquad  for  } p_{ij}^k\in\Sym(\gl(\CC))$$
\end{enumerate}
\end{proposition}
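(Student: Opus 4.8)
The plan is to establish the three facts essentially as a bookkeeping exercise, tracking how the $t$-grading, the $W$-grading, and membership in the defining ideal $\II$ constrain the nonlinear bivector $\alpha^k$ for $1<k<n$. First I would recall from the previous corollaries that $\pi=(\beta+\alpha)+\tilde\pi$ with $\tilde\pi=\sum_{l\geq n}\pi^l$, so that for $k<n$ the $k^{\rm th}$ homogeneous component is exactly $\pi^k=\beta^k+\alpha^k$, where $\beta^k$ is nonzero only for $k=1$ (giving $\beta^1$) and $k=n-1$ (giving $\beta^{n-1}$). Crucially, $\alpha$ vanishes on $\ker(\phi)$, and since the generators of $\ker(\phi)$ in Table \ref{table:GenKer} (as corrected by the corollary) that involve the $\gl(\CC)$-coordinates $a_i$ alone reduce to $C=a_0a_1-a_2a_3$, any coefficient of $\alpha$ that happens to lie in $\Sym(\gl(\CC))$ must in fact lie in the ideal $(C)$.

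For Fact 1, I would argue that $\pi$ restricted to the $\gl(\CC)$-directions is forced: the Levi/semi-linearization procedure of Proposition \ref{prop:LeviImprove} (via Example \ref{ex:ZnCotangent}) puts $\pi$ in the normal form \eqref{eqn:LeviDecomp}, in which the only bivector terms are $\bivect{s_i}{s_j}$ with constant coefficients (from $\beta^1$), $\bivect{s_i}{r_j}$ with linear coefficients (from $\beta^1$), $\bivect{t}{w_k}$ with $\lambda$-resonant coefficients, and $\bivect{w_i}{w_j}$ with arbitrary nonlinear coefficients; in particular $\pi(da_i,da_j)$ is purely the Lie--Poisson value $\beta^1(da_i,da_j)$, so the nonlinear remainder $\alpha^k(da_i,da_j)=0$. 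For Fact 2, the same normal form shows $\pi(dt,dw_i)=\sum_l\lambda_l(x_iy_j)^l\,dt\wedge dw_k$ is built only from resonant monomials $x_iy_j$; extracting the degree-$k$, $W$-degree $d$ piece for $d\in\{-2,-1,0\}$ and $k<n$ kills everything except possibly a term in $\beta^1$, hence $\alpha^k_d(dt,dw_i)=0$ — one checks the $W$-degree of $(x_iy_j)^l dt\wedge dw_k$ is $2l-1\geq 1$, so it never contributes in $W$-degrees $-2,-1,0$, and in $W$-degree $0$ only $\beta^1$ survives. For Fact 3, I would use the key structural input that $\pi$ extends the Poisson bracket on $Q=V(\II)$, so $[\pi,\pi]$ and more simply each $\pi(dw_i,dw_j)$ modulo the bracket on $Q$ lands in $\II$; the $W$-degree $-2$ part $\alpha^k_{-2}(dw_i,dw_j)$ has, by the minimal-$W$-degree proposition and a degree count, coefficients in $\Sym(\gl(\CC))$ (no $w$'s left), and as noted the only such elements of $\ker(\phi)=\II$ are multiples of $C$, giving $\alpha^k_{-2}(dw_i,dw_j)=p^k_{ij}C$.

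The main obstacle I expect is Fact 3: pinning down precisely why the $W$-degree $-2$ coefficient of $\alpha^k(dw_i,dw_j)$ must be a polynomial in the $a_i$ alone (rather than, say, still involving $w$'s in a way that only vanishes on $Q$ for subtler reasons) and then why the only degree-appropriate elements of $\II$ with no $w$-dependence are multiples of $C$. This requires carefully combining three gradings — the usual polynomial degree $k$, the $W$-degree $-2$, and the $t$-degree — against the explicit generator list in Table \ref{table:GenKer}, noting that among the generators $C, A_k, B_k, \overline A_k, \overline B_k, D_{ijk}, \overline D_{ijk}, M_{ij}$, only $C$ has $W$-degree $0$ while all the others have strictly positive $W$-degree, so the only way an element of $\II$ can have $W$-degree $0$ is to be a $\Sym(\gl(\CC))$-multiple of $C$. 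Once that observation is isolated, each of the three facts follows by a short degree comparison, so I would present Fact 1 and Fact 2 quickly and devote the bulk of the argument to making the $W$-degree/ideal-membership bookkeeping for Fact 3 airtight.
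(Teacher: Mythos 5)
Your treatment of Facts 1 and 2 follows the paper's proof essentially verbatim: both vanishings are read off from the semi-linearized normal form of Proposition \ref{prop:LeviImprove} (via Example \ref{ex:ZnCotangent}), with Fact 2 coming from the same $W$-degree count — the nonlinear coefficient of $\bivect{t}{w_k}$ is a polynomial in the products $x_iy_j$, hence lies in $\Sym(\g)_{\geq 2}$, while a $W$-degree $d\leq 0$ component of such a term would need its coefficient in $\Sym(\g)_{\leq 1}$. (A small slip: the $\bivect{s_i}{s_j}$ terms in the normal form carry linear, not constant, coefficients, but they still lie entirely in $\pi^1$, so your conclusion for Fact 1 stands.)

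The genuine gap is in your justification of Fact 3. You assert that, among the generators in Table \ref{table:GenKer}, only $C$ has $W$-degree $0$ while all the others have strictly positive $W$-degree, and conclude that the $W$-degree-$0$ part of any element of $\II$ is a $\Sym(\gl(\CC))$-multiple of $C$. That reading of the table is wrong: the generators $M_{ij}=x_iy_j-a_0^ja_1^{n-i}a_3^{i-j}$ (and their deformations $x_iy_j-p(t)q(t)a_0^ja_1^{n-i}a_3^{i-j}$ in $\ker(\phi)$) are not $W$-homogeneous; each has a $W$-degree-$0$ component lying in $\Sym(\gl(\CC))$ that is not a multiple of $C$. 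So the $W$-degree-$0$ graded piece of a general element of $\II$ need not be a multiple of $C$, and the bookkeeping as you state it fails. What rescues the claim is precisely the hypothesis $1<k<n$, which you never invoke in Fact 3: the $W$-degree-$0$ components of the $M_{ij}$ have polynomial degree at least $n$, so they cannot contribute to the degree-$k$ coefficient $\alpha^k_{-2}(dw_i,dw_j)$ when $k<n$; once they are excluded, only the $C$-generator can produce a $W$-degree-$0$ contribution, which is the content of the paper's one-line argument that $\ker(\phi)\cap\Sym(\gl(\CC))=\langle C\rangle$ (the unique relation among $z\zb$, $w\wb$, $\zb w$, $z\wb$). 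With the degree restriction made explicit your argument closes and coincides with the paper's; without it, the step ``$W$-degree $0$ in $\II$ implies multiple of $C$'' is simply not true.
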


\begin{proof}  Fact 1 follows from the normal form given by the Levi decomposition: the subalgebra $\gl(\CC)$ is a 1-dimensional central extension of the Levi factor $\slt(\CC)$ (on which $\pi$ is linear).   Fact 2 follows directly from the definition of the $W$-grading: as the $W$-degree of $t$ is $0$, the bivector $\alpha^k_{d}(dt,dw_i)\in\Sym^k(\g)_{\leq 1}$.  However, the Levi-decomposition implies that $\alpha^k(dt,dw_i)=\sum(x_iy_j)^l\in\Sym(\g)_{\geq 2}$.   Finally, Fact 3 follows from the description of $\ker(\phi)$ given in Section \ref{sec:DefIdeal}:  $\ker(\phi)\cap\Sym(\gl(\CC))=\langle C\rangle$.
\end{proof}

We will prove that $\pi_{-2}^k$ preserves the $t$-degree, which will determine the form of $\pi^k_{-2}$ for $k<n$.

\begin{proposition}
\label{prop:tdeg}
The $k^{\rm th}$ component $\pi_{-2}^k$ preserves the $t$-degree for all $k<n$.  That is:
$$\{t,\pi_{-2}^k(dw_i,dw_j)\}_1=\pi_{-2}^k(d\{t,dw_i\}_1,dw_j)+\pi_{-2}^k(dw_i,d\{t,dw_j\}_1)$$
\end{proposition}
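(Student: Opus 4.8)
The plan is to extract this statement as a consequence of the $W$-degree $(-2)$ part of the Jacobi identity $[\pi,\pi]=0$, restricted to triples of the form $(dt, dw_i, dw_j)$, and then to run an induction on the polynomial degree $k$. First I would write down, for fixed $k<n$, the degree-$k$ polynomial component and $W$-degree $(-2)$ component of the identity $[\pi,\pi]=0$. Using the decomposition $\pi=\sum_m\pi^m$ and $\pi^m=\sum_d\pi^m_d$ together with Proposition~\ref{prop:Facts}, most terms will be seen to vanish or to lie in $\Sym(\gl(\CC))$: by Fact~1 the purely $\gl(\CC)$-directions contribute nothing nonlinear, by Fact~2 the components $\alpha^m_d(dt,dw_i)$ vanish for $d\le 0$, and by Fact~3 the $(-2)$-components evaluated on two $W$-directions are multiples of the Casimir $C$. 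The upshot is that the equation $[\pi,\pi]^k_{-2}(dt,dw_i,dw_j)=0$ should collapse, modulo lower-order contributions already controlled by the inductive hypothesis, to exactly the assertion
$$\{t,\pi^k_{-2}(dw_i,dw_j)\}_1 = \pi^k_{-2}(d\{t,w_i\}_1,dw_j)+\pi^k_{-2}(dw_i,d\{t,w_j\}_1),$$
i.e.\ that $\brac_1$-bracketing with $t$ (which is the infinitesimal generator of the $t$-grading, since $\{t,\cdot\}_1$ acts as the $\KK$-action on $\mathfrak r$) commutes through $\pi^k_{-2}$. Since $\{t,\cdot\}_1$ is semisimple with integer eigenvalues, this is equivalent to the statement that $\pi^k_{-2}$ is homogeneous of $t$-degree $0$ as a bivector — precisely ``$\pi^k_{-2}$ preserves the $t$-degree.''

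The induction is organized as follows. The base case is $k=1$: $\pi^1=\beta^1$ is the Lie--Poisson bivector, which is $\brac_1$-invariant under the adjoint action of all of $\g$ (in particular of $t$), so $\pi^1_{-2}=0$ trivially preserves the $t$-degree. For the inductive step at level $k$, I would isolate in $[\pi,\pi]^k_{-2}(dt,dw_i,dw_j)$ the single term $2[\beta^1,\pi^k_{-2}](dt,dw_i,dw_j)$ — this is the only place $\pi^k_{-2}$ itself appears — and expand it via the Schouten formula; this term is exactly $2\,\mathrm{d}_{\beta^1}\pi^k$ evaluated on the triple, and its $(dt,dw_i,dw_j)$-component is (twice) the difference of the two sides of the claimed identity. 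Every remaining term is a bracket $[\pi^a_b,\pi^c_e]$ with $a,c<k$ (and $b+e=-2$, $a+c=k+1$), so I can apply the inductive hypothesis that $\pi^a_{-2},\pi^c_{-2}$ preserve the $t$-degree, together with Facts~1--3, to show each such term either vanishes on $(dt,dw_i,dw_j)$ or contributes only to the $\Sym(\gl(\CC))$-part (a multiple of $C$) which, after contraction with $dt$ and using that $[\beta,\beta]$ has coefficients in the ideal $\II$ and $\iota_{dt}[\beta,\beta]=0$, must separately vanish. Collecting, $\mathrm{d}_{\beta^1}\pi^k(dt,dw_i,dw_j)=0$, which is the assertion.

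The main obstacle I anticipate is the bookkeeping in the inductive step: there are several families of cross-terms $[\pi^a_b,\pi^c_e]$ to account for, and one must be careful that the ``lower-order'' terms really are controlled — in particular that the $W$-degree constraint $b+e=-2$ forces at least one of $b,e$ to equal $-2$ (by the lower-bound Proposition on $W$-degrees of bivectors, $b,e\ge -2$, so $(b,e)\in\{(-2,0),(0,-2),(-1,-1)\}$), so that Facts~2 and~3 are always applicable. A secondary subtlety is confirming that the ``error'' terms valued in $\langle C\rangle\subseteq\Sym(\gl(\CC))$ genuinely drop out after contracting with $dt$; this should follow because $\{t,C\}_1=0$ (the Casimir is $\slt$- and $t$-invariant), so the $C$-valued contributions are automatically $t$-degree $0$ and cannot obstruct the conclusion. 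Once these are handled, the passage from ``$\mathrm{d}_{\beta^1}\pi^k(dt,dw_i,dw_j)=0$'' to ``$\pi^k_{-2}$ preserves the $t$-degree'' is just the observation that $\mathrm{d}_{\beta^1}(\,\cdot\,)(dt,-,-)$ computes the failure of $\{t,\cdot\}_1$ to commute with the bivector.
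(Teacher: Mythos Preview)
Your approach is correct and follows the same overall strategy as the paper: evaluate the $W$-degree $(-2)$ component of $[\pi,\pi]^k=0$ on triples $(dt,dw_i,dw_j)$, identify $[\beta^1,\pi^k_{-2}](dt,dw_i,dw_j)$ as precisely the obstruction to $t$-degree preservation, and show the remaining cross-terms vanish.

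The one notable difference is that the paper does \emph{not} argue by induction. Once you write the cross-terms as $[\alpha^l_{-2},\alpha^m_0]$ and $[\alpha^l_{-1},\alpha^m_{-1}]$ and expand on $(dt,dw_i,dw_j)$, every summand is killed outright by Fact~1 ($\alpha^k(da_i,da_j)=0$) and Fact~2 ($\alpha^k_d(dt,dw_i)=0$ for $d\le 0$), with no need to invoke anything about lower $\pi^a_{-2}$. Your inductive hypothesis is therefore never actually used, and your anticipated ``error terms valued in $\langle C\rangle$'' do not appear here at all---they show up only in the $W$-degree $(-1)$ analysis (the next proposition). So your plan works, but if you execute it you will find the argument collapses to a direct (non-inductive) computation, which is cleaner and is what the paper does. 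The separate treatment of $k=n-1$ (where the extra term $[\beta^1,\beta^{n-1}]$ enters but vanishes on $(dt,\cdot,\cdot)$ since $\beta^{n-1}$ already preserves $t$-degree) is handled the same way in both.
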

 
 \begin{proof} For all $k<(n-1)$, we have: 
\begin{equation}
\label{eqn:degk1}
0=[\beta^1,\alpha^k]_{-2}+\sum_{l=2}^{k-2}[\alpha^l,\alpha^{k-l}]_{-2}=\overbrace{[\beta^1,\alpha^k_{-2}]}^{\mbox{term I}}+\overbrace{\sum_{l=2}^{k-2}[\alpha_{-2}^l,\alpha_0^{k-l}]}^{\mbox{term II}}+\overbrace{\sum_{l=2}^{k-2}[\alpha_{-1}^l,\alpha_{-1}^{k-l}]}^{\mbox{term III}}
\end{equation}

Applying these three terms to an element of the form $(dt,dw_i,dw_j)$ yields the following lemmas:

\begin{lemma}[Term I]
\label{lemma:TermI}
The first term in Equation \ref{eqn:degk1} measures the extent to which $\alpha^k_{-2}$ preserves the $t$-degree:
$$ [\beta^1,\alpha_{-2}^k](dt,dw_i,dw_j) =\{t,\alpha_{-2}^k(dw_i,dw_j)\}_1-\alpha_{-2}^k(d\{t,w_i\}_1,dw_j)-\alpha_{-2}^k(dw_i,d\{t,w_j\}_1) $$
\end{lemma}

\begin{proof}  Calculating the Schouten bracket gives
$$\begin{array}{rcl}
  [\beta^1,\alpha_{-2}^k](dt,dw_i,dw_j)  &=& \beta^1(dt,d\alpha_{-2}^k(dw_i,dw_j))-\alpha_{-2}^k(d\beta^1(dt,dw_i),dw_j)+\\
    & &\qquad -\alpha_{-2}^k(dw_i,d\beta^1(dt,dw_j))+ \beta^1(dw_i,d\alpha^k_{-2}(dw_j,dt))+\\
    & &\qquad +\beta^1(dw_j,d\alpha^k_{-2}(dt,dw_i)) \\
    &=& \{t,\alpha_{-2}^k(dw_i,dw_j)\}_1-\alpha_{-2}^k(d\{t,w_i\}_1,dw_j)-\alpha_{-2}^k(dw_i,d\{t,w_j\}_1).
\end{array}$$
The terms in the middle line vanish Fact 2 of Proposition \ref{prop:Facts}.  
\end{proof}

\begin{lemma}[Terms II and III]  The second and third terms in Equation \ref{eqn:degk1} are identically zero.
\end{lemma}
\begin{proof} We write out the second term:
$$\begin{array}{rcl}
[\alpha_{-2}^l,\alpha_0^{k-l}](dt,dw_i,dw_j) &=& \alpha_{-2}^l(dt,d\alpha_0^{k-l}(dw_i,dw_j))+\alpha_{-2}^l(dw_i,d\alpha_0^{k-l}(dw_j,dt))+ \\
   & & +\alpha_{-2}^l(dw_j,d\alpha_0^{k-l}(dt,dw_i))+ \alpha_{0}^{k-l}(dt,d\alpha_{-2}^{l}(dw_i,dw_j))+ \\
   & & +\alpha_{0}^{k-l}(dw_i,d\alpha_{-2}^{l}(dw_j,dt))+\alpha_{0}^{k-l}(dw_j,d\alpha_{-2}^{l}(dt,dw_i)) \\
   &=& 0
\end{array}$$
The fourth term is zero by Fact 1 and the other terms vanish by Fact 2 of Proposition \ref{prop:Facts}.  Similar reasoning implies that term III (i.e. $[\alpha_{-1}^l,\alpha_{-1}^{k-l}](dt,dw_i,dw_j)$) is zero.
\end{proof}

In other words,$[\pi,\pi]^k_{-2}=0$ if and only if $\alpha_{-2}^k:W\wedge W\to\rm{Sym}^k(\gl(\CC))\cap\II$ preserves the $t$-degree for $k<(n-1)$.  The $k=(n-1)$ case is similar, as there is only one new term in the vanishing condition:

$$0=[\pi,\pi]^{n-1}_{-2}=[\beta^1,\alpha^k_{-2}]+[\beta^{n-1},\beta^1]+\sum_{l=2}^{k-2}[\alpha_{-2}^l,\alpha_0^{k-l}]+\sum_{l=2}^{k-2}[\alpha_{-1}^l,\alpha_{-1}^{k-l}]$$
However, as was shown in Section \ref{sec:BetaExt}, $\beta^{n-1}$ preserves the $t$-degree and 
$$[\beta^1,\beta^{n-1}](dt,dw_i,dw_j)=0$$ proving Proposition \ref{prop:tdeg}.
\end{proof}
 
That $\alpha^k_{-2}$ preserves the $t$-degree forces $\alpha_{-2}^k(dw_i,dw_j)$ to have the following form:

\begin{proposition}
\label{prop:Deg2Form}
  For all $k<n$, the $W$-degree $(-2)$ summand takes the values
$$ \left\{\begin{array}{l}
 \alpha_{-2}^k(dx_i,dx_j)=0\\
 \alpha_{-2}^k(dy_i,dy_j)=0\\
 \alpha_{-2}^k(dx_i,dy_j)=p^k_{ij}C  \mbox{\qquad where  } p^k_{ij}\in\Sym(\gl(\CC)) \end{array}\right.$$
\end{proposition}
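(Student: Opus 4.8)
The plan is to derive the three formulas directly from Proposition \ref{prop:tdeg} together with the fact that $t$ is central in $\gl(\CC)$. First I would record that $\alpha_{-2}^k = \pi_{-2}^k - \beta_{-2}^k$ preserves the $t$-degree for every $1 \le k < n$: indeed $\beta^k = 0$ unless $k \in \{1, n-1\}$, the degree-one part $\beta^1$ has $W$-degree $0$ and so contributes nothing to the $W$-degree $(-2)$ component, and $\beta^{n-1} = \beta_{-2}$ was shown in Section \ref{sec:BetaExt} to preserve the $t$-degree; subtracting $\beta_{-2}^k$ from $\pi_{-2}^k$ (which preserves the $t$-degree by Proposition \ref{prop:tdeg}) therefore leaves this property intact. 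Explicitly, for all $w_i, w_j \in \{x_i, y_i\}$,
$$\{t, \alpha_{-2}^k(dw_i, dw_j)\}_1 = \alpha_{-2}^k(d\{t, w_i\}_1, dw_j) + \alpha_{-2}^k(dw_i, d\{t, w_j\}_1).$$

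Next I would feed in Fact 3 of Proposition \ref{prop:Facts}: the $W$-degree already forces $\alpha_{-2}^k$ to consist only of $\partial_{w_i}\wedge\partial_{w_j}$ terms whose coefficients $\alpha_{-2}^k(dw_i, dw_j)$ lie in $\Sym(\gl(\CC))$ and are divisible by $C$. Since $t$ is central in $\gl(\CC)$, the derivation $\{t,\cdot\}_1$ kills every element of $\Sym(\gl(\CC))$, so the left-hand side of the displayed identity vanishes. On the right-hand side the Lie--Poisson coefficient matrix of Section \ref{sec:LPg} gives $\{t, x_i\}_1 = -n x_i$ and $\{t, y_i\}_1 = n y_i$, hence $d\{t, x_i\}_1 = -n\,dx_i$ and $d\{t, y_i\}_1 = n\,dy_i$. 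Taking $(w_i,w_j) = (x_i,x_j)$ yields $0 = -2n\,\alpha_{-2}^k(dx_i,dx_j)$ and taking $(w_i,w_j) = (y_i,y_j)$ yields $0 = 2n\,\alpha_{-2}^k(dy_i,dy_j)$; since $n \ne 0$, both coefficients vanish. In the mixed case $(w_i,w_j) = (x_i,y_j)$ the two eigenvalues cancel, the identity is vacuous, and Fact 3 already records $\alpha_{-2}^k(dx_i,dy_j) = p_{ij}^k C$ with $p_{ij}^k \in \Sym(\gl(\CC))$; this is the third formula. Assembling the three cases gives the proposition.

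I do not expect a serious obstacle here, since the substantive work is contained in Proposition \ref{prop:tdeg}. The only mild subtlety is the endpoint $k = n-1$, where one genuinely has to account for the nonzero bivector $\beta^{n-1}$: here I would use that $\beta^{n-1}$ is supported on $dx_i\wedge dy_j$ with coefficients in $\Sym(\gl(\CC))$ — so it does not touch the $dx_i\wedge dx_j$ or $dy_i\wedge dy_j$ components at all — and that it preserves the $t$-degree, as established in Section \ref{sec:BetaExt}, so passing from $\pi_{-2}^{n-1}$ to $\alpha_{-2}^{n-1}$ changes nothing relevant.
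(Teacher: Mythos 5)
Your proof is correct and follows essentially the same route as the paper: apply the $t$-degree preservation from Proposition \ref{prop:tdeg}, use that $\alpha_{-2}^k(dw_i,dw_j)\in\Sym(\gl(\CC))$ so the left side $\{t,\cdot\}_1$ vanishes by centrality of $t$, read off the eigenvalues $\mp n$ to force $\alpha_{-2}^k(dx_i,dx_j)=\alpha_{-2}^k(dy_i,dy_j)=0$, and quote Fact 3 for the mixed case. Your extra care in passing from $\pi_{-2}^k$ to $\alpha_{-2}^k$ (in particular at $k=n-1$, where $\beta^{n-1}$ preserves the $t$-degree and only involves $dx_i\wedge dy_j$ terms) is a detail the paper leaves implicit, but it is the same argument.
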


\begin{proof} To prove the first equality, we use that the $t$-degree of $x_i$ is $(-n)$ and $\alpha^k_{-2}(dw_i,dw_j)\in\Sym(\gl(\CC))$:
\begin{eqnarray}
\{t,\alpha^k_{-2}(dx_i,dx_j)\}_1 &=& \alpha^k_{-2}(d\{t,x_i\}_1,dx_j)+\alpha^k_{-2}(dx_i,d\{t,x_j\}_1) \nonumber\\
0						&=&  \alpha^k_{-2}(-ndx_i,dx_j)+\alpha^k_{-2}(dx_i,-ndx_j) \nonumber\\
0						&=& -2n\alpha^k_{-2}(dx_i,dx_j) \nonumber
\end{eqnarray}
The second equality is proved similarly, while the third equality follows from Fact 3 of Proposition \ref{prop:Facts}.

\end{proof}

 \subsubsection{The component $\pi^k_{-1}$ preserves the $t$-degree $\bmod\, C$ for $k<n$ }
 
As in the $W$-degree $(-2)$ case, the following (slightly weaker) condition on the $t$-degree will determine the form of $\pi^k_{-1}$ for $k<n$.

\begin{proposition}
\label{prop:degt2}
  In the case of $W$-degree $(-1)$, the component $\pi^k_{-1}$ only preserves the $t$-degree up to a factor in the ideal $\langle C\rangle$:

$$\{t,\pi^k_{-1}(dw_i,dw_j)\}_1=\pi^k_{-1}(d\{t,w_i\}_1,dw_j)+\pi^k_{-1}(dw_i,d\{t,w_j\}_1)+q_{ij}$$
where $q_{ij}\in\langle C\rangle\subseteq\Sym(\g)$

\end{proposition}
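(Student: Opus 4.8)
The plan is to mimic the argument used for Proposition \ref{prop:tdeg} (the $W$-degree $(-2)$ case), but keep careful track of the new terms that arise at $W$-degree $(-1)$, which will only vanish modulo $\langle C\rangle$. First I would write out the $W$-degree $(-1)$ component of the condition $[\pi,\pi]=0$ in the $k^{\rm th}$ usual-degree slice. Since $\beta^{n-1}=\beta_{-2}$ and $\beta^1=\beta_0$ have no $W$-degree $(-1)$ part, and since $\pi^l=\beta^l+\alpha^l$ with $\alpha^l_{-1}$ the only contributions in $W$-degree $-1$ for small $l$, the relevant equation reads
\begin{equation}
\label{eqn:degk1minus}
0=[\pi,\pi]^k_{-1}=[\beta^1,\alpha^k_{-1}]+[\beta^{n-1},\alpha^{k-n+2}_{-1}]+\sum_{l}[\alpha^l_{-2},\alpha^{k-l}_{-1}]+\sum_{l}[\alpha^l_{-1},\alpha^{k-l}_0],
\end{equation}
where the $\beta^{n-1}$ term is only present once $k\geq n-1$ (so it contributes nothing for $k<n-1$, and for $k=n-1$ it becomes $[\beta^{n-1},\alpha^1_{-1}]=0$ since $\alpha^1=0$). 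The key computation, exactly as in Lemma \ref{lemma:TermI}, is that applying $[\beta^1,\alpha^k_{-1}]$ to a triple $(dt,dw_i,dw_j)$ produces precisely
$$\{t,\alpha^k_{-1}(dw_i,dw_j)\}_1-\alpha^k_{-1}(d\{t,w_i\}_1,dw_j)-\alpha^k_{-1}(dw_i,d\{t,w_j\}_1),$$
using Fact 2 of Proposition \ref{prop:Facts} to kill the cross terms $\alpha^k_{-1}(d\beta^1(dt,dw_i),dw_j)$ and its partner — note Fact 2 applies in $W$-degree $-1$, not just $-2$. So $\pi^k_{-1}$ preserves the $t$-degree if and only if all the remaining terms in \eqref{eqn:degk1minus}, evaluated on $(dt,dw_i,dw_j)$, vanish.

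Next I would examine these remaining terms. For the brackets $[\alpha^l_{-2},\alpha^{k-l}_{-1}]$ and $[\alpha^l_{-1},\alpha^{k-l}_0]$ applied to $(dt,dw_i,dw_j)$, I would expand the Schouten bracket into its six summands as in the proof of "Terms II and III" in Proposition \ref{prop:tdeg}. Most summands die immediately: those of the form $\alpha^{\bullet}(dt,\cdot)$ vanish by Fact 2, and those of the form $\alpha^{\bullet}_0(da_i,da_j)$ vanish by Fact 1. The surviving summands are those where one factor is a derivative of $\alpha^l_{-2}(dw_i,dw_j)$ — and here is the crucial point: unlike in the $W$-degree $(-2)$ case, that term need not vanish outright. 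By Proposition \ref{prop:Deg2Form} (Fact 3), $\alpha^l_{-2}(dw_i,dw_j)=p^l_{ij}C$, so the surviving summands are expressions like $\alpha^{k-l}_0(dw_j,d(p^l_{ij}C))=\alpha^{k-l}_0(dw_j,C\,dp^l_{ij}+p^l_{ij}\,dC)$, which lies in $\langle C\rangle$ because $\alpha^{k-l}_0$ is $\Sym(\gl(\CC))$-linear in its entries — or more precisely, because $C$ is a Casimir for $\beta^1$ the term with $dC$ contracts to something in $\langle C\rangle$ as well. Collecting these, I get that $[\pi,\pi]^k_{-1}(dt,dw_i,dw_j)=0$ forces
$$\{t,\pi^k_{-1}(dw_i,dw_j)\}_1-\pi^k_{-1}(d\{t,w_i\}_1,dw_j)-\pi^k_{-1}(dw_i,d\{t,w_j\}_1)=q_{ij}$$
with $q_{ij}\in\langle C\rangle\subseteq\Sym(\g)$, which is exactly the claimed statement.

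I would then handle the $k=n-1$ boundary case separately, checking that the only new term $[\beta^{n-1},\alpha^{k-n+2}_{-1}]=[\beta^{n-1},\alpha^1_{-1}]$ vanishes because there is no degree-$1$ nonlinear correction ($\alpha^1=0$, the linear part being exactly $\beta^1$), so no new obstruction appears and the argument for $k<n-1$ extends to $k=n-1$ unchanged. The main obstacle, and the part requiring genuine care rather than bookkeeping, is verifying that every surviving summand of the mixed brackets $[\alpha^l_{-2},\alpha^{k-l}_{-1}]$ and $[\alpha^l_{-1},\alpha^{k-l}_0]$ really does land in $\langle C\rangle$: one must confirm that when $d$ of a multiple of $C$ is fed into another bivector, the Leibniz-rule expansion keeps everything in the ideal — this uses both that $C$ is $\beta^1$-Casimir (so $\{t,C\}_1=0$, controlling the $t$-derivative terms) and that the coefficient bivectors are $\Sym(\gl(\CC))$-module maps. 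Once that is pinned down, combining Lemma \ref{lemma:TermI}'s analogue with the vanishing-mod-$C$ of the auxiliary terms yields the proposition directly.
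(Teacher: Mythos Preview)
Your overall strategy matches the paper's: decompose $[\pi,\pi]^k_{-1}$ into a $[\beta^1,\alpha^k_{-1}]$ term plus cross terms, show the first measures $t$-degree preservation, and argue the rest land in $\langle C\rangle$. But you have a concrete bookkeeping error in the $W$-degree decomposition that breaks the argument.

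The Schouten bracket is additive in $W$-degree, so to land in $W$-degree $-1$ you need pairs $(a,b)$ with $a+b=-1$. Your term $[\alpha^l_{-2},\alpha^{k-l}_{-1}]$ has $W$-degree $-3$, not $-1$, and so does your $[\beta^{n-1},\alpha^{k-n+2}_{-1}]$; neither belongs in this decomposition. The correct pairing with $\alpha_{-2}$ is $[\alpha^l_{-2},\alpha^{k-l}_{1}]$, i.e.\ $W$-degree $(-2)+1=-1$. This is exactly what the paper writes as its ``Term III.'' Your ``surviving summand'' $\alpha^{k-l}_0(dw_j,d(p^l_{ij}C))$ does not actually arise from either of the brackets you listed, so your justification for landing in $\langle C\rangle$ is attached to the wrong object.

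Once you fix the pairing, the mechanism is cleaner than what you wrote. Expanding $[\alpha^l_{-2},\alpha^{k-l}_{1}](dt,dw_i,dw_j)$, most summands die by Facts~1 and~2, but crucially Fact~2 only covers $d=-2,-1,0$, so $\alpha^{k-l}_1(dt,dw_j)$ need \emph{not} vanish. The surviving terms are
\[
\alpha^l_{-2}\bigl(dw_i,\,d\alpha^{k-l}_1(dw_j,dt)\bigr)+\alpha^l_{-2}\bigl(dw_j,\,d\alpha^{k-l}_1(dt,dw_i)\bigr),
\]
and these lie in $\langle C\rangle$ directly by Fact~3 (the outer $\alpha^l_{-2}$ takes values in $\langle C\rangle$). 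No Leibniz-rule gymnastics with $dC$ or ``$\Sym(\gl(\CC))$-linearity'' is needed. Meanwhile $[\alpha^l_{-1},\alpha^{k-l}_0](dt,dw_i,dw_j)$ vanishes identically by Facts~1 and~2, so it contributes nothing (not even a nonzero multiple of $C$). With this correction the proof goes through exactly as in the paper; there is also no extra $\beta^{n-1}$ term to handle at $k=n-1$, since $\beta^{n-1}$ has only a $W$-degree $-2$ component.
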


\begin{proof} One proves this proposition in the same way as Proposition \ref{prop:tdeg}.  Consider the $W$-degree $(-1)$ component of the Schouten bracket $[\pi,\pi]^k$:

\begin{equation}
\label{eqn:degk2}
0=[\beta^1,\alpha^k]_{-1}+\sum_{l=2}^{k-2}[\alpha^l,\alpha^{k-l}]_{-1}=\overbrace{[\beta^1,\alpha^k_{-1}]}^{\mbox{term I}}+\overbrace{\sum_{l=2}^{k-2}[\alpha_{-1}^l,\alpha_0^{k-l}]}^{\mbox{term II}}+\overbrace{\sum_{l=2}^{k-2}[\alpha_{-2}^l,\alpha_{1}^{k-l}]}^{\mbox{term III}}
\end{equation}
 
Applying these three terms to an element of the form $(dt,dw_i,dw_j)$ yields three more lemmas:

\begin{lemma}[Term I]  The first term in Equation \ref{eqn:degk2} measures the extent to which $\alpha^k_{-1}$ preserves the $t$-degree:
$$ [\beta^1,\alpha_{-1}^k](dt,dw_i,dw_j) =\{t,\alpha_{-1}^k(dw_i,dw_j)\}_1-\alpha_{-1}^k(d\{t,w_i\}_1,dw_j)-\alpha_{-1}^k(dw_i,d\{t,w_j\}_1) $$
\end{lemma}
 
\begin{proof} This is identical to the proof of Lemma \ref{lemma:TermI} above.
\end{proof}
 
 \begin{lemma}[Term II]  The second term vanishes identically: $$\sum[\alpha_{-1}^l,\alpha_0^{k-l}](dt,dw_i,dw_j)=0$$
 \end{lemma}
 
 \begin{proof}  This follows from Fact 1 of Proposition \ref{prop:Facts}.
 \end{proof}
 
 \begin{lemma}[Term III]  The third term $\sum_{l=2}^{k-2}[\alpha_{-2}^l,\alpha_{1}^{k-l}]$ belongs to the ideal generated by $\langle C\rangle\subseteq\Sym(W)_1$.
 \end{lemma}
 
 \begin{proof}  We examine each summand $[\alpha_{-2}^l,\alpha_{1}^{k-l}]$ on $(dt,dw_i,dw_j)$.  Using Fact 1 from Proposition \ref{prop:Facts}, the nonzero terms are:
$$ [\alpha_{-2}^l,\alpha_{1}^{k-l}](dt,dw_i,dw_j) = \alpha^l_{-2}(dw_i,d\alpha_1^{k-l}(dw_j,dt))+ \alpha^l_{-2}(dw_j,d\alpha_1^{k-l}(dt,dw_i))$$
The result then follows from applying Fact 3 of Proposition \ref{prop:Facts}.
\end{proof}
As in the $W$-degree $-2$ case, the above lemmas easily imply the conclusion of Proposition \ref{prop:degt2}.  
\end{proof}

That $\pi^k_{-1}$ {\it almost} preserves the $t$-degree still greatly determines the form of $\pi^k_{-1}$:

\begin{proposition} 
\label{prop:Deg1Form}
For all $k<n$, the $W$-degree $(-1)$ summand is 
$$\alpha_{-1}^k(dw_i,dw_j)=r^k_{ij}C  \mbox{\qquad where  } r^k_{ij}\in\Sym(\g)_{1}$$
In particular, $\alpha_{-1}^k(dw_i,dw_j)\notin\langle A_l,B_l,\overline{A}_l,\overline{B}_l\rangle\subseteq\Sym(\g)_{1}$
\end{proposition}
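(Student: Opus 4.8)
The plan is to leverage the structure we have already extracted for $\alpha^k_{-1}$, namely Proposition~\ref{prop:degt2}: the component $\pi^k_{-1}$ preserves the $t$-degree modulo the ideal $\langle C\rangle$. The key observation is that the $t$-degree is a genuine weight with respect to the Cartan-like element $t=a_0+a_1$ acting via $\{\cdot,t\}_1$, and preservation of this weight forces $\alpha^k_{-1}(dw_i,dw_j)$ to lie in the $t$-eigenspace of $\Sym(\g)_1$ determined by the $t$-weights of $w_i$ and $w_j$. First I would record the $t$-weights of the coordinates: $t\cdot x_i = -n\, x_i$ and $t\cdot y_i = n\, y_i$ (these follow from Section~\ref{sec:LPg}, since the $\CC$-factor acts as $\mathrm{Id}\oplus(-\mathrm{Id})$ scaled by $n$), while $t\cdot a_j=0$ for all $a_j\in\gl(\CC)$. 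Hence $\alpha^k_{-1}(dx_i,dx_j)$ must have $t$-weight $-2n$, $\alpha^k_{-1}(dy_i,dy_j)$ must have $t$-weight $2n$, and $\alpha^k_{-1}(dx_i,dy_j)$ must have $t$-weight $0$ — all of this modulo an element of $\langle C\rangle$.

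Next I would analyze the $W$-grading constraint. Since $\alpha^k_{-1}$ has $W$-degree $-1$ and each $w_i$ has $W$-degree $1$, the output $\alpha^k_{-1}(dw_i,dw_j)$ lies in $\Sym(\g)_1 = \Sym^1_{\gl(\CC)}(W)$, i.e.\ it is $\Sym(\gl(\CC))$-linear in a single $W$-generator. Write $\alpha^k_{-1}(dw_i,dw_j) = \sum_m c_m\, w_m + (\text{element of }\langle C\rangle)$ with $c_m\in\Sym(\gl(\CC))$; the weight analysis above pins down which $w_m$ can appear and with what $\gl(\CC)$-coefficient weights. The crucial point is then to compare this with the generators $A_l, B_l, \overline{A}_l, \overline{B}_l$ from Table~\ref{table:GenKer}: each of these is a $\Sym(\gl(\CC))$-combination of a single $y_l$ (resp.\ $x_l$) family, and they span the $\slt(\CC)$-representations $V_n$ and $V_{n-2}$ inside $\Sym(\g)_1$ (Proposition~\ref{prop:RepIdeal}, items 2 and 3). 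I would show that the $t$-weight forced on $\alpha^k_{-1}(dw_i,dw_j)$ by Proposition~\ref{prop:degt2}, combined with the degree bound $k<n$ on the $\gl(\CC)$-coefficients, is incompatible with membership in $\langle A_l, B_l, \overline{A}_l, \overline{B}_l\rangle$ — essentially because reaching those generators with a nonzero coefficient would require a $\gl(\CC)$-polynomial of degree at least $n-1$ adjusting the $H$-degree, but the $t$-degree/$H$-degree bookkeeping (together with the fact that the $M_{ij}$ are the only generators mixing $x$'s and $y$'s with high-degree $\gl(\CC)$-coefficients, and those have $W$-degree $2$, not $1$) rules this out for $k<n$. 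What survives is exactly a multiple of $C$: the only element of $\Sym(\g)_1$ in $\ker(\phi)$ with the allowed $t$-weight and with $\gl(\CC)$-coefficient degree $<n$ is $r^k_{ij}C$ with $r^k_{ij}\in\Sym(\g)_1$.

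I would carry out the argument case by case on the pair $(w_i,w_j)\in\{(x_i,x_j),(y_i,y_j),(x_i,y_j)\}$, in each case: (1) compute the forced $t$-weight from Proposition~\ref{prop:degt2}; (2) intersect the corresponding $t$-isotypic component of $\Sym(\g)_1$ with $\ker(\phi)$ (using the description of $\ker(\phi)$ in Section~\ref{sec:DefIdeal} and its deformation in the Corollary after Proposition~\ref{prop:tdeg}'s discussion); (3) observe that this intersection, restricted to $\gl(\CC)$-coefficient degree $<n$, is spanned by $C$ times something in $\Sym(\g)_1$, with no component along $A_l,B_l,\overline{A}_l,\overline{B}_l$. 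The main obstacle I anticipate is step (2): cleanly characterizing the $t$-isotypic pieces of $\ker(\phi)\cap\Sym(\g)_1$ and verifying that the $A_l,B_l$ (and barred) directions genuinely cannot be hit — this requires careful tracking of both the $t$-degree and $H$-degree and using that the defining-ideal generators of $W$-degree $1$ come only in the $V_n$ (the $A_l,\overline A_l$) and $V_{n-2}$ (the $B_l,\overline B_l$) blocks plus the $\langle C\rangle$-multiples, so that the degree-$(<n)$ coefficient restriction isolates $C$. The rest is the weight bookkeeping already set up in Section~\ref{sec:HilbertBasis} and Proposition~\ref{prop:RepIdeal}.
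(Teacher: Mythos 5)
Your route is the paper's route: invoke Proposition \ref{prop:degt2} to force $t$-degree preservation modulo $\langle C\rangle$, note that the output lies in the $W$-degree-$1$ part of the vanishing ideal (so it is a $\Sym(\gl(\CC))$-combination of $C$-multiples and the $A_l,B_l,\overline{A}_l,\overline{B}_l$), and kill the latter component by a weight mismatch. However, the mechanism you give at the decisive step is off, and it is exactly the step you flag as your ``main obstacle.'' You claim the incompatibility needs the degree bound $k<n$ on the $\gl(\CC)$-coefficients and some $H$-degree bookkeeping (``a $\gl(\CC)$-polynomial of degree at least $n-1$ adjusting the $H$-degree''). Neither is relevant: multiplying by elements of $\Sym(\gl(\CC))$ can change the $H$-degree at will (each factor of $a_2$ or $a_3$ shifts it by $\pm2$), so an $H$-degree count alone proves nothing, and no bound on the coefficient degree enters. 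The point is purely the $t$-grading: since $t=a_0+a_1$ acts trivially on all of $\gl(\CC)$, \emph{every} nonzero element of $\Span\{A_l,B_l,\overline{A}_l,\overline{B}_l\}\otimes\Sym(\gl(\CC))$ has $t$-degree exactly $\pm n$, while Proposition \ref{prop:degt2} forces $\alpha^k_{-1}(dw_i,dw_j)\bmod\langle C\rangle$ to have $t$-degree $-2n$, $+2n$, or $0$ according to whether the pair is $(x_i,x_j)$, $(y_i,y_j)$, or $(x_i,y_j)$. Since $\pm n\notin\{0,\pm 2n\}$, the component along the $A,B,\overline{A},\overline{B}$ families vanishes, and what remains is $r^k_{ij}C$ with $r^k_{ij}\in\Sym(\g)_1$. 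This one-line weight comparison is precisely how the paper closes the argument; with it, your anticipated case-by-case analysis of isotypic pieces in step (2) is unnecessary.
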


\begin{proof}
 
Proposition \ref{prop:degt2} implies 
$$[\pi,\pi]^k_{-1}(dt,dw_i,dw_j)=0 \mbox{ only if } \alpha^k_{-1}(dw_i,dw_j) \bmod\langle C\rangle \mbox{ preserves the $t$-degree}.$$  
Therefore, any component 
$$\widetilde{\alpha^k_{-1}}(dw_i,dw_j)\in\Span\{A_l,B_l,\overline{A}_l,\overline{B}_l\}\otimes\Sym(\gl(\CC))$$
of $\alpha^k_{-1}$ that {\it does not} vanish on $V(C)$ satisfies
$$
\{t,\widetilde{\alpha^k_{-1}}(dw_i,dw_j)\}_1 =\left\{\begin{array}{l} \pm 2n\widetilde{\alpha^k_{-1}}(dw_i,dw_j) \\
0\\
\end{array}\right.$$
As any nonzero element of $\Span\{A_l,B_l,\overline{A}_l,\overline{B}_l\}\otimes\Sym(\gl(\CC))$ has $t$-degree $\pm n$, the component $\widetilde{\alpha^k_{-1}}$ must be zero --- proving the proposition. 
\end{proof}
 
 \subsubsection{The degree $(n-1)$ condition}
 
Propositions \ref{prop:Deg1Form} and \ref{prop:Deg2Form} imply that $\pi$ has the following form:

\begin{proposition} 
\label{prop:Alphak}
Recall that $\pi=\beta+\alpha+\widetilde{\pi}$, where $\alpha$ vanishes on the quotient $Q$ and $\widetilde{\pi}$ vanishes up to degree $n$.  The previous sections imply that $\alpha$ has the following form:
$$\alpha^k(dw_i,dw_j)=p_{ij}^k\cdot C+\sum_{l\geq 0}\alpha^k_l(dw_i,dw_j),$$
where $p_{ij}^k(g)\in{\rm Sym}^{k-2}(\g)$.
\end{proposition}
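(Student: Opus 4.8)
The plan is to assemble this statement directly from the structural results already obtained for the $W$-homogeneous pieces $\alpha^k_{-2}$ and $\alpha^k_{-1}$, namely Propositions \ref{prop:Deg2Form} and \ref{prop:Deg1Form}, together with the lower bound on $W$-degrees of multivectors. First I would recall that, by the corollary giving $\pi=(\beta+\alpha)+\widetilde{\pi}$ with $\beta=\beta^1+\beta^{n-1}$ and $\widetilde{\pi}=\sum_{k\geq n}\pi^k$, the bivector $\alpha=\pi-\beta-\widetilde{\pi}$ has homogeneous components $\alpha^k$ (in the usual polynomial grading) that are nonzero only for $2\leq k\leq n-1$, using $\pi^1=\beta^1$; in particular the hypothesis ``$k<n$'' needed for Propositions \ref{prop:Deg1Form} and \ref{prop:Deg2Form} is automatic for every relevant $k$. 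Next, decompose each $\alpha^k$ by $W$-degree, $\alpha^k=\sum_{d}\alpha^k_d$. By the proposition bounding the minimal $W$-degree of a $p$-vector below by $-p$, this sum runs only over $d\geq-2$, so $\alpha^k=\alpha^k_{-2}+\alpha^k_{-1}+\sum_{l\geq 0}\alpha^k_l$. Contracting with $dw_i\wedge dw_j$ for $w_i,w_j\in W$ and noting that $\partial_{w_i}\wedge\partial_{w_j}$ has $W$-degree $-2$, the coefficient $\alpha^k_d(dw_i,dw_j)$ is $W$-homogeneous of degree $d+2$; in particular $\alpha^k_{-2}(dw_i,dw_j)\in\Sym(\gl(\CC))$ and $\alpha^k_{-1}(dw_i,dw_j)\in\Sym(\g)_1$.

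Then I would invoke Proposition \ref{prop:Deg2Form}: the $W$-degree $(-2)$ piece $\alpha^k_{-2}(dw_i,dw_j)$ vanishes when $w_i,w_j$ are both among the $x$'s or both among the $y$'s, and otherwise equals $p^k_{ij}C$ with $p^k_{ij}\in\Sym(\gl(\CC))$; counting total polynomial degree and using $\deg C=2$ forces $p^k_{ij}\in\Sym^{k-2}(\gl(\CC))$. Similarly, Proposition \ref{prop:Deg1Form} gives $\alpha^k_{-1}(dw_i,dw_j)=r^k_{ij}C$ with $r^k_{ij}\in\Sym(\g)_1$, again of polynomial degree $k-2$. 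Adding these two contributions, the $W$-degree $\leq 1$ part of $\alpha^k(dw_i,dw_j)$ is $(p^k_{ij}+r^k_{ij})C$; relabelling $p^k_{ij}$ to denote this sum, which now lies in $\Sym^{k-2}(\g)$, and retaining the remaining terms $\sum_{l\geq 0}\alpha^k_l(dw_i,dw_j)$ (all of $W$-degree $\geq 2$), one obtains exactly the asserted form
$$\alpha^k(dw_i,dw_j)=p^k_{ij}\cdot C+\sum_{l\geq 0}\alpha^k_l(dw_i,dw_j),\qquad p^k_{ij}\in\Sym^{k-2}(\g).$$

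Since every ingredient is already in hand, I do not expect a genuine obstacle here: the proposition is essentially a repackaging of Propositions \ref{prop:Deg1Form} and \ref{prop:Deg2Form}. The one point requiring care is the bookkeeping between the $W$-degree of a bivector component and the $W$-degree of its coefficient on $\partial_{w_i}\wedge\partial_{w_j}$ (a shift by $2$), which is what places the $W$-degree $(-2)$ and $(-1)$ coefficients in $\Sym(\gl(\CC))$ and $\Sym(\g)_1$ respectively, and hence under the scope of the two cited propositions; once that shift is tracked correctly, the statement follows by collecting terms.
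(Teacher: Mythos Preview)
Your proposal is correct and follows exactly the approach the paper intends: the proposition is stated as a direct consequence of Propositions \ref{prop:Deg2Form} and \ref{prop:Deg1Form}, and your write-up makes explicit the $W$-degree decomposition $\alpha^k=\alpha^k_{-2}+\alpha^k_{-1}+\sum_{l\geq 0}\alpha^k_l$ (using the lower bound on $W$-degree of a bivector) together with the degree-$2$ shift on coefficients, then combines the two cited results and a polynomial-degree count to land the coefficient $p^k_{ij}$ in $\Sym^{k-2}(\g)$. This is precisely the assembly the paper has in mind.
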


We now examine the $W$-degree $(-2)$ component of the ``degree $(n-1)$'' condition applied to an element of $(dw_i,dw_j,dw_k)$:
\begin{equation}\label{eqn:LastCondition}
[\pi,\pi]^{n-1}_{-2}=\overbrace{[\beta^1,\beta^{n-1}]}^{\mbox{term 1}}+\overbrace{[\beta^1,\alpha^{n-1}_{-2}]}^{\mbox{term 2}}+\overbrace{\sum_{k=0}^{n-2}[\alpha^k_{-1},\alpha^{n-k}_{-1}]}^{\mbox{term 3}}+\overbrace{\sum_{k=0}^{n-2}[\alpha^k_{-2},\alpha^{n-k}_{0}]}^{\mbox{term 4}}=0
\end{equation}
For concreteness, choose $(dx_i,dx_j,dy_0)\in \bigwedge^3 W^*$, with $i>j$.  We will prove that 
$$[\pi,\pi]_{-2}^{n-1}(dx_i,dx_j,dy_0)\neq 0$$
contrary to the assumption that $\pi$ is a Poisson structure (and therefore $[\pi,\pi]=0$).

\begin{lemma}[1st Term]  The first term in Equation \ref{eqn:LastCondition} is given by the following: 

$$[\beta^1,\beta^{n-1}](dx_i,dx_j,dy_0)= \hspace{10cm}$$
$$=(n-i)(n-j)\left[ ja_1^{n-j-2}a_3^{j-1}(\overline{A}_{i+1}-(i+1)\overline{B}_{i+1}) +ia_1^{n-i-2}a_3^{i-1}(\overline{A}_{j+1}-(j+1)\overline{B}_{j+1})\right] +$$
$$+n(n-1)(n-i)(n-j)\sum_{l=0}^{i-j-2}a_1^{(n-3)-(j+l)}a_3^{j+l}(\overline{A}_{i-l}-(i-l)\overline{B}_{i-l}),$$
where the above term belongs to 
$$[\beta^1,\beta^{n-1}](dx_i,dx_j,dy_0)\in\Span\{\overline{A}_{j+1}-(j+1)\overline{B}_{j+1},\ldots,\overline{A}_{i+1}-(i+1)\overline{B}_{i+1}\}\subseteq\Sym(\g)_1$$

\end{lemma}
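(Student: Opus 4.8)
## Proof proposal for the First Term lemma

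The plan is to compute the Schouten bracket $[\beta^1,\beta^{n-1}]$ directly on the covector triple $(dx_i,dx_j,dy_0)$ using the explicit coordinate formula for the Schouten bracket of two bivectors. Recall from Definition \ref{def:Jacobiator} and the surrounding discussion that for bivectors $\mu,\nu$,
$$[\mu,\nu](df,dg,dh) = \mu(df, d\nu(dg,dh)) + \nu(df, d\mu(dg,dh)) + \text{(cyclic permutations of $f,g,h$)},$$
so here the relevant terms are governed by (a) the values $\beta^{n-1}(dx_a,dy_b)$ from the defining formula in Section \ref{sec:BetaExt} — nonzero only on mixed pairs $dx\wedge dy$, with coefficients in $\Sym(\gl(\CC))$ — and (b) the linear brackets $\{a,x\}_1$, $\{a,y\}_1$ recorded in the coefficient matrix of Section \ref{sec:LPg}. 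The first step is therefore bookkeeping: since $\beta^{n-1}$ kills $dx_i\wedge dx_j$ and its coefficients are Casimir-adjacent polynomials in the $a$'s, the only surviving contributions to $[\beta^1,\beta^{n-1}](dx_i,dx_j,dy_0)$ come from terms of the shape $\beta^1(dx_i, d\,\beta^{n-1}(dx_j,dy_0))$ and $\beta^1(dx_j, d\,\beta^{n-1}(dx_i,dy_0))$ together with the ``cross'' terms where $\beta^1$ first produces an $a$-coordinate from $dx$ and $\beta^{n-1}$ is applied to the resulting $dx\wedge dy_0$.

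Next I would substitute $\beta^{n-1}(dx_i,dy_0)$ from the defining formula (the $i\geq j=0$ branch gives $(n-i)\cdot n\cdot a_0^{0}a_1^{n-i-1}a_3^{i} + i\cdot 0 \cdot(\cdots)$, i.e.\ $n(n-i)a_1^{n-i-1}a_3^i$ after setting $j=0$), differentiate, and contract against $\beta^1$. Each application of $d$ to a monomial $a_1^{p}a_3^{q}$ produces $p\,a_1^{p-1}a_3^q\,da_1 + q\,a_1^p a_3^{q-1}\,da_3$, and then $\beta^1(dx_k, da_1) = -(n-k)x_k$, $\beta^1(dx_k,da_3) = -(n-k)x_{k+1}$ from the matrix in Section \ref{sec:LPg}. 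The telescoping sum $\sum_{l=0}^{i-j-2}a_1^{(n-3)-(j+l)}a_3^{j+l}$ in the claimed formula is exactly what emerges from iterating the chain rule through the gap between the $j$-th and $i$-th powers; I would organize this as an explicit finite sum and collect terms. The key algebraic point that makes the answer land inside $\Span\{\overline A_l - l\,\overline B_l\}$ is the identity $\overline{A}_k - k\overline{B}_k = (n-k)(a_0 x_k - a_2 x_{k+1}) + k(a_1 x_k - a_3 x_{k-1}) - k[(a_0-a_1)x_k + (a_3 x_{k-1} - a_2 x_{k+1})]$, which one checks simplifies to $n(a_1 x_k) - \text{(something proportional to the }\overline A,\overline B\text{ generators)}$; verifying this reduction is the main computational obstacle, since one must recognize the raw output — a combination of $x_k$'s and $x_{k\pm1}$'s multiplied by $a_1$-powers and $a_3$-powers — as precisely a $\Sym(\gl(\CC))$-combination of the ideal generators $\overline{A}_l, \overline{B}_l$ from Table \ref{table:GenKer}.

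The hard part, as flagged, is not any single derivative but keeping the combinatorics of the telescoping sum correct and confirming the coefficients $(n-i)(n-j)$, the ``boundary'' terms at $l = -1$ and $l = i-j-1$ (which produce the isolated $\overline A_{i+1}-(i+1)\overline B_{i+1}$ and $\overline A_{j+1}-(j+1)\overline B_{j+1}$ pieces), and the $n(n-1)$ factor on the bulk sum, which comes from the two factors of $n(n-\cdot)$ in $\beta^{n-1}$ combined with a factor that appears when differentiating twice across the $a_3$-tower. Once the explicit expression is in hand, membership in $\Span\{\overline A_{j+1}-(j+1)\overline B_{j+1},\dots,\overline A_{i+1}-(i+1)\overline B_{i+1}\}$ is immediate by inspection, which is all the lemma asserts; I would close by remarking that this is the input needed for the subsequent degree-$(n-1)$ contradiction, since $\alpha$ vanishing on $V(C)$ forces $\alpha^{n-1}_{-2}$ into $\langle C\rangle$ and hence cannot cancel these $\overline A_l - l\overline B_l$ terms (by Proposition \ref{prop:Deg1Form}).
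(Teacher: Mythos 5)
Your overall strategy is the same as the paper's: expand $[\beta^1,\beta^{n-1}](dx_i,dx_j,dy_0)$ by the coordinate formula, substitute the explicit $\beta^{n-1}(dx_\bullet,dy_0)=n(n-\bullet)a_1^{n-\bullet-1}a_3^{\bullet}$, contract against the linear brackets $\{x_k,a_1\}_1,\{x_k,a_3\}_1$, and then recognize the output as a $\Sym(\gl(\CC))$-combination of the generators $\overline{A}_l,\overline{B}_l$. Two of your bookkeeping claims, however, misdescribe what actually happens, and the second one sits exactly at the step the lemma is about. First, the ``cross'' terms you keep in play, of the form $\beta^{n-1}(d\cdot,d\beta^1(\cdot,\cdot))$, all vanish identically for this triple: $\beta^1(dx_i,dx_j)=\beta^1(dx_j,dy_0)=0$ because $W$ is abelian in $\g$, and $\beta^{n-1}(dx_i,dx_j)=0$ by definition, so only the two terms $\beta^1(dx_i,d\beta^{n-1}(dx_j,dy_0))$ and $\beta^1(\beta^{n-1}(dx_i,dy_0),dx_j)$ survive --- this is how the paper begins, and there is no extra class of contributions.

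Second, and more seriously, the telescoping sum does \emph{not} ``emerge from iterating the chain rule through the gap between the $j$-th and $i$-th powers,'' and there is no ``differentiating twice across the $a_3$-tower'': the Schouten bracket with the linear bivector $\beta^1$ is first order, so a single application of Leibniz to each of the two surviving terms produces only four monomials (coefficients times $x_i,x_{i+1},x_j,x_{j+1}$), none of which telescopes by itself. In the paper the sum and the $n(n-1)$ factor enter only afterwards, through the purely algebraic identity
$$a_1^{n-j-2}a_3^jx_i-a_1^{n-i-2}a_3^ix_j=\sum_{l=0}^{i-j-2}a_1^{(n-3)-(j+l)}a_3^{j+l}\left(\overline{A}_{i-l}-(i-l)\overline{B}_{i-l}\right),$$
which rewrites the leftover monomial difference as a telescoping combination of the ideal generators; the underlying point is that $a_1x_k-a_3x_{k-1}$ is itself a fixed $\CC$-combination of $\overline{A}_k$ and $\overline{B}_k$, so multiplying by $a_1^{p}a_3^{q}$ and summing over the gap gives the stated span membership. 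Your stated ``key algebraic point'' ($\overline{A}_k-k\overline{B}_k$ equals $n a_1x_k$ minus ``something proportional to the generators'') is circular as written and does not supply this identity. Without it your computation stalls at the raw four-monomial expression and neither the $n(n-1)$ coefficient nor the membership in $\Span\{\overline{A}_{j+1}-(j+1)\overline{B}_{j+1},\ldots,\overline{A}_{i+1}-(i+1)\overline{B}_{i+1}\}$ follows ``by inspection''; supplying and proving that one identity is the missing content of the proof.
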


\begin{proof}  Using the explicit formula for $\beta^{n-1}$ given earlier:
 $$\begin{array}{rcc}
 [\beta^1,\beta^{n-1}](dx_i,dx_j,dy_0) &=& \beta^1(dx_i,\beta^{n-1}(dx_j,dy_0))+\beta^1(\beta^{n-1}(dx_i,dy_0),dx_j), \\
 							  & &   (I)  \hspace{35pt} + \hspace{25pt} (II)   \\
\end{array}$$
where
$$\begin{array}{ccr}
(I) &=& (n-j)(n-i)[n(n-1)a_1^{n-j-2}a_3^jx_i+ja_1^{n-j-2}a_3^{j-1}(\overline{A}_{i+1}-(i+1)\overline{B}_{i+1})] \\
(II) &=& -(n-j)(n-i)[n(n-1)a_1^{n-i-2}a_3^ix_j+ia_1^{n-i-2}a_3^{i-1}(\overline{A}_{j+1}-(j+1)\overline{B}_{j+1})]
\end{array}$$ 

Assembling the above terms gives
$$[\beta^1,\beta^{n-1}](dx_i,dx_j,dy_0)= n(n-i)(n-j)(n-1)\left(a_1^{n-j-2}a_3^jx_i-v^{n-i-2}a_3^ix_j\right)+$$
$$\hspace{1cm}+(n-i)(n-j)\left(ja_1^{n-j-2}a_3^{j-1}(\overline{A}_{i+1}-(i+1)\overline{B}_{i+1})+ia_1^{n-i-2}a_3^{i-1}(\overline{A}_{j+1}-(j+1)\overline{B}_{j+1})\right)$$

Then, the result easily follows from the observation:

$$\left(a_1^{n-j-2}a_3^jx_i-a_1^{n-i-2}a_3^ix_j\right)=\sum_{l=0}^{i-j-2}a_1^{(n-3)-(j+l)}a_3^{j+l}(\overline{A}_{i-l}-(i-l)\overline{B}_{i-l}) $$
\end{proof}

\begin{lemma}[2nd Term] The second term of the condition is given by
$$[\beta^1,\alpha_{-2}^{n-1}](dx_i,dx_j,dy_0)=  p_{j0} \overline{A}_i-p_{i0}\overline{A}_j+ q_{ij}$$,
for polynomials $q_{ij}$ vanishing on $V(C)=V(a_0a_1-a_2a_3)$.
\end{lemma}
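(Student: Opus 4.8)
The plan is a direct computation resting entirely on the normal form of $\alpha_{-2}^{n-1}$ already secured in Proposition \ref{prop:Deg2Form}. First I would polarize the Schouten-bracket identity for bivectors recorded in Definition \ref{def:Jacobiator} to obtain, for bivectors $A,B$ and functions $f,g,h$,
$$[A,B](df,dg,dh)=A\big(df,dB(dg,dh)\big)+B\big(df,dA(dg,dh)\big)+\text{c.p.},$$
where $\text{c.p.}$ denotes the cyclic permutations of $(f,g,h)$, and then specialize to $A=\beta^1$, $B=\alpha_{-2}^{n-1}$, and $(f,g,h)=(x_i,x_j,y_0)$ with $i>j$.

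The second step is to discard the vanishing contributions. Because $\g=\gl(\CC)\ltimes W$ is a semidirect product with $\{W,W\}_1\equiv 0$ (see Section \ref{sec:LPg}), we have $\beta^1(dx_i,dx_j)=\beta^1(dx_j,dy_0)=\beta^1(dy_0,dx_i)=0$, so all three terms of the form $\alpha_{-2}^{n-1}\big(d\,\cdot\,,d\beta^1(\cdot,\cdot)\big)$ vanish; and by Proposition \ref{prop:Deg2Form}, $\alpha_{-2}^{n-1}(dx_i,dx_j)=0$, which kills the remaining cyclic term $\beta^1\big(dy_0,d\alpha_{-2}^{n-1}(dx_i,dx_j)\big)$. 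Using $\alpha_{-2}^{n-1}(dx_j,dy_0)=p_{j0}C$ and $\alpha_{-2}^{n-1}(dy_0,dx_i)=-p_{i0}C$ (where $p_{j0}:=p_{j0}^{n-1}$, etc., as in Proposition \ref{prop:Deg2Form}), what is left is
$$[\beta^1,\alpha_{-2}^{n-1}](dx_i,dx_j,dy_0)=\beta^1\big(dx_i,d(p_{j0}C)\big)-\beta^1\big(dx_j,d(p_{i0}C)\big).$$

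The third step is a Leibniz expansion together with one explicit bracket. Writing $d(p_{j0}C)=C\,dp_{j0}+p_{j0}\,dC$ gives
$$\beta^1\big(dx_i,d(p_{j0}C)\big)=p_{j0}\,\beta^1(dx_i,dC)+C\,\beta^1(dx_i,dp_{j0}),$$
and the only nontrivial ingredient, $\beta^1(dx_i,dC)=\{x_i,C\}_1$, is computed directly from $C=a_0a_1-a_2a_3$ and the coefficient matrix of Section \ref{sec:LPg}:
$$\{x_i,C\}_1=(n-i)(a_0x_i-a_2x_{i+1})+i(a_1x_i-a_3x_{i-1})=\overline{A}_i.$$
Substituting this back, the surviving terms become $p_{j0}\overline{A}_i-p_{i0}\overline{A}_j$ together with $q_{ij}:=C\big(\beta^1(dx_i,dp_{j0})-\beta^1(dx_j,dp_{i0})\big)$, which visibly lies in $\langle C\rangle$ and hence vanishes on $V(C)=V(a_0a_1-a_2a_3)$; this is exactly the claimed formula, and it furnishes the second term of Equation \ref{eqn:LastCondition}.

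I do not expect a genuine obstacle here: the lemma is a mechanical consequence of Proposition \ref{prop:Deg2Form}. The two places that demand care are the bookkeeping of which of the six Schouten terms survive — made transparent by the $W$-grading and by $\{W,W\}_1=0$ — and tracking the sign conventions of the Lie--Poisson coefficient matrix so that $\{x_i,C\}_1$ reproduces precisely the ideal generator $\overline{A}_i$ and not some relative such as a combination involving $\overline{B}_i$.
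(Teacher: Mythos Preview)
Your proposal is correct and follows essentially the same approach as the paper's proof: both expand the six Schouten terms, use $\{W,W\}_1=0$ and Proposition \ref{prop:Deg2Form} to discard all but the two $\beta^1\big(dx_\bullet,d(p_{\bullet 0}C)\big)$ contributions, Leibniz-expand these, and identify $\{x_i,C\}_1=\overline{A}_i$. The paper is terser---it writes the three surviving terms directly without commenting on why the other three vanish---but your more explicit bookkeeping is exactly right and your computation of $\{x_i,C\}_1$ from the coefficient matrix matches the definition of $\overline{A}_i$ in Table \ref{table:GenKer}.
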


\begin{proof} We use the semi-explicit description of $\alpha^{n-1}_{-2}$ obtained earlier:

$$\begin{array}{rcl}
[\beta^1,\alpha_{-2}^{n-1}](dx_i,dx_j,dy_0) &=& \{x_i,p_{j0}C\}_1+\{p_{i0}C,x_j\}_1+\{y_0,\alpha^{n-1}_{-2}(dx_i,dx_j)\}_1\\
 &=& (\{x_i,C\}_1p_{j0}+p_{i0}\{C,x_j\}_1)+(\{x_i,p_{j0}\}_1+\{p_{i0},x_j\}_1)C+\\
 &  & \hspace{7cm}+ \{y_0,\alpha^{n-1}_{-2}(dx_i,dx_j)\}_1 \\
\end{array}$$
The first two terms in the above sum are:
$$(\{x_i,C\}_1p_{j0}+p_{i0}\{C,x_j\}_1)=\overline{A}_i p_{j0} -\overline{A}_j p_{i0}. $$
The last term in the sum $\{y_0,\alpha^{n-1}_{-2}(dx_i,dx_j)\}_1$ vanishes as $\alpha^{n-1}_{-2}(dx_i,dx_j)=0$.
\end{proof}

\begin{lemma}[3rd and 4th terms]  The third and fourth terms of the ``degree (n-1) condition'' send $(dx_i,dx_j,dy_0)$ into the ideal generated by $C$.  That is

$$\sum_{k=0}^{n-2}[\alpha^k_{-1},\alpha^{n-k}_{-1}](dx_i,dx_j,dy_0)+\sum_{k=0}^{n-2}[\alpha^k_{-2},\alpha^{n-k}_{0}](dx_i,dx_j,dy_0)\in \langle C\rangle\subseteq\Sym(\g)$$
\end{lemma}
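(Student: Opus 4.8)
The plan is to expand every summand of terms 3 and 4 by the coordinate-free formula for the Schouten bracket of two bivectors on exact one-forms,
$$[A,B](d\xi_1,d\xi_2,d\xi_3)=\sum_{\mathrm{cyc}}\Big(A\big(d\xi_1,dB(d\xi_2,d\xi_3)\big)+B\big(d\xi_1,dA(d\xi_2,d\xi_3)\big)\Big),$$
the sum running over the three cyclic permutations of $(d\xi_1,d\xi_2,d\xi_3)$, and then to check that each of the six resulting scalars lands in $\langle C\rangle$ (signs are immaterial, since we only track membership in an ideal). The facts I would feed in are: Proposition \ref{prop:Deg1Form}, giving $\alpha^k_{-1}(dw_a,dw_b)\in\langle C\rangle$ for all $k<n$; Proposition \ref{prop:Deg2Form}, giving $\alpha^k_{-2}(dx_a,dx_b)=\alpha^k_{-2}(dy_a,dy_b)=0$ and $\alpha^k_{-2}(dx_a,dy_b)\in\langle C\rangle$ for all $k<n$; and a short bookkeeping step, valid for $2\le k\le n-2$ (the cases $k=0,1$ being vacuous since $\alpha^0=\alpha^1=0$), to the effect that $\alpha^k_d(dw,da_\ell)=0$ for every $a_\ell\in\gl(\CC)$ and every $d\le 0$. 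This last point I would deduce from the Levi normal form of Equation \ref{eqn:LeviDecomp}, which confines $\alpha^k$ to the slots $dt\wedge dw$ and $dw_a\wedge dw_b$, together with Fact 2 of Proposition \ref{prop:Facts}, which kills the $a_\ell=t$ contribution in the relevant $W$-degrees; the $\slt(\CC)$-directions contribute nothing at all by the support statement. One also records that $dC$ is a combination of the $da_\ell$ alone, since $C=a_0a_1-a_2a_3\in\Sym(\gl(\CC))$.

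For term 3 I would fix $2\le k\le n-2$ and expand $[\alpha^k_{-1},\alpha^{n-k}_{-1}](dx_i,dx_j,dy_0)$. In each of the six summands the inner evaluation has the shape $\alpha^m_{-1}(d\eta,d\eta')$ with $m<n$ and $\eta,\eta'\in W$, hence equals $rC$ for some $r\in\Sym(\g)$ by Proposition \ref{prop:Deg1Form}. Applying the Leibniz rule, $d(rC)=C\,dr+r\,dC$, the outer evaluation becomes $C\,\alpha^{m'}_{-1}(d\zeta,dr)+r\,\alpha^{m'}_{-1}(d\zeta,dC)$ with $\zeta\in W$ and $m'<n$; the first term lies in $\langle C\rangle$ outright, and the second vanishes because $\alpha^{m'}_{-1}(d\zeta,dC)=\sum_\ell(\partial C/\partial a_\ell)\,\alpha^{m'}_{-1}(d\zeta,da_\ell)=0$ by the bookkeeping step. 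Thus every summand of term 3 is in $\langle C\rangle$.

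For term 4 I would expand $[\alpha^k_{-2},\alpha^{n-k}_0](dx_i,dx_j,dy_0)$ and treat the two orderings separately. When the inner bivector is $\alpha^k_{-2}$, its value on a pair drawn from $\{dx_i,dx_j,dy_0\}$ is either $0$ (on $dx_i\wedge dx_j$, by Proposition \ref{prop:Deg2Form}) or of the form $pC$ with $p\in\Sym(\gl(\CC))$, and the Leibniz split goes through exactly as in term 3, now using $\alpha^{n-k}_0(d\zeta,dC)=\sum_\ell(\partial C/\partial a_\ell)\,\alpha^{n-k}_0(d\zeta,da_\ell)=0$. When the inner bivector is $\alpha^{n-k}_0$, producing some $g\in\Sym(\g)$, the outer evaluation is $\alpha^k_{-2}(d\zeta,dg)$ with $\zeta\in W$; expanding $dg$ in the basis $\{da_\ell\}\cup\{dw_m\}$ and using $\alpha^k_{-2}(d\zeta,da_\ell)=0$ together with the fact that $\alpha^k_{-2}(d\zeta,dw_m)$ is either $0$ or lies in $\langle C\rangle$ (Proposition \ref{prop:Deg2Form}) again lands everything in $\langle C\rangle$. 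Summing over $k$ then gives the stated inclusion.

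I expect the one genuinely delicate ingredient to be the bookkeeping claim that no outer evaluation ever meets a $\gl(\CC)$-direction carrying a nonzero value of $\alpha^k_d$ in degrees $d\le 0$; this is precisely where Facts 1 and 2 of Proposition \ref{prop:Facts}, the Levi normal form, and the lower bound $-p$ on the $W$-degree of a $p$-vector all have to be combined correctly. Everything else is routine casework over the six terms of each Schouten bracket, made painless by the fact that $\alpha_{-1}$ and $\alpha_{-2}$ have already been pinned down modulo $\langle C\rangle$ in Propositions \ref{prop:Deg1Form} and \ref{prop:Deg2Form}.
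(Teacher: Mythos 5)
Your proposal is correct and follows essentially the same route as the paper's own proof: expand each Schouten bracket into inner/outer evaluations, feed in Propositions \ref{prop:Deg1Form} and \ref{prop:Deg2Form} (values are $0$ or multiples of $C$), kill the $\gl(\CC)$-directions via the Levi normal form together with Fact 2 of Proposition \ref{prop:Facts}, and use that $dC$ lies in the span of the $da_\ell$. The only divergence is that the paper asserts the third term vanishes identically, whereas you conclude only membership in $\langle C\rangle$ — the surviving piece $C\,\alpha_{-1}(d\zeta,dr)$ lands in $\langle C^2\rangle$ rather than being zero — and your weaker (and more carefully justified) conclusion is all the lemma and its later use modulo $\JJ$ actually require.
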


\begin{proof}
The third term is identically zero.  For all $k<(n-1)$,
$$\alpha^k_{-1}(dx_i,d\alpha^{n-k}_{-1}(dx_j,y_0))=\alpha^k_{-1}(dx_i,d\left(p_{j0}^{n-k}C\right)). $$
Since $\alpha^k_{-1}(dx_i,a_l)=0$ for all $a_l\in\gl(\CC)$ (Proposition \ref{prop:Alphak}), a simple application of the Leibniz rule implies $\alpha^k_{-1}(dx_i,d\alpha^{n-k}_{-1}(dx_j,dy_0))=0$.  Moreover, this holds for all the cyclic permutations in the third term.
  Since $\alpha^l_{-2}(dw_i,dw_j)\in Sym^{l}(\gl(\CC))$ and $\alpha^l_0(da,dw)=0$ for all $l<(n-1)$, $a\in\gl(\CC)$, $w\in W$, the composition $\alpha^k_0(dw_i,d\alpha^{n-k}_{-2}(dw_j,dw_k))=0$.  Thus
$$[\alpha^k_{-2},\alpha^{n-k}_{0}](dx_i,dx_j,dy_0)= \alpha^k_{-2}(dx_i,\alpha_0^{n-k}(dx_j,dy_0)) + \mbox{c.p.} $$

As $\alpha^{k}_{-2}(dw_i,dw_j)\in\Span\{C\}\otimes\Sym(\g) $, the result follows.
\end{proof}

One can now verify that the ($W$-degree $-2$ part of the) ``degree $(n-1)$ condition'' 

$$[\pi,\pi]^{n-1}_{-2}= [\beta^1,\beta^{n-1}]+[\beta^1,\alpha^{n-1}_{-2}]+\sum_{k=0}^{n-2}[\alpha^k_{-1},\alpha^{n-k}_{-1}]+\sum_{k=0}^{n-2}[\alpha^k_{-2},\alpha^{n-k}_{0}]=0$$
cannot possibly be satisfied, as 
$$[\pi,\pi]^{n-1}_{-2}(dx_i,dx_j,dy_0) \bmod \langle C\rangle \neq 0$$

\begin{remark}
This last conclusion is most easily seen by noticing the following:  define the ideal $\JJ:=\langle C,\overline{A}_j,\ldots\overline{A}_i,\overline{B}_j,\ldots,\overline{B}_i\rangle$.  Then
$$[\pi,\pi]^{n-1}_{-2}(dx_i,dx_j,dy_0) \bmod\JJ =(n-i)(n-j)ja_1^{n-j-2}a_3^{j-1}(\overline{A}_{i+1}-(i+1)\overline{B}_{i+1})\neq0$$
\end{remark}

Thus, we arrive at a contradiction to the assumption that $\pi$ is a Poisson structure extending the Poisson structure on $Q$, as the jacabiator $[\pi,\pi]$ of any such Poisson structure must be identically zero.

\chapter{Constructing Explicit Poisson Embeddings}

This section will explore the problem of constructing an explicit Poisson embedding of one of the singular symplectic quotients considered in the previous chapter.  This discussion will result in an explicit Poisson embedding of $V/\ZZ_3$ into $\RR^{78}$.  Existence of such an embedding, along with the non-embedding theorem of the previous chapter, implies the following theorem:

\begin{theorem} The four dimensional symplectic orbifold $V/\ZZ_3$ has the following embedding properties:
 \begin{itemize}
 \item The minimal smooth embedding dimension of $V/\ZZ_3$ is $12$.
 \item The minimal Poisson embedding dimension of $V/\ZZ_3$ is $12<d\leq 78$.
 \end{itemize}
 \end{theorem}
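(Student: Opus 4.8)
The plan is to deduce this final theorem as a formal consequence of Theorem~\ref{thm:NonEmb} (applied at $n=3$) together with Theorem~\ref{thm:PoissExt}, so no new calculation is needed beyond assembling what has already been established. First, I would recall the general setup: $V = \RR^4$ carries the standard symplectic structure, $\ZZ_3\subseteq{\rm Sp}_4(\RR)$ acts with $0$ as an isolated fixed point, and the quotient $V/\ZZ_3$ inherits a Poisson structure $\brac_Q$. By the discussion in Section~\ref{sec:PoissEmb}, the minimal smooth embedding dimension equals $\dim_\RR(\TT_{V/\ZZ_3,0}) = \dim_\RR(\m_0/\m_0^2)$, which is the size of a minimal Hilbert basis; and by Section~\ref{sec:HilbertBasis} the Hilbert basis of $\Sym(V^*)^{\ZZ_n}$ has $4 + 2(n+1)$ elements, so for $n=3$ this is $4+8 = 12$. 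Since the $12$ generators listed there are algebraically independent in low degree precisely to the extent of spanning $\m_0/\m_0^2$, and since the Schwarz theorem (Section~\ref{sec:SmStr}) guarantees $C^\infty(V)^{\ZZ_3}$ is generated by these polynomial invariants, the Hilbert map $F:V/\ZZ_3\inj\RR^{12}$ is a smooth embedding and no smaller one exists. This establishes the first bullet.

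For the second bullet, the upper bound $d\leq 78$ is immediate from Theorem~\ref{thm:PoissExt}, which furnishes a Poisson structure on $\RR^{78}$ and an embedding $V/\ZZ_3\inj\RR^{78}$ realizing $V/\ZZ_3$ as a Poisson subspace. It remains to rule out $d=12$, i.e.\ to show there is no Poisson embedding into $\RR^{12}$. Here I would invoke Theorem~\ref{thm:NonEmb} with $n=3$: since $3$ is odd and greater than $2$, the theorem says $V/\ZZ_3$ is \emph{not} realizable as a Poisson subspace of $\RR^{2\cdot 3+6} = \RR^{12}$ for any Poisson structure on $\RR^{12}$. Because $12$ is the minimal algebraic (hence smooth) embedding dimension, any Poisson embedding must have target dimension strictly greater than $12$; hence the Poisson embedding dimension $d$ satisfies $d>12$. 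Combining this strict lower bound with the upper bound from Theorem~\ref{thm:PoissExt} yields $12 < d \leq 78$, which is the second bullet.

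Two small points warrant care in writing this up. First, Theorem~\ref{thm:NonEmb} as stated concerns the real quotient $V/\ZZ_n$ and the real space $\RR^{2n+6}$; one must note (as in Section~\ref{sec:Complexification}) that a Poisson embedding into $\RR^{12}$ would complexify to a Poisson embedding of $V_\CC/\ZZ_3$ into $\CC^{12}$, and it is the nonexistence of the latter that Chapter~6 actually proves, so the logical chain is consistent. Second, one should be explicit that ``Poisson embedding dimension'' is the minimum over \emph{all} target Poisson manifolds, not just $\RR^{12}$ with a prescribed bracket; but since any such embedding factors (after completion, Section~\ref{sec:ReductionAlg}) through the minimal algebraic embedding, the relevant target is $\RR^{d}$ with $d\geq 12$, and Theorem~\ref{thm:NonEmb} kills exactly the boundary case $d=12$.

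I do not expect a genuine obstacle here: the entire content of the theorem is the conjunction of the two main results of the thesis, and the only ``work'' is the bookkeeping that identifies $12$ as the common value of the smooth and minimal algebraic embedding dimensions and that $2n+6 = 12$ when $n=3$. If anything, the subtlest step is simply making sure the reader sees that the strict inequality $d > 12$ (rather than $d \geq 12$) is what Theorem~\ref{thm:NonEmb} provides, and that there is currently no information pinning down $d$ further within the range $(12,78]$ — which is why the statement is left as an interval.
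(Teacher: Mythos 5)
Your proposal is correct and is essentially the paper's own argument: the paper deduces this theorem directly by combining the nonembedding result of Chapter 6 (Theorem \ref{thm:NonEmb} with $n=3$, so $2n+6=12$) with the explicit Poisson embedding into $\RR^{78}$ constructed in Chapter 7 (Theorem \ref{thm:PoissExt}), the value $12$ coming from the $4+2(n+1)$-element Hilbert basis exactly as you describe. Your added remarks on the complexification reduction and on the strictness of $d>12$ match the paper's reasoning in Sections \ref{sec:Complexification} and \ref{sec:ReductionAlg}, so no further work is needed.
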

 
Thus, the Poisson structure (and {\it not} the geometric structure) on the singular quotient $\CC^3/\ZZ_3$ has an obstruction to extension in $\RR^{12}$ that disappears in $\RR^{78}$.  It's not known whether there exists a Poisson embedding of $V/\ZZ_3$ into a Euclidean space of smaller dimension.
 
 \section{Generalities on Constructing Poisson Embeddings}
 
This section will continue to use the notation of Chapter 5 and later specialize to the setup in Chapter 6.  Fix a symplectic linear action of a compact group on the standard $2n$-dimensional symplectic vector space $V$ (denoting the resulting quotient $(Q,\brac_Q)$), and a minimal embedding $Q\hookrightarrow \g^*$ that is a Poisson embedding up to first order.   Recall the following theorem proved in Chapter 4:

\begin{reptheorem}{prop:EmbeddingCriteria} 
Let $\phi\!:(Q,\brac_{Q})\hookrightarrow(\RR^n,\brac)$ be a Poisson embedding.  Denote by $\pi=\sum\pi^k$ the decomposition of the Poisson bivector corresponding to $\brac$ into its homogeneous components, and let $\h\isom\RR^n$ be the Lie algebra corresponding to the Lie-Poisson structure $(\RR^n,\pi^1)$.  Then
\begin{enumerate}
\item The Lie algebra $\h$ is a Lie algebra extension of $\g:=\m_0/\m_0^2$
\item The extended Poisson structure $\pi$ is constructed from the following $\g$-cohomological equations:
$${\rm d}_\g\pi^k=\sum_{i<k}[\pi^i,\pi^{k-i}] \mbox{,  where } \pi^i\in{\rm C^2}(\g,\Sym^i(\g))$$
\end{enumerate}
\end{reptheorem}

In the following section we will consider the obvious ``extension'' of the fixed minimal embedding:
 $$Q\hookrightarrow\g^*\hookrightarrow\g^*\times\{0\}\hookrightarrow(\g\times Z)^*$$
for some finite dimensional vector space $Z$, with coordinates $\{z_i\}$, and fix the following extension of $\brac_Q$:
\begin{itemize}
\item Let $\brac$ be an extension of $\brac_Q$ to $(\g\times Z)^*$, and denote by $\pi$ the corresponding Poisson bivector.  This Poisson bracket is the extension that we are attempting to determine.
\item Let $\brac_\beta$ be an extension of $\brac_Q$ to the minimal embedding space $\g^*$ that {\it does not} necessarily satisfy the Jacobi identity.  Denote the corresponding bivector by $\beta=\beta^1+\beta^2+\beta^3+\ldots$.  This bracket will be the concrete extension.
\item Let $\alpha:=(\pi-\beta)$ be the difference of the above bivectors.  We note that $\alpha$ has coefficients in the defining ideal $\II$ of $Q\isom V(\II)\subseteq(\g\times Z)^*$.  From this we may deduce that $\alpha^1$ has coefficients in $Z$.
\end{itemize}

\subsection{The General Degree 1 Condition}

We will attempt to satisfy condition (1) in the proposition above by finding a (possibly nonabelian) Lie algebra extension:
$$0\to Z\to \g\times Z\to \g\to 0$$ 

The degree 1 vanishing condition has the following characterization:

\begin{proposition}  
\label{prop:LieAlgExt}
The Schouten bracket $[\pi^1,\pi^1]=0$ vanishes if and only if the following four conditions hold:
\begin{enumerate}
\item The bivector $\alpha^1$ defines a Lie bracket $[\cdot,\cdot]_Z: Z\wedge Z\to Z$.
\item The bivector $\alpha^1$ defines a linear map $\rho_Z:\g\to {\rm End}(Z)$ whose image lies within the derivations on $Z$, denoted by ${\rm Der}(Z)\subseteq{\rm End}(Z)$.
\item Furthermore, $\rho_Z:\g\to {\rm Der}(Z)\to{\rm Out}(Z)$ defines a homomorphism of Lie algebras (and is therefore ``a $\g$-representation modulo inner derivations'')
\item $\alpha^1$ defines a cocycle $c:\g\wedge\g\to Z$ with respect to the above data. Moreover, this cocycle measures the failure of $\rho_Z$ to be a true $\g$-representation:
$$\rho_Z([u,v])-[\rho_Z(u),\rho_Z(v)] = \ad_{c(u,v)} $$
\end{enumerate}
\end{proposition}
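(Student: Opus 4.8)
The plan is to pass through the standard correspondence between linear bivector fields on a vector space and Lie brackets on its dual. Since $\pi^1$ is homogeneous of degree $1$, it corresponds to a skew bilinear map $[\cdot,\cdot]_{\h}$ on $\h:=\g\times Z$, and the local formula for the Schouten bracket of a linear bivector shows that $[\pi^1,\pi^1]=0$ is equivalent to $[\cdot,\cdot]_{\h}$ satisfying the Jacobi identity; so the whole statement reduces to unwinding that single identity. First I would record the shape of $[\cdot,\cdot]_{\h}$: because $\pi^1=\beta^1+\alpha^1$, where $\beta^1$ is the Lie--Poisson bivector of $\g$ (involving only the $\g$-coordinates) and $\alpha^1$ takes all of its values in $Z$, the bracket splits as
\[
[u,v]_{\h}=[u,v]_{\g}+c(u,v),\qquad [u,a]_{\h}=\rho_Z(u)(a),\qquad [a,b]_{\h}=[a,b]_Z
\]
for $u,v\in\g$ and $a,b\in Z$, with $c:\g\wedge\g\to Z$, $\rho_Z:\g\to\End(Z)$ and $[\cdot,\cdot]_Z:Z\wedge Z\to Z$ the three components of $\alpha^1$; bilinearity and skew-symmetry are automatic. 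In particular $Z$ is an ideal and $\h/Z\isom\g$ with its given bracket, so this is exactly the problem of classifying the (possibly non-abelian) extension $0\to Z\to\h\to\g\to 0$.

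Next I would evaluate the Jacobiator $[[x,y]_{\h},z]_{\h}+[[y,z]_{\h},x]_{\h}+[[z,x]_{\h},y]_{\h}$ on the four kinds of homogeneous triples and match each to one of the stated conditions. On a triple from $Z$ it is exactly the Jacobi identity for $[\cdot,\cdot]_Z$, which is (1). On a triple $(u,a,b)$ with $u\in\g$ and $a,b\in Z$ it becomes the statement that $\rho_Z(u)$ is a derivation of $[\cdot,\cdot]_Z$, which is (2). On a triple $(u,v,a)$ with $u,v\in\g$ and $a\in Z$, using $[c(u,v),a]_{\h}=[c(u,v),a]_Z=\ad_{c(u,v)}(a)$, it collapses to $\rho_Z([u,v])-[\rho_Z(u),\rho_Z(v)]=\ad_{c(u,v)}$, the second assertion of (4). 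On a triple from $\g$, the $\g$-components cancel by the Jacobi identity of $\g$ and the $Z$-components yield the twisted cocycle equation $\sum_{\text{c.p.}}\bigl(c([u,v]_{\g},w)-\rho_Z(w)c(u,v)\bigr)=0$, i.e. $c$ is a $2$-cocycle for the data $(\rho_Z,[\cdot,\cdot]_Z)$, the first assertion of (4). Condition (3) is then not independent: granting (1), the inner derivations ${\rm Inn}(Z)$ form an ideal of $\Der(Z)$; granting (2), $\rho_Z$ maps into $\Der(Z)$; and reducing the relation from the triple $(u,v,a)$ modulo ${\rm Inn}(Z)$ shows $\g\to{\rm Out}(Z)$ is a Lie algebra homomorphism, so I would present (3) as a remark extracted from (2) and (4). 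Reading all of these equivalences backwards reassembles the Jacobi identity for $[\cdot,\cdot]_{\h}$ from (1)--(4), which gives the converse.

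The computations are routine multilinear bookkeeping, so I do not expect a genuine obstacle; the two points that deserve care are (i) justifying that $\alpha^1$ really takes all of its values in $Z$ — so that the targets of $c$, $\rho_Z$ and $[\cdot,\cdot]_Z$ are as claimed and $Z$ is an ideal — which follows from $\alpha=\pi-\beta$ having coefficients in $\II$ together with the fact that, for the minimal embedding $Q\hookrightarrow\g^*\times\{0\}$, the degree-$1$ part of $\II$ is precisely the span of the $Z$-coordinates; and (ii) keeping the signs straight when the cyclic Jacobi sum is distributed over the four triple types, in particular the identity $[u,a]_{\h}=-[a,u]_{\h}$, so that the derivation and cocycle relations come out with their conventional signs.
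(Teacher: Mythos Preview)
Your proposal is correct and follows essentially the same route as the paper: both arguments evaluate the Jacobiator of $\pi^1$ on the four types of homogeneous triples $(Z,Z,Z)$, $(\g,Z,Z)$, $(\g,\g,Z)$, $(\g,\g,\g)$ and match each to one of the four conditions, with the justification that $\alpha^1$ has coefficients in $Z$ coming from the absence of linear terms in the vanishing ideal. The only cosmetic difference is that the paper works directly with the Schouten bracket applied to triples of exact $1$-forms $(dv_i,dz_j,\ldots)$, whereas you first translate to the Lie bracket $[\cdot,\cdot]_{\h}$ on $\h=\g\times Z$ and check its Jacobi identity; these are dual formulations of the same computation. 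Your observation that condition (3) is a formal consequence of (2) and (4) is a nice addendum not made explicit in the paper, which treats the four conditions in parallel.
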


\begin{proof} As the vanishing ideal of $Q$ in $\g^*$ contains no linear terms, the linear components have the form:
$$\beta^1=\sum_{i,j}b_{ij}^k v_k \bivect{v_i}{v_j} \mbox{ and } \alpha^1=\sum_{i,j}a_{ij}^k z_k \bivect{z_i}{z_j}+\sum_{i,j}a_{ij}^{'k} z_k \bivect{v_i}{z_j}+\sum_{i,j}a_{ij}^{''k} z_k \bivect{v_i}{v_j}$$

\begin{enumerate} 
\item Define the Lie bracket on $Z$ by $[z_i,z_j]_Z:=\sum_{k}a^{k}_{ij}z_k$.  This Lie bracket satisfies the Jacobi identity precisely when $[\pi^1,\pi^1](dz_i,dz_j,dz_k)=0$, as $\beta^1$ vanishes on $Z$.
\item Let $\rho_Z(v)(z):=\alpha^1(dv,dz)=\sum_k a^{'k}_{ij}z_k\in Z$.  The Schouten bracket is
$$[\pi^1,\pi^1](dv,dz_i,dz_j) = [\beta^1,\beta^1](dv,dz_i,dz_j) + 2[\beta^1,\alpha^1](dv,dz_i,dz_j) + [\alpha^1,\alpha^1](dv,dz_i,dz_j)$$
The first and second terms vanish, as $\beta^1$ vanishes on $Z$, leaving $[\pi^1,\pi^1]=0$ precisely when
\begin{eqnarray}
\nonumber 0 &=& \frac{1}{2}[\alpha^1,\alpha^1](dv,dz_i,dz_j) \\
\nonumber    &=& \alpha^1(dv,d\alpha^1(dz_i,dz_j))+\alpha^1(dz_i,d\alpha^1(dz_j,dv)])+\alpha^1(dz_j,d\alpha^1(dv,dz_i))
\end{eqnarray}

Rearranging these terms, we obtain the following precisely when $[\pi^1,\pi^1]=0$:
\begin{eqnarray}
 \rho_Z(v)([z_i,z_j]_Z) &=& \alpha^1(dv,\alpha^1(dz_i,dz_j)) \nonumber\\
					&=& -\alpha^1(dz_i,d\alpha^1(dz_j,dv))-\alpha^1(dz_j,d\alpha^1(dv,dz_i)) \nonumber\\
					&=& \alpha^1(dz_i,d\alpha^1(dv,dz_j))+\alpha^1(d\alpha^1(dv,dz_j),dz_i)\nonumber\\
					&=& \alpha^1(dz_i,\rho_Z(v)(z_j))+\alpha^1(\rho_Z(v)(z_i),z_j)\nonumber\\
					&=&[z_i,\rho_Z(v)(z_j)]_Z+[\rho_Z(v)(z_i),z_j]_Z \nonumber
\end{eqnarray}

\item To show that the map $\rho_Z$ is {\it almost} a $\g$-representation, note that
$$[\pi^1,\pi^1](dz,dv_i,dv_j) = [\beta^1,\beta^1](dz,dv_i,dv_j) + 2[\beta^1,\alpha^1](dz,dv_i,dv_j) + [\alpha^1,\alpha^1](dz,dv_i,dv_j)$$

As $\beta^1$ vanishes on $Z$, we have $[\beta^1,\beta^1](dz,dv_i,dv_j)=0$, as well as
$$[\beta^1,\alpha^1](dz,dv_i,dv_j) = \alpha^1(dz,d\beta^1(dv_i,dv_j))=-\rho_Z([v_i,v_j]_\g)(z)$$
Moreover, 
\begin{eqnarray}
\nonumber \half[\alpha^1,\alpha^1](dz,dv_i,dv_j) &=& \alpha^1(dv_i,d\alpha^1(dv_j,dz))+\alpha^1(dv_j,d\alpha^1(dz,dv_i))+\alpha^1(dz,d\alpha^1(dv_i,dv_j))\\
\nonumber						&=& \rho_Z(v_i)\circ\rho_Z(v_j)(z)-\rho_Z(v_j)\circ\rho_Z(v_i)(z)-[\alpha^1(dv_i,dv_j),z]_Z
\end{eqnarray}
Thus, 
$$\half[\pi^1,\pi^1](dz,dv_i,dv_j)=-\rho_Z([v_i,v_j]_\g)(z)+[\rho_Z(v_i),\rho_Z(v_j)](z)-{\rm ad}_{\alpha^1(dv_i,dv_j)}z$$
And $[\pi^1,\pi^1]=0$ if and only if
$$\rho_Z([v_i,v_j]_\g)(z)=[\rho_Z(v_i),\rho_Z(v_j)](z)-{\rm ad}_{\alpha^1(dv_i,dv_j)}z$$

\item Defining $c(v_i,v_j)=\alpha^1(dv_i,dv_j)$, the analogous Schouten bracket calculations are:
\begin{itemize}
\item $\half[\beta^1,\beta^1](dv_i,dv_j,dv_k)=0$, as $\beta^1$ is a Lie-Poisson bracket.
\item $[\alpha^1,\beta^1](dv_i,dv_j,dv_k)= c(v_i,[v_j,v_k]_\g)+ c(v_k,[v_i,v_j]_\g)+ c(v_j,[v_k,v_i]_\g)$
\item $\half[\alpha^1,\alpha^1](dv_i,dv_j,dv_k)=-\rho_Z(v_i)(c(v_j,v_k))-\rho_Z(v_k)(c(v_i,v_j))-\rho_Z(v_j)(c(v_k,v_i))$
\end{itemize}
Thus,
\begin{eqnarray} dc(v_i,v_j,v_k) &=& c(v_i,[v_j,v_k]_\g)-c(v_j,[v_i,v_k]_\g) c(v_k,[v_i,v_j]_\g) + \nonumber\\
\nonumber						&&\quad -\rho_Z(v_i)(c(v_j,v_k))+\rho_Z(v_j)(c(v_i,v_k)) -\rho_Z(v_k)(c(v_i,v_j))\\
						&=& [\alpha^1,\beta^1](dv_i,dv_j,dv_k)+\half[\alpha^1,\alpha^1](dv_i,dv_j,dv_k) \nonumber\\
						&=& [\pi^1,\pi^1](dv_i,dv_j,dv_k)\nonumber
						\end{eqnarray}
						
and so $[\pi^1,\pi^1]=0$ exactly when $dc=0$.
\end{enumerate}
\end{proof}

\begin{remark}  These data appear in different guises in the literature.  The first appearance of this cohomological perspective   is due to Hochschild \cite{Hochschild54}.   One may also package these data into a morphism of Lie 2-algebras $\g\to{\rm DER}(Z)$ (see HDA6 \cite{Baez04}).
\end{remark}

\begin{example}  Suppose that $Z$ is 1-dimensional.  Then $\pi^1$ on $(\g\times Z)^*$ is just a one dimensional extension of  $\beta^1$ on $\g^*$.  The conditions in the proposition above become:
\begin{enumerate}
\item $(Z,[\cdot,\cdot]_Z)$ is necessarily abelian. 
\item Any $\rho_Z: \g\to {\rm End}(Z)$ is automatically a derivation.
\item $\rho_Z$ is the trivial representation.
\item $c$ is a genuine cocycle, with respect to the trivial representation.
\end{enumerate}
\end{example}

\begin{example} Consider the example of the $(1,1,2)$-resonance space $\CC^3/S^1$.  In \cite{Egilsson95}, Egilsson proved that there is no Poisson structure on $\RR^{11}$ extending the Poisson structure on $\CC^3/S^1$.  In this case $\RR^{11}$ is a minimal embedding and $\RR^{11}\isom\g^*=(\RR\oplus\gl(\RR))^*\ltimes(\RR^3\oplus\RR^3)^*$).  This result was later strengthened by Davis \cite{Davis01}, who proved there does not exist a Poisson embedding of the quotient into $\RR^{12}$.  Davis implicitly considered all possible cocycles (as in the above example), in order to prove the degree 2 condition could not be satisfied. 
\end{example}

When $Z$ is of dimension greater than one, the problem of determining when the degree 1 condition is satisfied quickly becomes intractable.  After all, the first criteria alone demands that one keeps track of all, possibly nonabelian, extensions of $\g$.  Thus, in an effort to construct explicit embeddings of $(Q,\brac_Q)$, we will assume that $(Z,[\cdot,\cdot]_Z)$ is an abelian Lie algebra.  Such an assumption imposes quite a bit of structure on any resulting extension and so we will expect to have the dimension of $Z$ get quite large before finding any embeddings. 

\subsubsection{The case of an abelian $Z$}

When we impose the condition that $Z$ be an abelian Lie algebra, the criteria in Proposition \ref{prop:LieAlgExt} become:
\begin{enumerate}
\item $(Z,[\cdot,\cdot]_Z)$ is an abelian Lie algebra.
\item $\rho_Z:\g\to{\rm End}(Z)$ is a linear map.
\item $\rho_Z$ is a homomorphism of Lie algebras, and therefore a $\g$-representation.
\item $c:\g\wedge\g\to Z$ is a cocycle with respect to $\rho_Z$.
\end{enumerate}
Therefore the embedding space $(\g\times Z)^*$ becomes a representation of $\g$; this condition imposes  $\g$-structure on any such choice of an embedding.

\section{A Poisson Embedding of $V/\ZZ_3$}

\subsubsection{Introduction}

The contruction of the Poisson embedding of $Q=V/\ZZ_3$ follows a simple idea.  As above, fix an algebraic embedding $Q\subseteq \g\times Z$ into an $n$-dimensional affine space and an extension $\brac_\beta$ of the Poisson structure $(Q,\brac_Q)$ on the quotient to a skew-symmetric bilinear bracket on $\Sym(\g\times Z)$.  Finding a Poisson structure that extends the one on $Q$ amounts to solving
$$[\beta+\alpha,\beta+\alpha]=0$$
for a bivector $\alpha$ that vanishes on $Q$.  This requires tackling three issues:
\begin{enumerate}
\item choosing the dimension of the embedding space $\g\times Z$,
\item fixing the extension of $\brac_Q$ up to first order, and
\item determining the higher-order bivector $(\alpha^2+\alpha^3+\alpha^4+\ldots)$
\end{enumerate}

The first two issues are closely related and are largely determined by the simplifying assumption that the Lie algebra $\g\times Z$ defined by the choice of $\pi^1=(\beta^1+\alpha^1)$ is an {\em abelian} extension of the Lie algebra $\g$.  The approach to the third issue is very crude:\! we write $\alpha^2$ as a bivector in general form and use a computer to calculate $[\beta+\alpha,\beta+\alpha]$.  We then set the resulting expression equal to 0 and attempt to find constraints on the coefficients of $\alpha^2$.  At the start, this method is extraordinarily unfeasible: the potential embedding space is $78$-dimensional, making the vector space of quadratic bivectors $(18,270,252)$-dimensional.  However, choosing such a large candidate for the embedding space has two advantages:  
\begin{enumerate}
\item The aim is to merely find {\em one} Poisson structure.  As $\g\times Z$ is so large, one may hope there is probably more Poisson structure extending $\brac_Q$.  This allows us to merely try to cancel the non-zero terms of $[\beta,\beta]$ when choosing $\alpha$.
\item The large size of $\g\times Z$ seems to allow the Poisson structure on $Q$ to impose a clear structure on $\alpha$ without being overly restrictive.  By choosing $\g\times Z$ to be an abelian extension of $\g$, all of the relevant structures $(\g\times Z, \alpha, \II)$ inherit a grading from $\g$, and all such structures must be $\g$-compatible as described below.

\end{enumerate}

Thus, by choosing a large embedding dimension, we allow ourselves the freedom to both apply reductions and take advantage of structure imposed by $\g$, in order to restrict the form of $\alpha$ to something computationally manageable.  Using the techniques mentioned above, the $(18,270,252)$-dimensional vector space of quadratic bivectors becomes $199$-dimensional --- small enough for a computer to handle.\\

\begin{remark}  The computations in this chapter are done with Macaulay2 \cite{M2}, Python Scripts, and  Maple${}^\mathrm{TM}$ \cite{Maple10}.  The majority of the work --- including all the invariant theoretic calculations and the Schouten bracket computations -- are done using Macaulay2.  The Python scripts are used both to construct $\alpha^2$ according to a set of rules (defined by a grading) and to scan very long expressions for certain coefficients (to aid in eliminating extraneous coefficients).  Lastly,  Maple${}^\mathrm{TM}$ is used only a handful of times, to symbolically solve systems of linear equations.  A detailed explanation of the process and code used is included in Appendix \ref{sec:AppendixM2}.
\end{remark}

\subsection{The setup and minimal embedding}

As in Chapter 6, for the case of $Q=V/\ZZ_3$ we will work with the complexification of the vector spaces involved -- that is, while the minimal embedding space of $Q$ is $\RR^{12}\isom\g^*$, we will consider the complexification $\g_\CC\isom\gl(\CC)\ltimes(\CC^4\oplus\CC^4)$ when doing calculations -- which we will simply denote by $\g$ to ease notation.  As before, denote the basis of $\g$ by 
$$\g=\gl(\CC)\ltimes(\CC^4\oplus\CC^4)\isom\Span\{a_0,a_1,a_2,a_3,x_0,x_1,x_2,x_3,y_0,y_1,y_2,y_3\}$$
and the embedding $\phi^*:V/\ZZ_3\hookrightarrow \g^*$ is the one induced by the algebra map $\phi:\Sym(\g^*)\to\Sym(V^*)^{\ZZ_3}$ defined on generators by:
$$
\phi:\left\{\begin{array}{lll}
a_0 \mapsto z\zb, &   x_0 \mapsto w^3, & y_0 \mapsto \wb^3 \\
a_1 \mapsto w\wb, & x_1 \mapsto zw^2, & y_1 \mapsto \zb\wb^2 \\
a_2 \mapsto \zb w, & x_2 \mapsto z^2w, & y_2 \mapsto \zb^2\wb  \\
a_3 \mapsto z\wb, &  x_3 \mapsto z^3,  & y_3 \mapsto \zb^3 \\
\end{array}\right.
$$

The previously considered extension $\beta=\beta^1+\beta^2$ is given explicitly by \\

\begin{itemize}
\item[$\beta^1=$]
$(a_2\bivect{a_0}{a_2}-a_2\bivect{a_1}{a_2} -a_3\bivect{a_0}{a_3} +a_3\bivect{a_1}{a_3} + (a_0-a_1)\bivect{a_2}{a_3})+ \\-3x_0\bivect{a_1}{x_0} -3x_1\bivect{a_3}{x_0}-x_1\bivect{a_0}{x_1}-2x_1\bivect{a_1}{x_1}-x_0\bivect{a_2}{x_1}+
\\-2x_2\bivect{a_3}{x_1}-2x_2\bivect{a_0}{x_2}-x_2\bivect{a_1}{x_2}-2x_1\bivect{a_2}{x_2}-x_3\bivect{a_3}{x_2} +
\\-3x_3\bivect{a_0}{x_3}-3x_2\bivect{a_2}{x_3}+3y_0\bivect{a_1}{y_0}+3y_1\bivect{a_2}{y_0}+y_1\bivect{a_0}{y_1}+2y_1\bivect{a_1}{y_1}+2y_2\bivect{a_2}{y_1}+y_0\bivect{a_3}{y_1}+2y_2\bivect{a_0}{y_2}+y_2\bivect{a_1}{y_2}+y_3\bivect{a_2}{y_2}+2y_1\bivect{a_3}{y_2}+ 3y_3\bivect{a_0}{y_3}$
\\
\item[$\beta^2=$] $3a_2^2\bivect{x_0}{y_2}+6a_1a_3\bivect{x_1}{y_0}+3a_3^2\bivect{x_2}{y_0}+6a_1a_2\bivect{x_0}{y_1}+(a_1^2+4a_2a_3)\bivect{x_1}{y_1}+(2a_0a_3+2a_1a_3)\bivect{x_2}{y_1}+ 3a_3^2\bivect{x_3}{y_1}+(2a_0a_2+2a_1a_2)\bivect{x_1}{y_2}+(a_0^2+4a_2a_3)\bivect{x_2}{y_2}+6a_0a_3\bivect{x_3}{y_2}+3y_2\bivect{a_3}{y_3}+3a_2^2\bivect{x_1}{y_3}+6a_0a_2\bivect{x_2}{y_3}+9a_0^2\bivect{x_3}{y_3}+9a_1^2\bivect{x_0}{y_0}$
\end{itemize}
Recalling the isomorphism of Lie algebras $\gl(\CC)\to\Span\{a_0,a_1,a_2,a_3\}$ given by
$$\{ H\mapsto(a_0-a_1), E\mapsto a_2, F\mapsto a_3, t\mapsto(a_0+a_1)\} $$
one notes that $\beta^1$ has the structure:
$$ \beta^1=\sum_{0\leq i,j\leq3}[a_i,a_j]_{\gl}\bivect{a_i}{a_j}+\sum_{0\leq i,j\leq3}\rho(a_i)(x_j)\bivect{a_i}{x_j}+\sum_{0\leq i,j\leq3}\rho(a_i)(y_j)\bivect{a_i}{y_j}$$
where $\rho:\gl(\CC)\to{\rm End}(V_4\oplus V_4)$ is given by $\rho=(\rho_3\oplus{\rm Id})\oplus(-\rho_3\oplus-{\rm Id})$.

The quadratic term $\beta^2$ has the structure:
$$\beta^2=\sum_{0\leq i,j\leq3}p(x_i,y_j)\bivect{x_i}{y_j} \mbox{ where the $p(x_i,y_j)$ are quadratic in the $a_i$.}$$

\subsection{ The Degree 1 Condition}

An explicit embedding of $Q=\CC^3/\ZZ_3$ into $\g^*\isom\CC^{12}$ was fixed in the above section.  However, this embedding is not a Poisson embedding --- in fact, by Theorem 1.1, there cannot exist a Poisson embedding into $\g^*$.  

\begin{reduction}Following the above section, consider an embedding $Q\hookrightarrow(\g\times Z)^*$ and an abelian Lie algebra extension of $\g$.  That is,
\begin{enumerate}
\item a $\g$-representation $\rho_Z:\g\to{\rm End}(Z)$, and
\item a cocycle $c:\g\wedge\g\to Z$.  
\end{enumerate}
\end{reduction}
These conditions become quite strong when one uses the Levi decomposition $\g\isom\slt(\CC)\ltimes(\CC\oplus\CC^4\oplus\CC^4)$.  We can thus restrict $\rho_Z$ to the Levi factor $\slt(\CC)$, and use the natural action of $\slt(\CC)$ on $\g$ to deduce:
\begin{enumerate}
\item the representation $(Z,\rho_Z |_{\slt})$ is an $\slt(\CC)$-representation,
\item $c:\g\wedge\g\to Z$ is $\slt(\CC)$-equivariant, (i.e. a homomorphism of $\slt(\CC)$-representations).
\end{enumerate}
Therefore, a natural starting candidate for an embedding space would be to take 
$$\g^*\times Z^*\isom\g^*\times(\g\wedge\g)^*\isom\CC^{78}$$ 
and to assume that $c:\g\wedge\g\to\g\wedge\g$ restricts to scalar multiples of the identity on the irreducible subrepresentations of $\g\wedge\g$ (see Proposition \ref{cor:SchurEnd}).  So as not to confuse the vector spaces $Z$ and $\g\wedge\g$, fix the following basis for $Z$:
$$Z=\Span\{\ldots (aa)_{ij} \dots, (ax)_{ij} \dots,(ay)_{ij} \dots,(xx)_{ij} \dots,(xy)_{ij} \dots,(yy)_{ij} \ldots\}$$
The choice of notation, while perhaps unfortunate for exposition, was chosen to ease computations in Macaulay2.  Such a first order extension yields a bivectors $\alpha^1=\alpha^1_\rho+\alpha^1_c$.  While the explicit formulae for this bivector is included in Appendix \ref{sec:AppendixM2}, the following computations are intended to explain the structure of the formulae and how they are computed:\\
\begin{enumerate}
\item The first term $\alpha^1_\rho$ is defined via the natural representation of $\g$ on $\g\wedge\g$.  
For example $\alpha^1_\rho(dxy_{02},da_3)$ is defined as follows:
\begin{eqnarray}
\nonumber \alpha^1_\rho(dxy_{02},da_3) &=& \beta^1(dx_0\wedge dy_2,da_3)  \\
\nonumber					&=& \beta^1(dx_0,da_3)\wedge y_2 + x_0\wedge\beta^1(dy_2,da_3)  \\
\nonumber					&=& 3x_1\wedge y_2+x_0\wedge(-2y_1) \\
\nonumber					&=& 3xy_{12}-2xy_{01}
\end{eqnarray}

\item The second term $\alpha^1_c:\g\wedge\g\to\g\wedge\g$ is a multiple of the identity on the irreducible $\slt(\CC)$-subrepresentations of $\g\wedge\g$. For example:
$$ \alpha^1_c(dx_i,dx_j)=L_{ij}(xx)_{ij} \mbox{ for scalars $L_{ij}$ }$$
\end{enumerate}

\subsection{The Degree 2 Condition}

The condition for $[\pi,\pi]$ to vanish in degree 2 is:
\begin{eqnarray}
\nonumber 0 &=& [\pi^1,\pi^2] \\
 \nonumber  &=& [\beta^1+\alpha^1,\beta^2+\alpha^2]\\
   &=& [\beta^1,\beta^2]+[\beta^1,\alpha^2]+[\alpha^1,\beta^2]+[\alpha^1,\alpha^2]
   \end{eqnarray}
   
As noted in Chapter 6, the jacobiator $[\beta,\beta]=[\beta^1,\beta^2]\neq 0$ is nonvanishing and therefore is the term that we are trying to cancel when choosing $\alpha^2$.  Explicitly, for the $\ZZ_3$-action the jacobiator is generically nonzero on a $28$-dimensional subspace of $\g\wedge\g\wedge\g$:

\begin{flushleft}
\noindent $[\beta^1,\beta^2]= (6\text{a}_{0}\text{a}_{1}-6\text{a}_{2}\text{a}_{3})\partial\text{a}_{2}\partial\text{x}_{1}\partial\text{y}_{0}+(18\text{a}_{3}\text{x}_{0}-18\text{a}_{1}\text{x}_{1})\partial\text{x}_{0}\partial\text{x}_{1}\partial\text{y}_{0}+(18\text{a}_{3}\text{x}_{1}-18\text{a}_{1}\text{x}_{2})\partial\text{x}_{0}\partial\text{x}_{2}\partial\text{y}_{0}+(6\text{a}_{3}\text{x}_{2}-6\text{a}_{1}\text{x}_{3})\partial\text{x}_{1}\partial\text{x}_{2}\partial\text{y}_{0}+(-6\text{a}_{0}\text{a}_{1}+6\text{a}_{2}\text{a}_{3})\partial\text{a}_{3}\partial\text{x}_{0}\partial\text{y}_{1}+(6\text{a}_{3}\text{x}_{0}+6\text{a}_{0}\text{x}_{1}-6\text{a}_{1}\text{x}_{1}-6\text{a}_{2}\text{x}_{2})\partial\text{x}_{0}\partial\text{x}_{2}\partial\text{y}_{1}+(-2\text{a}_{3}\text{x}_{1}+4\text{a}_{0}\text{x}_{2}+2\text{a}_{1}\text{x}_{2}-4\text{a}_{2}\text{x}_{3})\partial\text{x}_{1}\partial\text{x}_{2}\partial\text{y}_{1}+(18\text{a}_{3}\text{x}_{1}-18\text{a}_{1}\text{x}_{2})\partial\text{x}_{0}\partial\text{x}_{3}\partial\text{y}_{1}+(18\text{a}_{2}\text{y}_{0}-18\text{a}_{1}\text{y}_{1})\partial\text{x}_{0}\partial\text{y}_{0}\partial\text{y}_{1}+(4\text{a}_{3}\text{x}_{0}-2\text{a}_{0}\text{x}_{1}-4\text{a}_{1}\text{x}_{1}+2\text{a}_{2}\text{x}_{2})\partial\text{x}_{1}\partial\text{x}_{2}\partial\text{y}_{2}+(-6\text{a}_{0}\text{a}_{1}+6\text{a}_{2}\text{a}_{3})\partial\text{a}_{2}\partial\text{x}_{3}\partial\text{y}_{2}+(18\text{a}_{0}\text{x}_{1}-18\text{a}_{2}\text{x}_{2})\partial\text{x}_{0}\partial\text{x}_{3}\partial\text{y}_{2}+(6\text{a}_{3}\text{x}_{1}+6\text{a}_{0}\text{x}_{2}-6\text{a}_{1}\text{x}_{2}-6\text{a}_{2}\text{x}_{3})\partial\text{x}_{1}\partial\text{x}_{3}\partial\text{y}_{2}+(18\text{a}_{2}\text{y}_{1}-18\text{a}_{1}\text{y}_{2})\partial\text{x}_{0}\partial\text{y}_{0}\partial\text{y}_{2}+(6\text{a}_{2}\text{y}_{0}+6\text{a}_{0}\text{y}_{1}-6\text{a}_{1}\text{y}_{1}-6\text{a}_{3}\text{y}_{2})\partial\text{x}_{1}\partial\text{y}_{0}\partial\text{y}_{2}+(6\text{a}_{2}\text{y}_{2}-6\text{a}_{1}\text{y}_{3})\partial\text{x}_{0}\partial\text{y}_{1}\partial\text{y}_{2}+(-2\text{a}_{2}\text{y}_{1}+4\text{a}_{0}\text{y}_{2}+2\text{a}_{1}\text{y}_{2}-4\text{a}_{3}\text{y}_{3})\partial\text{x}_{1}\partial\text{y}_{1}\partial\text{y}_{2}+(4\text{a}_{2}\text{y}_{0}-2\text{a}_{0}\text{y}_{1}-4\text{a}_{1}\text{y}_{1}+2\text{a}_{3}\text{y}_{2})\partial\text{x}_{2}\partial\text{y}_{1}\partial\text{y}_{2}+(6\text{a}_{0}\text{y}_{0}-6\text{a}_{3}\text{y}_{1})\partial\text{x}_{3}\partial\text{y}_{1}\partial\text{y}_{2}+(6\text{a}_{0}\text{a}_{1}-6\text{a}_{2}\text{a}_{3})\partial\text{a}_{3}\partial\text{x}_{2}\partial\text{y}_{3}+(6\text{a}_{0}\text{x}_{0}-6\text{a}_{2}\text{x}_{1})\partial\text{x}_{1}\partial\text{x}_{2}\partial\text{y}_{3}+(18\text{a}_{0}\text{x}_{1}-18\text{a}_{2}\text{x}_{2})\partial\text{x}_{1}\partial\text{x}_{3}\partial\text{y}_{3}+(18\text{a}_{0}\text{x}_{2}-18\text{a}_{2}\text{x}_{3})\partial\text{x}_{2}\partial\text{x}_{3}\partial\text{y}_{3}+(18\text{a}_{2}\text{y}_{1}-18\text{a}_{1}\text{y}_{2})\partial\text{x}_{1}\partial\text{y}_{0}\partial\text{y}_{3}+(18\text{a}_{0}\text{y}_{1}-18\text{a}_{3}\text{y}_{2})\partial\text{x}_{2}\partial\text{y}_{0}\partial\text{y}_{3}+(6\text{a}_{2}\text{y}_{1}+6\text{a}_{0}\text{y}_{2}-6\text{a}_{1}\text{y}_{2}-6\text{a}_{3}\text{y}_{3})\partial\text{x}_{2}\partial\text{y}_{1}\partial\text{y}_{3}+(18\text{a}_{0}\text{y}_{1}-18\text{a}_{3}\text{y}_{2})\partial\text{x}_{3}\partial\text{y}_{1}\partial\text{y}_{3}+(18\text{a}_{0}\text{y}_{2}-18\text{a}_{3}\text{y}_{3})\partial\text{x}_{3}\partial\text{y}_{2}\partial\text{y}_{3}$
\end{flushleft}

\begin{remark} An element ``$\partial\text{v}$'' is shorthand for the anti-commuting variable ``$\vect{v}$.''
\end{remark}

\noindent Therefore, in choosing $\alpha^2$, we will first focus on the $28$-dimensional subspace above.

\begin{reduction} If there exists a Poisson structure $\pi=\beta+\alpha$ on $(\g\times Z)^*$ with linear Poisson structure $\pi^1=\beta^1+\alpha^1$, then there exists a semi-linearized Poisson structure with respect to the Levi decomposition $(\slt(\CC)\ltimes(\CC\oplus W\oplus Z))^*$.  Therefore, we need only consider $\pi^2=\beta^2+\alpha^2$ that are supported on the radical of $\g\times Z$.  That is,
$$\alpha^2(ds,\cdot)=0 \mbox{ for } s\in\slt(\CC)$$
\end{reduction}

The statement in this reduction only holds for our embedding up to order $(n-1)=2$, by the proof in Lemma \ref{sec:DeterminingPhi}.  However, our $\alpha$ will conveniently be a second order extension $\alpha=\alpha^1+\alpha^2$.

\subsubsection{Reductions of $\alpha^2$ via $H$-grading}

The most effective reduction in reducing the possible choices of $\alpha^2$ comes from the $H$-grading defined by the natural action of $\slt(\CC)$. The following properties were either proved in Chapter 6 (or may be verified by inspection or Macaulay2):
\begin{itemize}
\item[] $\pi^1=\beta^1+\alpha^1$ preserves the $H$-grading.
\item[] $\beta^2$ preserves the $H$-grading.
\end{itemize}

\begin{reduction}As we are attempting to solve equation (7.1) above, only the summands in $\alpha^2$ that preserve the $H$-grading need to be considered.  
\end{reduction}
This observation, along with an analysis of the $H$-grading on the vanishing ideal $\II$ of $Q\subseteq\g^*$, will provide the first attempt at determining $\alpha^2$.  Recall that in degree 2 part of the vanishing ideal $\II$ is spanned by:

\begin{table}[h!]
\caption{The $t,H$-degrees of relevant generators of the vanishing ideal $\II$}
$$\begin{array}{|l|r|c|}\hline
\mbox{Element} 						&  H-\mbox{deg} & t-\mbox{deg} \\ \hline 
C=\text{a}_0 \text{a}_1-\text{a}_2 \text{a}_3 & 0 & 0 \\ \hline \noalign{\smallskip}
\overline{A}_0=3 \text{a}_0 \text{x}_0-3 \text{a}_2 \text{x}_1& 3 & -3 \\
\overline{A}_1=-\text{a}_3 \text{x}_0+2 \text{a}_0 \text{x}_1+\text{a}_1 \text{x}_1-2 \text{a}_2 \text{x}_2& 1 &-3 \\
\overline{A}_2=-2 \text{a}_3 \text{x}_1+\text{a}_0 \text{x}_2+2 \text{a}_1 \text{x}_2-\text{a}_2 \text{x}_3& -1 &-3 \\
\overline{A}_3=-3 \text{a}_3 \text{x}_2+3 \text{a}_1 \text{x}_3& -3 &-3 \\ \hline \noalign{\smallskip}
\overline{B}_0=\text{a}_3 \text{x}_0+\text{a}_0 \text{x}_1-\text{a}_1 \text{x}_1-\text{a}_2 \text{x}_2& 1 &-3 \\
\overline{B}_1=\text{a}_3 \text{x}_1+\text{a}_0 \text{x}_2-\text{a}_1 \text{x}_2-\text{a}_2 \text{x}_3& -1 &-3 \\ \hline
A_0=3 \text{a}_0 \text{y}_0-3 \text{a}_3 \text{y}_1& -3 &3 \\
A_1=-\text{a}_2 \text{y}_0+2 \text{a}_0 \text{y}_1+\text{a}_1 \text{y}_1-2 \text{a}_3 \text{y}_2& -1 &3 \\
A_2=-2 \text{a}_2 \text{y}_1+\text{a}_0 \text{y}_2+2 \text{a}_1 \text{y}_2-\text{a}_3 \text{y}_3& 1 &3 \\
A_3=-3 \text{a}_2 \text{y}_2+3 \text{a}_1 \text{y}_3& 3 & 3 \\ \hline
B_0=\text{a}_2 \text{y}_0+\text{a}_0 \text{y}_1-\text{a}_1 \text{y}_1-\text{a}_3 \text{y}_2& -1 &3 \\
B_1=\text{a}_2 \text{y}_1+\text{a}_0 \text{y}_2-\text{a}_1 \text{y}_2-\text{a}_3 \text{y}_3& 1 &3 \\
\hline
\end{array}$$
\label{table:M2generator2}
\end{table}

\begin{remark}  The sub-sections of the above table actually form irreducible representations of $\slt(\CC)$.
\end{remark}

\begin{reduction} Additionally there are degree 2 elements in $\II$ of the form:
$$\{\text{y}_2^{2}-\text{y}_1 \text{y}_3,\text{y}_1 \text{y}_2-\text{y}_0
       \text{y}_3,\text{y}_1^{2}-\text{y}_0 \text{y}_2,\text{x}_2^{2}-\text{x}_1
       \text{x}_3,\text{x}_1 \text{x}_2-\text{x}_0 \text{x}_3,\text{x}_1^{2}-\text{x}_0
       \text{x}_2\}$$

However, these elements do not appear in the coefficients of the jacobiator $[\beta^1,\beta^2]$, and therefore will not appear in the coefficients of $\alpha^2$.
\end{reduction}

With the above descriptions of the $H$-gradings of $\g\times Z$ and $\II$, we are almost ready to make a first attempt at defining $\alpha^2$.  However, we will first make a few more preliminary reductions guided by the idea that we are merely trying to cancel the nonzero part of $[\beta^1,\beta^{n-1}]$:

\begin{reduction} The coefficients of $\alpha^2$ are in the vanishing ideal $\II$ (and not in $\Sym(Z)$) and $\alpha^2$ preserves the $H$-grading.
\begin{itemize}
\item  The following terms are of the form: $\alpha^2(dx_i,dy_j)=K_{ij}C_0$, $\alpha^2(dx_i,dx_j)=K'_{ij}C_0$, $\alpha^2(dy_i,dy_j)=K''_{ij}C_0$ for scalars $K_{ij},K'_{ij},K''_{ij}$, as the $H$-degree of the terms $dx_i\wedge dy_j,dx_i\wedge dx_j,dy_i\wedge dy_j$  are even and $C_0$ is the only even quadratic element of $\II$ in the table above.
\item Analogous reasoning implies that the following have odd $H$-degree:
$$\{\alpha^2(dxx_{ij},dy_{k}),\alpha^2(dxx_{ij},dx_k),\alpha^2(dyy_{ij},dx_k)\},$$
$$\{\alpha^2(dyy_{ij},dy_k),\alpha^2(dxy_{ij},dx_k),\alpha^2(dxy_{ij},dy_k)\}$$ and therefore they lie in $Span\{\overline{A}_i,\overline{B}_i,A_i,B_i\}$.
\end{itemize}
\end{reduction}

\begin{reduction} Through an analysis of the coefficients of the jacobiator $[\beta^1,\beta^2]$, we can improve upon the reduction above:
\begin{itemize}
\item Elements of the form $\alpha^2(dxx_{ij},dy_k)$ and $\alpha^2(dxy_{ij},dx_k)$ lie in $\Span\{\overline{A}_l,\overline{B}_l\}$.
\item Elements of the form $\alpha^2(dyy_{ij},dx_k)$ and $\alpha^2(dxy_{ij},dy_k)$ lie in $\Span\{A_l,B_l\}$.
\item The elements from above $\alpha^2(dx_i,dx_j)=\alpha^2(dy_i,dy_j)=0$.
\end{itemize}
\end{reduction}

The justification for the reduction that $\alpha^2(dxx_{ij},dy_k)\in\Span\{\overline{A}_l,\overline{B}_l\}$ is as follows:  we only want to consider $\alpha^2(dxx_{ij},dy_k)$ that will cancel a nontrivial term of the jacobiator $[\beta,\beta]$, which is supported on $\g\wedge\g\wedge\g$.  Recall that $\alpha^1(dx_i,dx_j)=xx_{ij}$, so $\alpha^2(d\alpha^1(x_i,x_j)],dy_k)$.  This suggests we examine:
$$ [\pi^1,\pi^2](dx_i,dx_j,dy_k) = \left([\beta^1,\beta^2]+[\beta^1,\alpha^2]+[\alpha^1,\beta^2]+[\alpha^1,\alpha^2]\right)(dx_i,dx_j,dy_k)$$
First, by inspection, we see that $[\beta^1,\beta^2](dx_i,dx_j,dy_k)\in\Span\{\overline{A}_l,\overline{B}_l\}$.  The other term that's already fixed $[\alpha^1,\beta^2](dx_i,dx_j,dy_k)\in\Sym(Z)$ won't be relevant to canceling the terms of $[\beta^1,\beta^2]$, so we will ignore it.  The two terms involving $\alpha^2$ are:
\begin{eqnarray}
\nonumber [\beta^1,\alpha^2](dx_i,dx_j,dy_k) &=& \beta^1(dx_i, d\alpha^2(dx_j,dy_k))+\beta^1(dx_j, d\alpha^2(dy_k,dx_i))+ \\
\nonumber							& & +\beta^1(dy_k, d\alpha^2(dx_i,dx_j)) \\
\nonumber 							&=& K_{jk}\beta^1(dx_i,dC_0) - K_{ik}\beta^1(dx_j,dC_0)+K'_{ij}\beta^1(dy_k,dC_0) \\
									&=& K_{jk}\overline{A}_i-K_{ik}\overline{A}_j+K'_{ij}A_k
\end{eqnarray}
\begin{eqnarray}
\nonumber [\alpha^1,\alpha^2](dx_i,dx_j,dy_k) &=&  \alpha^2(dx_i, d\alpha^1(dx_j,dy_k))+\alpha^2(dx_j, d\alpha^1(dy_k,dx_i))+ \\
\nonumber							 &  & \alpha^2(dy_k, d\alpha^1(dx_i,dx_j))+\mbox{terms in $\Sym(Z)$} \\
									 &=& L_{33}\alpha^2(dx_i,dxy_{jk})-L_{33}\alpha^2(dx_j,dxy_{ik})+\alpha^2(dy_k,dxx_{ij}) 
\end{eqnarray}

Thus, we see from Equation $(7.2)$ that it's reasonable to make $\alpha^2(dx_i,dx_j)=K'_{ij}C_0=0$, and from Equation $(7.3)$ that it's reasonable to make $\alpha^2(dxx_{ij},dy_k)$, $\alpha^2(dxy_{jk},dx_i)$, and $\alpha^2(dxy_{ik},dx_j)$ lie in $\Span\{\overline{A}_l,\overline{B}_l\}$.\\

Using the above reasoning to propose a first candidate for $\alpha^2$ produces a bivector which is included in its entirety in Appendix \ref{sec:AppendixM2} (the scalar coefficients $K_i$ range from $i=0...195$, and $l_i$ for $i=0...3$):

\begin{flushleft}
\begin{itemize}
\item[$\alpha^2 =$] $l_0C_0\partial\text{x}_{0}\partial\text{y}_{0}+l_1C_0\partial\text{x}_{1}\partial\text{y}_{1}+l_2C_0\partial\text{x}_{2}\partial\text{y}_{2}+l_3C_0\partial\text{x}_{3}\partial\text{y}_{3}+(K_{1}A_1+K_{2}B_0)\partial\text{x}_{0}\partial\text{yy}_{01}+(K_{3}A_2+K_{4}B_1)\partial\text{x}_{0}\partial\text{yy}_{02}+K_{5}A_3\partial\text{x}_{0}\partial\text{yy}_{03}+K_{6}A_3\partial\text{x}_{0}\partial\text{yy}_{12}+K_{7}\overline{A}_0\partial\text{x}_{0}\partial\text{xy}_{00}+(K_{8}\overline{A}_1+K_{9}\overline{B}_0)\partial\text{x}_{0}\partial\text{xy}_{10}+K_{10}\overline{A}_0\partial\text{x}_{0}\partial\text{xy}_{11}+(K_{11}\overline{A}_2+K_{12}\overline{B}_1)\partial\text{x}_{0}\partial\text{xy}_{20}+(K_{13}\overline{A}_1+K_{14}\overline{B}_0)\partial\text{x}_{0}\partial\text{xy}_{21}+K_{15}\overline{A}_0\partial\text{x}_{0}\partial\text{xy}_{22}+K_{16}\overline{A}_3\partial\text{x}_{0}\partial\text{xy}_{30}+(K_{17}\overline{A}_2+K_{18}\overline{B}_1)\partial\text{x}_{0}\partial\text{xy}_{31}+(K_{19}\overline{A}_1+K_{20}\overline{B}_0)\partial\text{x}_{0}\partial\text{xy}_{32}+K_{21}\overline{A}_0\partial\text{x}_{0}\partial\text{xy}_{33}+K_{22}A_0\partial\text{x}_{1}\partial\text{yy}_{01}+(K_{23}A_1+K_{24}B_0)\partial\text{x}_{1}\partial\text{yy}_{02}+(K_{25}A_2+K_{26}B_1)\partial\text{x}_{1}\partial\text{yy}_{03} \ldots$\\
\end{itemize}
\end{flushleft}

While this candidate for $\alpha^2$ is too general to compute with --- the resulting jacobiator $[\pi,\pi]$ would be around 100k lines long --- we can evaluate it on the space of $\g^*\wedge\g^*\wedge\g^*$ spanned by $[\beta,\beta]$ to determine some of the $K_i$.  The jacobiator of the candidate $\pi=(\beta^1+\alpha^1)+(\beta^2+\alpha^2)$ given above is applied to the 28 vectors below:
\\

\noindent $\{(\text{da}_2,\text{dx}_1,\text{dy}_0),( \text{dx}_0,\text{dx}_1,\text{dy}_0),( \text{dx}_0,\text{dx}_2,\text{dy}_0),$
$( \text{dx}_1,\text{dx}_2,\text{dy}_0),( \text{da}_3,\text{dx}_0,\text{dy}_1),( \text{dx}_0,\text{dx}_2,\text{dy}_1),$\\
$( \text{dx}_1,\text{dx}_2,\text{dy}_1),( \text{dx}_0,\text{dx}_3,\text{dy}_1),( \text{dx}_0,\text{dy}_0,\text{dy}_1),$
$(\text{dx}_1,\text{dx}_2,\text{dy}_2), ( \text{da}_2,\text{dx}_3,\text{dy}_2),( \text{dx}_0,\text{dx}_3,\text{dy}_2),$\\
$( \text{dx}_1,\text{dx}_3,\text{dy}_2),( \text{dx}_0,\text{dy}_0,\text{dy}_2),( \text{dx}_1,\text{dy}_0,\text{dy}_2),
( \text{dx}_0,\text{dy}_1,\text{dy}_2),( \text{dx}_1,\text{dy}_1,\text{dy}_2),( \text{dx}_2,\text{dy}_1,\text{dy}_2),$\\
$(\text{dx}_3,\text{dy}_1,\text{dy}_2),( \text{da}_3,\text{dx}_2,\text{dy}_3),( \text{dx}_1,\text{dx}_2,\text{dy}_3),
(\text{dx}_1,\text{dx}_3,\text{dy}_3),( \text{dx}_2,\text{dx}_3,\text{dy}_3),( \text{dx}_1,\text{dy}_0,\text{dy}_3),$\\
$( \text{dx}_2,\text{dy}_0,\text{dy}_3),( \text{dx}_2,\text{dy}_1,\text{dy}_3),( \text{dx}_3,\text{dy}_1,\text{dy}_3),(
      \text{dx}_3,\text{dy}_2,\text{dy}_3)\}$
\\

The condition that $[\pi,\pi]$ vanishes on each of these vectors imposes linear constraints on the coefficients $K_i$ of $\alpha^2$:    
$$
\left\{\begin{array}{l}
2L_{33}K_{8}+L_{33}K_{9}-2L_{33}K_{30}-L_{33}K_{31}+2L_{10}K_{97}+L_{10}K_{98}-2l_0=0,\\
L_{33}K_{8}-L_{33}K_{9}-L_{33}K_{30}+L_{33}K_{31}+L_{10}K_{97}-L_{10}K_{98}-l_0-18=0,\\
 L_{33}K_{11}+L_{33}K_{12}-L_{33}K_{57}-L_{33}K_{58}+L_{10}K_{99}+L_{10}K_{100}-l_0=0,\\
 2L_{33}K_{11}-L_{33}K_{12}-2L_{33}K_{57}+L_{33}K_{58}+2L_{10}K_{99}-L_{10}K_{100}-2l_0-18=0,\\
\vdots \\
\vdots  \quad \mbox{(A total of 44 equations linear in $K_i$)} \\
\vdots \\

\end{array}\right.$$

There are a total of 132 coefficients $K_i$ in the above linear system of equations (in the $K_i$).  Using Maple, we eliminate 44 of the $K$-variables and two of the $l$-variables to simplify $\alpha^2$ to a bivector with 156 degrees of freedom.  Moreover, the resulting jacobiator $[\beta+\alpha,\beta+\alpha]$ now vanishes on the 28-dimensional subspace of $\g\wedge\g\wedge\g$ given above (that is, we have now cancelled $[\beta,\beta]$. 

The next step is to determine restrictions on the $K_i$ to make $[\pi,\pi]=0$ on the entire $\g^*\wedge\g^*\wedge\g^*$ and then on $\bigwedge^3(\g\times Z)^*$.  This is done by iterating the process above --- each time solving a new set of linear equations.  The end result is a Poisson bivector $\pi=(\beta^1+\alpha^1)+(\beta^2+\alpha^2)$ that extends the Poisson structure on $V/\ZZ_3$.

\bibliography{references}
\appendix
\chapter{Representations of $\mathfrak{\MakeLowercase{sl}}_2(\CC)$}
\label{sec:AppendixSL2}

The purpose of this section is to fix notation concerning the representation theory of the complex semi-simple Lie algebra $\slt(\CC)$.  We follow the treatment in \cite{Fulton91}. Denote the standard basis of $\slt(\CC)$ by $\{H,E,F\}$ where

$$[H,E]=2E, \mbox{\qquad   } [H,F]=-2F, \mbox{\qquad   } [E,F]=H $$

The standard representation of $\slt(\CC)$ is given by $\rho_2:\slt(\CC)\to{\rm End}(\CC^2)$, where

$$ \rho(H)=\left(\begin{array}{cc} 1 & 0 \\ 0 & -1 \end{array}\right), \mbox{\qquad   }
      \rho(E)=\left(\begin{array}{cc} 0 & 1 \\ 0 & 0 \end{array}\right), \mbox{\qquad   }
      \rho(F)=\left(\begin{array}{cc} 0 & 0 \\ 1 & 0 \end{array}\right), \mbox{\qquad   } $$
      
 In each dimension, there is exactly one irreducible representation.  We denote the $(n+1)$-dimensional representation by $V_n$ with corresponding basis $\{v_0,\ldots,v_n\}$.  The representation $\rho_n:\slt(\CC)\to{\rm End}(V_n)$ is given as follows:
 $$ \rho_n(H)(v_k)=(n-2k)v_k, \mbox{ \qquad  } \rho_n(E)(v_k)=(n-k)v_{k+1} \mbox{  \qquad } \rho_n(F)(v_k)=kv_{k-1} $$
 
 The Cartan element $H$ acts diagonally, decomposing $V_n$ into eigenspaces $V_n=\bigoplus_{i=0}^n E_i$, and the action of $\slt(\CC)$ acts on each one dimensional eigenspace:
 $$
 \xymatrix{
0  & E_0 \ar@/^/[l]^F \ar@(ru,ul)[]|{H} \ar@/^/[r]^{E} & E_1 \ar@(ru,ul)[]|{H} \ar@/^/[r]^E \ar@/^/[l]^F & E_2 \ar@(ru,ul)[]|{H} \ar@/^/[r]^E \ar@/^/[l]^F & \ldots\ldots  \ar@/^/[l]^F  \ar@/^/[r]^E & E_{n-1} \ar@(ru,ul)[]|{H} \ar@/^/[r]^E \ar@/^/[l]^F & E_n \ar@(ru,ul)[]|{H} \ar@/^/[l]^F \ar@/^/[r]^{E}& 0 
 }$$
 
 As $\slt(\CC)$ is semi-simple, a finite dimensional representation $V$ is the direct sum of the irreducible representations described above: $V=\bigoplus V_{d_i}$, for $d_i\in\NN$.  We will often have $\slt(\CC)$-equivariant linear maps of two representations and use Schur's Lemma to deconstruct the map:
 
 \begin{lemma}[Schur] Suppose that $V$ and $W$ are irreducible representations of a complex Lie algebra $\g$ and $\phi: V\to W$ is a $\g$-module homomorphism. Then
\begin{enumerate}
\item Either $\phi=0$ or $\phi$ is an isomorphism.
\item Moreover, if $V=W$,  then $\phi=\lambda {\rm Id}$ for $\lambda\in\CC$.
 \end{enumerate}
 \end{lemma}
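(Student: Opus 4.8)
The plan is to prove both parts by the standard submodule argument, using only that $V$ and $W$ are irreducible and, for part (2), that $\CC$ is algebraically closed and the representations are finite dimensional. First I would observe that for any $\g$-module homomorphism $\phi\colon V\to W$, both $\ker\phi\subseteq V$ and ${\rm im}\,\phi\subseteq W$ are $\g$-submodules: this is immediate from $\g$-equivariance, since $\phi(v)=0$ forces $\phi(X\cdot v)=X\cdot\phi(v)=0$ for every $X\in\g$, and likewise ${\rm im}\,\phi$ is stable under the action.

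For part (1), since $V$ is irreducible, $\ker\phi$ is either $0$ or all of $V$. In the latter case $\phi=0$. In the former case $\phi$ is injective, so ${\rm im}\,\phi\neq 0$; as $W$ is irreducible and ${\rm im}\,\phi$ is a nonzero submodule, ${\rm im}\,\phi=W$, so $\phi$ is surjective as well, hence an isomorphism.

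For part (2), suppose $V=W$. If $\phi=0$ take $\lambda=0$; otherwise, because $V$ is finite dimensional and $\CC$ is algebraically closed, the linear operator $\phi$ has an eigenvalue $\lambda\in\CC$, say $\phi(v)=\lambda v$ with $v\neq 0$. Then $\phi-\lambda\,{\rm Id}\colon V\to V$ is again a $\g$-module homomorphism whose kernel is nonzero (it contains $v$), so by part (1) it is not an isomorphism and must be the zero map; that is, $\phi=\lambda\,{\rm Id}$.

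I do not expect a genuine obstacle here; the only point requiring care is that part (2) silently relies on $V$ being finite dimensional — which holds throughout this appendix — and on $\CC$ being algebraically closed, in order to produce the eigenvalue $\lambda$. Without these hypotheses $\phi-\lambda\,{\rm Id}$ need not have nontrivial kernel for any scalar, and the conclusion can fail.
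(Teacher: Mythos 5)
Your proof is correct: it is the standard argument (kernel and image are submodules, irreducibility forces them to be trivial or everything, and for part (2) an eigenvalue exists by finite dimensionality and algebraic closedness of $\CC$, so $\phi-\lambda\,{\rm Id}$ has nonzero kernel and must vanish by part (1)). The paper itself states this lemma without proof, citing the standard treatment in \cite{Fulton91}, so there is nothing to compare beyond noting that your caveat about finite dimensionality is appropriate, since all representations in the appendix are finite dimensional.
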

 This implies the following corollary, which we use liberally throughout chapters 6 and 7:

\begin{corollary} 
\label{cor:SchurEnd}
Let $V$ and $V'$ be finite dimensional $\slt(\CC)$ representations, and $\phi:V\to V'$ be a $\g$-module homomorphism.  Denote the decomposition of $V=\bigoplus_{i}V_{d_i}$ and $V'=\bigoplus_{j}V_{c_j}$ into their irreducible representations.  Then $\phi$ decomposes as
$$\phi=\sum_{i}\phi|_{V_{d_i}}, \mbox{ \qquad where   }\phi|_{V_{d_i}}= \sum_{d_i=c_j}\lambda_{(d_i,c_j)}{\rm Id}   \mbox{\quad and  } \lambda_{(d_i,c_j)}\in\CC$$
\end{corollary}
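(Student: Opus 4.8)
The plan is to deduce the statement directly from Schur's Lemma by decomposing $\phi$ along the canonical inclusions and projections attached to the two given isotypic decompositions. First I would fix, for each summand $V_{d_i}\subseteq V$, the inclusion $\iota_i\!:V_{d_i}\inj V$ and the projection $\varpi_i\!:V\sur V_{d_i}$; these are $\slt(\CC)$-module maps satisfying $\sum_i\iota_i\circ\varpi_i={\rm Id}_V$. Similarly I would fix $\iota'_j\!:V_{c_j}\inj V'$ and $\varpi'_j\!:V'\sur V_{c_j}$ with $\sum_j\iota'_j\circ\varpi'_j={\rm Id}_{V'}$. Writing $\phi=\sum_{i,j}\iota'_j\circ\varpi'_j\circ\phi\circ\iota_i\circ\varpi_i$ then reduces the problem to understanding the ``block'' maps $\phi_{ij}:=\varpi'_j\circ\phi\circ\iota_i\!:V_{d_i}\to V_{c_j}$.

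Next, each $\phi_{ij}$ is a homomorphism between irreducible $\slt(\CC)$-modules, so by part (1) of Schur's Lemma it is either zero or an isomorphism; since the finite-dimensional irreducible $\slt(\CC)$-modules are classified by their dimension (Appendix \ref{sec:AppendixSL2}), $\phi_{ij}$ can be nonzero only when $d_i=c_j$. In that case I would invoke the standard models: both $V_{d_i}$ and $V_{c_j}$ are identified with the standard $(d_i+1)$-dimensional representation via their distinguished bases $\{v_0,\dots,v_{d_i}\}$, so $\phi_{ij}$ becomes an endomorphism of an irreducible module, and part (2) of Schur's Lemma gives $\phi_{ij}=\lambda_{(d_i,c_j)}{\rm Id}$ for a unique scalar $\lambda_{(d_i,c_j)}\in\CC$. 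Reassembling, $\phi|_{V_{d_i}}:=\phi\circ\iota_i=\sum_{j:\,c_j=d_i}\iota'_j\circ(\lambda_{(d_i,c_j)}{\rm Id})$ and $\phi=\sum_i\phi|_{V_{d_i}}$, which is precisely the claimed decomposition.

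I do not expect a genuine obstacle here; the only points needing care are bookkeeping ones. First, when several summands of $V'$ are isomorphic to a given $V_{d_i}$ one must retain all of them, which is why $\phi|_{V_{d_i}}$ is a sum over the matching indices rather than a single scalar map; the notation ``$\sum_{d_i=c_j}\lambda_{(d_i,c_j)}{\rm Id}$'' should be read as the map $V_{d_i}\to\bigoplus_{j:\,c_j=d_i}V_{c_j}$ sending $v\mapsto(\lambda_{(d_i,c_j)}v)_j$. Second, the symbol ``${\rm Id}$'' between $V_{d_i}$ and $V_{c_j}$ is only meaningful after the choice of standard bases fixing the isomorphism $V_{d_i}\isom V_{c_j}$, so I would make that convention explicit at the outset in order that the scalars $\lambda_{(d_i,c_j)}$ be well defined. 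With these conventions in place, the proof is a routine unwinding of Schur's Lemma together with the classification of irreducible $\slt(\CC)$-representations recalled in the appendix.
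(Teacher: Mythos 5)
Your argument is correct and is exactly the intended one: the paper states Corollary \ref{cor:SchurEnd} without proof as an immediate consequence of Schur's Lemma, and your block decomposition via the inclusions/projections, with part (1) killing blocks with $d_i\neq c_j$ and part (2) (after fixing the standard-basis identifications $V_{d_i}\isom V_{c_j}$) forcing scalar multiples of ${\rm Id}$, is precisely the routine unwinding the paper leaves implicit. Your remarks on summing over repeated isotypic factors and on fixing the isomorphisms so the scalars $\lambda_{(d_i,c_j)}$ are well defined are the right points of care and introduce no gap.
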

 
This corollary is often applied to the tensor product, symmetric product, and wedge product of representations:

\begin{proposition} Let $V_n$ be an irreducible representation of $\slt(\CC)$. The decompositions of the following representations into irreducible sub-representations are:

$$V_n\otimes V_n = \bigoplus_{k=0}^n V_{2(n-k)} \mbox{ \qquad and \qquad }  V_n\wedge V_n  = \bigoplus_{k=0}^{\lfloor \frac{n-1}{2} \rfloor } V_{2(n-(2k+1))} $$

\end{proposition}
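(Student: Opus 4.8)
The plan is to prove both decompositions by weight-space bookkeeping, i.e. by computing formal characters with respect to the maximal torus generated by $H$, using the facts recalled in Appendix~\ref{sec:AppendixSL2}: every finite-dimensional $\slt(\CC)$-module is semisimple and is determined up to isomorphism by its collection of $H$-eigenvalue multiplicities, and in any module $U$ the multiplicity of the irreducible $V_m$ equals $\dim U_m - \dim U_{m+2}$, where $U_j$ denotes the $H$-eigenspace with eigenvalue $j$.

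First I would treat $V_n \otimes V_n$. Since $H$ acts on $V_n$ with eigenvalues $n, n-2, \ldots, -n$, each of multiplicity one, the $H$-eigenvalues on $V_n \otimes V_n$ are the sums $(n-2i)+(n-2j)$ with $0 \le i,j \le n$, so $\dim(V_n \otimes V_n)_m$ is the number of such pairs; this is the familiar ``tent'' function, strictly increasing as $m$ runs through $2n, 2n-2, \ldots, 0$ and symmetric about $0$. Subtracting consecutive values gives $\dim(V_n\otimes V_n)_m - \dim(V_n\otimes V_n)_{m+2} = 1$ for $m = 2n, 2n-2, \ldots, 0$ and $0$ otherwise, whence $V_n \otimes V_n \isom \bigoplus_{k=0}^{n} V_{2(n-k)}$; the identity $\sum_{k=0}^n (2(n-k)+1) = (n+1)^2$ serves as a dimension check.

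For the wedge square I would use $V_n \otimes V_n \isom \Sym^2 V_n \oplus \bigwedge^2 V_n$ together with the standard character identities $\chi_{\Sym^2 V_n}(t) = \tfrac12\big(\chi_{V_n}(t)^2 + \chi_{V_n}(t^2)\big)$ and $\chi_{\bigwedge^2 V_n}(t) = \tfrac12\big(\chi_{V_n}(t)^2 - \chi_{V_n}(t^2)\big)$. Here $\chi_{V_n}(t) = t^{n}+t^{n-2}+\cdots+t^{-n}$, so $\chi_{V_n}(t)^2 = \sum_{k=0}^n \chi_{V_{2(n-k)}}(t)$ by the tensor decomposition just established, while $\chi_{V_n}(t^2) = t^{2n}+t^{2n-4}+\cdots+t^{-2n}$. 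The one genuine computation is the identity $\sum_{k=0}^n (-1)^k \chi_{V_{2(n-k)}}(t) = \chi_{V_n}(t^2)$, which I would verify by telescoping via $\chi_{V_{m}}(t) - \chi_{V_{m-2}}(t) = t^{m}+t^{-m}$ (equivalently, by summing the geometric series in the closed form $\chi_{V_m}(t) = (t^{m+1}-t^{-m-1})/(t-t^{-1})$). Granting it, $\chi_{\bigwedge^2 V_n}(t) = \sum_{k \text{ odd},\, 0\le k\le n} \chi_{V_{2(n-k)}}(t)$; writing the odd $k$ as $k = 2j+1$ with $0 \le j \le \lfloor (n-1)/2\rfloor$ yields $\bigwedge^2 V_n \isom \bigoplus_{j=0}^{\lfloor (n-1)/2\rfloor} V_{2(n-(2j+1))}$, and $\Sym^2 V_n$ collects the even $k$.

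An alternative to the character identity, which I would mention since it also underlies Proposition~\ref{prop:RepIdeal}, uses the classical transvectants: realizing $V_n \isom \Sym^n(\CC^2)$, for each $k$ there is an $\slt(\CC)$-equivariant surjection $\tau_k : V_n \otimes V_n \to V_{2(n-k)}$ built from $k$ applications of the $\slt(\CC)$-invariant polarization operator $\partial_{x_1}\partial_{y_2} - \partial_{y_1}\partial_{x_2}$ followed by restriction to the diagonal; since $V_{2(n-k)}$ occurs exactly once in $V_n\otimes V_n$, $\tau_k$ is, up to scalar, the projection onto that summand. The swap $u\otimes v \mapsto v\otimes u$ commutes with the diagonal $\slt(\CC)$-action, hence acts by a scalar $\epsilon_k = \pm 1$ on the summand $V_{2(n-k)}$; comparing with the manifest relation $\tau_k \circ (\text{swap}) = (-1)^k\,\tau_k$ forces $\epsilon_k = (-1)^k$, so $V_{2(n-k)}$ lies in $\Sym^2 V_n$ for $k$ even and in $\bigwedge^2 V_n$ for $k$ odd, giving the same conclusion. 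The main obstacle in either route is exactly this symmetric-versus-antisymmetric split — weight multiplicities alone cannot detect it — so the heart of the argument is the parity bookkeeping (the sign in $\chi_{V_n}(t^2)$, equivalently the skew-symmetry of the transvectant under interchanging the two factors), with everything else routine.
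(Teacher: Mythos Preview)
Your argument is correct. The paper, however, does not actually prove this proposition: it is stated in Appendix~\ref{sec:AppendixSL2} as a standard fact about $\slt(\CC)$-representations (the Clebsch--Gordan rule and its symmetric/exterior refinement), immediately followed by an example, with no proof given. So there is nothing to compare against; you have supplied a complete proof where the paper simply cites the classical result. Both of your routes---the character/weight-multiplicity computation with the $\chi(t^2)$ identity, and the transvectant argument identifying the sign of the swap on each summand---are standard and correct ways to establish the parity split between $\Sym^2 V_n$ and $\bigwedge^2 V_n$.
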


\begin{example} A simple, yet typical application of this is the case $n=1$, then: $V_1\otimes V_1=V_2\oplus V_0$ and $V_1\wedge V_1=V_0$, and so any $\slt(\CC)$-equivariant map $\phi:V_1\otimes V_1\to V_1\wedge V_1$ sends 
$$(v_0\otimes v_1+v_1\otimes v_0) \mapsto \lambda v_0\wedge v_1$$
and sends the complementary subspace to zero.  That is, 
$$\phi: \left\{\begin{array}{l} v_0\otimes v_0 \mapsto 0 \\
					v_0\otimes v_1 \mapsto \frac{\lambda}{2} v_0\wedge v_1 \\
					v_1\otimes v_0 \mapsto \frac{\lambda}{2} v_0\wedge v_1 \\
					v_1\otimes v_1 \mapsto 0 
\end{array}\right.$$
\end{example}

\begin{remark}  Using Corollary \ref{cor:SchurEnd} as in the example above, one can actually explicitly determine the extensions up to order $(n-1)$ in Chapters 6 and 7.

\end{remark}

\chapter{Macaulay2 Computations}
\label{sec:AppendixM2}

This appendix contains an explicit description, using Macaulay2 \cite{M2}, of the extension of the Poisson structure on $V/\ZZ_3$ given in Chapter 7 to $\RR^{78}$.  A sketch of the procedure used to construct this Poisson structure is also given (using the syntax/notation of Macaulay2), as it is likely that such a process is applicable to other examples (e.g. $V/\ZZ_n$ for other $n$, or for the $(2,1,1)$-resonance action considered in \cite{Egilsson95,Davis02}).  Throughout, we use $\mathbb{Q}$ for the coefficient ring to avoid decimals and issues associated to floating point.  \\

First define the coordinate ring of the embedding space:
\begin{itemize}
\item[$\text{i1}:$] $\text{T=QQ[a$_{0}$,a$_{1}$,a$_{2}$,a$_{3}$,x$_{0}$,x$_{1}$,x$_{2}$,x$_{3}$,y$_{0}$,y$_{1}$,y$_{2}$,y$_{3}$,aa$_{01}$,$\ldots$,aa$_{23}$,ax$_{00}$,$\ldots$,ax$_{33}$,ay$_{00}$,$\ldots$,ay$_{33}$,}$\\ $\text{xx$_{01}$,$\ldots$,xx$_{23}$,yy$_{01}$,$\ldots$,yy$_{23}$,xy$_{00}$,$\ldots$,xy$_{33}$];}$
\end{itemize}
To realize the quotient as an embedded subvariety of $\RR^{78}$, we will examine the graph of the embedding in $V\times\RR^{78}$.  Define the coordinate ring of the product $V\times\RR^{78}$ and a minimal generating set of $\Sym(V)^{\ZZ_n}$:
\begin{itemize}
\item[$\text{i2}:$] $\text{R}=\text{T}[z,\zb,w,\wb ];$
\end{itemize}
\begin{itemize}
\item[$\text{i3}:$]  $\text{HilbertBasis}=\text{ideal}(\{z \zb,w \wb,\zb w,z \wb,w^{3},z w^{2},z^{2}w,z^{3},\wb^{3},\zb \wb^{2},\zb^{2}\wb,\zb^{3}\});$
\end{itemize}
The image of the embedding is then given by $V(I)\isom Q$, where $Q=V/\ZZ_3$:
\begin{itemize}
\item[$\text{i4}:$]  $\text{Inv}=\text{matrix}\{\{z \zb,w \wb,\zb w,z \wb,w^{3},z w^{2},z^{2}w,z^{3},\wb^{3},\zb \wb^{2},\zb^{2}\wb,\zb^{3},0,0,0,0,0,\ldots\}\};$ \\
$I=\text{ideal}(\text{vars}(\text{T})-\text{Inv});$ \\
$Q=\text{R}/I ;$ \\
$\text{use R};$ 
\end{itemize}
We then define the Poisson bracket $\text{Brac}$ on $\Sym(V)$
\begin{itemize}
\item[$\text{i5}:$] $\text{Brac} = $\\
$(f,g) \to \text{diff}(z,f)\text{diff}(\zb,g)-\text{diff}(\zb,f)\text{diff}(z,g)+\text{diff}(w,f)\text{diff}(\wb,g)-\text{diff}(\wb,f)\text{diff}(w,g);$
\end{itemize}
and the skew-commutative ring of polynomial multivector fields on $\RR^{78}$
\begin{itemize}
\item[$\text{i6}:$] $ \text{XT}=\text{T}[\text{$\partial$a$_0$,$\partial$a$_1$,$\partial$a$_2$,$\partial$a$_3$,$\partial$x$_0$,$\partial$x$_1$,\ldots,$\partial$xy$_{32}$,$\partial$xy$_{33}$, SkewCommutative} \Rightarrow \text{true}];$
\end{itemize}
where $\partial\text{u}$ is the skew-commuting variable $\vect{\text{u}}$ for $\text{u}\in\text{T}$.  We may then extend the Poisson bracket on $Q$ to a bivector on $\RR^{78}$:
\begin{itemize}
\item[$\text{i7}:$]  $ \text{phi}= \text{map}(\text{XT},\text{T},\{\text{$\partial$a$_0$,$\partial$a$_1$,$\partial$a$_2$,$\partial$a$_3$,$\partial$x$_0$,$\ldots,\partial$xy$_{33}$}\}); $\\
 $ \text{psi} = f \to \text{lift}(\text{promote}(f,Q),\text{T}); $ \\
 $ \text{Br} = (f,g) \to \text{lift}(\text{promote}(\text{Brac}(f,g),Q),\text{T})\text{phi}(\text{psi}(f))\text{phi}(\text{psi}(g)); $
\end{itemize}
Br extends the bracket $\{f,g\}_Q$ for any $f,g\in\Sym(V)^{\ZZ_n}$, which is then used to construct the bivector Pi.
\begin{itemize}
\item[$\text{i8}:$] $\text{H} = \text{flatten entries Inv};  $
\item[$\text{i9}:$]  $ \beta= (1/2)(\text{fold (flatten table(H,H,Br)},(i,j)\to i+j)) $
\item[$\text{o9}:$] $\text{a$_2$}\text{$\partial$a$_0$}\text{$\partial$a$_2$}-\text{a$_2$}\text{$\partial$a$_1$}\text{$\partial$a$_2$}-\text{a$_3$}\text{$\partial$a$_0$}\text{$\partial$a$_3$}+\text{a$_3$}\text{$\partial$a$_1$}\text{$\partial$a$_3$}+(\text{a$_0$}-\text{a$_1$})\text{$\partial$a$_2$}\text{$\partial$a$_3$}-3\text{x$_0$}\text{$\partial$a$_1$}\text{$\partial$x$_0$}-3\text{x$_1$}\text{$\partial$a$_3$}\text{$\partial$x$_0$}-\text{x$_1$}\text{$\partial$a$_0$}\text{$\partial$x$_1$}-2\text{x$_1$}\text{$\partial$a$_1$}\text{$\partial$x$_1$}-\text{x$_0$}\text{$\partial$a$_2$}\text{$\partial$x$_1$}-2\text{x$_2$}\text{$\partial$a$_3$}\text{$\partial$x$_1$}-2\text{x$_2$}\text{$\partial$a$_0$}\text{$\partial$x$_2$}-\text{x$_2$}\text{$\partial$a$_1$}\text{$\partial$x$_2$}-2\text{x$_1$}\text{$\partial$a$_2$}\text{$\partial$x$_2$}-\text{x$_3$}\text{$\partial$a$_3$}\text{$\partial$x$_2$}-3\text{x$_3$}\text{$\partial$a$_0$}\text{$\partial$x$_3$}-3\text{x$_2$}\text{$\partial$a$_2$}\text{$\partial$x$_3$}+3\text{y$_0$}\text{$\partial$a$_1$}\text{$\partial$y$_0$}+3\text{y$_1$}\text{$\partial$a$_2$}\text{$\partial$y$_0$}+9\text{a$_1$}^{2}\text{$\partial$x$_0$}\text{$\partial$y$_0$}+6\text{a$_1$}\text{a$_3$}\text{$\partial$x$_1$}\text{$\partial$y$_0$}+3\text{a$_3$}^{2}\text{$\partial$x$_2$}\text{$\partial$y$_0$}+\text{y$_1$}\text{$\partial$a$_0$}\text{$\partial$y$_1$}+2\text{y$_1$}\text{$\partial$a$_1$}\text{$\partial$y$_1$}+2\text{y$_2$}\text{$\partial$a$_2$}\text{$\partial$y$_1$}+\text{y$_0$}\text{$\partial$a$_3$}\text{$\partial$y$_1$}+6\text{a$_1$}\text{a$_2$}\text{$\partial$x$_0$}\text{$\partial$y$_1$}+(\text{a$_1$}^{2}+4\text{a$_2$}\text{a$_3$})\text{$\partial$x$_1$}\text{$\partial$y$_1$}+(2\text{a$_0$}\text{a$_3$}+2\text{a$_1$}\text{a$_3$})\text{$\partial$x$_2$}\text{$\partial$y$_1$}+3\text{a$_3$}^{2}\text{$\partial$x$_3$}\text{$\partial$y$_1$}+2\text{y$_2$}\text{$\partial$a$_0$}\text{$\partial$y$_2$}+\text{y$_2$}\text{$\partial$a$_1$}\text{$\partial$y$_2$}+\text{y$_3$}\text{$\partial$a$_2$}\text{$\partial$y$_2$}+2\text{y$_1$}\text{$\partial$a$_3$}\text{$\partial$y$_2$}+3\text{a$_2$}^{2}\text{$\partial$x$_0$}\text{$\partial$y$_2$}+(2\text{a$_0$}\text{a$_2$}+2\text{a$_1$}\text{a$_2$})\text{$\partial$x$_1$}\text{$\partial$y$_2$}+(\text{a$_0$}^{2}+4\text{a$_2$}\text{a$_3$})\text{$\partial$x$_2$}\text{$\partial$y$_2$}+6\text{a$_0$}\text{a$_3$}\text{$\partial$x$_3$}\text{$\partial$y$_2$}+3\text{y$_3$}\text{$\partial$a$_0$}\text{$\partial$y$_3$}+3\text{y$_2$}\text{$\partial$a$_3$}\text{$\partial$y$_3$}+3\text{a$_2$}^{2}\text{$\partial$x$_1$}\text{$\partial$y$_3$}+6\text{a$_0$}\text{a$_2$}\text{$\partial$x$_2$}\text{$\partial$y$_3$}+9\text{a$_0$}^{2}\text{$\partial$x$_3$}\text{$\partial$y$_3$}$
\end{itemize}
The bivector $\beta$ decomposes into homogeneous components $\beta=\beta^1+\beta^2$:
\begin{itemize}
\item[$\text{i10}:$] $\text{tt}=\text{matrix}\{\{\text{a}_0,\text{a}_1,\text{a}_2,\text{a}_3,\text{x}_0,\text{x}_1,\text{x}_2,\text{x}_3,\text{y}_0,\text{y}_1,\text{y}_2,\text{y}_3\}\};$ \\
M2=ideal(transpose(tt)*tt);\\
XM2=XT/promote(M2,XT);\\
use XT;
\item[$\text{i11}:$] $\beta^1=\text{lift(promote($\beta$,XM2),XT)}$
\item[$\text{o11}:$] $\text{a$_2$} \text{$\partial$a$_0$} \text{$\partial$a$_2$}-\text{a$_2$} \text{$\partial$a$_1$} \text{$\partial$a$_2$}-\text{a$_3$}      \text{$\partial$a$_0$} \text{$\partial$a$_3$}+\text{a$_3$} \text{$\partial$a$_1$} \text{$\partial$a$_3$}+(\text{a$_0$}-\text{a$_1$})      \text{$\partial$a$_2$} \text{$\partial$a$_3$}-3 \text{x$_0$} \text{$\partial$a$_1$} \text{$\partial$x$_0$}-3 \text{x$_1$} \text{$\partial$a$_3$}      \text{$\partial$x$_0$}-\text{x$_1$} \text{$\partial$a$_0$} \text{$\partial$x$_1$}-2 \text{x$_1$} \text{$\partial$a$_1$}      \text{$\partial$x$_1$}-\text{x$_0$} \text{$\partial$a$_2$} \text{$\partial$x$_1$}-2 \text{x$_2$} \text{$\partial$a$_3$} \text{$\partial$x$_1$}-2      \text{x$_2$} \text{$\partial$a$_0$} \text{$\partial$x$_2$}-\text{x$_2$} \text{$\partial$a$_1$} \text{$\partial$x$_2$}-2 \text{x$_1$}      \text{$\partial$a$_2$} \text{$\partial$x$_2$}-\text{x$_3$} \text{$\partial$a$_3$} \text{$\partial$x$_2$}-3 \text{x$_3$} \text{$\partial$a$_0$}      \text{$\partial$x$_3$}-3 \text{x$_2$} \text{$\partial$a$_2$} \text{$\partial$x$_3$}+3 \text{y$_0$} \text{$\partial$a$_1$} \text{$\partial$y$_0$}+3      \text{y$_1$} \text{$\partial$a$_2$} \text{$\partial$y$_0$}+\text{y$_1$} \text{$\partial$a$_0$} \text{$\partial$y$_1$}+2 \text{y$_1$}      \text{$\partial$a$_1$} \text{$\partial$y$_1$}+2 \text{y$_2$} \text{$\partial$a$_2$} \text{$\partial$y$_1$}+\text{y$_0$} \text{$\partial$a$_3$}      \text{$\partial$y$_1$}+2 \text{y$_2$} \text{$\partial$a$_0$} \text{$\partial$y$_2$}+\text{y$_2$} \text{$\partial$a$_1$}      \text{$\partial$y$_2$}+\text{y$_3$} \text{$\partial$a$_2$} \text{$\partial$y$_2$}+2 \text{y$_1$} \text{$\partial$a$_3$} \text{$\partial$y$_2$}+3      \text{y$_3$} \text{$\partial$a$_0$} \text{$\partial$y$_3$}+3 \text{y$_2$} \text{$\partial$a$_3$} \text{$\partial$y$_3$}$
\item[$\text{i12}:$] $\beta^2=\beta-\beta^1$
\item[$\text{o12}:$] $9 \text{a$_1$}^{2} \text{$\partial$x$_0$} \text{$\partial$y$_0$}+6 \text{a$_1$} \text{a$_3$} \text{$\partial$x$_1$}
      \text{$\partial$y$_0$}+3 \text{a$_3$}^{2} \text{$\partial$x$_2$} \text{$\partial$y$_0$}+6 \text{a$_1$} \text{a$_2$}
      \text{$\partial$x$_0$} \text{$\partial$y$_1$}+(\text{a$_1$}^{2}+4 \text{a$_2$} \text{a$_3$}) \text{$\partial$x$_1$}
      \text{$\partial$y$_1$}+(2 \text{a$_0$} \text{a$_3$}+2 \text{a$_1$} \text{a$_3$}) \text{$\partial$x$_2$} \text{$\partial$y$_1$}+3
      \text{a$_3$}^{2} \text{$\partial$x$_3$} \text{$\partial$y$_1$}+3 \text{a$_2$}^{2} \text{$\partial$x$_0$} \text{$\partial$y$_2$}+(2
      \text{a$_0$} \text{a$_2$}+2 \text{a$_1$} \text{a$_2$}) \text{$\partial$x$_1$}
      \text{$\partial$y$_2$}+(\text{a$_0$}^{2}+4 \text{a$_2$} \text{a$_3$}) \text{$\partial$x$_2$} \text{$\partial$y$_2$}+6
      \text{a$_0$} \text{a$_3$} \text{$\partial$x$_3$} \text{$\partial$y$_2$}+3 \text{a$_2$}^{2} \text{$\partial$x$_1$}
      \text{$\partial$y$_3$}+6 \text{a$_0$} \text{a$_2$} \text{$\partial$x$_2$} \text{$\partial$y$_3$}+9 \text{a$_0$}^{2}
      \text{$\partial$x$_3$} \text{$\partial$y$_3$}$
\end{itemize}
We define the (even degree) Schouten bracket (on 2-vector fields), denoted by SBr, on $\RR^{78}$:
\begin{itemize}
\item[$\text{i13}:$]  $\text{Sbr}= (A,B) \to (\text{diff($\partial$a$_0$,$A$)diff(a$_0$,$B$)}+\text{diff($\partial$a$_0$,$B$)diff(a$_0$,$A$)})+\ldots$\\$\quad\qquad\qquad\ldots+(\text{diff($\partial$ay$_{33}$,$A$)diff(ay$_{33}$,$B$)}+\text{diff($\partial$ay$_{33}$,$B$)diff(ay$_{33}$,$A$)});$
\end{itemize}
We are now able to verify several facts from Chapter 6.
\begin{itemize}
\item[$\text{i14}:$] $\text{Sbr($\beta^1$,$\beta^1$)}==0 \text{ and Sbr($\beta^2$,$\beta^2$)}==0 $
\item[$\text{o14}:$] true
\item[$\text{i15}:$] jacobiator=Sbr($\beta$,$\beta$)
\item[$\text{o15}:$] \begin{flushleft} $(3 \text{a$_0$} \text{a$_1$}-3 \text{a$_2$} \text{a$_3$}) \text{$\partial$a$_2$} \text{$\partial$x$_1$}
      \text{$\partial$y$_0$}+(9 \text{a$_3$} \text{x$_0$}-9 \text{a$_1$} \text{x$_1$}) \text{$\partial$x$_0$} \text{$\partial$x$_1$}
      \text{$\partial$y$_0$}+(9 \text{a$_3$} \text{x$_1$}-9 \text{a$_1$} \text{x$_2$}) \text{$\partial$x$_0$} \text{$\partial$x$_2$}
      \text{$\partial$y$_0$}+(3 \text{a$_3$} \text{x$_2$}-3 \text{a$_1$} \text{x$_3$}) \text{$\partial$x$_1$} \text{$\partial$x$_2$}
      \text{$\partial$y$_0$}+(-3 \text{a$_0$} \text{a$_1$}+3 \text{a$_2$} \text{a$_3$}) \text{$\partial$a$_3$} \text{$\partial$x$_0$}
      \text{$\partial$y$_1$}+(3 \text{a$_3$} \text{x$_0$}+3 \text{a$_0$} \text{x$_1$}-3 \text{a$_1$} \text{x$_1$}-3
      \text{a$_2$} \text{x$_2$}) \text{$\partial$x$_0$} \text{$\partial$x$_2$} \text{$\partial$y$_1$}+(-\text{a$_3$} \text{x$_1$}+2
      \text{a$_0$} \text{x$_2$}+\text{a$_1$} \text{x$_2$}-2 \text{a$_2$} \text{x$_3$}) \text{$\partial$x$_1$}
      \text{$\partial$x$_2$} \text{$\partial$y$_1$}+(9 \text{a$_3$} \text{x$_1$}-9 \text{a$_1$} \text{x$_2$}) \text{$\partial$x$_0$}
      \text{$\partial$x$_3$} \text{$\partial$y$_1$}+(9 \text{a$_2$} \text{y$_0$}-9 \text{a$_1$} \text{y$_1$}) \text{$\partial$x$_0$}
      \text{$\partial$y$_0$} \text{$\partial$y$_1$}+(2 \text{a$_3$} \text{x$_0$}-\text{a$_0$} \text{x$_1$}-2 \text{a$_1$}
      \text{x$_1$}+\text{a$_2$} \text{x$_2$}) \text{$\partial$x$_1$} \text{$\partial$x$_2$} \text{$\partial$y$_2$}+(-3 \text{a$_0$}
      \text{a$_1$}+3 \text{a$_2$} \text{a$_3$}) \text{$\partial$a$_2$} \text{$\partial$x$_3$} \text{$\partial$y$_2$}+(9 \text{a$_0$}
      \text{x$_1$}-9 \text{a$_2$} \text{x$_2$}) \text{$\partial$x$_0$} \text{$\partial$x$_3$} \text{$\partial$y$_2$}+(3 \text{a$_3$}
      \text{x$_1$}+3 \text{a$_0$} \text{x$_2$}-3 \text{a$_1$} \text{x$_2$}-3 \text{a$_2$} \text{x$_3$})
      \text{$\partial$x$_1$} \text{$\partial$x$_3$} \text{$\partial$y$_2$}+(9 \text{a$_2$} \text{y$_1$}-9 \text{a$_1$} \text{y$_2$})
      \text{$\partial$x$_0$} \text{$\partial$y$_0$} \text{$\partial$y$_2$}+(3 \text{a$_2$} \text{y$_0$}+3 \text{a$_0$} \text{y$_1$}-3
      \text{a$_1$} \text{y$_1$}-3 \text{a$_3$} \text{y$_2$}) \text{$\partial$x$_1$} \text{$\partial$y$_0$} \text{$\partial$y$_2$}+(3
      \text{a$_2$} \text{y$_2$}-3 \text{a$_1$} \text{y$_3$}) \text{$\partial$x$_0$} \text{$\partial$y$_1$}
      \text{$\partial$y$_2$}+(-\text{a$_2$} \text{y$_1$}+2 \text{a$_0$} \text{y$_2$}+\text{a$_1$} \text{y$_2$}-2
      \text{a$_3$} \text{y$_3$}) \text{$\partial$x$_1$} \text{$\partial$y$_1$} \text{$\partial$y$_2$}+(2 \text{a$_2$}
      \text{y$_0$}-\text{a$_0$} \text{y$_1$}-2 \text{a$_1$} \text{y$_1$}+\text{a$_3$} \text{y$_2$})
      \text{$\partial$x$_2$} \text{$\partial$y$_1$} \text{$\partial$y$_2$}+(3 \text{a$_0$} \text{y$_0$}-3 \text{a$_3$} \text{y$_1$})
      \text{$\partial$x$_3$} \text{$\partial$y$_1$} \text{$\partial$y$_2$}+(3 \text{a$_0$} \text{a$_1$}-3 \text{a$_2$} \text{a$_3$})
      \text{$\partial$a$_3$} \text{$\partial$x$_2$} \text{$\partial$y$_3$}+(3 \text{a$_0$} \text{x$_0$}-3 \text{a$_2$} \text{x$_1$})
      \text{$\partial$x$_1$} \text{$\partial$x$_2$} \text{$\partial$y$_3$}+(9 \text{a$_0$} \text{x$_1$}-9 \text{a$_2$} \text{x$_2$})
      \text{$\partial$x$_1$} \text{$\partial$x$_3$} \text{$\partial$y$_3$}+(9 \text{a$_0$} \text{x$_2$}-9 \text{a$_2$} \text{x$_3$})
      \text{$\partial$x$_2$} \text{$\partial$x$_3$} \text{$\partial$y$_3$}+(9 \text{a$_2$} \text{y$_1$}-9 \text{a$_1$} \text{y$_2$})
      \text{$\partial$x$_1$} \text{$\partial$y$_0$} \text{$\partial$y$_3$}+(9 \text{a$_0$} \text{y$_1$}-9 \text{a$_3$} \text{y$_2$})
      \text{$\partial$x$_2$} \text{$\partial$y$_0$} \text{$\partial$y$_3$}+(3 \text{a$_2$} \text{y$_1$}+3 \text{a$_0$} \text{y$_2$}-3
      \text{a$_1$} \text{y$_2$}-3 \text{a$_3$} \text{y$_3$}) \text{$\partial$x$_2$} \text{$\partial$y$_1$} \text{$\partial$y$_3$}+(9
      \text{a$_0$} \text{y$_1$}-9 \text{a$_3$} \text{y$_2$}) \text{$\partial$x$_3$} \text{$\partial$y$_1$} \text{$\partial$y$_3$}+(9
      \text{a$_0$} \text{y$_2$}-9 \text{a$_3$} \text{y$_3$}) \text{$\partial$x$_3$} \text{$\partial$y$_2$} \text{$\partial$y$_3$}$
      \end{flushleft}
\end{itemize}
Following the procedure laid out in Chapter 7, we construct the linear bivector $\alpha^1=\alpha^1_\rho+\alpha^1_c$, where $\alpha^1_\rho$ is constructed from the $\g$-representation $Z\isom\g\wedge\g$, and $\alpha^1_c$ is constructed from the ``identity cocycle'' $\g\wedge\g\to Z\isom\g\wedge\g$.  We construct $\alpha^1_\rho$ using an identical process to the process used to extend $\brac_Q$ to $\beta$, obtaining 
\begin{itemize}
\item[$\text{i16}:$]\begin{flushleft} $\alpha_\rho^1=(\text{aa}_{02}+\text{aa}_{12})\partial\text{a}_{2}\partial\text{aa}_{01}+(-\text{aa}_{03}-\text{aa}_{13})\partial\text{a}_{3}\partial\text{aa}_{01}+3\text{ax}_{00}\partial\text{x}_{0}\partial\text{aa}_{01}+(2\text{ax}_{01}-\text{ax}_{11})\partial\text{x}_{1}\partial\text{aa}_{01}+(\text{ax}_{02}-2\text{ax}_{12})\partial\text{x}_{2}\partial\text{aa}_{01}-3\text{ax}_{13}\partial\text{x}_{3}\partial\text{aa}_{01}-3\text{ay}_{00}\partial\text{y}_{0}\partial\text{aa}_{01}+(-2\text{ay}_{01}+\text{ay}_{11})\partial\text{y}_{1}\partial\text{aa}_{01}+(-\text{ay}_{02}+2\text{ay}_{12})\partial\text{y}_{2}\partial\text{aa}_{01}+3\text{ay}_{13}\partial\text{y}_{3}\partial\text{aa}_{01}+\text{aa}_{02}\partial\text{a}_{0}\partial\text{aa}_{02}-\text{aa}_{02}\partial\text{a}_{1}\partial\text{aa}_{02}+(\text{aa}_{01}-\text{aa}_{23})\partial\text{a}_{3}\partial\text{aa}_{02}+(\text{ax}_{00}-\text{ax}_{21})\partial\text{x}_{1}\partial\text{aa}_{02}+(2\text{ax}_{01}-2\text{ax}_{22})\partial\text{x}_{2}\partial\text{aa}_{02}+(3\text{ax}_{02}-3\text{ax}_{23})\partial\text{x}_{3}\partial\text{aa}_{02}-3\text{ay}_{01}\partial\text{y}_{0}\partial\text{aa}_{02}+(-2\text{ay}_{02}+\text{ay}_{21})\partial\text{y}_{1}\partial\text{aa}_{02}+(-\text{ay}_{03}+2\text{ay}_{22})\partial\text{y}_{2}\partial\text{aa}_{02}+3\text{ay}_{23}\partial\text{y}_{3}\partial\text{aa}_{02}-\text{aa}_{03}\partial\text{a}_{0}\partial\text{aa}_{03}+\text{aa}_{03}\partial\text{a}_{1}\partial\text{aa}_{03}+(-\text{aa}_{01}-\text{aa}_{23})\partial\text{a}_{2}\partial\text{aa}_{03}+3\text{ax}_{01}\partial\text{x}_{0}\partial\text{aa}_{03}+(2\text{ax}_{02}-\text{ax}_{31})\partial\text{x}_{1}\partial\text{aa}_{03}+(\text{ax}_{03}-2\text{ax}_{32})\partial\text{x}_{2}\partial\text{aa}_{03}-3\text{ax}_{33}\partial\text{x}_{3}\partial\text{aa}_{03}+(-\text{ay}_{00}+\text{ay}_{31})\partial\text{y}_{1}\partial\text{aa}_{03}+(-2\text{ay}_{01}+2\text{ay}_{32})\partial\text{y}_{2}\partial\text{aa}_{03}+(-3\text{ay}_{02}+3\text{ay}_{33})\partial\text{y}_{3}\partial\text{aa}_{03}+\text{aa}_{12}\partial\text{a}_{0}\partial\text{aa}_{12}-\text{aa}_{12}\partial\text{a}_{1}\partial\text{aa}_{12}+(\text{aa}_{01}+\text{aa}_{23})\partial\text{a}_{3}\partial\text{aa}_{12}-3\text{ax}_{20}\partial\text{x}_{0}\partial\text{aa}_{12}+(\text{ax}_{10}-2\text{ax}_{21})\partial\text{x}_{1}\partial\text{aa}_{12}+(2\text{ax}_{11}-\text{ax}_{22})\partial\text{x}_{2}\partial\text{aa}_{12}+3\text{ax}_{12}\partial\text{x}_{3}\partial\text{aa}_{12}+(-3\text{ay}_{11}+3\text{ay}_{20})\partial\text{y}_{0}\partial\text{aa}_{12}+(-2\text{ay}_{12}+2\text{ay}_{21})\partial\text{y}_{1}\partial\text{aa}_{12}+(-\text{ay}_{13}+\text{ay}_{22})\partial\text{y}_{2}\partial\text{aa}_{12}-\text{aa}_{13}\partial\text{a}_{0}\partial\text{aa}_{13}+\text{aa}_{13}\partial\text{a}_{1}\partial\text{aa}_{13}+(-\text{aa}_{01}+\text{aa}_{23})\partial\text{a}_{2}\partial\text{aa}_{13}+(3\text{ax}_{11}-3\text{ax}_{30})\partial\text{x}_{0}\partial\text{aa}_{13}+(2\text{ax}_{12}-2\text{ax}_{31})\partial\text{x}_{1}\partial\text{aa}_{13}+(\text{ax}_{13}-\text{ax}_{32})\partial\text{x}_{2}\partial\text{aa}_{13}+3\text{ay}_{30}\partial\text{y}_{0}\partial\text{aa}_{13}+(-\text{ay}_{10}+2\text{ay}_{31})\partial\text{y}_{1}\partial\text{aa}_{13}+(-2\text{ay}_{11}+\text{ay}_{32})\partial\text{y}_{2}\partial\text{aa}_{13}-3\text{ay}_{12}\partial\text{y}_{3}\partial\text{aa}_{13}+(-\text{aa}_{02}+\text{aa}_{12})\partial\text{a}_{2}\partial\text{aa}_{23}+(-\text{aa}_{03}+\text{aa}_{13})\partial\text{a}_{3}\partial\text{aa}_{23}+3\text{ax}_{21}\partial\text{x}_{0}\partial\text{aa}_{23}+(2\text{ax}_{22}-\text{ax}_{30})\partial\text{x}_{1}\partial\text{aa}_{23}+(\text{ax}_{23}-2\text{ax}_{31})\partial\text{x}_{2}\partial\text{aa}_{23}-3\text{ax}_{32}\partial\text{x}_{3}\partial\text{aa}_{23}+3\text{ay}_{31}\partial\text{y}_{0}\partial\text{aa}_{23}+(-\text{ay}_{20}+2\text{ay}_{32})\partial\text{y}_{1}\partial\text{aa}_{23}+(-2\text{ay}_{21}+\text{ay}_{33})\partial\text{y}_{2}\partial\text{aa}_{23}-3\text{ay}_{22}\partial\text{y}_{3}\partial\text{aa}_{23}-3\text{ax}_{00}\partial\text{a}_{1}\partial\text{ax}_{00}-\text{ax}_{20}\partial\text{a}_{2}\partial\text{ax}_{00}+(-3\text{ax}_{01}+\text{ax}_{30})\partial\text{a}_{3}\partial\text{ax}_{00}-\text{xx}_{01}\partial\text{x}_{1}\partial\text{ax}_{00}-2\text{xx}_{02}\partial\text{x}_{2}\partial\text{ax}_{00}-3\text{xx}_{03}\partial\text{x}_{3}\partial\text{ax}_{00}+\text{xy}_{01}\partial\text{y}_{1}\partial\text{ax}_{00}+2\text{xy}_{02}\partial\text{y}_{2}\partial\text{ax}_{00}+3\text{xy}_{03}\partial\text{y}_{3}\partial\text{ax}_{00}-\text{ax}_{01}\partial\text{a}_{0}\partial\text{ax}_{01}-2\text{ax}_{01}\partial\text{a}_{1}\partial\text{ax}_{01}+(-\text{ax}_{00}-\text{ax}_{21})\partial\text{a}_{2}\partial\text{ax}_{01}+(-2\text{ax}_{02}+\text{ax}_{31})\partial\text{a}_{3}\partial\text{ax}_{01}-2\text{xx}_{12}\partial\text{x}_{2}\partial\text{ax}_{01}-3\text{xx}_{13}\partial\text{x}_{3}\partial\text{ax}_{01}+\text{xy}_{11}\partial\text{y}_{1}\partial\text{ax}_{01}+2\text{xy}_{12}\partial\text{y}_{2}\partial\text{ax}_{01}+3\text{xy}_{13}\partial\text{y}_{3}\partial\text{ax}_{01}-2\text{ax}_{02}\partial\text{a}_{0}\partial\text{ax}_{02}-\text{ax}_{02}\partial\text{a}_{1}\partial\text{ax}_{02}+(-2\text{ax}_{01}-\text{ax}_{22})\partial\text{a}_{2}\partial\text{ax}_{02}+(-\text{ax}_{03}+\text{ax}_{32})\partial\text{a}_{3}\partial\text{ax}_{02}+\text{xx}_{12}\partial\text{x}_{1}\partial\text{ax}_{02}-3\text{xx}_{23}\partial\text{x}_{3}\partial\text{ax}_{02}+\text{xy}_{21}\partial\text{y}_{1}\partial\text{ax}_{02}+2\text{xy}_{22}\partial\text{y}_{2}\partial\text{ax}_{02}+3\text{xy}_{23}\partial\text{y}_{3}\partial\text{ax}_{02}-3\text{ax}_{03}\partial\text{a}_{0}\partial\text{ax}_{03}+(-3\text{ax}_{02}-\text{ax}_{23})\partial\text{a}_{2}\partial\text{ax}_{03}+\text{ax}_{33}\partial\text{a}_{3}\partial\text{ax}_{03}+\text{xx}_{13}\partial\text{x}_{1}\partial\text{ax}_{03}+2\text{xx}_{23}\partial\text{x}_{2}\partial\text{ax}_{03}+\text{xy}_{31}\partial\text{y}_{1}\partial\text{ax}_{03}+2\text{xy}_{32}\partial\text{y}_{2}\partial\text{ax}_{03}+3\text{xy}_{33}\partial\text{y}_{3}\partial\text{ax}_{03}-3\text{ax}_{10}\partial\text{a}_{1}\partial\text{ax}_{10}+\text{ax}_{20}\partial\text{a}_{2}\partial\text{ax}_{10}+(-3\text{ax}_{11}-\text{ax}_{30})\partial\text{a}_{3}\partial\text{ax}_{10}-2\text{xx}_{01}\partial\text{x}_{1}\partial\text{ax}_{10}-\text{xx}_{02}\partial\text{x}_{2}\partial\text{ax}_{10}+3\text{xy}_{00}\partial\text{y}_{0}\partial\text{ax}_{10}+2\text{xy}_{01}\partial\text{y}_{1}\partial\text{ax}_{10}+\text{xy}_{02}\partial\text{y}_{2}\partial\text{ax}_{10}-\text{ax}_{11}\partial\text{a}_{0}\partial\text{ax}_{11}-2\text{ax}_{11}\partial\text{a}_{1}\partial\text{ax}_{11}+(-\text{ax}_{10}+\text{ax}_{21})\partial\text{a}_{2}\partial\text{ax}_{11}+(-2\text{ax}_{12}-\text{ax}_{31})\partial\text{a}_{3}\partial\text{ax}_{11}+3\text{xx}_{01}\partial\text{x}_{0}\partial\text{ax}_{11}-\text{xx}_{12}\partial\text{x}_{2}\partial\text{ax}_{11}+3\text{xy}_{10}\partial\text{y}_{0}\partial\text{ax}_{11}+2\text{xy}_{11}\partial\text{y}_{1}\partial\text{ax}_{11}+\text{xy}_{12}\partial\text{y}_{2}\partial\text{ax}_{11}-2\text{ax}_{12}\partial\text{a}_{0}\partial\text{ax}_{12}-\text{ax}_{12}\partial\text{a}_{1}\partial\text{ax}_{12}+(-2\text{ax}_{11}+\text{ax}_{22})\partial\text{a}_{2}\partial\text{ax}_{12}+(-\text{ax}_{13}-\text{ax}_{32})\partial\text{a}_{3}\partial\text{ax}_{12}+3\text{xx}_{02}\partial\text{x}_{0}\partial\text{ax}_{12}+2\text{xx}_{12}\partial\text{x}_{1}\partial\text{ax}_{12}+3\text{xy}_{20}\partial\text{y}_{0}\partial\text{ax}_{12}+2\text{xy}_{21}\partial\text{y}_{1}\partial\text{ax}_{12}+\text{xy}_{22}\partial\text{y}_{2}\partial\text{ax}_{12}-3\text{ax}_{13}\partial\text{a}_{0}\partial\text{ax}_{13}+(-3\text{ax}_{12}+\text{ax}_{23})\partial\text{a}_{2}\partial\text{ax}_{13}-\text{ax}_{33}\partial\text{a}_{3}\partial\text{ax}_{13}+3\text{xx}_{03}\partial\text{x}_{0}\partial\text{ax}_{13}+2\text{xx}_{13}\partial\text{x}_{1}\partial\text{ax}_{13}+\text{xx}_{23}\partial\text{x}_{2}\partial\text{ax}_{13}+3\text{xy}_{30}\partial\text{y}_{0}\partial\text{ax}_{13}+2\text{xy}_{31}\partial\text{y}_{1}\partial\text{ax}_{13}+\text{xy}_{32}\partial\text{y}_{2}\partial\text{ax}_{13}+\text{ax}_{20}\partial\text{a}_{0}\partial\text{ax}_{20}-4\text{ax}_{20}\partial\text{a}_{1}\partial\text{ax}_{20}+(-\text{ax}_{00}+\text{ax}_{10}-3\text{ax}_{21})\partial\text{a}_{3}\partial\text{ax}_{20}-2\text{xx}_{01}\partial\text{x}_{2}\partial\text{ax}_{20}-3\text{xx}_{02}\partial\text{x}_{3}\partial\text{ax}_{20}+3\text{xy}_{01}\partial\text{y}_{0}\partial\text{ax}_{20}+2\text{xy}_{02}\partial\text{y}_{1}\partial\text{ax}_{20}+\text{xy}_{03}\partial\text{y}_{2}\partial\text{ax}_{20}-3\text{ax}_{21}\partial\text{a}_{1}\partial\text{ax}_{21}-\text{ax}_{20}\partial\text{a}_{2}\partial\text{ax}_{21}+(-\text{ax}_{01}+\text{ax}_{11}-2\text{ax}_{22})\partial\text{a}_{3}\partial\text{ax}_{21}+\text{xx}_{01}\partial\text{x}_{1}\partial\text{ax}_{21}-3\text{xx}_{12}\partial\text{x}_{3}\partial\text{ax}_{21}+3\text{xy}_{11}\partial\text{y}_{0}\partial\text{ax}_{21}+2\text{xy}_{12}\partial\text{y}_{1}\partial\text{ax}_{21}+\text{xy}_{13}\partial\text{y}_{2}\partial\text{ax}_{21}-\text{ax}_{22}\partial\text{a}_{0}\partial\text{ax}_{22}-2\text{ax}_{22}\partial\text{a}_{1}\partial\text{ax}_{22}-2\text{ax}_{21}\partial\text{a}_{2}\partial\text{ax}_{22}+(-\text{ax}_{02}+\text{ax}_{12}-\text{ax}_{23})\partial\text{a}_{3}\partial\text{ax}_{22}+\text{xx}_{02}\partial\text{x}_{1}\partial\text{ax}_{22}+2\text{xx}_{12}\partial\text{x}_{2}\partial\text{ax}_{22}+3\text{xy}_{21}\partial\text{y}_{0}\partial\text{ax}_{22}+2\text{xy}_{22}\partial\text{y}_{1}\partial\text{ax}_{22}+\text{xy}_{23}\partial\text{y}_{2}\partial\text{ax}_{22}-2\text{ax}_{23}\partial\text{a}_{0}\partial\text{ax}_{23}-\text{ax}_{23}\partial\text{a}_{1}\partial\text{ax}_{23}-3\text{ax}_{22}\partial\text{a}_{2}\partial\text{ax}_{23}+(-\text{ax}_{03}+\text{ax}_{13})\partial\text{a}_{3}\partial\text{ax}_{23}+\text{xx}_{03}\partial\text{x}_{1}\partial\text{ax}_{23}+2\text{xx}_{13}\partial\text{x}_{2}\partial\text{ax}_{23}+3\text{xx}_{23}\partial\text{x}_{3}\partial\text{ax}_{23}+3\text{xy}_{31}\partial\text{y}_{0}\partial\text{ax}_{23}+2\text{xy}_{32}\partial\text{y}_{1}\partial\text{ax}_{23}+\text{xy}_{33}\partial\text{y}_{2}\partial\text{ax}_{23}-\text{ax}_{30}\partial\text{a}_{0}\partial\text{ax}_{30}-2\text{ax}_{30}\partial\text{a}_{1}\partial\text{ax}_{30}+(\text{ax}_{00}-\text{ax}_{10})\partial\text{a}_{2}\partial\text{ax}_{30}-3\text{ax}_{31}\partial\text{a}_{3}\partial\text{ax}_{30}-3\text{xx}_{01}\partial\text{x}_{0}\partial\text{ax}_{30}-2\text{xx}_{02}\partial\text{x}_{1}\partial\text{ax}_{30}-\text{xx}_{03}\partial\text{x}_{2}\partial\text{ax}_{30}+\text{xy}_{00}\partial\text{y}_{1}\partial\text{ax}_{30}+2\text{xy}_{01}\partial\text{y}_{2}\partial\text{ax}_{30}+3\text{xy}_{02}\partial\text{y}_{3}\partial\text{ax}_{30}-2\text{ax}_{31}\partial\text{a}_{0}\partial\text{ax}_{31}-\text{ax}_{31}\partial\text{a}_{1}\partial\text{ax}_{31}+(\text{ax}_{01}-\text{ax}_{11}-\text{ax}_{30})\partial\text{a}_{2}\partial\text{ax}_{31}-2\text{ax}_{32}\partial\text{a}_{3}\partial\text{ax}_{31}-2\text{xx}_{12}\partial\text{x}_{1}\partial\text{ax}_{31}-\text{xx}_{13}\partial\text{x}_{2}\partial\text{ax}_{31}+\text{xy}_{10}\partial\text{y}_{1}\partial\text{ax}_{31}+2\text{xy}_{11}\partial\text{y}_{2}\partial\text{ax}_{31}+3\text{xy}_{12}\partial\text{y}_{3}\partial\text{ax}_{31}-3\text{ax}_{32}\partial\text{a}_{0}\partial\text{ax}_{32}+(\text{ax}_{02}-\text{ax}_{12}-2\text{ax}_{31})\partial\text{a}_{2}\partial\text{ax}_{32}-\text{ax}_{33}\partial\text{a}_{3}\partial\text{ax}_{32}+3\text{xx}_{12}\partial\text{x}_{0}\partial\text{ax}_{32}-\text{xx}_{23}\partial\text{x}_{2}\partial\text{ax}_{32}+\text{xy}_{20}\partial\text{y}_{1}\partial\text{ax}_{32}+2\text{xy}_{21}\partial\text{y}_{2}\partial\text{ax}_{32}+3\text{xy}_{22}\partial\text{y}_{3}\partial\text{ax}_{32}-4\text{ax}_{33}\partial\text{a}_{0}\partial\text{ax}_{33}+\text{ax}_{33}\partial\text{a}_{1}\partial\text{ax}_{33}+(\text{ax}_{03}-\text{ax}_{13}-3\text{ax}_{32})\partial\text{a}_{2}\partial\text{ax}_{33}+3\text{xx}_{13}\partial\text{x}_{0}\partial\text{ax}_{33}+2\text{xx}_{23}\partial\text{x}_{1}\partial\text{ax}_{33}+\text{xy}_{30}\partial\text{y}_{1}\partial\text{ax}_{33}+2\text{xy}_{31}\partial\text{y}_{2}\partial\text{ax}_{33}+3\text{xy}_{32}\partial\text{y}_{3}\partial\text{ax}_{33}+3\text{ay}_{00}\partial\text{a}_{1}\partial\text{ay}_{00}+(3\text{ay}_{01}-\text{ay}_{20})\partial\text{a}_{2}\partial\text{ay}_{00}+\text{ay}_{30}\partial\text{a}_{3}\partial\text{ay}_{00}+\text{xy}_{10}\partial\text{x}_{1}\partial\text{ay}_{00}+2\text{xy}_{20}\partial\text{x}_{2}\partial\text{ay}_{00}+3\text{xy}_{30}\partial\text{x}_{3}\partial\text{ay}_{00}+\text{yy}_{01}\partial\text{y}_{1}\partial\text{ay}_{00}+2\text{yy}_{02}\partial\text{y}_{2}\partial\text{ay}_{00}+3\text{yy}_{03}\partial\text{y}_{3}\partial\text{ay}_{00}+\text{ay}_{01}\partial\text{a}_{0}\partial\text{ay}_{01}+2\text{ay}_{01}\partial\text{a}_{1}\partial\text{ay}_{01}+(2\text{ay}_{02}-\text{ay}_{21})\partial\text{a}_{2}\partial\text{ay}_{01}+(\text{ay}_{00}+\text{ay}_{31})\partial\text{a}_{3}\partial\text{ay}_{01}+\text{xy}_{11}\partial\text{x}_{1}\partial\text{ay}_{01}+2\text{xy}_{21}\partial\text{x}_{2}\partial\text{ay}_{01}+3\text{xy}_{31}\partial\text{x}_{3}\partial\text{ay}_{01}+2\text{yy}_{12}\partial\text{y}_{2}\partial\text{ay}_{01}+3\text{yy}_{13}\partial\text{y}_{3}\partial\text{ay}_{01}+2\text{ay}_{02}\partial\text{a}_{0}\partial\text{ay}_{02}+\text{ay}_{02}\partial\text{a}_{1}\partial\text{ay}_{02}+(\text{ay}_{03}-\text{ay}_{22})\partial\text{a}_{2}\partial\text{ay}_{02}+(2\text{ay}_{01}+\text{ay}_{32})\partial\text{a}_{3}\partial\text{ay}_{02}+\text{xy}_{12}\partial\text{x}_{1}\partial\text{ay}_{02}+2\text{xy}_{22}\partial\text{x}_{2}\partial\text{ay}_{02}+3\text{xy}_{32}\partial\text{x}_{3}\partial\text{ay}_{02}-\text{yy}_{12}\partial\text{y}_{1}\partial\text{ay}_{02}+3\text{yy}_{23}\partial\text{y}_{3}\partial\text{ay}_{02}+3\text{ay}_{03}\partial\text{a}_{0}\partial\text{ay}_{03}-\text{ay}_{23}\partial\text{a}_{2}\partial\text{ay}_{03}+(3\text{ay}_{02}+\text{ay}_{33})\partial\text{a}_{3}\partial\text{ay}_{03}+\text{xy}_{13}\partial\text{x}_{1}\partial\text{ay}_{03}+2\text{xy}_{23}\partial\text{x}_{2}\partial\text{ay}_{03}+3\text{xy}_{33}\partial\text{x}_{3}\partial\text{ay}_{03}-\text{yy}_{13}\partial\text{y}_{1}\partial\text{ay}_{03}-2\text{yy}_{23}\partial\text{y}_{2}\partial\text{ay}_{03}+3\text{ay}_{10}\partial\text{a}_{1}\partial\text{ay}_{10}+(3\text{ay}_{11}+\text{ay}_{20})\partial\text{a}_{2}\partial\text{ay}_{10}-\text{ay}_{30}\partial\text{a}_{3}\partial\text{ay}_{10}+3\text{xy}_{00}\partial\text{x}_{0}\partial\text{ay}_{10}+2\text{xy}_{10}\partial\text{x}_{1}\partial\text{ay}_{10}+\text{xy}_{20}\partial\text{x}_{2}\partial\text{ay}_{10}+2\text{yy}_{01}\partial\text{y}_{1}\partial\text{ay}_{10}+\text{yy}_{02}\partial\text{y}_{2}\partial\text{ay}_{10}+\text{ay}_{11}\partial\text{a}_{0}\partial\text{ay}_{11}+2\text{ay}_{11}\partial\text{a}_{1}\partial\text{ay}_{11}+(2\text{ay}_{12}+\text{ay}_{21})\partial\text{a}_{2}\partial\text{ay}_{11}+(\text{ay}_{10}-\text{ay}_{31})\partial\text{a}_{3}\partial\text{ay}_{11}+3\text{xy}_{01}\partial\text{x}_{0}\partial\text{ay}_{11}+2\text{xy}_{11}\partial\text{x}_{1}\partial\text{ay}_{11}+\text{xy}_{21}\partial\text{x}_{2}\partial\text{ay}_{11}-3\text{yy}_{01}\partial\text{y}_{0}\partial\text{ay}_{11}+\text{yy}_{12}\partial\text{y}_{2}\partial\text{ay}_{11}+2\text{ay}_{12}\partial\text{a}_{0}\partial\text{ay}_{12}+\text{ay}_{12}\partial\text{a}_{1}\partial\text{ay}_{12}+(\text{ay}_{13}+\text{ay}_{22})\partial\text{a}_{2}\partial\text{ay}_{12}+(2\text{ay}_{11}-\text{ay}_{32})\partial\text{a}_{3}\partial\text{ay}_{12}+3\text{xy}_{02}\partial\text{x}_{0}\partial\text{ay}_{12}+2\text{xy}_{12}\partial\text{x}_{1}\partial\text{ay}_{12}+\text{xy}_{22}\partial\text{x}_{2}\partial\text{ay}_{12}-3\text{yy}_{02}\partial\text{y}_{0}\partial\text{ay}_{12}-2\text{yy}_{12}\partial\text{y}_{1}\partial\text{ay}_{12}+3\text{ay}_{13}\partial\text{a}_{0}\partial\text{ay}_{13}+\text{ay}_{23}\partial\text{a}_{2}\partial\text{ay}_{13}+(3\text{ay}_{12}-\text{ay}_{33})\partial\text{a}_{3}\partial\text{ay}_{13}+3\text{xy}_{03}\partial\text{x}_{0}\partial\text{ay}_{13}+2\text{xy}_{13}\partial\text{x}_{1}\partial\text{ay}_{13}+\text{xy}_{23}\partial\text{x}_{2}\partial\text{ay}_{13}-3\text{yy}_{03}\partial\text{y}_{0}\partial\text{ay}_{13}-2\text{yy}_{13}\partial\text{y}_{1}\partial\text{ay}_{13}-\text{yy}_{23}\partial\text{y}_{2}\partial\text{ay}_{13}+\text{ay}_{20}\partial\text{a}_{0}\partial\text{ay}_{20}+2\text{ay}_{20}\partial\text{a}_{1}\partial\text{ay}_{20}+3\text{ay}_{21}\partial\text{a}_{2}\partial\text{ay}_{20}+(-\text{ay}_{00}+\text{ay}_{10})\partial\text{a}_{3}\partial\text{ay}_{20}+\text{xy}_{00}\partial\text{x}_{1}\partial\text{ay}_{20}+2\text{xy}_{10}\partial\text{x}_{2}\partial\text{ay}_{20}+3\text{xy}_{20}\partial\text{x}_{3}\partial\text{ay}_{20}+3\text{yy}_{01}\partial\text{y}_{0}\partial\text{ay}_{20}+2\text{yy}_{02}\partial\text{y}_{1}\partial\text{ay}_{20}+\text{yy}_{03}\partial\text{y}_{2}\partial\text{ay}_{20}+2\text{ay}_{21}\partial\text{a}_{0}\partial\text{ay}_{21}+\text{ay}_{21}\partial\text{a}_{1}\partial\text{ay}_{21}+2\text{ay}_{22}\partial\text{a}_{2}\partial\text{ay}_{21}+(-\text{ay}_{01}+\text{ay}_{11}+\text{ay}_{20})\partial\text{a}_{3}\partial\text{ay}_{21}+\text{xy}_{01}\partial\text{x}_{1}\partial\text{ay}_{21}+2\text{xy}_{11}\partial\text{x}_{2}\partial\text{ay}_{21}+3\text{xy}_{21}\partial\text{x}_{3}\partial\text{ay}_{21}+2\text{yy}_{12}\partial\text{y}_{1}\partial\text{ay}_{21}+\text{yy}_{13}\partial\text{y}_{2}\partial\text{ay}_{21}+3\text{ay}_{22}\partial\text{a}_{0}\partial\text{ay}_{22}+\text{ay}_{23}\partial\text{a}_{2}\partial\text{ay}_{22}+(-\text{ay}_{02}+\text{ay}_{12}+2\text{ay}_{21})\partial\text{a}_{3}\partial\text{ay}_{22}+\text{xy}_{02}\partial\text{x}_{1}\partial\text{ay}_{22}+2\text{xy}_{12}\partial\text{x}_{2}\partial\text{ay}_{22}+3\text{xy}_{22}\partial\text{x}_{3}\partial\text{ay}_{22}-3\text{yy}_{12}\partial\text{y}_{0}\partial\text{ay}_{22}+\text{yy}_{23}\partial\text{y}_{2}\partial\text{ay}_{22}+4\text{ay}_{23}\partial\text{a}_{0}\partial\text{ay}_{23}-\text{ay}_{23}\partial\text{a}_{1}\partial\text{ay}_{23}+(-\text{ay}_{03}+\text{ay}_{13}+3\text{ay}_{22})\partial\text{a}_{3}\partial\text{ay}_{23}+\text{xy}_{03}\partial\text{x}_{1}\partial\text{ay}_{23}+2\text{xy}_{13}\partial\text{x}_{2}\partial\text{ay}_{23}+3\text{xy}_{23}\partial\text{x}_{3}\partial\text{ay}_{23}-3\text{yy}_{13}\partial\text{y}_{0}\partial\text{ay}_{23}-2\text{yy}_{23}\partial\text{y}_{1}\partial\text{ay}_{23}-\text{ay}_{30}\partial\text{a}_{0}\partial\text{ay}_{30}+4\text{ay}_{30}\partial\text{a}_{1}\partial\text{ay}_{30}+(\text{ay}_{00}-\text{ay}_{10}+3\text{ay}_{31})\partial\text{a}_{2}\partial\text{ay}_{30}+3\text{xy}_{10}\partial\text{x}_{0}\partial\text{ay}_{30}+2\text{xy}_{20}\partial\text{x}_{1}\partial\text{ay}_{30}+\text{xy}_{30}\partial\text{x}_{2}\partial\text{ay}_{30}+2\text{yy}_{01}\partial\text{y}_{2}\partial\text{ay}_{30}+3\text{yy}_{02}\partial\text{y}_{3}\partial\text{ay}_{30}+3\text{ay}_{31}\partial\text{a}_{1}\partial\text{ay}_{31}+(\text{ay}_{01}-\text{ay}_{11}+2\text{ay}_{32})\partial\text{a}_{2}\partial\text{ay}_{31}+\text{ay}_{30}\partial\text{a}_{3}\partial\text{ay}_{31}+3\text{xy}_{11}\partial\text{x}_{0}\partial\text{ay}_{31}+2\text{xy}_{21}\partial\text{x}_{1}\partial\text{ay}_{31}+\text{xy}_{31}\partial\text{x}_{2}\partial\text{ay}_{31}-\text{yy}_{01}\partial\text{y}_{1}\partial\text{ay}_{31}+3\text{yy}_{12}\partial\text{y}_{3}\partial\text{ay}_{31}+\text{ay}_{32}\partial\text{a}_{0}\partial\text{ay}_{32}+2\text{ay}_{32}\partial\text{a}_{1}\partial\text{ay}_{32}+(\text{ay}_{02}-\text{ay}_{12}+\text{ay}_{33})\partial\text{a}_{2}\partial\text{ay}_{32}+2\text{ay}_{31}\partial\text{a}_{3}\partial\text{ay}_{32}+3\text{xy}_{12}\partial\text{x}_{0}\partial\text{ay}_{32}+2\text{xy}_{22}\partial\text{x}_{1}\partial\text{ay}_{32}+\text{xy}_{32}\partial\text{x}_{2}\partial\text{ay}_{32}-\text{yy}_{02}\partial\text{y}_{1}\partial\text{ay}_{32}-2\text{yy}_{12}\partial\text{y}_{2}\partial\text{ay}_{32}+2\text{ay}_{33}\partial\text{a}_{0}\partial\text{ay}_{33}+\text{ay}_{33}\partial\text{a}_{1}\partial\text{ay}_{33}+(\text{ay}_{03}-\text{ay}_{13})\partial\text{a}_{2}\partial\text{ay}_{33}+3\text{ay}_{32}\partial\text{a}_{3}\partial\text{ay}_{33}+3\text{xy}_{13}\partial\text{x}_{0}\partial\text{ay}_{33}+2\text{xy}_{23}\partial\text{x}_{1}\partial\text{ay}_{33}+\text{xy}_{33}\partial\text{x}_{2}\partial\text{ay}_{33}-\text{yy}_{03}\partial\text{y}_{1}\partial\text{ay}_{33}-2\text{yy}_{13}\partial\text{y}_{2}\partial\text{ay}_{33}-3\text{yy}_{23}\partial\text{y}_{3}\partial\text{ay}_{33}-\text{xx}_{01}\partial\text{a}_{0}\partial\text{xx}_{01}-5\text{xx}_{01}\partial\text{a}_{1}\partial\text{xx}_{01}-2\text{xx}_{02}\partial\text{a}_{3}\partial\text{xx}_{01}-2\text{xx}_{02}\partial\text{a}_{0}\partial\text{xx}_{02}-4\text{xx}_{02}\partial\text{a}_{1}\partial\text{xx}_{02}-2\text{xx}_{01}\partial\text{a}_{2}\partial\text{xx}_{02}+(-\text{xx}_{03}-3\text{xx}_{12})\partial\text{a}_{3}\partial\text{xx}_{02}-3\text{xx}_{03}\partial\text{a}_{0}\partial\text{xx}_{03}-3\text{xx}_{03}\partial\text{a}_{1}\partial\text{xx}_{03}-3\text{xx}_{02}\partial\text{a}_{2}\partial\text{xx}_{03}-3\text{xx}_{13}\partial\text{a}_{3}\partial\text{xx}_{03}-3\text{xx}_{12}\partial\text{a}_{0}\partial\text{xx}_{12}-3\text{xx}_{12}\partial\text{a}_{1}\partial\text{xx}_{12}-\text{xx}_{02}\partial\text{a}_{2}\partial\text{xx}_{12}-\text{xx}_{13}\partial\text{a}_{3}\partial\text{xx}_{12}-4\text{xx}_{13}\partial\text{a}_{0}\partial\text{xx}_{13}-2\text{xx}_{13}\partial\text{a}_{1}\partial\text{xx}_{13}+(-\text{xx}_{03}-3\text{xx}_{12})\partial\text{a}_{2}\partial\text{xx}_{13}-2\text{xx}_{23}\partial\text{a}_{3}\partial\text{xx}_{13}-5\text{xx}_{23}\partial\text{a}_{0}\partial\text{xx}_{23}-\text{xx}_{23}\partial\text{a}_{1}\partial\text{xx}_{23}-2\text{xx}_{13}\partial\text{a}_{2}\partial\text{xx}_{23}+\text{yy}_{01}\partial\text{a}_{0}\partial\text{yy}_{01}+5\text{yy}_{01}\partial\text{a}_{1}\partial\text{yy}_{01}+2\text{yy}_{02}\partial\text{a}_{2}\partial\text{yy}_{01}+2\text{yy}_{02}\partial\text{a}_{0}\partial\text{yy}_{02}+4\text{yy}_{02}\partial\text{a}_{1}\partial\text{yy}_{02}+(\text{yy}_{03}+3\text{yy}_{12})\partial\text{a}_{2}\partial\text{yy}_{02}+2\text{yy}_{01}\partial\text{a}_{3}\partial\text{yy}_{02}+3\text{yy}_{03}\partial\text{a}_{0}\partial\text{yy}_{03}+3\text{yy}_{03}\partial\text{a}_{1}\partial\text{yy}_{03}+3\text{yy}_{13}\partial\text{a}_{2}\partial\text{yy}_{03}+3\text{yy}_{02}\partial\text{a}_{3}\partial\text{yy}_{03}+3\text{yy}_{12}\partial\text{a}_{0}\partial\text{yy}_{12}+3\text{yy}_{12}\partial\text{a}_{1}\partial\text{yy}_{12}+\text{yy}_{13}\partial\text{a}_{2}\partial\text{yy}_{12}+\text{yy}_{02}\partial\text{a}_{3}\partial\text{yy}_{12}+4\text{yy}_{13}\partial\text{a}_{0}\partial\text{yy}_{13}+2\text{yy}_{13}\partial\text{a}_{1}\partial\text{yy}_{13}+2\text{yy}_{23}\partial\text{a}_{2}\partial\text{yy}_{13}+(\text{yy}_{03}+3\text{yy}_{12})\partial\text{a}_{3}\partial\text{yy}_{13}+5\text{yy}_{23}\partial\text{a}_{0}\partial\text{yy}_{23}+\text{yy}_{23}\partial\text{a}_{1}\partial\text{yy}_{23}+2\text{yy}_{13}\partial\text{a}_{3}\partial\text{yy}_{23}+3\text{xy}_{01}\partial\text{a}_{2}\partial\text{xy}_{00}-3\text{xy}_{10}\partial\text{a}_{3}\partial\text{xy}_{00}+\text{xy}_{01}\partial\text{a}_{0}\partial\text{xy}_{01}-\text{xy}_{01}\partial\text{a}_{1}\partial\text{xy}_{01}+2\text{xy}_{02}\partial\text{a}_{2}\partial\text{xy}_{01}+(\text{xy}_{00}-3\text{xy}_{11})\partial\text{a}_{3}\partial\text{xy}_{01}+2\text{xy}_{02}\partial\text{a}_{0}\partial\text{xy}_{02}-2\text{xy}_{02}\partial\text{a}_{1}\partial\text{xy}_{02}+\text{xy}_{03}\partial\text{a}_{2}\partial\text{xy}_{02}+(2\text{xy}_{01}-3\text{xy}_{12})\partial\text{a}_{3}\partial\text{xy}_{02}+3\text{xy}_{03}\partial\text{a}_{0}\partial\text{xy}_{03}-3\text{xy}_{03}\partial\text{a}_{1}\partial\text{xy}_{03}+(3\text{xy}_{02}-3\text{xy}_{13})\partial\text{a}_{3}\partial\text{xy}_{03}-\text{xy}_{10}\partial\text{a}_{0}\partial\text{xy}_{10}+\text{xy}_{10}\partial\text{a}_{1}\partial\text{xy}_{10}+(-\text{xy}_{00}+3\text{xy}_{11})\partial\text{a}_{2}\partial\text{xy}_{10}-2\text{xy}_{20}\partial\text{a}_{3}\partial\text{xy}_{10}+(-\text{xy}_{01}+2\text{xy}_{12})\partial\text{a}_{2}\partial\text{xy}_{11}+(\text{xy}_{10}-2\text{xy}_{21})\partial\text{a}_{3}\partial\text{xy}_{11}+\text{xy}_{12}\partial\text{a}_{0}\partial\text{xy}_{12}-\text{xy}_{12}\partial\text{a}_{1}\partial\text{xy}_{12}+(-\text{xy}_{02}+\text{xy}_{13})\partial\text{a}_{2}\partial\text{xy}_{12}+(2\text{xy}_{11}-2\text{xy}_{22})\partial\text{a}_{3}\partial\text{xy}_{12}+2\text{xy}_{13}\partial\text{a}_{0}\partial\text{xy}_{13}-2\text{xy}_{13}\partial\text{a}_{1}\partial\text{xy}_{13}-\text{xy}_{03}\partial\text{a}_{2}\partial\text{xy}_{13}+(3\text{xy}_{12}-2\text{xy}_{23})\partial\text{a}_{3}\partial\text{xy}_{13}-2\text{xy}_{20}\partial\text{a}_{0}\partial\text{xy}_{20}+2\text{xy}_{20}\partial\text{a}_{1}\partial\text{xy}_{20}+(-2\text{xy}_{10}+3\text{xy}_{21})\partial\text{a}_{2}\partial\text{xy}_{20}-\text{xy}_{30}\partial\text{a}_{3}\partial\text{xy}_{20}-\text{xy}_{21}\partial\text{a}_{0}\partial\text{xy}_{21}+\text{xy}_{21}\partial\text{a}_{1}\partial\text{xy}_{21}+(-2\text{xy}_{11}+2\text{xy}_{22})\partial\text{a}_{2}\partial\text{xy}_{21}+(\text{xy}_{20}-\text{xy}_{31})\partial\text{a}_{3}\partial\text{xy}_{21}+(-2\text{xy}_{12}+\text{xy}_{23})\partial\text{a}_{2}\partial\text{xy}_{22}+(2\text{xy}_{21}-\text{xy}_{32})\partial\text{a}_{3}\partial\text{xy}_{22}+\text{xy}_{23}\partial\text{a}_{0}\partial\text{xy}_{23}-\text{xy}_{23}\partial\text{a}_{1}\partial\text{xy}_{23}-2\text{xy}_{13}\partial\text{a}_{2}\partial\text{xy}_{23}+(3\text{xy}_{22}-\text{xy}_{33})\partial\text{a}_{3}\partial\text{xy}_{23}-3\text{xy}_{30}\partial\text{a}_{0}\partial\text{xy}_{30}+3\text{xy}_{30}\partial\text{a}_{1}\partial\text{xy}_{30}+(-3\text{xy}_{20}+3\text{xy}_{31})\partial\text{a}_{2}\partial\text{xy}_{30}-2\text{xy}_{31}\partial\text{a}_{0}\partial\text{xy}_{31}+2\text{xy}_{31}\partial\text{a}_{1}\partial\text{xy}_{31}+(-3\text{xy}_{21}+2\text{xy}_{32})\partial\text{a}_{2}\partial\text{xy}_{31}+\text{xy}_{30}\partial\text{a}_{3}\partial\text{xy}_{31}-\text{xy}_{32}\partial\text{a}_{0}\partial\text{xy}_{32}+\text{xy}_{32}\partial\text{a}_{1}\partial\text{xy}_{32}+(-3\text{xy}_{22}+\text{xy}_{33})\partial\text{a}_{2}\partial\text{xy}_{32}+2\text{xy}_{31}\partial\text{a}_{3}\partial\text{xy}_{32}-3\text{xy}_{23}\partial\text{a}_{2}\partial\text{xy}_{33}+3\text{xy}_{32}\partial\text{a}_{3}\partial\text{xy}_{33}$ 
\end{flushleft}
\end{itemize}
Initially, $\alpha^1_c$ is written with general coefficients (i.e. a different $L_{ij}$ for each element of $\g\wedge\g$).  Displaying $[\beta^1+\alpha^1,\beta^1+\alpha^1]$ and setting the expression equal to zero determines the coefficients $L_{ij}$:
\begin{itemize}
\item[$\text{i17}:$]\begin{flushleft}$\alpha_c^1=L_{10}\text{xx}_{01} \partial\text{x}_{0} \partial\text{x}_{1}+L_{10}\text{xx}_{02} \partial\text{x}_{0} \partial\text{x}_{2}+L_{10}\text{xx}_{12}
      \partial\text{x}_{1} \partial\text{x}_{2}+L_{10}\text{xx}_{03} \partial\text{x}_{0} \partial\text{x}_{3}+L_{10}\text{xx}_{13} \partial\text{x}_{1}
      \partial\text{x}_{3}+L_{10}\text{xx}_{23} \partial\text{x}_{2} \partial\text{x}_{3}+{L}_{33} \text{xy}_{00} \partial\text{x}_{0}
      \partial\text{y}_{0}+{L}_{33} \text{xy}_{10} \partial\text{x}_{1} \partial\text{y}_{0}+{L}_{33} \text{xy}_{20} \partial\text{x}_{2}
      \partial\text{y}_{0}+{L}_{33} \text{xy}_{30} \partial\text{x}_{3} \partial\text{y}_{0}+{L}_{33} \text{xy}_{01} \partial\text{x}_{0}
      \partial\text{y}_{1}+{L}_{33} \text{xy}_{11} \partial\text{x}_{1} \partial\text{y}_{1}+{L}_{33} \text{xy}_{21} \partial\text{x}_{2}
      \partial\text{y}_{1}+{L}_{33} \text{xy}_{31} \partial\text{x}_{3} \partial\text{y}_{1}+\text{yy}_{01} \partial\text{y}_{0}
      \partial\text{y}_{1}+{L}_{33} \text{xy}_{02} \partial\text{x}_{0} \partial\text{y}_{2}+{L}_{33} \text{xy}_{12} \partial\text{x}_{1}
      \partial\text{y}_{2}+{L}_{33} \text{xy}_{22} \partial\text{x}_{2} \partial\text{y}_{2}+{L}_{33} \text{xy}_{32} \partial\text{x}_{3}
      \partial\text{y}_{2}+L_{14}\text{yy}_{02} \partial\text{y}_{0} \partial\text{y}_{2}+L_{14}\text{yy}_{12} \partial\text{y}_{1} \partial\text{y}_{2}+{L}_{33}
      \text{xy}_{03} \partial\text{x}_{0} \partial\text{y}_{3}+{L}_{33} \text{xy}_{13} \partial\text{x}_{1} \partial\text{y}_{3}+{L}_{33}
      \text{xy}_{23} \partial\text{x}_{2} \partial\text{y}_{3}+{L}_{33} \text{xy}_{33} \partial\text{x}_{3} \partial\text{y}_{3}+L_{14}\text{yy}_{03}
      \partial\text{y}_{0} \partial\text{y}_{3}+L_{14}\text{yy}_{13} \partial\text{y}_{1} \partial\text{y}_{3}+L_{14}\text{yy}_{23} \partial\text{y}_{2} \partial\text{y}_{3}$
      \end{flushleft}
\end{itemize}
One may verify that $\pi^1=\beta^1+\alpha^1$ is a Poisson bivector:
\begin{itemize}
\item[$\text{i18}:$] $\pi^1=\beta^1+\alpha^1$; \\
				$\text{Sbr}(\pi^1,\pi^1)==0$
\item[$\text{o18}:$]  true
\end{itemize}
Using the reductions in Chapter 7, we construct a general $\alpha^2$ coefficients $K_{ij},l_{k}$:
\begin{flushleft}
\begin{itemize}
\item[$\text{i19}:$] $\alpha^2 = l_0C\partial\text{x}_{0}\partial\text{y}_{0}+l_1C\partial\text{x}_{1}\partial\text{y}_{1}+l_2C\partial\text{x}_{2}\partial\text{y}_{2}+l_3C\partial\text{x}_{3}\partial\text{y}_{3}+(K_{1}A_1+K_{2}B_0)\partial\text{x}_{0}\partial\text{yy}_{01}+(K_{3}A_2+K_{4}B_1)\partial\text{x}_{0}\partial\text{yy}_{02}+K_{5}A_3\partial\text{x}_{0}\partial\text{yy}_{03}+K_{6}A_3\partial\text{x}_{0}\partial\text{yy}_{12}+K_{7}\overline{A}_0\partial\text{x}_{0}\partial\text{xy}_{00}+(K_{8}\overline{A}_1+K_{9}\overline{B}_0)\partial\text{x}_{0}\partial\text{xy}_{10}+K_{10}\overline{A}_0\partial\text{x}_{0}\partial\text{xy}_{11}+(K_{11}\overline{A}_2+K_{12}\overline{B}_1)\partial\text{x}_{0}\partial\text{xy}_{20}+(K_{13}\overline{A}_1+K_{14}\overline{B}_0)\partial\text{x}_{0}\partial\text{xy}_{21}+K_{15}\overline{A}_0\partial\text{x}_{0}\partial\text{xy}_{22}+K_{16}\overline{A}_3\partial\text{x}_{0}\partial\text{xy}_{30}+(K_{17}\overline{A}_2+K_{18}\overline{B}_1)\partial\text{x}_{0}\partial\text{xy}_{31}+(K_{19}\overline{A}_1+K_{20}\overline{B}_0)\partial\text{x}_{0}\partial\text{xy}_{32}+K_{21}\overline{A}_0\partial\text{x}_{0}\partial\text{xy}_{33}+K_{22}A_0\partial\text{x}_{1}\partial\text{yy}_{01}+(K_{23}A_1+K_{24}B_0)\partial\text{x}_{1}\partial\text{yy}_{02}+(K_{25}A_2+K_{26}B_1)\partial\text{x}_{1}\partial\text{yy}_{03}+(K_{27}A_2+K_{28}B_1)\partial\text{x}_{1}\partial\text{yy}_{12}+K_{29}A_3\partial\text{x}_{1}\partial\text{yy}_{13}+(K_{30}\overline{A}_1+K_{31}\overline{B}_0)\partial\text{x}_{1}\partial\text{xy}_{00}+K_{32}\overline{A}_0\partial\text{x}_{1}\partial\text{xy}_{01}+(K_{33}\overline{A}_2+K_{34}\overline{B}_1)\partial\text{x}_{1}\partial\text{xy}_{10}+(K_{35}\overline{A}_1+K_{36}\overline{B}_0)\partial\text{x}_{1}\partial\text{xy}_{11}+K_{37}\overline{A}_0\partial\text{x}_{1}\partial\text{xy}_{12}+K_{38}\overline{A}_3\partial\text{x}_{1}\partial\text{xy}_{20}+(K_{39}\overline{A}_2+K_{40}\overline{B}_1)\partial\text{x}_{1}\partial\text{xy}_{21}+(K_{41}\overline{A}_1+K_{42}\overline{B}_0)\partial\text{x}_{1}\partial\text{xy}_{22}+K_{43}\overline{A}_0\partial\text{x}_{1}\partial\text{xy}_{23}+K_{44}\overline{A}_3\partial\text{x}_{1}\partial\text{xy}_{31}+(K_{45}\overline{A}_2+K_{46}\overline{B}_1)\partial\text{x}_{1}\partial\text{xy}_{32}+(K_{47}\overline{A}_1+K_{48}\overline{B}_0)\partial\text{x}_{1}\partial\text{xy}_{33}+K_{49}A_0\partial\text{x}_{2}\partial\text{yy}_{02}+(K_{50}A_1+K_{51}B_0)\partial\text{x}_{2}\partial\text{yy}_{03}+(K_{52}A_1+K_{53}B_0)\partial\text{x}_{2}\partial\text{yy}_{12}+(K_{54}A_2+K_{55}B_1)\partial\text{x}_{2}\partial\text{yy}_{13}+K_{56}A_3\partial\text{x}_{2}\partial\text{yy}_{23}+(K_{57}\overline{A}_2+K_{58}\overline{B}_1)\partial\text{x}_{2}\partial\text{xy}_{00}+(K_{59}\overline{A}_1+K_{60}\overline{B}_0)\partial\text{x}_{2}\partial\text{xy}_{01}+K_{61}\overline{A}_0\partial\text{x}_{2}\partial\text{xy}_{02}+K_{62}\overline{A}_3\partial\text{x}_{2}\partial\text{xy}_{10}+(K_{63}\overline{A}_2+K_{64}\overline{B}_1)\partial\text{x}_{2}\partial\text{xy}_{11}+(K_{65}\overline{A}_1+K_{66}\overline{B}_0)\partial\text{x}_{2}\partial\text{xy}_{12}+K_{67}\overline{A}_0\partial\text{x}_{2}\partial\text{xy}_{13}+K_{68}\overline{A}_3\partial\text{x}_{2}\partial\text{xy}_{21}+(K_{69}\overline{A}_2+K_{70}\overline{B}_1)\partial\text{x}_{2}\partial\text{xy}_{22}+(K_{71}\overline{A}_1+K_{72}\overline{B}_0)\partial\text{x}_{2}\partial\text{xy}_{23}+K_{73}\overline{A}_3\partial\text{x}_{2}\partial\text{xy}_{32}+(K_{74}\overline{A}_2+K_{75}\overline{B}_1)\partial\text{x}_{2}\partial\text{xy}_{33}+K_{76}A_0\partial\text{x}_{3}\partial\text{yy}_{03}+K_{77}A_0\partial\text{x}_{3}\partial\text{yy}_{12}+(K_{78}A_1+K_{79}B_0)\partial\text{x}_{3}\partial\text{yy}_{13}+(K_{80}A_2+K_{81}B_1)\partial\text{x}_{3}\partial\text{yy}_{23}+K_{82}\overline{A}_3\partial\text{x}_{3}\partial\text{xy}_{00}+(K_{83}\overline{A}_2+K_{84}\overline{B}_1)\partial\text{x}_{3}\partial\text{xy}_{01}+(K_{85}\overline{A}_1+K_{86}\overline{B}_0)\partial\text{x}_{3}\partial\text{xy}_{02}+K_{87}\overline{A}_0\partial\text{x}_{3}\partial\text{xy}_{03}+K_{88}\overline{A}_3\partial\text{x}_{3}\partial\text{xy}_{11}+(K_{89}\overline{A}_2+K_{90}\overline{B}_1)\partial\text{x}_{3}\partial\text{xy}_{12}+(K_{91}\overline{A}_1+K_{92}\overline{B}_0)\partial\text{x}_{3}\partial\text{xy}_{13}+K_{93}\overline{A}_3\partial\text{x}_{3}\partial\text{xy}_{22}+(K_{94}\overline{A}_2+K_{95}\overline{B}_1)\partial\text{x}_{3}\partial\text{xy}_{23}+K_{96}\overline{A}_3\partial\text{x}_{3}\partial\text{xy}_{33}+(K_{97}\overline{A}_1+K_{98}\overline{B}_0)\partial\text{y}_{0}\partial\text{xx}_{01}+(K_{99}\overline{A}_2+K_{100}\overline{B}_1)\partial\text{y}_{0}\partial\text{xx}_{02}+K_{101}\overline{A}_3\partial\text{y}_{0}\partial\text{xx}_{03}+K_{102}\overline{A}_3\partial\text{y}_{0}\partial\text{xx}_{12}+K_{103}A_0\partial\text{y}_{0}\partial\text{xy}_{00}+(K_{104}A_1+K_{105}B_0)\partial\text{y}_{0}\partial\text{xy}_{01}+(K_{106}A_2+K_{107}B_1)\partial\text{y}_{0}\partial\text{xy}_{02}+K_{108}A_3\partial\text{y}_{0}\partial\text{xy}_{03}+K_{109}A_0\partial\text{y}_{0}\partial\text{xy}_{11}+(K_{110}A_1+K_{111}B_0)\partial\text{y}_{0}\partial\text{xy}_{12}+(K_{112}A_2+K_{113}B_1)\partial\text{y}_{0}\partial\text{xy}_{13}+K_{114}A_0\partial\text{y}_{0}\partial\text{xy}_{22}+(K_{115}A_1+K_{116}B_0)\partial\text{y}_{0}\partial\text{xy}_{23}+K_{117}A_0\partial\text{y}_{0}\partial\text{xy}_{33}+K_{118}\overline{A}_0\partial\text{y}_{1}\partial\text{xx}_{01}+(K_{119}\overline{A}_1+K_{120}\overline{B}_0)\partial\text{y}_{1}\partial\text{xx}_{02}+(K_{121}\overline{A}_2+K_{122}\overline{B}_1)\partial\text{y}_{1}\partial\text{xx}_{03}+(K_{123}\overline{A}_2+K_{124}\overline{B}_1)\partial\text{y}_{1}\partial\text{xx}_{12}+K_{125}\overline{A}_3\partial\text{y}_{1}\partial\text{xx}_{13}+(K_{126}A_1+K_{127}B_0)\partial\text{y}_{1}\partial\text{xy}_{00}+(K_{128}A_2+K_{129}B_1)\partial\text{y}_{1}\partial\text{xy}_{01}+K_{130}A_3\partial\text{y}_{1}\partial\text{xy}_{02}+K_{131}A_0\partial\text{y}_{1}\partial\text{xy}_{10}+(K_{132}A_1+K_{133}B_0)\partial\text{y}_{1}\partial\text{xy}_{11}+(K_{134}A_2+K_{135}B_1)\partial\text{y}_{1}\partial\text{xy}_{12}+K_{136}A_3\partial\text{y}_{1}\partial\text{xy}_{13}+K_{137}A_0\partial\text{y}_{1}\partial\text{xy}_{21}+(K_{138}A_1+K_{139}B_0)\partial\text{y}_{1}\partial\text{xy}_{22}+(K_{140}A_2+K_{141}B_1)\partial\text{y}_{1}\partial\text{xy}_{23}+K_{142}A_0\partial\text{y}_{1}\partial\text{xy}_{32}+(K_{143}A_1+K_{144}B_0)\partial\text{y}_{1}\partial\text{xy}_{33}+K_{145}\overline{A}_0\partial\text{y}_{2}\partial\text{xx}_{02}+(K_{146}\overline{A}_1+K_{147}\overline{B}_0)\partial\text{y}_{2}\partial\text{xx}_{03}+(K_{148}\overline{A}_1+K_{149}\overline{B}_0)\partial\text{y}_{2}\partial\text{xx}_{12}+(K_{150}\overline{A}_2+K_{151}\overline{B}_1)\partial\text{y}_{2}\partial\text{xx}_{13}+K_{152}\overline{A}_3\partial\text{y}_{2}\partial\text{xx}_{23}+(K_{153}A_2+K_{154}B_1)\partial\text{y}_{2}\partial\text{xy}_{00}+K_{155}A_3\partial\text{y}_{2}\partial\text{xy}_{01}+(K_{156}A_1+K_{157}B_0)\partial\text{y}_{2}\partial\text{xy}_{10}+(K_{158}A_2+K_{159}B_1)\partial\text{y}_{2}\partial\text{xy}_{11}+K_{160}A_3\partial\text{y}_{2}\partial\text{xy}_{12}+K_{161}A_0\partial\text{y}_{2}\partial\text{xy}_{20}+(K_{162}A_1+K_{163}B_0)\partial\text{y}_{2}\partial\text{xy}_{21}+(K_{164}A_2+K_{165}B_1)\partial\text{y}_{2}\partial\text{xy}_{22}+K_{166}A_3\partial\text{y}_{2}\partial\text{xy}_{23}+K_{167}A_0\partial\text{y}_{2}\partial\text{xy}_{31}+(K_{168}A_1+K_{169}B_0)\partial\text{y}_{2}\partial\text{xy}_{32}+(K_{170}A_2+K_{171}B_1)\partial\text{y}_{2}\partial\text{xy}_{33}+K_{172}\overline{A}_0\partial\text{y}_{3}\partial\text{xx}_{03}+K_{173}\overline{A}_0\partial\text{y}_{3}\partial\text{xx}_{12}+(K_{174}\overline{A}_1+K_{175}\overline{B}_0)\partial\text{y}_{3}\partial\text{xx}_{13}+(K_{176}\overline{A}_2+K_{177}\overline{B}_1)\partial\text{y}_{3}\partial\text{xx}_{23}+K_{178}A_3\partial\text{y}_{3}\partial\text{xy}_{00}+(K_{179}A_2+K_{180}B_1)\partial\text{y}_{3}\partial\text{xy}_{10}+K_{181}A_3\partial\text{y}_{3}\partial\text{xy}_{11}+(K_{182}A_1+K_{183}B_0)\partial\text{y}_{3}\partial\text{xy}_{20}+(K_{184}A_2+K_{185}B_1)\partial\text{y}_{3}\partial\text{xy}_{21}+K_{186}A_3\partial\text{y}_{3}\partial\text{xy}_{22}+K_{187}A_0\partial\text{y}_{3}\partial\text{xy}_{30}+(K_{188}A_1+K_{189}B_0)\partial\text{y}_{3}\partial\text{xy}_{31}+(K_{190}A_2+K_{191}B_1)\partial\text{y}_{3}\partial\text{xy}_{32}+K_{192}A_3\partial\text{y}_{3}\partial\text{xy}_{33}$
\end{itemize}
\end{flushleft}
The bivector $\pi=(\beta^1+\alpha^1)+(\beta^2+\alpha^2)$ is the candidate Poisson bivector.  While it certainly extends $\brac_Q$ as a bivector, the proper coefficients $K_{ij}$ must be chosen so that $[\pi,\pi]=0$.   We first begin by attempting to solve the equation 
$$[\pi^1,\pi^2]=[\beta^1+\alpha^1,\beta^2+\alpha^2]=0$$
which is a (very large) system of equations linear in the $K_{ij}$.  Once these coefficents are determined, a solution of the equation $[\pi^2,\pi^2]=0$ follows easily.

Simply setting $[\pi^1,\pi^2]$ equal to zero is too unwieldy to directly determine the coefficients.  Thus, we will first define the contraction for multivectors and analyze $[\pi^1,\pi^2]$ when first contracted with 
\begin{enumerate}
\item elements of $\bigwedge^3(\g\times Z)^*$ for which $[\beta^1,\beta^2]$ is nonzero,
\item elements of $\bigwedge^3\g^*$, and finally
\item all other elements in $\bigwedge^3(\g\times Z)^*$.
\end{enumerate}
The contraction operator $[\pi^1,\pi^2](du,dv,dw)$ is defined as:
\begin{itemize}
\item[$\text{i20}:$] $ \text{contract} = $\\
$(du,dv,dw)\! \to\! \text{Sbr(Sbr(Sbr(}\pi^2,u),v),\text{Sbr}(w,\pi^1))+\text{Sbr(Sbr(Sbr}(\pi^2,w),u),\text{Sbr}(v,\pi^1))+$\\
$\text{Sbr(Sbr(Sbr}(\pi^2,v),w),\text{Sbr}(u,\pi^1))+\text{ Sbr(Sbr(Sbr}(\pi^1,u),v),\text{Sbr}(w,\pi^2))+$\\
$\text{Sbr(Sbr(Sbr}(\pi^1,w),u),\text{Sbr}(v,\pi^2))+\text{Sbr(Sbr(Sbr}(\pi^1,v),w),\text{Sbr}(u,\pi^2));$
\end{itemize}
Given a list LL of elements of $\bigwedge^3\g^*$ as prescribed in the list above, we contract each element with $[\pi^1,\pi^2]$, and extract the $K_{ij}$-coefficients of the resulting elements of $\g$:
\begin{itemize}
\item[$\text{i21}:$]  $\text{apply}(\text{LL},\text{contract});$\\
			$\text{apply(oo},x\to \text{lift}(x,T));$    --  lifts the elements of XT to T\\
			$\text{apply(oo},x\to ((\text{coefficients } x)_1)_0);$  -- makes a list of coefficients \\
			$\text{apply(oo,entries)}; $ \\
\end{itemize}
When applied to the list $LL=\{(da_i,dx_i,dy_i)\}$, this procedure produces the system of equations in Table \ref{table:Maple} below.

\begin{table}[ph!]
\label{table:Maple}
\caption{System of equations obtained by setting $[\pi^1,\pi^2]=0$}
\vspace{.5cm}
 $
\left\{ \begin{array}{l}
   -2 {l}_{0}+2 {K}_{97} {L}_{10}+{K}_{98} {L}_{10}+2 {K}_{8} {L}_{33}+{K}_{9} {L}_{33}-2 {K}_{30}
     {L}_{33}-{K}_{31} {L}_{33}=0,\\-{l}_{0}+{K}_{97} {L}_{10}-{K}_{98} {L}_{10}+{K}_{8} {L}_{33}-{K}_{9}
     {L}_{33}-{K}_{30} {L}_{33}+{K}_{31} {L}_{33}-18=0,\\-{l}_{0}+{K}_{99} {L}_{10}+{K}_{100}
     {L}_{10}+{K}_{11} {L}_{33}+{K}_{12} {L}_{33}-{K}_{57} {L}_{33}-{K}_{58} {L}_{33}=0,\\-2 {l}_{0}+2
     {K}_{99} {L}_{10}-{K}_{100} {L}_{10}+2 {K}_{11} {L}_{33}-{K}_{12} {L}_{33}-2 {K}_{57}
     {L}_{33}+{K}_{58} {L}_{33}-18=0,\\2 {K}_{119} {L}_{10}+{K}_{120} {L}_{10}+2 {K}_{13} {L}_{33}+{K}_{14}
     {L}_{33}-2 {K}_{59} {L}_{33}-{K}_{60} {L}_{33}+6=0,\\{K}_{119} {L}_{10}-{K}_{120} {L}_{10}+{K}_{13}
     {L}_{33}-{K}_{14} {L}_{33}-{K}_{59} {L}_{33}+{K}_{60} {L}_{33}-6=0,\\-{l}_{1}+{K}_{123}
     {L}_{10}+{K}_{124} {L}_{10}+{K}_{39} {L}_{33}+{K}_{40} {L}_{33}-{K}_{63} {L}_{33}-{K}_{64}
     {L}_{33}+4=0,\\-2 {l}_{1}+2 {K}_{123} {L}_{10}-{K}_{124} {L}_{10}+2 {K}_{39} {L}_{33}-{K}_{40}
     {L}_{33}-2 {K}_{63} {L}_{33}+{K}_{64} {L}_{33}+2=0,\\{K}_{121} {L}_{10}+{K}_{122} {L}_{10}+{K}_{17}
     {L}_{33}+{K}_{18} {L}_{33}-{K}_{83} {L}_{33}-{K}_{84} {L}_{33}=0,\\
     \vdots \\
    (44 \mbox{ equations in total}) \\
     \vdots \\
      2 {K}_{50} {L}_{14}+{K}_{51} {L}_{14}-2
     {K}_{115} {L}_{33}-{K}_{116} {L}_{33}+2 {K}_{182} {L}_{33}+{K}_{183} {L}_{33}+18=0,\\{K}_{50}
     {L}_{14}-{K}_{51} {L}_{14}-{K}_{115} {L}_{33}+{K}_{116} {L}_{33}+{K}_{182} {L}_{33}-{K}_{183}
     {L}_{33}=0,\\{K}_{54} {L}_{14}+{K}_{55} {L}_{14}-{K}_{140} {L}_{33}-{K}_{141} {L}_{33}+{K}_{184}
     {L}_{33}+{K}_{185} {L}_{33}+6=0,\\2 {K}_{54} {L}_{14}-{K}_{55} {L}_{14}-2 {K}_{140} {L}_{33}+{K}_{141}
     {L}_{33}+2 {K}_{184} {L}_{33}-{K}_{185} {L}_{33}-6=0,\\2 {l}_{3}+2 {K}_{78} {L}_{14}+{K}_{79}
     {L}_{14}-2 {K}_{143} {L}_{33}-{K}_{144} {L}_{33}+2 {K}_{188} {L}_{33}+{K}_{189}
     {L}_{33}+18=0,\\{l}_{3}+{K}_{78} {L}_{14}-{K}_{79} {L}_{14}-{K}_{143} {L}_{33}+{K}_{144}
     {L}_{33}+{K}_{188} {L}_{33}-{K}_{189} {L}_{33}=0,\\{l}_{3}+{K}_{80} {L}_{14}+{K}_{81}
     {L}_{14}-{K}_{170} {L}_{33}-{K}_{171} {L}_{33}+{K}_{190} {L}_{33}+{K}_{191} {L}_{33}+18=0,\\2 {l}_{3}+2
     {K}_{80} {L}_{14}-{K}_{81} {L}_{14}-2 {K}_{170} {L}_{33}+{K}_{171} {L}_{33}+2 {K}_{190}
     {L}_{33}-{K}_{191} {L}_{33}=0,\\3 {K}_{102} {L}_{10}+3 {K}_{38} {L}_{33}-3 {K}_{62} {L}_{33}-6=0,3
     {K}_{77} {L}_{14}-3 {K}_{142} {L}_{33}+3 {K}_{167} {L}_{33}+6=0,\\3 {K}_{173} {L}_{10}+3 {K}_{43}
     {L}_{33}-3 {K}_{67} {L}_{33}+6=0,3 {K}_{6} {L}_{14}-3 {K}_{130} {L}_{33}+3 {K}_{155} {L}_{33}-6=0
 \end{array}\right.
 $
 \end{table}
 
 These linear equations in $K_{ij}$ are then solved using Maple${}^\mathrm{TM}$ \cite{Maple10}, allowing one to eliminate a number of the coefficients.  This elimination results in a simplified $\alpha^2$ for which we can repeat this process  contracted with a different list of elements of $\bigwedge(\g\times Z)^*$.  Continuing this procedure yields an extension $\pi$ given below:

\begin{flushleft}
\begin{itemize}
\item[$\text{i22}:$] $\pi=60\text{a}_{2}\partial\text{a}_{0}\partial\text{a}_{2}-60\text{a}_{2}\partial\text{a}_{1}\partial\text{a}_{2}-60\text{a}_{3}\partial\text{a}_{0}\partial\text{a}_{3}+60\text{a}_{3}\partial\text{a}_{1}\partial\text{a}_{3}+(60\text{a}_{0}-60\text{a}_{1}+60\text{aa}_{23})\partial\text{a}_{2}\partial\text{a}_{3}-180\text{x}_{0}\partial\text{a}_{1}\partial\text{x}_{0}+60\text{ax}_{20}\partial\text{a}_{2}\partial\text{x}_{0}+(-180\text{x}_{1}-60\text{ax}_{30})\partial\text{a}_{3}\partial\text{x}_{0}-60\text{x}_{1}\partial\text{a}_{0}\partial\text{x}_{1}-120\text{x}_{1}\partial\text{a}_{1}\partial\text{x}_{1}+(-60\text{x}_{0}+120\text{ax}_{21})\partial\text{a}_{2}\partial\text{x}_{1}+(-120\text{x}_{2}-120\text{ax}_{31})\partial\text{a}_{3}\partial\text{x}_{1}+420\text{xx}_{01}\partial\text{x}_{0}\partial\text{x}_{1}-120\text{x}_{2}\partial\text{a}_{0}\partial\text{x}_{2}-60\text{x}_{2}\partial\text{a}_{1}\partial\text{x}_{2}+(-120\text{x}_{1}+180\text{ax}_{22})\partial\text{a}_{2}\partial\text{x}_{2}+(-60\text{x}_{3}-180\text{ax}_{32})\partial\text{a}_{3}\partial\text{x}_{2}+360\text{xx}_{02}\partial\text{x}_{0}\partial\text{x}_{2}+180\text{xx}_{12}\partial\text{x}_{1}\partial\text{x}_{2}-180\text{x}_{3}\partial\text{a}_{0}\partial\text{x}_{3}+(-180\text{x}_{2}+240\text{ax}_{23})\partial\text{a}_{2}\partial\text{x}_{3}-240\text{ax}_{33}\partial\text{a}_{3}\partial\text{x}_{3}+300\text{xx}_{03}\partial\text{x}_{0}\partial\text{x}_{3}+60\text{xx}_{13}\partial\text{x}_{1}\partial\text{x}_{3}-180\text{xx}_{23}\partial\text{x}_{2}\partial\text{x}_{3}+180\text{y}_{0}\partial\text{a}_{1}\partial\text{y}_{0}+(180\text{y}_{1}+120\text{ay}_{20})\partial\text{a}_{2}\partial\text{y}_{0}-120\text{ay}_{30}\partial\text{a}_{3}\partial\text{y}_{0}+(900\text{a}_{0}\text{a}_{1}+540\text{a}_{1}^2-900\text{a}_{2}\text{a}_{3}-60\text{xy}_{00})\partial\text{x}_{0}\partial\text{y}_{0}+(360\text{a}_{1}\text{a}_{3}-180\text{xy}_{10})\partial\text{x}_{1}\partial\text{y}_{0}+(180\text{a}_{3}^2-300\text{xy}_{20})\partial\text{x}_{2}\partial\text{y}_{0}-420\text{xy}_{30}\partial\text{x}_{3}\partial\text{y}_{0}+60\text{y}_{1}\partial\text{a}_{0}\partial\text{y}_{1}+120\text{y}_{1}\partial\text{a}_{1}\partial\text{y}_{1}+(120\text{y}_{2}+60\text{ay}_{21})\partial\text{a}_{2}\partial\text{y}_{1}+(60\text{y}_{0}-60\text{ay}_{31})\partial\text{a}_{3}\partial\text{y}_{1}+360\text{a}_{1}\text{a}_{2}\partial\text{x}_{0}\partial\text{y}_{1}+(420\text{a}_{0}\text{a}_{1}+60\text{a}_{1}^2-180\text{a}_{2}\text{a}_{3}-60\text{xy}_{11})\partial\text{x}_{1}\partial\text{y}_{1}+(120\text{a}_{0}\text{a}_{3}+120\text{a}_{1}\text{a}_{3}-120\text{xy}_{21})\partial\text{x}_{2}\partial\text{y}_{1}+(180\text{a}_{3}^2-180\text{xy}_{31})\partial\text{x}_{3}\partial\text{y}_{1}-60\text{yy}_{01}\partial\text{y}_{0}\partial\text{y}_{1}+120\text{y}_{2}\partial\text{a}_{0}\partial\text{y}_{2}+60\text{y}_{2}\partial\text{a}_{1}\partial\text{y}_{2}+60\text{y}_{3}\partial\text{a}_{2}\partial\text{y}_{2}+120\text{y}_{1}\partial\text{a}_{3}\partial\text{y}_{2}+(180\text{a}_{2}^2+60\text{xy}_{02})\partial\text{x}_{0}\partial\text{y}_{2}+(120\text{a}_{0}\text{a}_{2}+120\text{a}_{1}\text{a}_{2}+60\text{xy}_{12})\partial\text{x}_{1}\partial\text{y}_{2}+(60\text{a}_{0}^2+420\text{a}_{0}\text{a}_{1}-180\text{a}_{2}\text{a}_{3}+60\text{xy}_{22})\partial\text{x}_{2}\partial\text{y}_{2}+(360\text{a}_{0}\text{a}_{3}+60\text{xy}_{32})\partial\text{x}_{3}\partial\text{y}_{2}+60\text{yy}_{02}\partial\text{y}_{0}\partial\text{y}_{2}+60\text{yy}_{12}\partial\text{y}_{1}\partial\text{y}_{2}+180\text{y}_{3}\partial\text{a}_{0}\partial\text{y}_{3}-60\text{ay}_{23}\partial\text{a}_{2}\partial\text{y}_{3}+(180\text{y}_{2}+60\text{ay}_{33})\partial\text{a}_{3}\partial\text{y}_{3}+120\text{xy}_{03}\partial\text{x}_{0}\partial\text{y}_{3}+(180\text{a}_{2}^2+180\text{xy}_{13})\partial\text{x}_{1}\partial\text{y}_{3}+(360\text{a}_{0}\text{a}_{2}+240\text{xy}_{23})\partial\text{x}_{2}\partial\text{y}_{3}+(540\text{a}_{0}^2+900\text{a}_{0}\text{a}_{1}-900\text{a}_{2}\text{a}_{3}+300\text{xy}_{33})\partial\text{x}_{3}\partial\text{y}_{3}+180\text{yy}_{03}\partial\text{y}_{0}\partial\text{y}_{3}+120\text{yy}_{13}\partial\text{y}_{1}\partial\text{y}_{3}+60\text{yy}_{23}\partial\text{y}_{2}\partial\text{y}_{3}+(60\text{aa}_{02}+60\text{aa}_{12})\partial\text{a}_{2}\partial\text{aa}_{01}+(-60\text{aa}_{03}-60\text{aa}_{13})\partial\text{a}_{3}\partial\text{aa}_{01}+180\text{ax}_{00}\partial\text{x}_{0}\partial\text{aa}_{01}+(120\text{ax}_{01}-60\text{ax}_{11})\partial\text{x}_{1}\partial\text{aa}_{01}+(60\text{ax}_{02}-120\text{ax}_{12})\partial\text{x}_{2}\partial\text{aa}_{01}-180\text{ax}_{13}\partial\text{x}_{3}\partial\text{aa}_{01}-180\text{ay}_{00}\partial\text{y}_{0}\partial\text{aa}_{01}+(-120\text{ay}_{01}+60\text{ay}_{11})\partial\text{y}_{1}\partial\text{aa}_{01}+(-60\text{ay}_{02}+120\text{ay}_{12})\partial\text{y}_{2}\partial\text{aa}_{01}+180\text{ay}_{13}\partial\text{y}_{3}\partial\text{aa}_{01}+60\text{aa}_{02}\partial\text{a}_{0}\partial\text{aa}_{02}-60\text{aa}_{02}\partial\text{a}_{1}\partial\text{aa}_{02}+(60\text{aa}_{01}-60\text{aa}_{23})\partial\text{a}_{3}\partial\text{aa}_{02}+(60\text{ax}_{00}-60\text{ax}_{21})\partial\text{x}_{1}\partial\text{aa}_{02}+(120\text{ax}_{01}-120\text{ax}_{22})\partial\text{x}_{2}\partial\text{aa}_{02}+(180\text{ax}_{02}-180\text{ax}_{23})\partial\text{x}_{3}\partial\text{aa}_{02}-180\text{ay}_{01}\partial\text{y}_{0}\partial\text{aa}_{02}+(-120\text{ay}_{02}+60\text{ay}_{21})\partial\text{y}_{1}\partial\text{aa}_{02}+(-60\text{ay}_{03}+120\text{ay}_{22})\partial\text{y}_{2}\partial\text{aa}_{02}+180\text{ay}_{23}\partial\text{y}_{3}\partial\text{aa}_{02}-60\text{aa}_{03}\partial\text{a}_{0}\partial\text{aa}_{03}+60\text{aa}_{03}\partial\text{a}_{1}\partial\text{aa}_{03}+(-60\text{aa}_{01}-60\text{aa}_{23})\partial\text{a}_{2}\partial\text{aa}_{03}+180\text{ax}_{01}\partial\text{x}_{0}\partial\text{aa}_{03}+(120\text{ax}_{02}-60\text{ax}_{31})\partial\text{x}_{1}\partial\text{aa}_{03}+(60\text{ax}_{03}-120\text{ax}_{32})\partial\text{x}_{2}\partial\text{aa}_{03}-180\text{ax}_{33}\partial\text{x}_{3}\partial\text{aa}_{03}+(-60\text{ay}_{00}+60\text{ay}_{31})\partial\text{y}_{1}\partial\text{aa}_{03}+(-120\text{ay}_{01}+120\text{ay}_{32})\partial\text{y}_{2}\partial\text{aa}_{03}+(-180\text{ay}_{02}+180\text{ay}_{33})\partial\text{y}_{3}\partial\text{aa}_{03}+60\text{aa}_{12}\partial\text{a}_{0}\partial\text{aa}_{12}-60\text{aa}_{12}\partial\text{a}_{1}\partial\text{aa}_{12}+(60\text{aa}_{01}+60\text{aa}_{23})\partial\text{a}_{3}\partial\text{aa}_{12}-180\text{ax}_{20}\partial\text{x}_{0}\partial\text{aa}_{12}+(60\text{ax}_{10}-120\text{ax}_{21})\partial\text{x}_{1}\partial\text{aa}_{12}+(120\text{ax}_{11}-60\text{ax}_{22})\partial\text{x}_{2}\partial\text{aa}_{12}+180\text{ax}_{12}\partial\text{x}_{3}\partial\text{aa}_{12}+(-180\text{ay}_{11}+180\text{ay}_{20})\partial\text{y}_{0}\partial\text{aa}_{12}+(-120\text{ay}_{12}+120\text{ay}_{21})\partial\text{y}_{1}\partial\text{aa}_{12}+(-60\text{ay}_{13}+60\text{ay}_{22})\partial\text{y}_{2}\partial\text{aa}_{12}-60\text{aa}_{13}\partial\text{a}_{0}\partial\text{aa}_{13}+60\text{aa}_{13}\partial\text{a}_{1}\partial\text{aa}_{13}+(-60\text{aa}_{01}+60\text{aa}_{23})\partial\text{a}_{2}\partial\text{aa}_{13}+(180\text{ax}_{11}-180\text{ax}_{30})\partial\text{x}_{0}\partial\text{aa}_{13}+(120\text{ax}_{12}-120\text{ax}_{31})\partial\text{x}_{1}\partial\text{aa}_{13}+(60\text{ax}_{13}-60\text{ax}_{32})\partial\text{x}_{2}\partial\text{aa}_{13}+180\text{ay}_{30}\partial\text{y}_{0}\partial\text{aa}_{13}+(-60\text{ay}_{10}+120\text{ay}_{31})\partial\text{y}_{1}\partial\text{aa}_{13}+(-120\text{ay}_{11}+60\text{ay}_{32})\partial\text{y}_{2}\partial\text{aa}_{13}-180\text{ay}_{12}\partial\text{y}_{3}\partial\text{aa}_{13}+(-60\text{aa}_{02}+60\text{aa}_{12})\partial\text{a}_{2}\partial\text{aa}_{23}+(-60\text{aa}_{03}+60\text{aa}_{13})\partial\text{a}_{3}\partial\text{aa}_{23}+180\text{ax}_{21}\partial\text{x}_{0}\partial\text{aa}_{23}+(120\text{ax}_{22}-60\text{ax}_{30})\partial\text{x}_{1}\partial\text{aa}_{23}+(60\text{ax}_{23}-120\text{ax}_{31})\partial\text{x}_{2}\partial\text{aa}_{23}-180\text{ax}_{32}\partial\text{x}_{3}\partial\text{aa}_{23}+180\text{ay}_{31}\partial\text{y}_{0}\partial\text{aa}_{23}+(-60\text{ay}_{20}+120\text{ay}_{32})\partial\text{y}_{1}\partial\text{aa}_{23}+(-120\text{ay}_{21}+60\text{ay}_{33})\partial\text{y}_{2}\partial\text{aa}_{23}-180\text{ay}_{22}\partial\text{y}_{3}\partial\text{aa}_{23}-180\text{ax}_{00}\partial\text{a}_{1}\partial\text{ax}_{00}-60\text{ax}_{20}\partial\text{a}_{2}\partial\text{ax}_{00}+(-180\text{ax}_{01}+60\text{ax}_{30})\partial\text{a}_{3}\partial\text{ax}_{00}-60\text{xx}_{01}\partial\text{x}_{1}\partial\text{ax}_{00}-120\text{xx}_{02}\partial\text{x}_{2}\partial\text{ax}_{00}-180\text{xx}_{03}\partial\text{x}_{3}\partial\text{ax}_{00}+60\text{xy}_{01}\partial\text{y}_{1}\partial\text{ax}_{00}+120\text{xy}_{02}\partial\text{y}_{2}\partial\text{ax}_{00}+180\text{xy}_{03}\partial\text{y}_{3}\partial\text{ax}_{00}-60\text{ax}_{01}\partial\text{a}_{0}\partial\text{ax}_{01}-120\text{ax}_{01}\partial\text{a}_{1}\partial\text{ax}_{01}+(-60\text{ax}_{00}-60\text{ax}_{21})\partial\text{a}_{2}\partial\text{ax}_{01}+(-120\text{ax}_{02}+60\text{ax}_{31})\partial\text{a}_{3}\partial\text{ax}_{01}-120\text{xx}_{12}\partial\text{x}_{2}\partial\text{ax}_{01}-180\text{xx}_{13}\partial\text{x}_{3}\partial\text{ax}_{01}+60\text{xy}_{11}\partial\text{y}_{1}\partial\text{ax}_{01}+120\text{xy}_{12}\partial\text{y}_{2}\partial\text{ax}_{01}+180\text{xy}_{13}\partial\text{y}_{3}\partial\text{ax}_{01}-120\text{ax}_{02}\partial\text{a}_{0}\partial\text{ax}_{02}-60\text{ax}_{02}\partial\text{a}_{1}\partial\text{ax}_{02}+(-120\text{ax}_{01}-60\text{ax}_{22})\partial\text{a}_{2}\partial\text{ax}_{02}+(-60\text{ax}_{03}+60\text{ax}_{32})\partial\text{a}_{3}\partial\text{ax}_{02}+60\text{xx}_{12}\partial\text{x}_{1}\partial\text{ax}_{02}-180\text{xx}_{23}\partial\text{x}_{3}\partial\text{ax}_{02}+60\text{xy}_{21}\partial\text{y}_{1}\partial\text{ax}_{02}+120\text{xy}_{22}\partial\text{y}_{2}\partial\text{ax}_{02}+180\text{xy}_{23}\partial\text{y}_{3}\partial\text{ax}_{02}-180\text{ax}_{03}\partial\text{a}_{0}\partial\text{ax}_{03}+(-180\text{ax}_{02}-60\text{ax}_{23})\partial\text{a}_{2}\partial\text{ax}_{03}+60\text{ax}_{33}\partial\text{a}_{3}\partial\text{ax}_{03}+60\text{xx}_{13}\partial\text{x}_{1}\partial\text{ax}_{03}+120\text{xx}_{23}\partial\text{x}_{2}\partial\text{ax}_{03}+60\text{xy}_{31}\partial\text{y}_{1}\partial\text{ax}_{03}+120\text{xy}_{32}\partial\text{y}_{2}\partial\text{ax}_{03}+180\text{xy}_{33}\partial\text{y}_{3}\partial\text{ax}_{03}-180\text{ax}_{10}\partial\text{a}_{1}\partial\text{ax}_{10}+60\text{ax}_{20}\partial\text{a}_{2}\partial\text{ax}_{10}+(-180\text{ax}_{11}-60\text{ax}_{30})\partial\text{a}_{3}\partial\text{ax}_{10}-120\text{xx}_{01}\partial\text{x}_{1}\partial\text{ax}_{10}-60\text{xx}_{02}\partial\text{x}_{2}\partial\text{ax}_{10}+180\text{xy}_{00}\partial\text{y}_{0}\partial\text{ax}_{10}+120\text{xy}_{01}\partial\text{y}_{1}\partial\text{ax}_{10}+60\text{xy}_{02}\partial\text{y}_{2}\partial\text{ax}_{10}-60\text{ax}_{11}\partial\text{a}_{0}\partial\text{ax}_{11}-120\text{ax}_{11}\partial\text{a}_{1}\partial\text{ax}_{11}+(-60\text{ax}_{10}+60\text{ax}_{21})\partial\text{a}_{2}\partial\text{ax}_{11}+(-120\text{ax}_{12}-60\text{ax}_{31})\partial\text{a}_{3}\partial\text{ax}_{11}+180\text{xx}_{01}\partial\text{x}_{0}\partial\text{ax}_{11}-60\text{xx}_{12}\partial\text{x}_{2}\partial\text{ax}_{11}+180\text{xy}_{10}\partial\text{y}_{0}\partial\text{ax}_{11}+120\text{xy}_{11}\partial\text{y}_{1}\partial\text{ax}_{11}+60\text{xy}_{12}\partial\text{y}_{2}\partial\text{ax}_{11}-120\text{ax}_{12}\partial\text{a}_{0}\partial\text{ax}_{12}-60\text{ax}_{12}\partial\text{a}_{1}\partial\text{ax}_{12}+(-120\text{ax}_{11}+60\text{ax}_{22})\partial\text{a}_{2}\partial\text{ax}_{12}+(-60\text{ax}_{13}-60\text{ax}_{32})\partial\text{a}_{3}\partial\text{ax}_{12}+180\text{xx}_{02}\partial\text{x}_{0}\partial\text{ax}_{12}+120\text{xx}_{12}\partial\text{x}_{1}\partial\text{ax}_{12}+180\text{xy}_{20}\partial\text{y}_{0}\partial\text{ax}_{12}+120\text{xy}_{21}\partial\text{y}_{1}\partial\text{ax}_{12}+60\text{xy}_{22}\partial\text{y}_{2}\partial\text{ax}_{12}-180\text{ax}_{13}\partial\text{a}_{0}\partial\text{ax}_{13}+(-180\text{ax}_{12}+60\text{ax}_{23})\partial\text{a}_{2}\partial\text{ax}_{13}-60\text{ax}_{33}\partial\text{a}_{3}\partial\text{ax}_{13}+180\text{xx}_{03}\partial\text{x}_{0}\partial\text{ax}_{13}+120\text{xx}_{13}\partial\text{x}_{1}\partial\text{ax}_{13}+60\text{xx}_{23}\partial\text{x}_{2}\partial\text{ax}_{13}+180\text{xy}_{30}\partial\text{y}_{0}\partial\text{ax}_{13}+120\text{xy}_{31}\partial\text{y}_{1}\partial\text{ax}_{13}+60\text{xy}_{32}\partial\text{y}_{2}\partial\text{ax}_{13}+60\text{ax}_{20}\partial\text{a}_{0}\partial\text{ax}_{20}-240\text{ax}_{20}\partial\text{a}_{1}\partial\text{ax}_{20}+(-60\text{ax}_{00}+60\text{ax}_{10}-180\text{ax}_{21})\partial\text{a}_{3}\partial\text{ax}_{20}-120\text{xx}_{01}\partial\text{x}_{2}\partial\text{ax}_{20}-180\text{xx}_{02}\partial\text{x}_{3}\partial\text{ax}_{20}+180\text{xy}_{01}\partial\text{y}_{0}\partial\text{ax}_{20}+120\text{xy}_{02}\partial\text{y}_{1}\partial\text{ax}_{20}+60\text{xy}_{03}\partial\text{y}_{2}\partial\text{ax}_{20}-180\text{ax}_{21}\partial\text{a}_{1}\partial\text{ax}_{21}-60\text{ax}_{20}\partial\text{a}_{2}\partial\text{ax}_{21}+(-60\text{ax}_{01}+60\text{ax}_{11}-120\text{ax}_{22})\partial\text{a}_{3}\partial\text{ax}_{21}+60\text{xx}_{01}\partial\text{x}_{1}\partial\text{ax}_{21}-180\text{xx}_{12}\partial\text{x}_{3}\partial\text{ax}_{21}+180\text{xy}_{11}\partial\text{y}_{0}\partial\text{ax}_{21}+120\text{xy}_{12}\partial\text{y}_{1}\partial\text{ax}_{21}+60\text{xy}_{13}\partial\text{y}_{2}\partial\text{ax}_{21}-60\text{ax}_{22}\partial\text{a}_{0}\partial\text{ax}_{22}-120\text{ax}_{22}\partial\text{a}_{1}\partial\text{ax}_{22}-120\text{ax}_{21}\partial\text{a}_{2}\partial\text{ax}_{22}+(-60\text{ax}_{02}+60\text{ax}_{12}-60\text{ax}_{23})\partial\text{a}_{3}\partial\text{ax}_{22}+60\text{xx}_{02}\partial\text{x}_{1}\partial\text{ax}_{22}+120\text{xx}_{12}\partial\text{x}_{2}\partial\text{ax}_{22}+180\text{xy}_{21}\partial\text{y}_{0}\partial\text{ax}_{22}+120\text{xy}_{22}\partial\text{y}_{1}\partial\text{ax}_{22}+60\text{xy}_{23}\partial\text{y}_{2}\partial\text{ax}_{22}-120\text{ax}_{23}\partial\text{a}_{0}\partial\text{ax}_{23}-60\text{ax}_{23}\partial\text{a}_{1}\partial\text{ax}_{23}-180\text{ax}_{22}\partial\text{a}_{2}\partial\text{ax}_{23}+(-60\text{ax}_{03}+60\text{ax}_{13})\partial\text{a}_{3}\partial\text{ax}_{23}+60\text{xx}_{03}\partial\text{x}_{1}\partial\text{ax}_{23}+120\text{xx}_{13}\partial\text{x}_{2}\partial\text{ax}_{23}+180\text{xx}_{23}\partial\text{x}_{3}\partial\text{ax}_{23}+180\text{xy}_{31}\partial\text{y}_{0}\partial\text{ax}_{23}+120\text{xy}_{32}\partial\text{y}_{1}\partial\text{ax}_{23}+60\text{xy}_{33}\partial\text{y}_{2}\partial\text{ax}_{23}-60\text{ax}_{30}\partial\text{a}_{0}\partial\text{ax}_{30}-120\text{ax}_{30}\partial\text{a}_{1}\partial\text{ax}_{30}+(60\text{ax}_{00}-60\text{ax}_{10})\partial\text{a}_{2}\partial\text{ax}_{30}-180\text{ax}_{31}\partial\text{a}_{3}\partial\text{ax}_{30}-180\text{xx}_{01}\partial\text{x}_{0}\partial\text{ax}_{30}-120\text{xx}_{02}\partial\text{x}_{1}\partial\text{ax}_{30}-60\text{xx}_{03}\partial\text{x}_{2}\partial\text{ax}_{30}+60\text{xy}_{00}\partial\text{y}_{1}\partial\text{ax}_{30}+120\text{xy}_{01}\partial\text{y}_{2}\partial\text{ax}_{30}+180\text{xy}_{02}\partial\text{y}_{3}\partial\text{ax}_{30}-120\text{ax}_{31}\partial\text{a}_{0}\partial\text{ax}_{31}-60\text{ax}_{31}\partial\text{a}_{1}\partial\text{ax}_{31}+(60\text{ax}_{01}-60\text{ax}_{11}-60\text{ax}_{30})\partial\text{a}_{2}\partial\text{ax}_{31}-120\text{ax}_{32}\partial\text{a}_{3}\partial\text{ax}_{31}-120\text{xx}_{12}\partial\text{x}_{1}\partial\text{ax}_{31}-60\text{xx}_{13}\partial\text{x}_{2}\partial\text{ax}_{31}+60\text{xy}_{10}\partial\text{y}_{1}\partial\text{ax}_{31}+120\text{xy}_{11}\partial\text{y}_{2}\partial\text{ax}_{31}+180\text{xy}_{12}\partial\text{y}_{3}\partial\text{ax}_{31}-180\text{ax}_{32}\partial\text{a}_{0}\partial\text{ax}_{32}+(60\text{ax}_{02}-60\text{ax}_{12}-120\text{ax}_{31})\partial\text{a}_{2}\partial\text{ax}_{32}-60\text{ax}_{33}\partial\text{a}_{3}\partial\text{ax}_{32}+180\text{xx}_{12}\partial\text{x}_{0}\partial\text{ax}_{32}-60\text{xx}_{23}\partial\text{x}_{2}\partial\text{ax}_{32}+60\text{xy}_{20}\partial\text{y}_{1}\partial\text{ax}_{32}+120\text{xy}_{21}\partial\text{y}_{2}\partial\text{ax}_{32}+180\text{xy}_{22}\partial\text{y}_{3}\partial\text{ax}_{32}-240\text{ax}_{33}\partial\text{a}_{0}\partial\text{ax}_{33}+60\text{ax}_{33}\partial\text{a}_{1}\partial\text{ax}_{33}+(60\text{ax}_{03}-60\text{ax}_{13}-180\text{ax}_{32})\partial\text{a}_{2}\partial\text{ax}_{33}+180\text{xx}_{13}\partial\text{x}_{0}\partial\text{ax}_{33}+120\text{xx}_{23}\partial\text{x}_{1}\partial\text{ax}_{33}+60\text{xy}_{30}\partial\text{y}_{1}\partial\text{ax}_{33}+120\text{xy}_{31}\partial\text{y}_{2}\partial\text{ax}_{33}+180\text{xy}_{32}\partial\text{y}_{3}\partial\text{ax}_{33}+180\text{ay}_{00}\partial\text{a}_{1}\partial\text{ay}_{00}+(180\text{ay}_{01}-60\text{ay}_{20})\partial\text{a}_{2}\partial\text{ay}_{00}+60\text{ay}_{30}\partial\text{a}_{3}\partial\text{ay}_{00}+60\text{xy}_{10}\partial\text{x}_{1}\partial\text{ay}_{00}+120\text{xy}_{20}\partial\text{x}_{2}\partial\text{ay}_{00}+180\text{xy}_{30}\partial\text{x}_{3}\partial\text{ay}_{00}+60\text{yy}_{01}\partial\text{y}_{1}\partial\text{ay}_{00}+120\text{yy}_{02}\partial\text{y}_{2}\partial\text{ay}_{00}+180\text{yy}_{03}\partial\text{y}_{3}\partial\text{ay}_{00}+60\text{ay}_{01}\partial\text{a}_{0}\partial\text{ay}_{01}+120\text{ay}_{01}\partial\text{a}_{1}\partial\text{ay}_{01}+(120\text{ay}_{02}-60\text{ay}_{21})\partial\text{a}_{2}\partial\text{ay}_{01}+(60\text{ay}_{00}+60\text{ay}_{31})\partial\text{a}_{3}\partial\text{ay}_{01}+60\text{xy}_{11}\partial\text{x}_{1}\partial\text{ay}_{01}+120\text{xy}_{21}\partial\text{x}_{2}\partial\text{ay}_{01}+180\text{xy}_{31}\partial\text{x}_{3}\partial\text{ay}_{01}+120\text{yy}_{12}\partial\text{y}_{2}\partial\text{ay}_{01}+180\text{yy}_{13}\partial\text{y}_{3}\partial\text{ay}_{01}+120\text{ay}_{02}\partial\text{a}_{0}\partial\text{ay}_{02}+60\text{ay}_{02}\partial\text{a}_{1}\partial\text{ay}_{02}+(60\text{ay}_{03}-60\text{ay}_{22})\partial\text{a}_{2}\partial\text{ay}_{02}+(120\text{ay}_{01}+60\text{ay}_{32})\partial\text{a}_{3}\partial\text{ay}_{02}+60\text{xy}_{12}\partial\text{x}_{1}\partial\text{ay}_{02}+120\text{xy}_{22}\partial\text{x}_{2}\partial\text{ay}_{02}+180\text{xy}_{32}\partial\text{x}_{3}\partial\text{ay}_{02}-60\text{yy}_{12}\partial\text{y}_{1}\partial\text{ay}_{02}+180\text{yy}_{23}\partial\text{y}_{3}\partial\text{ay}_{02}+180\text{ay}_{03}\partial\text{a}_{0}\partial\text{ay}_{03}-60\text{ay}_{23}\partial\text{a}_{2}\partial\text{ay}_{03}+(180\text{ay}_{02}+60\text{ay}_{33})\partial\text{a}_{3}\partial\text{ay}_{03}+60\text{xy}_{13}\partial\text{x}_{1}\partial\text{ay}_{03}+120\text{xy}_{23}\partial\text{x}_{2}\partial\text{ay}_{03}+180\text{xy}_{33}\partial\text{x}_{3}\partial\text{ay}_{03}-60\text{yy}_{13}\partial\text{y}_{1}\partial\text{ay}_{03}-120\text{yy}_{23}\partial\text{y}_{2}\partial\text{ay}_{03}+180\text{ay}_{10}\partial\text{a}_{1}\partial\text{ay}_{10}+(180\text{ay}_{11}+60\text{ay}_{20})\partial\text{a}_{2}\partial\text{ay}_{10}-60\text{ay}_{30}\partial\text{a}_{3}\partial\text{ay}_{10}+180\text{xy}_{00}\partial\text{x}_{0}\partial\text{ay}_{10}+120\text{xy}_{10}\partial\text{x}_{1}\partial\text{ay}_{10}+60\text{xy}_{20}\partial\text{x}_{2}\partial\text{ay}_{10}+120\text{yy}_{01}\partial\text{y}_{1}\partial\text{ay}_{10}+60\text{yy}_{02}\partial\text{y}_{2}\partial\text{ay}_{10}+60\text{ay}_{11}\partial\text{a}_{0}\partial\text{ay}_{11}+120\text{ay}_{11}\partial\text{a}_{1}\partial\text{ay}_{11}+(120\text{ay}_{12}+60\text{ay}_{21})\partial\text{a}_{2}\partial\text{ay}_{11}+(60\text{ay}_{10}-60\text{ay}_{31})\partial\text{a}_{3}\partial\text{ay}_{11}+180\text{xy}_{01}\partial\text{x}_{0}\partial\text{ay}_{11}+120\text{xy}_{11}\partial\text{x}_{1}\partial\text{ay}_{11}+60\text{xy}_{21}\partial\text{x}_{2}\partial\text{ay}_{11}-180\text{yy}_{01}\partial\text{y}_{0}\partial\text{ay}_{11}+60\text{yy}_{12}\partial\text{y}_{2}\partial\text{ay}_{11}+120\text{ay}_{12}\partial\text{a}_{0}\partial\text{ay}_{12}+60\text{ay}_{12}\partial\text{a}_{1}\partial\text{ay}_{12}+(60\text{ay}_{13}+60\text{ay}_{22})\partial\text{a}_{2}\partial\text{ay}_{12}+(120\text{ay}_{11}-60\text{ay}_{32})\partial\text{a}_{3}\partial\text{ay}_{12}+180\text{xy}_{02}\partial\text{x}_{0}\partial\text{ay}_{12}+120\text{xy}_{12}\partial\text{x}_{1}\partial\text{ay}_{12}+60\text{xy}_{22}\partial\text{x}_{2}\partial\text{ay}_{12}-180\text{yy}_{02}\partial\text{y}_{0}\partial\text{ay}_{12}-120\text{yy}_{12}\partial\text{y}_{1}\partial\text{ay}_{12}+180\text{ay}_{13}\partial\text{a}_{0}\partial\text{ay}_{13}+60\text{ay}_{23}\partial\text{a}_{2}\partial\text{ay}_{13}+(180\text{ay}_{12}-60\text{ay}_{33})\partial\text{a}_{3}\partial\text{ay}_{13}+180\text{xy}_{03}\partial\text{x}_{0}\partial\text{ay}_{13}+120\text{xy}_{13}\partial\text{x}_{1}\partial\text{ay}_{13}+60\text{xy}_{23}\partial\text{x}_{2}\partial\text{ay}_{13}-180\text{yy}_{03}\partial\text{y}_{0}\partial\text{ay}_{13}-120\text{yy}_{13}\partial\text{y}_{1}\partial\text{ay}_{13}-60\text{yy}_{23}\partial\text{y}_{2}\partial\text{ay}_{13}+60\text{ay}_{20}\partial\text{a}_{0}\partial\text{ay}_{20}+120\text{ay}_{20}\partial\text{a}_{1}\partial\text{ay}_{20}+180\text{ay}_{21}\partial\text{a}_{2}\partial\text{ay}_{20}+(-60\text{ay}_{00}+60\text{ay}_{10})\partial\text{a}_{3}\partial\text{ay}_{20}+60\text{xy}_{00}\partial\text{x}_{1}\partial\text{ay}_{20}+120\text{xy}_{10}\partial\text{x}_{2}\partial\text{ay}_{20}+180\text{xy}_{20}\partial\text{x}_{3}\partial\text{ay}_{20}+180\text{yy}_{01}\partial\text{y}_{0}\partial\text{ay}_{20}+120\text{yy}_{02}\partial\text{y}_{1}\partial\text{ay}_{20}+60\text{yy}_{03}\partial\text{y}_{2}\partial\text{ay}_{20}+120\text{ay}_{21}\partial\text{a}_{0}\partial\text{ay}_{21}+60\text{ay}_{21}\partial\text{a}_{1}\partial\text{ay}_{21}+120\text{ay}_{22}\partial\text{a}_{2}\partial\text{ay}_{21}+(-60\text{ay}_{01}+60\text{ay}_{11}+60\text{ay}_{20})\partial\text{a}_{3}\partial\text{ay}_{21}+60\text{xy}_{01}\partial\text{x}_{1}\partial\text{ay}_{21}+120\text{xy}_{11}\partial\text{x}_{2}\partial\text{ay}_{21}+180\text{xy}_{21}\partial\text{x}_{3}\partial\text{ay}_{21}+120\text{yy}_{12}\partial\text{y}_{1}\partial\text{ay}_{21}+60\text{yy}_{13}\partial\text{y}_{2}\partial\text{ay}_{21}+180\text{ay}_{22}\partial\text{a}_{0}\partial\text{ay}_{22}+60\text{ay}_{23}\partial\text{a}_{2}\partial\text{ay}_{22}+(-60\text{ay}_{02}+60\text{ay}_{12}+120\text{ay}_{21})\partial\text{a}_{3}\partial\text{ay}_{22}+60\text{xy}_{02}\partial\text{x}_{1}\partial\text{ay}_{22}+120\text{xy}_{12}\partial\text{x}_{2}\partial\text{ay}_{22}+180\text{xy}_{22}\partial\text{x}_{3}\partial\text{ay}_{22}-180\text{yy}_{12}\partial\text{y}_{0}\partial\text{ay}_{22}+60\text{yy}_{23}\partial\text{y}_{2}\partial\text{ay}_{22}+240\text{ay}_{23}\partial\text{a}_{0}\partial\text{ay}_{23}-60\text{ay}_{23}\partial\text{a}_{1}\partial\text{ay}_{23}+(-60\text{ay}_{03}+60\text{ay}_{13}+180\text{ay}_{22})\partial\text{a}_{3}\partial\text{ay}_{23}+60\text{xy}_{03}\partial\text{x}_{1}\partial\text{ay}_{23}+120\text{xy}_{13}\partial\text{x}_{2}\partial\text{ay}_{23}+180\text{xy}_{23}\partial\text{x}_{3}\partial\text{ay}_{23}-180\text{yy}_{13}\partial\text{y}_{0}\partial\text{ay}_{23}-120\text{yy}_{23}\partial\text{y}_{1}\partial\text{ay}_{23}-60\text{ay}_{30}\partial\text{a}_{0}\partial\text{ay}_{30}+240\text{ay}_{30}\partial\text{a}_{1}\partial\text{ay}_{30}+(60\text{ay}_{00}-60\text{ay}_{10}+180\text{ay}_{31})\partial\text{a}_{2}\partial\text{ay}_{30}+180\text{xy}_{10}\partial\text{x}_{0}\partial\text{ay}_{30}+120\text{xy}_{20}\partial\text{x}_{1}\partial\text{ay}_{30}+60\text{xy}_{30}\partial\text{x}_{2}\partial\text{ay}_{30}+120\text{yy}_{01}\partial\text{y}_{2}\partial\text{ay}_{30}+180\text{yy}_{02}\partial\text{y}_{3}\partial\text{ay}_{30}+180\text{ay}_{31}\partial\text{a}_{1}\partial\text{ay}_{31}+(60\text{ay}_{01}-60\text{ay}_{11}+120\text{ay}_{32})\partial\text{a}_{2}\partial\text{ay}_{31}+60\text{ay}_{30}\partial\text{a}_{3}\partial\text{ay}_{31}+180\text{xy}_{11}\partial\text{x}_{0}\partial\text{ay}_{31}+120\text{xy}_{21}\partial\text{x}_{1}\partial\text{ay}_{31}+60\text{xy}_{31}\partial\text{x}_{2}\partial\text{ay}_{31}-60\text{yy}_{01}\partial\text{y}_{1}\partial\text{ay}_{31}+180\text{yy}_{12}\partial\text{y}_{3}\partial\text{ay}_{31}+60\text{ay}_{32}\partial\text{a}_{0}\partial\text{ay}_{32}+120\text{ay}_{32}\partial\text{a}_{1}\partial\text{ay}_{32}+(60\text{ay}_{02}-60\text{ay}_{12}+60\text{ay}_{33})\partial\text{a}_{2}\partial\text{ay}_{32}+120\text{ay}_{31}\partial\text{a}_{3}\partial\text{ay}_{32}+180\text{xy}_{12}\partial\text{x}_{0}\partial\text{ay}_{32}+120\text{xy}_{22}\partial\text{x}_{1}\partial\text{ay}_{32}+60\text{xy}_{32}\partial\text{x}_{2}\partial\text{ay}_{32}-60\text{yy}_{02}\partial\text{y}_{1}\partial\text{ay}_{32}-120\text{yy}_{12}\partial\text{y}_{2}\partial\text{ay}_{32}+120\text{ay}_{33}\partial\text{a}_{0}\partial\text{ay}_{33}+60\text{ay}_{33}\partial\text{a}_{1}\partial\text{ay}_{33}+(60\text{ay}_{03}-60\text{ay}_{13})\partial\text{a}_{2}\partial\text{ay}_{33}+180\text{ay}_{32}\partial\text{a}_{3}\partial\text{ay}_{33}+180\text{xy}_{13}\partial\text{x}_{0}\partial\text{ay}_{33}+120\text{xy}_{23}\partial\text{x}_{1}\partial\text{ay}_{33}+60\text{xy}_{33}\partial\text{x}_{2}\partial\text{ay}_{33}-60\text{yy}_{03}\partial\text{y}_{1}\partial\text{ay}_{33}-120\text{yy}_{13}\partial\text{y}_{2}\partial\text{ay}_{33}-180\text{yy}_{23}\partial\text{y}_{3}\partial\text{ay}_{33}-60\text{xx}_{01}\partial\text{a}_{0}\partial\text{xx}_{01}-300\text{xx}_{01}\partial\text{a}_{1}\partial\text{xx}_{01}-120\text{xx}_{02}\partial\text{a}_{3}\partial\text{xx}_{01}+(-240\text{a}_{3}\text{x}_{0}+1380\text{a}_{0}\text{x}_{1}+240\text{a}_{1}\text{x}_{1}-1380\text{a}_{2}\text{x}_{2})\partial\text{y}_{0}\partial\text{xx}_{01}+(-315\text{a}_{0}\text{x}_{0}+315\text{a}_{2}\text{x}_{1})\partial\text{y}_{1}\partial\text{xx}_{01}-120\text{xx}_{02}\partial\text{a}_{0}\partial\text{xx}_{02}-240\text{xx}_{02}\partial\text{a}_{1}\partial\text{xx}_{02}-120\text{xx}_{01}\partial\text{a}_{2}\partial\text{xx}_{02}+(-60\text{xx}_{03}-180\text{xx}_{12})\partial\text{a}_{3}\partial\text{xx}_{02}+(-960\text{a}_{3}\text{x}_{1}+660\text{a}_{0}\text{x}_{2}+960\text{a}_{1}\text{x}_{2}-660\text{a}_{2}\text{x}_{3})\partial\text{y}_{0}\partial\text{xx}_{02}+(-120\text{a}_{3}\text{x}_{0}-120\text{a}_{0}\text{x}_{1}+120\text{a}_{1}\text{x}_{1}+120\text{a}_{2}\text{x}_{2})\partial\text{y}_{1}\partial\text{xx}_{02}+(900\text{a}_{0}\text{x}_{0}-900\text{a}_{2}\text{x}_{1})\partial\text{y}_{2}\partial\text{xx}_{02}-180\text{xx}_{03}\partial\text{a}_{0}\partial\text{xx}_{03}-180\text{xx}_{03}\partial\text{a}_{1}\partial\text{xx}_{03}-180\text{xx}_{02}\partial\text{a}_{2}\partial\text{xx}_{03}-180\text{xx}_{13}\partial\text{a}_{3}\partial\text{xx}_{03}+(-360\text{a}_{3}\text{x}_{2}+360\text{a}_{1}\text{x}_{3})\partial\text{y}_{0}\partial\text{xx}_{03}+(-540\text{a}_{3}\text{x}_{1}+540\text{a}_{1}\text{x}_{2})\partial\text{y}_{1}\partial\text{xx}_{03}+(-420\text{a}_{3}\text{x}_{0}+1020\text{a}_{0}\text{x}_{1}+420\text{a}_{1}\text{x}_{1}-1020\text{a}_{2}\text{x}_{2})\partial\text{y}_{2}\partial\text{xx}_{03}+(1260\text{a}_{0}\text{x}_{0}-1260\text{a}_{2}\text{x}_{1})\partial\text{y}_{3}\partial\text{xx}_{03}-180\text{xx}_{12}\partial\text{a}_{0}\partial\text{xx}_{12}-180\text{xx}_{12}\partial\text{a}_{1}\partial\text{xx}_{12}-60\text{xx}_{02}\partial\text{a}_{2}\partial\text{xx}_{12}-60\text{xx}_{13}\partial\text{a}_{3}\partial\text{xx}_{12}+(-360\text{a}_{3}\text{x}_{1}+90\text{a}_{0}\text{x}_{2}+360\text{a}_{1}\text{x}_{2}-90\text{a}_{2}\text{x}_{3})\partial\text{y}_{1}\partial\text{xx}_{12}+(-60\text{a}_{3}\text{x}_{0}+660\text{a}_{0}\text{x}_{1}+60\text{a}_{1}\text{x}_{1}-660\text{a}_{2}\text{x}_{2})\partial\text{y}_{2}\partial\text{xx}_{12}+(1440\text{a}_{0}\text{x}_{0}-1440\text{a}_{2}\text{x}_{1})\partial\text{y}_{3}\partial\text{xx}_{12}-240\text{xx}_{13}\partial\text{a}_{0}\partial\text{xx}_{13}-120\text{xx}_{13}\partial\text{a}_{1}\partial\text{xx}_{13}+(-60\text{xx}_{03}-180\text{xx}_{12})\partial\text{a}_{2}\partial\text{xx}_{13}-120\text{xx}_{23}\partial\text{a}_{3}\partial\text{xx}_{13}+(-1260\text{a}_{3}\text{x}_{2}+1260\text{a}_{1}\text{x}_{3})\partial\text{y}_{1}\partial\text{xx}_{13}+(-420\text{a}_{3}\text{x}_{1}+660\text{a}_{0}\text{x}_{2}+420\text{a}_{1}\text{x}_{2}-660\text{a}_{2}\text{x}_{3})\partial\text{y}_{2}\partial\text{xx}_{13}+(300\text{a}_{3}\text{x}_{0}+840\text{a}_{0}\text{x}_{1}-300\text{a}_{1}\text{x}_{1}-840\text{a}_{2}\text{x}_{2})\partial\text{y}_{3}\partial\text{xx}_{13}-300\text{xx}_{23}\partial\text{a}_{0}\partial\text{xx}_{23}-60\text{xx}_{23}\partial\text{a}_{1}\partial\text{xx}_{23}-120\text{xx}_{13}\partial\text{a}_{2}\partial\text{xx}_{23}+(-1440\text{a}_{3}\text{x}_{2}+1440\text{a}_{1}\text{x}_{3})\partial\text{y}_{2}\partial\text{xx}_{23}+(540\text{a}_{0}\text{x}_{2}-540\text{a}_{2}\text{x}_{3})\partial\text{y}_{3}\partial\text{xx}_{23}+60\text{yy}_{01}\partial\text{a}_{0}\partial\text{yy}_{01}+300\text{yy}_{01}\partial\text{a}_{1}\partial\text{yy}_{01}+120\text{yy}_{02}\partial\text{a}_{2}\partial\text{yy}_{01}+(1740\text{a}_{2}\text{y}_{0}-2220\text{a}_{0}\text{y}_{1}-1740\text{a}_{1}\text{y}_{1}+2220\text{a}_{3}\text{y}_{2})\partial\text{x}_{0}\partial\text{yy}_{01}+(1080\text{a}_{0}\text{y}_{0}-1080\text{a}_{3}\text{y}_{1})\partial\text{x}_{1}\partial\text{yy}_{01}+120\text{yy}_{02}\partial\text{a}_{0}\partial\text{yy}_{02}+240\text{yy}_{02}\partial\text{a}_{1}\partial\text{yy}_{02}+(60\text{yy}_{03}+180\text{yy}_{12})\partial\text{a}_{2}\partial\text{yy}_{02}+120\text{yy}_{01}\partial\text{a}_{3}\partial\text{yy}_{02}+(-3900\text{a}_{2}\text{y}_{1}+2220\text{a}_{0}\text{y}_{2}+3900\text{a}_{1}\text{y}_{2}-2220\text{a}_{3}\text{y}_{3})\partial\text{x}_{0}\partial\text{yy}_{02}+(780\text{a}_{2}\text{y}_{0}+1500\text{a}_{0}\text{y}_{1}-780\text{a}_{1}\text{y}_{1}-1500\text{a}_{3}\text{y}_{2})\partial\text{x}_{1}\partial\text{yy}_{02}+(2700\text{a}_{0}\text{y}_{0}-2700\text{a}_{3}\text{y}_{1})\partial\text{x}_{2}\partial\text{yy}_{02}+180\text{yy}_{03}\partial\text{a}_{0}\partial\text{yy}_{03}+180\text{yy}_{03}\partial\text{a}_{1}\partial\text{yy}_{03}+180\text{yy}_{13}\partial\text{a}_{2}\partial\text{yy}_{03}+180\text{yy}_{02}\partial\text{a}_{3}\partial\text{yy}_{03}+(-2040\text{a}_{2}\text{y}_{2}+2040\text{a}_{1}\text{y}_{3})\partial\text{x}_{0}\partial\text{yy}_{03}+(-900\text{a}_{2}\text{y}_{1}+1440\text{a}_{0}\text{y}_{2}+900\text{a}_{1}\text{y}_{2}-1440\text{a}_{3}\text{y}_{3})\partial\text{x}_{1}\partial\text{yy}_{03}+(-140\text{a}_{2}\text{y}_{0}+1720\text{a}_{0}\text{y}_{1}+140\text{a}_{1}\text{y}_{1}-1720\text{a}_{3}\text{y}_{2})\partial\text{x}_{2}\partial\text{yy}_{03}+(1860\text{a}_{0}\text{y}_{0}-1860\text{a}_{3}\text{y}_{1})\partial\text{x}_{3}\partial\text{yy}_{03}+180\text{yy}_{12}\partial\text{a}_{0}\partial\text{yy}_{12}+180\text{yy}_{12}\partial\text{a}_{1}\partial\text{yy}_{12}+60\text{yy}_{13}\partial\text{a}_{2}\partial\text{yy}_{12}+60\text{yy}_{02}\partial\text{a}_{3}\partial\text{yy}_{12}+(-1800\text{a}_{2}\text{y}_{2}+1800\text{a}_{1}\text{y}_{3})\partial\text{x}_{0}\partial\text{yy}_{12}+(-1440\text{a}_{2}\text{y}_{1}+1080\text{a}_{0}\text{y}_{2}+1440\text{a}_{1}\text{y}_{2}-1080\text{a}_{3}\text{y}_{3})\partial\text{x}_{1}\partial\text{yy}_{12}+(240\text{a}_{2}\text{y}_{0}+780\text{a}_{0}\text{y}_{1}-240\text{a}_{1}\text{y}_{1}-780\text{a}_{3}\text{y}_{2})\partial\text{x}_{2}\partial\text{yy}_{12}+(3960\text{a}_{0}\text{y}_{0}-3960\text{a}_{3}\text{y}_{1})\partial\text{x}_{3}\partial\text{yy}_{12}+240\text{yy}_{13}\partial\text{a}_{0}\partial\text{yy}_{13}+120\text{yy}_{13}\partial\text{a}_{1}\partial\text{yy}_{13}+120\text{yy}_{23}\partial\text{a}_{2}\partial\text{yy}_{13}+(60\text{yy}_{03}+180\text{yy}_{12})\partial\text{a}_{3}\partial\text{yy}_{13}+(-2250\text{a}_{2}\text{y}_{2}+2250\text{a}_{1}\text{y}_{3})\partial\text{x}_{1}\partial\text{yy}_{13}+(-900\text{a}_{2}\text{y}_{1}+360\text{a}_{0}\text{y}_{2}+900\text{a}_{1}\text{y}_{2}-360\text{a}_{3}\text{y}_{3})\partial\text{x}_{2}\partial\text{yy}_{13}+(2130\text{a}_{2}\text{y}_{0}+1410\text{a}_{0}\text{y}_{1}-2130\text{a}_{1}\text{y}_{1}-1410\text{a}_{3}\text{y}_{2})\partial\text{x}_{3}\partial\text{yy}_{13}+300\text{yy}_{23}\partial\text{a}_{0}\partial\text{yy}_{23}+60\text{yy}_{23}\partial\text{a}_{1}\partial\text{yy}_{23}+120\text{yy}_{13}\partial\text{a}_{3}\partial\text{yy}_{23}+(-7020\text{a}_{2}\text{y}_{2}+7020\text{a}_{1}\text{y}_{3})\partial\text{x}_{2}\partial\text{yy}_{23}+(2460\text{a}_{2}\text{y}_{1}-420\text{a}_{0}\text{y}_{2}-2460\text{a}_{1}\text{y}_{2}+420\text{a}_{3}\text{y}_{3})\partial\text{x}_{3}\partial\text{yy}_{23}+180\text{xy}_{01}\partial\text{a}_{2}\partial\text{xy}_{00}-180\text{xy}_{10}\partial\text{a}_{3}\partial\text{xy}_{00}+(1440\text{a}_{0}\text{x}_{0}-1440\text{a}_{2}\text{x}_{1})\partial\text{x}_{0}\partial\text{xy}_{00}+(300\text{a}_{3}\text{x}_{0}+480\text{a}_{0}\text{x}_{1}-300\text{a}_{1}\text{x}_{1}-480\text{a}_{2}\text{x}_{2})\partial\text{x}_{1}\partial\text{xy}_{00}+(-240\text{a}_{3}\text{x}_{1}+300\text{a}_{0}\text{x}_{2}+240\text{a}_{1}\text{x}_{2}-300\text{a}_{2}\text{x}_{3})\partial\text{x}_{2}\partial\text{xy}_{00}+(-900\text{a}_{3}\text{x}_{2}+900\text{a}_{1}\text{x}_{3})\partial\text{x}_{3}\partial\text{xy}_{00}+(1080\text{a}_{0}\text{y}_{0}-1080\text{a}_{3}\text{y}_{1})\partial\text{y}_{0}\partial\text{xy}_{00}+(240\text{a}_{2}\text{y}_{0}+420\text{a}_{0}\text{y}_{1}-240\text{a}_{1}\text{y}_{1}-420\text{a}_{3}\text{y}_{2})\partial\text{y}_{1}\partial\text{xy}_{00}+(-300\text{a}_{2}\text{y}_{1}+960\text{a}_{0}\text{y}_{2}+300\text{a}_{1}\text{y}_{2}-960\text{a}_{3}\text{y}_{3})\partial\text{y}_{2}\partial\text{xy}_{00}+(-1620\text{a}_{2}\text{y}_{2}+1620\text{a}_{1}\text{y}_{3})\partial\text{y}_{3}\partial\text{xy}_{00}+60\text{xy}_{01}\partial\text{a}_{0}\partial\text{xy}_{01}-60\text{xy}_{01}\partial\text{a}_{1}\partial\text{xy}_{01}+120\text{xy}_{02}\partial\text{a}_{2}\partial\text{xy}_{01}+(60\text{xy}_{00}-180\text{xy}_{11})\partial\text{a}_{3}\partial\text{xy}_{01}+(1620\text{a}_{0}\text{x}_{0}-1620\text{a}_{2}\text{x}_{1})\partial\text{x}_{1}\partial\text{xy}_{01}+(60\text{a}_{3}\text{x}_{0}+420\text{a}_{0}\text{x}_{1}-60\text{a}_{1}\text{x}_{1}-420\text{a}_{2}\text{x}_{2})\partial\text{x}_{2}\partial\text{xy}_{01}+(-540\text{a}_{3}\text{x}_{1}+1080\text{a}_{0}\text{x}_{2}+540\text{a}_{1}\text{x}_{2}-1080\text{a}_{2}\text{x}_{3})\partial\text{x}_{3}\partial\text{xy}_{01}+(-480\text{a}_{2}\text{y}_{0}+960\text{a}_{0}\text{y}_{1}+480\text{a}_{1}\text{y}_{1}-960\text{a}_{3}\text{y}_{2})\partial\text{y}_{0}\partial\text{xy}_{01}+(-180\text{a}_{2}\text{y}_{1}+720\text{a}_{0}\text{y}_{2}+180\text{a}_{1}\text{y}_{2}-720\text{a}_{3}\text{y}_{3})\partial\text{y}_{1}\partial\text{xy}_{01}+(-720\text{a}_{2}\text{y}_{2}+720\text{a}_{1}\text{y}_{3})\partial\text{y}_{2}\partial\text{xy}_{01}+120\text{xy}_{02}\partial\text{a}_{0}\partial\text{xy}_{02}-120\text{xy}_{02}\partial\text{a}_{1}\partial\text{xy}_{02}+60\text{xy}_{03}\partial\text{a}_{2}\partial\text{xy}_{02}+(120\text{xy}_{01}-180\text{xy}_{12})\partial\text{a}_{3}\partial\text{xy}_{02}+(360\text{a}_{0}\text{x}_{0}-360\text{a}_{2}\text{x}_{1})\partial\text{x}_{2}\partial\text{xy}_{02}+(120\text{a}_{3}\text{x}_{0}+1200\text{a}_{0}\text{x}_{1}-120\text{a}_{1}\text{x}_{1}-1200\text{a}_{2}\text{x}_{2})\partial\text{x}_{3}\partial\text{xy}_{02}+(-720\text{a}_{2}\text{y}_{1}+360\text{a}_{0}\text{y}_{2}+720\text{a}_{1}\text{y}_{2}-360\text{a}_{3}\text{y}_{3})\partial\text{y}_{0}\partial\text{xy}_{02}+(-1440\text{a}_{2}\text{y}_{2}+1440\text{a}_{1}\text{y}_{3})\partial\text{y}_{1}\partial\text{xy}_{02}+180\text{xy}_{03}\partial\text{a}_{0}\partial\text{xy}_{03}-180\text{xy}_{03}\partial\text{a}_{1}\partial\text{xy}_{03}+(180\text{xy}_{02}-180\text{xy}_{13})\partial\text{a}_{3}\partial\text{xy}_{03}+(900\text{a}_{0}\text{x}_{0}-900\text{a}_{2}\text{x}_{1})\partial\text{x}_{3}\partial\text{xy}_{03}+(-900\text{a}_{2}\text{y}_{2}+900\text{a}_{1}\text{y}_{3})\partial\text{y}_{0}\partial\text{xy}_{03}-60\text{xy}_{10}\partial\text{a}_{0}\partial\text{xy}_{10}+60\text{xy}_{10}\partial\text{a}_{1}\partial\text{xy}_{10}+(-60\text{xy}_{00}+180\text{xy}_{11})\partial\text{a}_{2}\partial\text{xy}_{10}-120\text{xy}_{20}\partial\text{a}_{3}\partial\text{xy}_{10}+(200\text{a}_{3}\text{x}_{0}+2780\text{a}_{0}\text{x}_{1}-200\text{a}_{1}\text{x}_{1}-2780\text{a}_{2}\text{x}_{2})\partial\text{x}_{0}\partial\text{xy}_{10}+(-360\text{a}_{3}\text{x}_{1}+360\text{a}_{0}\text{x}_{2}+360\text{a}_{1}\text{x}_{2}-360\text{a}_{2}\text{x}_{3})\partial\text{x}_{1}\partial\text{xy}_{10}+(-360\text{a}_{3}\text{x}_{2}+360\text{a}_{1}\text{x}_{3})\partial\text{x}_{2}\partial\text{xy}_{10}+(360\text{a}_{0}\text{y}_{0}-360\text{a}_{3}\text{y}_{1})\partial\text{y}_{1}\partial\text{xy}_{10}+(360\text{a}_{2}\text{y}_{0}+540\text{a}_{0}\text{y}_{1}-360\text{a}_{1}\text{y}_{1}-540\text{a}_{3}\text{y}_{2})\partial\text{y}_{2}\partial\text{xy}_{10}+(-120\text{a}_{2}\text{y}_{1}+420\text{a}_{0}\text{y}_{2}+120\text{a}_{1}\text{y}_{2}-420\text{a}_{3}\text{y}_{3})\partial\text{y}_{3}\partial\text{xy}_{10}+(-60\text{xy}_{01}+120\text{xy}_{12})\partial\text{a}_{2}\partial\text{xy}_{11}+(60\text{xy}_{10}-120\text{xy}_{21})\partial\text{a}_{3}\partial\text{xy}_{11}+(-945\text{a}_{0}\text{x}_{0}+945\text{a}_{2}\text{x}_{1})\partial\text{x}_{0}\partial\text{xy}_{11}+(-60\text{a}_{3}\text{x}_{0}+1200\text{a}_{0}\text{x}_{1}+60\text{a}_{1}\text{x}_{1}-1200\text{a}_{2}\text{x}_{2})\partial\text{x}_{1}\partial\text{xy}_{11}+(240\text{a}_{3}\text{x}_{1}+420\text{a}_{0}\text{x}_{2}-240\text{a}_{1}\text{x}_{2}-420\text{a}_{2}\text{x}_{3})\partial\text{x}_{2}\partial\text{xy}_{11}+(900\text{a}_{0}\text{y}_{0}-900\text{a}_{3}\text{y}_{1})\partial\text{y}_{0}\partial\text{xy}_{11}+(60\text{a}_{2}\text{y}_{0}+1140\text{a}_{0}\text{y}_{1}-60\text{a}_{1}\text{y}_{1}-1140\text{a}_{3}\text{y}_{2})\partial\text{y}_{1}\partial\text{xy}_{11}+(-180\text{a}_{2}\text{y}_{1}+180\text{a}_{0}\text{y}_{2}+180\text{a}_{1}\text{y}_{2}-180\text{a}_{3}\text{y}_{3})\partial\text{y}_{2}\partial\text{xy}_{11}+(-1080\text{a}_{2}\text{y}_{2}+1080\text{a}_{1}\text{y}_{3})\partial\text{y}_{3}\partial\text{xy}_{11}+60\text{xy}_{12}\partial\text{a}_{0}\partial\text{xy}_{12}-60\text{xy}_{12}\partial\text{a}_{1}\partial\text{xy}_{12}+(-60\text{xy}_{02}+60\text{xy}_{13})\partial\text{a}_{2}\partial\text{xy}_{12}+(120\text{xy}_{11}-120\text{xy}_{22})\partial\text{a}_{3}\partial\text{xy}_{12}+(720\text{a}_{0}\text{x}_{0}-720\text{a}_{2}\text{x}_{1})\partial\text{x}_{1}\partial\text{xy}_{12}+(120\text{a}_{3}\text{x}_{0}+120\text{a}_{0}\text{x}_{1}-120\text{a}_{1}\text{x}_{1}-120\text{a}_{2}\text{x}_{2})\partial\text{x}_{2}\partial\text{xy}_{12}+(-60\text{a}_{3}\text{x}_{1}+660\text{a}_{0}\text{x}_{2}+60\text{a}_{1}\text{x}_{2}-660\text{a}_{2}\text{x}_{3})\partial\text{x}_{3}\partial\text{xy}_{12}+(60\text{a}_{2}\text{y}_{0}+240\text{a}_{0}\text{y}_{1}-60\text{a}_{1}\text{y}_{1}-240\text{a}_{3}\text{y}_{2})\partial\text{y}_{0}\partial\text{xy}_{12}+(-540\text{a}_{2}\text{y}_{1}+720\text{a}_{0}\text{y}_{2}+540\text{a}_{1}\text{y}_{2}-720\text{a}_{3}\text{y}_{3})\partial\text{y}_{1}\partial\text{xy}_{12}+(-1620\text{a}_{2}\text{y}_{2}+1620\text{a}_{1}\text{y}_{3})\partial\text{y}_{2}\partial\text{xy}_{12}+120\text{xy}_{13}\partial\text{a}_{0}\partial\text{xy}_{13}-120\text{xy}_{13}\partial\text{a}_{1}\partial\text{xy}_{13}-60\text{xy}_{03}\partial\text{a}_{2}\partial\text{xy}_{13}+(180\text{xy}_{12}-120\text{xy}_{23})\partial\text{a}_{3}\partial\text{xy}_{13}+(360\text{a}_{0}\text{x}_{0}-360\text{a}_{2}\text{x}_{1})\partial\text{x}_{2}\partial\text{xy}_{13}+(120\text{a}_{3}\text{x}_{0}+120\text{a}_{0}\text{x}_{1}-120\text{a}_{1}\text{x}_{1}-120\text{a}_{2}\text{x}_{2})\partial\text{x}_{3}\partial\text{xy}_{13}+(-420\text{a}_{2}\text{y}_{1}+1020\text{a}_{0}\text{y}_{2}+420\text{a}_{1}\text{y}_{2}-1020\text{a}_{3}\text{y}_{3})\partial\text{y}_{0}\partial\text{xy}_{13}+(-720\text{a}_{2}\text{y}_{2}+720\text{a}_{1}\text{y}_{3})\partial\text{y}_{1}\partial\text{xy}_{13}-120\text{xy}_{20}\partial\text{a}_{0}\partial\text{xy}_{20}+120\text{xy}_{20}\partial\text{a}_{1}\partial\text{xy}_{20}+(-120\text{xy}_{10}+180\text{xy}_{21})\partial\text{a}_{2}\partial\text{xy}_{20}-60\text{xy}_{30}\partial\text{a}_{3}\partial\text{xy}_{20}+(-624\text{a}_{3}\text{x}_{1}+672\text{a}_{0}\text{x}_{2}+624\text{a}_{1}\text{x}_{2}-672\text{a}_{2}\text{x}_{3})\partial\text{x}_{0}\partial\text{xy}_{20}+(-144\text{a}_{3}\text{x}_{2}+144\text{a}_{1}\text{x}_{3})\partial\text{x}_{1}\partial\text{xy}_{20}+(720\text{a}_{0}\text{y}_{0}-720\text{a}_{3}\text{y}_{1})\partial\text{y}_{2}\partial\text{xy}_{20}+(60\text{a}_{2}\text{y}_{0}+240\text{a}_{0}\text{y}_{1}-60\text{a}_{1}\text{y}_{1}-240\text{a}_{3}\text{y}_{2})\partial\text{y}_{3}\partial\text{xy}_{20}-60\text{xy}_{21}\partial\text{a}_{0}\partial\text{xy}_{21}+60\text{xy}_{21}\partial\text{a}_{1}\partial\text{xy}_{21}+(-120\text{xy}_{11}+120\text{xy}_{22})\partial\text{a}_{2}\partial\text{xy}_{21}+(60\text{xy}_{20}-60\text{xy}_{31})\partial\text{a}_{3}\partial\text{xy}_{21}+(-180\text{a}_{3}\text{x}_{0}-180\text{a}_{0}\text{x}_{1}+180\text{a}_{1}\text{x}_{1}+180\text{a}_{2}\text{x}_{2})\partial\text{x}_{0}\partial\text{xy}_{21}+(-60\text{a}_{3}\text{x}_{1}+255\text{a}_{0}\text{x}_{2}+60\text{a}_{1}\text{x}_{2}-255\text{a}_{2}\text{x}_{3})\partial\text{x}_{1}\partial\text{xy}_{21}+(-1260\text{a}_{3}\text{x}_{2}+1260\text{a}_{1}\text{x}_{3})\partial\text{x}_{2}\partial\text{xy}_{21}+(1440\text{a}_{0}\text{y}_{0}-1440\text{a}_{3}\text{y}_{1})\partial\text{y}_{1}\partial\text{xy}_{21}+(180\text{a}_{0}\text{y}_{1}-180\text{a}_{3}\text{y}_{2})\partial\text{y}_{2}\partial\text{xy}_{21}+(-120\text{a}_{2}\text{y}_{1}+60\text{a}_{0}\text{y}_{2}+120\text{a}_{1}\text{y}_{2}-60\text{a}_{3}\text{y}_{3})\partial\text{y}_{3}\partial\text{xy}_{21}+(-120\text{xy}_{12}+60\text{xy}_{23})\partial\text{a}_{2}\partial\text{xy}_{22}+(120\text{xy}_{21}-60\text{xy}_{32})\partial\text{a}_{3}\partial\text{xy}_{22}+(-6300\text{a}_{0}\text{x}_{0}+6300\text{a}_{2}\text{x}_{1})\partial\text{x}_{0}\partial\text{xy}_{22}+(480\text{a}_{3}\text{x}_{0}-2580\text{a}_{0}\text{x}_{1}-480\text{a}_{1}\text{x}_{1}+2580\text{a}_{2}\text{x}_{2})\partial\text{x}_{1}\partial\text{xy}_{22}+(-540\text{a}_{3}\text{x}_{1}+720\text{a}_{0}\text{x}_{2}+540\text{a}_{1}\text{x}_{2}-720\text{a}_{2}\text{x}_{3})\partial\text{x}_{2}\partial\text{xy}_{22}+(-720\text{a}_{3}\text{x}_{2}+720\text{a}_{1}\text{x}_{3})\partial\text{x}_{3}\partial\text{xy}_{22}+(360\text{a}_{0}\text{y}_{0}-360\text{a}_{3}\text{y}_{1})\partial\text{y}_{0}\partial\text{xy}_{22}+(60\text{a}_{2}\text{y}_{0}+1140\text{a}_{0}\text{y}_{1}-60\text{a}_{1}\text{y}_{1}-1140\text{a}_{3}\text{y}_{2})\partial\text{y}_{1}\partial\text{xy}_{22}+(-180\text{a}_{2}\text{y}_{1}+540\text{a}_{0}\text{y}_{2}+180\text{a}_{1}\text{y}_{2}-540\text{a}_{3}\text{y}_{3})\partial\text{y}_{2}\partial\text{xy}_{22}+60\text{xy}_{23}\partial\text{a}_{0}\partial\text{xy}_{23}-60\text{xy}_{23}\partial\text{a}_{1}\partial\text{xy}_{23}-120\text{xy}_{13}\partial\text{a}_{2}\partial\text{xy}_{23}+(180\text{xy}_{22}-60\text{xy}_{33})\partial\text{a}_{3}\partial\text{xy}_{23}+(-900\text{a}_{0}\text{x}_{0}+900\text{a}_{2}\text{x}_{1})\partial\text{x}_{1}\partial\text{xy}_{23}+(-120\text{a}_{3}\text{x}_{0}+420\text{a}_{0}\text{x}_{1}+120\text{a}_{1}\text{x}_{1}-420\text{a}_{2}\text{x}_{2})\partial\text{x}_{2}\partial\text{xy}_{23}+(-840\text{a}_{3}\text{x}_{1}+420\text{a}_{0}\text{x}_{2}+840\text{a}_{1}\text{x}_{2}-420\text{a}_{2}\text{x}_{3})\partial\text{x}_{3}\partial\text{xy}_{23}+(-180\text{a}_{2}\text{y}_{0}+1260\text{a}_{0}\text{y}_{1}+180\text{a}_{1}\text{y}_{1}-1260\text{a}_{3}\text{y}_{2})\partial\text{y}_{0}\partial\text{xy}_{23}+(-300\text{a}_{2}\text{y}_{1}+240\text{a}_{0}\text{y}_{2}+300\text{a}_{1}\text{y}_{2}-240\text{a}_{3}\text{y}_{3})\partial\text{y}_{1}\partial\text{xy}_{23}+(-1440\text{a}_{2}\text{y}_{2}+1440\text{a}_{1}\text{y}_{3})\partial\text{y}_{2}\partial\text{xy}_{23}-180\text{xy}_{30}\partial\text{a}_{0}\partial\text{xy}_{30}+180\text{xy}_{30}\partial\text{a}_{1}\partial\text{xy}_{30}+(-180\text{xy}_{20}+180\text{xy}_{31})\partial\text{a}_{2}\partial\text{xy}_{30}+(540\text{a}_{0}\text{y}_{0}-540\text{a}_{3}\text{y}_{1})\partial\text{y}_{3}\partial\text{xy}_{30}-120\text{xy}_{31}\partial\text{a}_{0}\partial\text{xy}_{31}+120\text{xy}_{31}\partial\text{a}_{1}\partial\text{xy}_{31}+(-180\text{xy}_{21}+120\text{xy}_{32})\partial\text{a}_{2}\partial\text{xy}_{31}+60\text{xy}_{30}\partial\text{a}_{3}\partial\text{xy}_{31}+(-540\text{a}_{3}\text{x}_{1}+540\text{a}_{1}\text{x}_{2})\partial\text{x}_{0}\partial\text{xy}_{31}+(900\text{a}_{0}\text{y}_{0}-900\text{a}_{3}\text{y}_{1})\partial\text{y}_{2}\partial\text{xy}_{31}+(420\text{a}_{2}\text{y}_{0}+600\text{a}_{0}\text{y}_{1}-420\text{a}_{1}\text{y}_{1}-600\text{a}_{3}\text{y}_{2})\partial\text{y}_{3}\partial\text{xy}_{31}-60\text{xy}_{32}\partial\text{a}_{0}\partial\text{xy}_{32}+60\text{xy}_{32}\partial\text{a}_{1}\partial\text{xy}_{32}+(-180\text{xy}_{22}+60\text{xy}_{33})\partial\text{a}_{2}\partial\text{xy}_{32}+120\text{xy}_{31}\partial\text{a}_{3}\partial\text{xy}_{32}+(2220\text{a}_{3}\text{x}_{0}-4980\text{a}_{0}\text{x}_{1}-2220\text{a}_{1}\text{x}_{1}+4980\text{a}_{2}\text{x}_{2})\partial\text{x}_{0}\partial\text{xy}_{32}+(-360\text{a}_{0}\text{x}_{2}+360\text{a}_{2}\text{x}_{3})\partial\text{x}_{1}\partial\text{xy}_{32}+(-6300\text{a}_{3}\text{x}_{2}+6300\text{a}_{1}\text{x}_{3})\partial\text{x}_{2}\partial\text{xy}_{32}+(1620\text{a}_{0}\text{y}_{0}-1620\text{a}_{3}\text{y}_{1})\partial\text{y}_{1}\partial\text{xy}_{32}+(360\text{a}_{2}\text{y}_{0}+900\text{a}_{0}\text{y}_{1}-360\text{a}_{1}\text{y}_{1}-900\text{a}_{3}\text{y}_{2})\partial\text{y}_{2}\partial\text{xy}_{32}+(540\text{a}_{2}\text{y}_{1}+540\text{a}_{0}\text{y}_{2}-540\text{a}_{1}\text{y}_{2}-540\text{a}_{3}\text{y}_{3})\partial\text{y}_{3}\partial\text{xy}_{32}-180\text{xy}_{23}\partial\text{a}_{2}\partial\text{xy}_{33}+180\text{xy}_{32}\partial\text{a}_{3}\partial\text{xy}_{33}+(-1440\text{a}_{0}\text{x}_{0}+1440\text{a}_{2}\text{x}_{1})\partial\text{x}_{0}\partial\text{xy}_{33}+(192\text{a}_{3}\text{x}_{0}-672\text{a}_{0}\text{x}_{1}-192\text{a}_{1}\text{x}_{1}+672\text{a}_{2}\text{x}_{2})\partial\text{x}_{1}\partial\text{xy}_{33}+(-312\text{a}_{3}\text{x}_{1}+264\text{a}_{0}\text{x}_{2}+312\text{a}_{1}\text{x}_{2}-264\text{a}_{2}\text{x}_{3})\partial\text{x}_{2}\partial\text{xy}_{33}+(-900\text{a}_{3}\text{x}_{2}+900\text{a}_{1}\text{x}_{3})\partial\text{x}_{3}\partial\text{xy}_{33}+(900\text{a}_{0}\text{y}_{0}-900\text{a}_{3}\text{y}_{1})\partial\text{y}_{0}\partial\text{xy}_{33}+(420\text{a}_{2}\text{y}_{0}+780\text{a}_{0}\text{y}_{1}-420\text{a}_{1}\text{y}_{1}-780\text{a}_{3}\text{y}_{2})\partial\text{y}_{1}\partial\text{xy}_{33}+(240\text{a}_{2}\text{y}_{1}+420\text{a}_{0}\text{y}_{2}-240\text{a}_{1}\text{y}_{2}-420\text{a}_{3}\text{y}_{3})\partial\text{y}_{2}\partial\text{xy}_{33}+(-360\text{a}_{2}\text{y}_{2}+360\text{a}_{1}\text{y}_{3})\partial\text{y}_{3}\partial\text{xy}_{33};$

\item[$\text{i23}:$] $\text{Sbr}(\pi,\pi)$
\item[$\text{o23}:$] $0$
\end{itemize}
\end{flushleft}

\bibliographystyle{plain}

\end{document}